\documentclass[a4paper,12pt]{article}
\usepackage[utf8]{inputenc}
\usepackage[T1]{fontenc}
\usepackage[english]{babel}
\usepackage{mathtools,amssymb,amsthm}
\usepackage{hyperref}
\usepackage{enumitem}
\usepackage{esint}
\usepackage{lmodern}
\usepackage{amsbsy} 
\usepackage{mathrsfs}
\usepackage{geometry} 
\usepackage{authblk}

\usepackage{tikz}

\geometry{hmargin=1.8cm,vmargin=2.0cm} 
\usepackage{ulem}
\usepackage[usenames, dvipsnames]{xcolor}

\makeatletter
\newcommand{\addresses@store}{} 

\newcommand{\authorinfo}[4]{
  \appto\addresses@store{
    \par\noindent\textsc{#1}\par
    \noindent #2\par
    \ifstrempty{#3}{}{
      \noindent\textit{E-mail address:} \href{mailto:#3}{#3}\par
    }
    \ifstrempty{#4}{}{
      \noindent\textit{URL:} \url{#4}\par
    }
    \medskip
  }
}

\AtEndDocument{
  \section*{Affiliations}
  \begingroup\footnotesize
  \addresses@store
  \endgroup
}
\makeatother

\newcommand \Gn{\mathcal{G}_{\text{norm}}}

\newcommand \Om{\Omega}
\newcommand \QM{\mathscr{QM}}

\newcommand \om{\omega}
\newcommand \eps{\epsilon}
\newcommand \B{\mathcal{B}}
\newcommand \Cy{\mathcal{C}}
\newcommand \D{\mathbb{D}}

\newcommand \Sp{\mathbb{S}^{1}}

\newcommand \G{\mathcal{G}}
\newcommand \ov{\overline}
\newcommand \ovh{\widehat}
\newcommand \N{\mathbb{N}}
\newcommand \Z{\mathbb{Z}}

\newcommand \C{\mathbb{C}}
\newcommand \R{\mathbb{R}}
\newcommand \lin{\mathrm{lin}}
\newcommand \loc{\mathrm{loc}}

\newcommand \W{\mathcal{W}}

\newcommand \M{\mathscr{M}}

\newcommand \Spt{\mathrm{Spt}}
\newcommand \Oo{\mathcal{O}}

\numberwithin{equation}{section}

\newtheorem{theorem}{Theorem} 
\newtheorem{corollary}[theorem]{Corollary} 
\newtheorem{lemma} [theorem]{Lemma} 
\newtheorem{remark} [theorem]{Remark} 
\newtheorem{proposition} [theorem]{Proposition} 
 
\theoremstyle{definition} 
\newtheorem{definition}[theorem] {Definition}

\title{Boundary regularity of a fourth order Alt-Caffarelli problem and applications to the minimization of the critical buckling load}

\author{Jimmy Lamboley, Mickaël Nahon}

\authorinfo{Jimmy Lamboley}{\'Ecole Normale Supérieure, CNRS, PSL University, DMA and Sorbonne Université, Universit\'e Paris Cit\'e, IMJ-PRG, F-75005 Paris, France}{jimmy.lamboley@ens.fr}{}
\authorinfo{Mickaël Nahon}{Univ. Grenoble Alpes, CNRS, Grenoble INP, LJK, 38000 Grenoble, France.}{mickael.nahon@univ-grenoble-alpes.fr}{}

\date{}
\begin{document}
\maketitle

\begin{abstract}
We study a higher order analogue to the Alt-Caffarelli functional that arises in several shape optimization problems, among which the minimization of the critical buckling load of a clamped plate of fixed area. We obtain several regularity results up to the boundary in two dimensions, in particular we prove the full regularity of the boundary (analytic outside angles of opening $\approx 1.43\pi$) near any point of density less than $1$ of the optimal shape. These results are based on the monotonicity formula discovered by Dipierro, Karakhanyan, and Valdinoci in \cite{DKV20}, which we improve with a new epiperimetric inequality.
\end{abstract}

\tableofcontents

\section{Introduction}

\subsection{General context}
The goal of this paper is to study a variational formulation of a fourth order  free boundary problem, which is a natural fourth order version of the classical Alt-Caffarelli free boundary problem from \cite{AC81}.

Let $D$ some open subset of $\R^2$ and let $H^2(D)=\left\{u\in L^2(D):\nabla u,\nabla^2 u\in L^2(D)\right\}$. For $u\in H^2(D)$ we define
\begin{equation}\label{eq_energy}
E(u;D)=\int_{D}\left(|\Delta u|^2+\chi_{u\neq 0}\right),
\end{equation}
where $\chi$ designates the indicator function, meaning $\chi_{u\neq 0}(x)=\begin{cases}1\text{ if }u(x)\neq 0\\ 0\text{ if }u(x)=0\end{cases}$. We say $u$ is a minimizer of $E(\cdot\ ;D)$ if for every $v\in H^2(D)$ such that $\{u\neq v\}$ is compactly contained in $D$, we have
$$E(u;D)\leq E(v;D).$$
We will denote the set of minimizers of $E(\cdot\ ;D)$ by $\M(D)$. 

When the $|\Delta u|^2$ term in the energy is replaced by $|\nabla u|^2$, and if one assumes $u\geq 0$ near $\partial D$, as mentioned above we obtain the classical (one-phase) Alt-Caffarelli problem and it is well-known in this context that solutions $u$ are nonnegative, locally Lipschitz in $D$ (this regularity is called the optimal regularity, as one expects the gradient to be discontinuous between a region where $u=0$ and a region where $u>0$), and that the free boundary $\partial\{u>0\}$ is analytic. In higher dimension ($D\subset\R^d$ for some $d\geq 2$), it is known that the free boundary is fully regular for $d=3,4$, that singularities may appear in high dimension ($d\geq 7$), and the situation is still open in dimension $d=5,6$ (see for instance  \cite[Sec. 1.4]{V23}).

The situation with this fourth order version is more involved, and the existing literature is more scarce. In \cite{DKV20}, the authors studied a similar problem where $\{u\neq 0\}$ is replaced by $\{u>0\}$: as mentioned in \cite{GM24}, while these two cases lead to the same problem when considering the Alt-Caffarelli functional (as the energy of $u_+$ is lower than the energy of $u$), when dealing with a fourth order analogue, the two resulting problems have very different behaviour. The problem under study here is therefore closer to the one studied in \cite{GM24}, although the mathematical tools will have similarities with \cite{DKV20}, particularly with the use of a monotonicity formula as explained later in this introduction.

\begin{remark}\label{rem:ipp}
By integration by parts, $u$ belongs to the set of minimizers $\M(D)$ if and only if it is a minimizer (in the same sense) of one of the following functionals:
$$u\mapsto \int_{D}\left(|\nabla^2 u|^2+\chi_{u\neq 0}\right)\text{ or }u\mapsto \int_{D}\left(\left(\partial_{x,x}u-\partial_{y,y}u\right)^2+4\left(\partial_{x,y}u\right)^2+\chi_{u\neq 0}\right).$$
\end{remark}

The purpose of this paper is twofold:
\begin{itemize}
    \item On one hand, we study the regularity properties of the free boundary related to minimizers of $E(\cdot,D)$: in particular, we classify the blow-ups for this problem, highlighting the possibility of angular singularities, and we provide regularity properties near flat and angular points of the free boundary, using an approach based on a monotonicity formula (in the spirit of \cite{DKV20}) and an epiperimetric inequality (in the spirit of \cite{R64,SV19} and subsequent works). Some questions remain open, in particular the behavior of the free boundary near points of density 1 of the support of $u$, and the optimal regularity for $u$ which is expected to be $\mathcal{C}^{1,1}$. 
    
    We describe these results in Section \ref{ssect:results} below.
    \item On the other hand, we extend our results to more general energies, by introducing a suitable notion of quasi-minimizer, and we show that this applies in particular to the minimization of buckling eigenvalues. This is described in Section \ref{ssect:motivations} below.
\end{itemize}

Note that in the whole paper, we only consider  the two-dimensional case (though as seen in the next paragraph, this includes several rotationally symmetric three-dimensional problems).
Another natural variant would be to study minimizers of
\begin{equation}\label{eq_Stokespb}
\vec{v}\in H^1(D,\R^d)\mapsto \int_{D}\left(|\nabla \vec{v}|^2+\chi_{\vec{v}\neq 0}\right)
\end{equation}
under the constraint $\nabla\cdot \vec{v}=0$. Indeed for $d=2$, the minimization of the functional \eqref{eq_Stokespb} is equivalent (by identifying $\vec{v}=\nabla^\bot u$) to the minimization of the functional 
\begin{equation}\label{eq_Stokes_scalar}
u\in H^2(D,\R)\mapsto \int_{D}\left(|\Delta u|^2+\chi_{\nabla u\neq 0}\right).
\end{equation}
When $\{\nabla u=0\}$ is fully included (up to a negligible set) in a single level set of $u$, then a minimizer of \eqref{eq_Stokes_scalar} is (up to a translation) a minimizer of \eqref{eq_energy}, however the reciprocal is not true in general.

A minimizer $\vec{v}$ of \eqref{eq_Stokespb} is expected to verify Stokes' equation with an overdetermined boundary condition $\left(\vec{v}=0,\Vert\nabla v\Vert=1\right)$ on the regular free boundary.

This is reminiscent of some vectorial versions of the one phase Alt-Caffarelli problem, see for instance \cite{KL18,CSY18,MTV17,MTV20,MTV23,BMMTV24,BLNP23}, and the recent survey \cite{TV25}. A notable difference is that in each of these works, some version of the maximum principle is involved to prove the regularity of the boundary, something which is not available for the Stokes system.

\subsection{Motivations}\label{ssect:motivations}

Generalized versions of the functional $E(\cdot\ ;D)$ arise in several settings: we give examples coming both from spectral optimization and fluid mechanics.\\

\noindent{\bf Towards a Faber-Krahn inequality for the first buckling eigenvalue}:
Consider the first buckling eigenvalue of a clamped plate of shape $\Om$
\begin{equation}\label{expr_Lambda1}
\Lambda_1(\Om)=\inf_{u\in H^2_0(\Om)}\frac{\int_{\Om}|\Delta u|^2}{\int_{\Om}|\nabla u|^2}
\end{equation}
defined for any open set $\Om\subset\R^2$ of finite area (here $H^2_0(\Om)$ is the $H^2$-closure of compact support functions in $\Om$, see subsection \ref{subsec_notations} for notations). It was conjectured by P{\'o}lya and Szeg{\"o} in \cite{S50,PS51} that among open sets of given area, $\Lambda_1(\Om)$ is minimal when $\Om$ is a disk, by analogy with Rayleigh's since solved conjectures on the principal frequency of membranes or of clamped plates. It is known by an argument of Weinberger and Willms (reproduced in \cite[Prop 4.4]{K00} and \cite{AB03}) that under the assumption that there exists a minimizer $\Om^{\mathrm{opt}}$ that is bounded, of class $\mathcal{C}^2$, and simply connected, then $\Om^{\mathrm{opt}}$ must be a disk. The regularity hypothesis is central as it implies (by a shape derivative argument) that $\Delta u$ is locally constant on $\partial\Om$.

One of the motivations in this work is to partially weaken the regularity hypotheses of Weinberger \& Willms's argument: we will prove that minimizers of $\Lambda_1(\Om)$ under area constraint are necessarily bounded, and that they satisfy some partial regularity properties, stated in Theorem \ref{mr_total_quasi}.
In other words, we lower the hypothesis on Weinberger and Willms's argument on buckling eigenvalue as follows:
\begin{center}\textit{
Assume $\Om\subset\R^2$ is a minimizer of $|\Om|\Lambda_1(\Om)$ among open sets of finite area, such that $\Om$ is simply connected and for all $p\in\partial\Om$ we have
\begin{equation}\label{eq_upper_density_estimate}
\liminf_{r\to 0}\frac{|\D_{p,r}\cap\Om|}{|\D_{p,r}|}<\frac{t_1}{2\pi}(\approx 0.715).
\end{equation}
Then $\Om$ is a disk.}\end{center}
Here $\D_{p,r}$ is the disk centered at $p$ of radius $r$ and $t_1\approx 1.43\pi$ is uniquely defined by $\tan(t_1)=t_1$ and $t_1\in(\pi,2\pi)$.  In particular, if $\Om$ is a minimizer and $\Om$ is convex, then it must be a disk (this follows from the fact that $t_1>\pi$). These results are discussed in subsection \ref{subsec_buckling}, as well as some alternative conditions on $\Om$.

\begin{remark}\label{rk:regsimplyconnected}
Note that the topological hypothesis of Weinberger \& Willms ($\Om$ being simply connected) may be considered as a constraint, in an attempt to prove the minimality of the disk only among simply connected sets of given area. However, proving the regularity of the boundary under this type of topological constraint goes beyond the scope of this work: as far as the authors know, this is not well-understood even for the classical Alt-Caffarelli problem.
\end{remark}

Let us now mention some other problems that lead to similar free boundary problems and where our results may apply, though they all exhibit some specific difficulties, which is why we leave these applications for future work.

For a rotationally symmetric set $K\subset\R^3$, the drag of $K$ in the direction $\vec{e_x}$ is defined as
\begin{equation}\label{eq:drag}\mathcal{D}(K)=\inf\left\{\int_{\R^3\setminus K}\left|\frac{\nabla \vec{v}+(\nabla \vec{v})^*}{2}\right|^2,\vec{v}\in \vec{e_x}+H^1(\R^3,\R^3):\nabla\cdot \vec{v}=0,\ \vec{v}|_{\partial K}\equiv 0 \right\}.
\end{equation}
The minimization of $D(K)$ under the constraint $|K|=1$ has been widely studied, see for instance in \cite{P73,B74} the study of a conjectured optimal shape with two sharp ends of angular opening $\frac{2\pi}{3}$. This has also been considered in two dimensions in \cite{R95}, with an explicit conjectured optimal shape for the drag with two angles of opening $\approx 0.57\pi$ (which corresponds to $2\pi-t_1$ where $t_1$ is as defined in \eqref{eq_upper_density_estimate}: the reason for this precise angle is explained in the classification of $2$-homogeneous minimizers below).
Let us also mention the works  \cite{P08,BP13} for the minimization of the energy dissipated by a fluid following Stokes' equation in a pipe.

In the three-dimensional rotationally invariant case, we can define a stream function $u\in H^2_\loc(\R\times\R_{>0},\R)$ by $\vec{v}(x,y,0)=y^{-1}\nabla^\bot\left(y^\frac{1}{2}u\right)$, and it can be checked that the minimality of the drag $\mathcal{D}(K)$ under the constraint $|K|=|B|$ corresponds to the minimality of the energy
$$u\mapsto \int_{D}\left(|\Delta u|^2+\frac{3}{2}y^{-2}|\nabla u|^2-\frac{63}{16}y^{-4}u^2\right)$$
for any $D\Subset\R\times\R_{>0}$, under the constraint $2\pi\int_{\R^2}y\chi_{\nabla u\neq 0}=1$: this is a variant of the energy \eqref{eq_Stokes_scalar}, and we consider the study of \eqref{eq_energy} to be a first step towards understanding this free boundary problem.

\subsection{Main results}\label{ssect:results}

Let us first remind several known results from the literature, essentially based on \cite{DKV20,GM24} (although the reference \cite{DKV20} deals with a different free boundary problem, we will see that several tools developed in \cite{DKV20} find applications here).

We start by studying the regularity of $u\in \M(D)$. In \cite{GM24} it is proved that any minimizer $u$ of $E(\cdot\ ;D)$ is in $\mathcal{C}^{1,\alpha}_\loc(D,\R)$ for any $\alpha\in (0,1)$, and more precisely that
\[\Delta u\in \mathrm{BMO}_\loc(D,\R).\]
The expected optimal regularity for $u$ is $u\in\mathcal{C}^{1,1}_\loc(D,\R)$, which we will not obtain here.  Let us mention the work \cite{S16} where the regularity of the optimal shape for $\Om\mapsto\Lambda_1(\Om)|\Om|$ (which is essentially the same free boundary problem) is analyzed, with an argument for the $\mathcal{C}^{1,1}$ regularity that appears erroneous to the authors.

Then we note that a function $u$ is a solution in $D$ if and only if $x\mapsto r^{-2}u(rx)$ is also a solution in $D/r$, for any $r>0$, so the expected regularity of the boundary is linked to the study of $2$-homogeneous minimizers. As will be seen in subsection \ref{subsec_classification}, we prove that these $2$-homogeneous minimizers belong to one of the four following types:
\begin{itemize}
\item[(I) ]Flat solutions:
\begin{equation}\label{def_uI}
u=s u^{\mathrm{I}}\circ \mathrm{rot},\ u^{\mathrm{I}}\left(x,y\right)=\frac{y_+^2}{2},
\end{equation}
where $s\in\{-1,+1\}$ and $\mathrm{rot}$ is a rotation.
\item[(II) ]Angular solutions:
\begin{equation}\label{def_uII}
u=s u^{\mathrm{II}}\circ \mathrm{rot},\ u^{\mathrm{II}}\left(re^{i\theta}\right)=\frac{r^2}{4}\left(1-\cos(2\theta)-\frac{2}{t_1}\left(\theta-\frac{\sin(2\theta)}{2}\right)\right)\chi_{0\leq \theta\leq t_1},
\end{equation}
where $t_1\approx 1.43\pi$ is the unique fixed point of $\tan$ in $(\pi,2\pi)$, $s\in\{-1,+1\}$ and $\mathrm{rot}$ is a rotation.
\item[(III) ]Nodal solutions:
\begin{equation}\label{def_uIII}
u=\lambda u^{\mathrm{III}}\circ \mathrm{rot},\ u^{\mathrm{III}}\left(x,y\right)=\frac{y^2}{2},
\end{equation}
where $|\lambda|\geq 1$ and $\mathrm{rot}$ is a rotation.
\item[(IV) ]Isolated points:
\begin{equation}\label{def_uIV}
u(x,y)=u_{a,b,c}\left(x,y\right)=a (x^2+y^2)+b(x^2-y^2)+2cxy,
\end{equation}
for some $(a,b,c)\in\R^3$ such that $a^2\neq b^2+c^2$ ($a^2=b^2+c^2$ corresponds to the previous case).
\end{itemize}
Note that we do not claim each of these functions are minimizers, only that all homogeneous minimizers are among these. A detailed discussion on this issue may be found in Remark \ref{rem_minimality_blowup_conjecture}.

A central property of elements of $\M(\D_1)$ is the monotonicity formula discovered in a different context by Dipierro, Karakhanyan, and Valdinoci in \cite[Th. 1.12]{DKV20}, which may be seen as a higher-order analogue of the monotonicity formula introduced by Weiss in \cite{W99} for the classical Alt-Caffarelli problem. We define
\begin{equation}\label{eq_WDKV}
W(u,r)=\frac{1}{r^2}\int_{\D_r}\left\{|\Delta u|^2+\chi_{u\neq 0}\right\}+D(u,r),
\end{equation}
where
\[D(u,r)=\frac{1}{r}\int_{\partial \D_r}\left\{\frac{2\partial_r u\Delta u}{r}-\frac{10 (\partial_ru)^2}{r^2}-\frac{4u\Delta u}{r^2}+\frac{24u \partial_ru}{r^3}+\frac{4\partial_{\theta}u \partial_{r,\theta}u}{r^3}-\frac{16u^2}{r^4}-\frac{6(\partial_{\theta}u)^2}{r^4}\right\}\]
is a ``corrective term'' that is well-defined for almost every $r$. Then we show (see Theorem \ref{Th_Monotonicity_disk}) that $r\mapsto W(u,r)$ is continuous, nondecreasing, and it is constant if and only if $u$ is one of the $2$-homogeneous minimizers described above. Moreover, in this case we identify
\[W(u,r)=|\{u\neq 0\}\cap \D_1|\in \left\{\frac{\pi}{2},\frac{t_1}{2},\pi\right\},\]
the value $\frac{\pi}{2}$ corresponding to type I, $\frac{t_1}{2}$ corresponding to type II, and $\pi$ corresponding to types III and IV.
For a general minimizer $u$, we write $W(u,0)\in \R\cup\{-\infty\}$ the limit of $W(u,r)$ as $r\to 0$ (which always exists since $r\mapsto W(u,r)$ is nondecreasing).

When $u\in\M(D)$, it is known from \cite[Th. 2]{GM24} (see also section \ref{sec_prel_est} below) that $u$ and $\nabla u$ are continuous, and we define the support of $u$ as the open set
$$ \Spt(u)=\{u\neq 0\}\cup\{\nabla u\neq 0\}.$$
Defining the support of a function as an open set may seem  non-standard, however it is necessary to capture some part of the free boundary that would otherwise be in the interior of the support. Our main result is as follows:

\begin{theorem}\label{mr_total}
Let $D$ be an open set of $\R^2$ and $u\in\M(D)$. Then $u\in\mathcal{C}_\loc^{1,\alpha}(D)$ for every $\alpha\in (0,1)$ and  there is a partition
$$D\cap\partial \Spt(u)=\mathcal{R}_u\sqcup\mathcal{A}_u\sqcup\mathcal{N}_u\sqcup\mathcal{J}_u\sqcup\mathcal{E}_u,$$
where 
\begin{itemize}[label=\textbullet]
\item $\mathcal{R}_u$ (\textit{regular} points) is the regular boundary of $\Spt(u)$, meaning every point for which $\Spt(u)$ is (after rotation) the epigraph of an analytic function in a neighbourhood, $\mathcal{R}_u$ is relatively open in $\partial\Spt(u)$, $u|_{\Spt(u)}$ is analytic up to $\mathcal{R}_u$, and the trace of $\Delta u|_{\Spt(u)}$ on $\mathcal{R}_u$ takes values in $\{-1,+1\}$.
\item $\mathcal{A}_u$ (\textit{angular} points) is a discrete set where two connected components of $\mathcal{R}_u$ (where $\Delta u|_{\mathcal{R}_u}$ takes opposite signs) join with an angular opening $t_1$.
\item $\mathcal{N}_u$ (\textit{nodal} points) is the set of points of $\partial\Spt(u)$ such that $u$ is biharmonic in a neighbourhood. It is a disjoint union of singletons, and of real analytic curves in $\partial\Spt(u)$ on which $|\Delta u|\geq 1$.
\item $\mathcal{J}_u$ (\textit{junction} points) is the set of points $p$, such that $r^{-2}u(p+r\cdot)$ converges as $r\to 0$ in $H^2_\loc(\R^2)$ to a homogeneous solution of type III and such that for any $r>0$, we have $|\{u=0\}\cap\D_{p,r}|>0$.
\item $\mathcal{E}_u$ (\textit{explosion} points) is a set of points $p\in\partial\Spt(u)$ such that $$\limsup_{r\to 0}\frac{|\{u=0\}\cap \D_{p,r}|}{\pi r^2}=0,\quad W(u(p+\cdot),0)<\pi,\quad \Vert u_{p,r}\Vert_{H^1(\D_1)}\underset{r\to 0}{\longrightarrow}+\infty,$$
where $u_{p,r}:=r^{-2}u(p+r\cdot)$. Moreover, any accumulation point of $\frac{u_{p,r}}{\Vert u_{p,r}\Vert_{H^1(\D_1)}}$ in $H^1(\D_1)$ as $r\to 0$ is a non-zero $2$-homogeneous biharmonic polynomial.
\end{itemize}
\end{theorem}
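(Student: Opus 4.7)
The plan is to define the partition through the pointwise Weiss-type density given by the monotonicity formula \eqref{eq_WDKV}, to read off the local structure at each point using the classification (I)--(IV) of $2$-homogeneous minimizers, and to upgrade uniqueness of blow-ups into pointwise regularity by an epiperimetric inequality in the flat and angular cases.

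For every $p\in D\cap\partial\Spt(u)$ I would consider the rescalings $u_{p,r}(x):=r^{-2}u(p+rx)$, which are themselves minimizers on $\D_{R/r}$ whenever $\D_{p,R}\subset D$, and the limit $W_0(p):=\lim_{r\to 0^+}W(u(p+\cdot),r)\in\R\cup\{-\infty\}$ provided by monotonicity. Two regimes appear. Either $\|u_{p,r}\|_{H^1(\D_1)}$ is bounded: compactness in $\M$ plus the equality case of the monotonicity formula forces every subsequential blow-up to be a $2$-homogeneous minimizer, so $W_0(p)\in\{\pi/2,\,t_1/2,\,\pi\}$ with blow-up of type (I), (II), (III) or (IV). Or $\|u_{p,r}\|_{H^1(\D_1)}\to\infty$, which I take as the defining property of $\mathcal{E}_u$: renormalising $u_{p,r}$ by its $H^1$-norm kills the penalisation $\chi_{u\neq 0}$ in the limit, and the limit inherits $2$-homogeneity together with minimisation of the pure biharmonic energy, hence is a nonzero $2$-homogeneous biharmonic polynomial. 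The density and $W_0(p)<\pi$ conditions on $\mathcal{E}_u$ fall out of the same dichotomy, since the characteristic penalty would otherwise survive in the limit and cap $W_0(p)$ below $\pi$ through a bounded blow-up.

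I would then set $\mathcal{R}_u:=\{W_0=\pi/2\}$, $\mathcal{A}_u:=\{W_0=t_1/2\}$, and split $\{W_0=\pi\}$ according to whether the zero set of $u$ has positive density at $p$ (placing $p$ in $\mathcal{J}_u$) or vanishing density (placing $p$ in $\mathcal{N}_u$, where the characteristic term disappears in the limit and $u$ turns out to be biharmonic in a neighbourhood). At a point of $\mathcal{R}_u$, the epiperimetric inequality for type (I) profiles gives an algebraic decay $W(u(p+\cdot),r)-\pi/2\lesssim r^\gamma$, which promotes uniqueness of the blow-up into a H\"older modulus on its direction, so $\Spt(u)$ is locally a $C^{1,\alpha}$ epigraph; on its regular part $u$ solves $\Delta^2 u=0$ inside $\Spt(u)$ with $u=\partial_\nu u=0$ and $|\Delta u|=1$ on the boundary, and a partial hodograph transform followed by elliptic bootstrap upgrades the boundary to analytic, with $\Delta u|_{\Spt(u)}\in\{-1,+1\}$ by continuity. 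The angular case is parallel: the epiperimetric inequality for type (II) profiles yields uniqueness of the blow-up with a rate, and combined with the strict gap between admissible Weiss values it forces $\mathcal{A}_u$ to be locally discrete; each such point is an endpoint of two analytic arcs of $\mathcal{R}_u$ meeting at opening $t_1$, and the two signs of $\Delta u$ there are opposite since the explicit profile $u^{\mathrm{II}}$ has this property.

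Finally, for $\{W_0=\pi\}$, a type (IV) blow-up corresponds to an isolated zero at which $u$ admits a biharmonic extension in a neighbourhood, producing the singleton part of $\mathcal{N}_u$; a type (III) blow-up splits into the junction case $\mathcal{J}_u$ and the nodal curve part of $\mathcal{N}_u$, where the zero set is locally a real analytic nodal curve of the biharmonic $u$ and $|\Delta u|\geq 1$ holds along it by testing against biharmonic competitors supported in $\Spt(u)\cup\mathcal{N}_u$. The main obstacle I anticipate is the epiperimetric inequality for the non-convex angular profile $u^{\mathrm{II}}$ of opening $t_1>\pi$: the natural spherical-harmonic decomposition on the arc $(0,t_1)$ has no automatic spectral gap around the critical homogeneity, and the argument must exploit the natural transversality at the two edges of the sector. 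Once this is in place, the remaining ingredients (monotonicity, classification of homogeneous minimizers, hodograph bootstrap) follow the Alt--Caffarelli programme adapted to the biharmonic setting.
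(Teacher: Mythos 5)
Your overall architecture (monotonicity, classification of homogeneous minimizers, epiperimetric inequality giving rates, hodograph upgrade, renormalized blow-ups for the unbounded case) matches the paper's, but your classification of the points with $W_0(p)=\pi$ contains a genuine error. You propose to split $\{W_0=\pi\}$ by the \emph{density} of the zero set at $p$: positive density goes to $\mathcal{J}_u$, vanishing density goes to $\mathcal{N}_u$ ``where $u$ turns out to be biharmonic in a neighbourhood''. This split collapses: if the upper density of $\{u=0\}$ at $p$ were positive, Proposition \ref{prop_existence_blowup} would produce a blow-up of type I or II and hence $W_0(p)\in\{\pi/2,t_1/2\}<\pi$, so within $\{W_0=\pi\}$ the zero set \emph{always} has vanishing density. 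Your criterion therefore makes $\mathcal{J}_u$ empty and sends every such point to $\mathcal{N}_u$; but at a genuine junction point (type III blow-up with $|\{u=0\}\cap\D_{p,r}|>0$ for every $r$, while the density still tends to $0$) $u$ cannot be biharmonic in any neighbourhood, since a nonzero biharmonic function is analytic and cannot vanish on a set of positive measure. The correct dividing line is a \emph{measure} condition, not a density condition: $p\in\mathcal{N}_u$ exactly when $|\D_{p,r}\setminus\Spt(u)|=0$ for some $r>0$, in which case comparison with the biharmonic extension (the area term cannot increase) forces $u$ to coincide with it; all remaining type III points, where $\{u=0\}$ keeps positive measure at every scale, form $\mathcal{J}_u$, and for these only the quantified convergence of Theorem \ref{mr_typeIII_IV} is available, not a local structure theorem — which is precisely why $\mathcal{J}_u$ must be kept as a separate class.

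Two secondary points. First, the properties you claim for $\mathcal{E}_u$ do not ``fall out of the dichotomy'': the bound $W(u(p+\cdot),0)<\pi$ at explosion points is obtained by contradiction through the non-compact case of the epiperimetric machinery — if $W_0(p)\geq\pi$, the renormalized blow-up of Proposition \ref{prop_renormalized_blowup} provides a polynomial $\ov{u}$ to which Theorem \ref{mr_typeIII_IV} (whose hypothesis is scale-invariant in $\Vert\ov{u}\Vert_{H^1}$) applies, forcing $u_{p,r}$ itself to converge and contradicting $\Vert u_{p,r}\Vert_{H^1(\D_1)}\to+\infty$; your sentence about the characteristic penalty ``capping $W_0$ below $\pi$'' does not supply this argument. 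Second, defining $\mathcal{R}_u$ and $\mathcal{A}_u$ as the level sets $\{W_0=\pi/2\}$ and $\{W_0=t_1/2\}$ requires you to rule out that a point with unbounded blow-up sequence has $W_0$ equal to one of these values (nothing a priori prevents it, since the theorem only asserts $W_0<\pi$ on $\mathcal{E}_u$); the paper sidesteps this by defining $\mathcal{R}_u$, $\mathcal{A}_u$ through the Lebesgue density of $\Spt(u)$, which via Proposition \ref{prop_existence_blowup} automatically yields a bounded subsequence and hence a blow-up to which the $\eps$-regularity theorems can be applied. Either adopt that definition or intersect your level sets with the bounded-blow-up regime explicitly.
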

More details on the behavior of $u$ at points of $\mathcal{A}_u$ and $\mathcal{J}_u$ may be found in Theorems \ref{mr_typeII} and \ref{mr_typeIII_IV}.
In order to close the regularity question in this context, the two remaining open problems are:
\begin{itemize}
    \item Whether the explosion set $\mathcal{E}_u$ is empty, which is strongly linked to the conjectured $\mathcal{C}^{1,1}$ regularity of $u$.
    \item A finer understanding of the junction set $\mathcal{J}_u$. A key step here would be to first prove the blow-up of $u$ at a point of $\mathcal{J}_u$ is necessarily $\pm u^\mathrm{III}\circ \mathrm{rot}$ for some rotation $\mathrm{rot}$ (in other words, proving that $|\lambda|=1$ in \eqref{def_uIII}).
\end{itemize}

The proof of Theorem \ref{mr_total} can be found in section \ref{sec_reg}, and is split into several results: Theorem \ref{mr_blowup} gives the existence and uniqueness of blow-ups at any boundary point that is not of density $1$ in the support of $u$, or at points for which $W(u,0)$ is sufficiently large, and it provides a speed of convergence to these blow-ups as well. Then Theorems  \ref{mr_typeI},  \ref{mr_typeII} and \ref{mr_typeIII_IV} are $\eps$-regularity results for minimizers that are assumed to be close to one of the homogeneous solutions: for type I and II we prove that any solution that is close enough to these is itself a smooth perturbation of a homogeneous solution, whereas for type III and IV we prove a quantified speed of convergence of the blow-up sequence.

\begin{theorem}[Existence and uniqueness of blow-ups]\label{mr_blowup}
There exists $\gamma\in (0,1)$ such that the following holds: let $D\subset\R^2$ be an open set and $u\in\M(D)$. Let $p\in D\cap\partial  \Spt(u)$ such that one of the following hypothesis is verified:
\begin{itemize}
\item[(i) ]$\liminf_{r\to 0}\frac{|\Spt(u)\cap\D_{p,r}|}{\pi r^2}<1$.
\item[(ii) ]$W(u(p+\cdot),0)\geq \pi$.
\end{itemize}
Define the blow-up sequence $u_{p,r}(z)=\frac{u(p+rz)}{r^2}$, then there exists $u_{p,0}\in H^2_\loc(\R^2)$ a $2$-homogeneous minimizer of type I or II in case (i), III or IV in case (ii), such that
$$\Vert u_{p,r}-u_{p,0}\Vert_{H^1(\D_1)}=\mathcal{O}_{r\to 0}\left(r^\gamma\right).$$
\end{theorem}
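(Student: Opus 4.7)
The plan is to follow the classical Weiss--White scheme, adapted to this fourth-order setting. The monotonicity formula for $W$ already ensures that the limit $\Theta := W(u(p+\cdot\,),0)$ exists, and the standard a priori estimates from \cite{GM24} together with the Caccioppoli-type bounds inherent to minimality provide uniform $H^2_\loc(\R^2)$ bounds on the rescaled family $(u_{p,r})$. Along any sequence $r_k \to 0$ I extract a weak $H^2_\loc$ limit $u_\star \in \M(\R^2)$, use $W(u_{p,r},\rho) = W(u,r\rho)$ together with the monotonicity of $W$ to get $W(u_\star,\cdot\,) \equiv \Theta$, and conclude by the rigidity part of Theorem \ref{Th_Monotonicity_disk} that $u_\star$ is $2$-homogeneous. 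The classification of subsection \ref{subsec_classification} then identifies $u_\star$ as being of one of the four types listed there, with $\Theta \in \{\pi/2,\, t_1/2,\, \pi\}$: hypothesis (i) forces $\Theta < \pi$, hence type I or II, while hypothesis (ii) forces $\Theta = \pi$, hence type III or IV.

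The core of the proof is the epiperimetric inequality announced in the abstract. For every boundary trace $\varphi$ on $\partial \D_1$ sufficiently close (in a suitable norm) to the trace of one of the $2$-homogeneous profiles of the relevant type, one should produce a competitor $\widetilde{\varphi} \in H^2(\D_1)$ with trace $\varphi$ such that, denoting by $\overline{\varphi}(r\theta) = r^2\varphi(\theta)$ the $2$-homogeneous extension, one has
\[
\int_{\D_1}\!\left(|\Delta \widetilde{\varphi}|^2 + \chi_{\widetilde{\varphi} \neq 0}\right) - \Theta \,\leq\, (1-\kappa)\!\left(\int_{\D_1}\!\bigl(|\Delta \overline{\varphi}|^2 + \chi_{\overline{\varphi} \neq 0}\bigr) - \Theta\right)
\]
for some $\kappa \in (0,1)$. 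Since $u$ is a minimizer, testing this inequality against $\varphi = u_{p,r}|_{\partial \D_1}$ and combining with the explicit formula for $\frac{d}{dr}W(u,r)$ inherited from \cite{DKV20} yields, for almost every sufficiently small $r$, a differential inequality of the form $r W'(u,r) \geq c\,\bigl(W(u,r) - \Theta\bigr)$ with $c = 2\kappa/(1-\kappa)$, which integrates into $W(u,r) - \Theta \leq C r^{c}$.

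To convert this energy decay into an $H^1$ convergence rate, I would use that $\frac{d}{dr}W(u,r)$ is bounded below (by the Weiss-type identity) in terms of $\Vert \partial_r u - 2u/r\Vert_{L^2(\partial \D_r)}^2 / r^3$, which after rescaling measures the distance of $u_{p,r}$ to its own $2$-homogeneous extension on $\partial \D_1$. Integrating over dyadic scales $r_k = 2^{-k}$ and combining with the preceding polynomial decay of $W(u,r)-\Theta$ shows that $r \mapsto u_{p,r}|_{\partial \D_1}$ is Cauchy in $L^2(\partial \D_1)$ at rate $r^\gamma$ for some $\gamma > 0$, hence converges to a trace that determines a unique $2$-homogeneous limit $u_{p,0}$ of the type identified above. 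Interpolating this $L^2(\partial \D_1)$ rate against the uniform $H^2(\D_2)$ boundedness of the family lifts the convergence to the announced $\Vert u_{p,r} - u_{p,0}\Vert_{H^1(\D_1)} = \mathcal{O}(r^\gamma)$.

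The main obstacle is clearly the epiperimetric inequality, which must hold with a uniform constant in a neighbourhood of each of the three distinct families of homogeneous minimizers (flat $u^{\mathrm{I}}$, angular $u^{\mathrm{II}}$, and the biharmonic profiles of types III and IV). The analysis around $u^{\mathrm{II}}$ is expected to be the most delicate, since the opening $t_1$ is transcendental and the associated spectral problem on $\Sp$ degenerates there; the usual contradiction-and-compactness proof of epiperimetric inequalities \`a la \cite{SV19} has to be carried out while keeping track of both the fourth-order nature of the energy and the free boundary jump that the angular profile exhibits. Once the epiperimetric inequality is secured with a uniform constant, the compactness and interpolation arguments of the previous steps close the proof.
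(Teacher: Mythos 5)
Your overall scheme (compactness of rescalings, Weiss-type rigidity, classification, epiperimetric inequality, dyadic iteration) is the same family of ideas the paper uses, but two of your key steps do not survive in this fourth-order setting. The first gap is the claim that the rescaled family $(u_{p,r})$ is uniformly bounded in $H^2_\loc$ "by the standard a priori estimates and Caccioppoli". Here there is no optimal regularity available, and the set of $2$-homogeneous minimizers is non-compact (type III profiles $\lambda y^2/2$ with $|\lambda|\ge 1$ arbitrary, type IV polynomials $u_{a,b,c}$): the Caccioppoli inequality only bounds $\Vert u_{p,r}\Vert_{H^2(\D_\rho)}$ by $\Vert u_{p,r}\Vert_{H^1(\D_1)}$, and the latter can genuinely tend to $+\infty$ as $r\to 0$ (this is exactly the explosion set $\mathcal{E}_u$ of Theorem \ref{mr_total}, and the remark after Lemma \ref{lem_Ncontrol} shows $W$ does not control $N$ in general). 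Compactness must therefore be \emph{earned}: under hypothesis (i) it comes from the density assumption via Lemma \ref{lem_Ncontrol} combined with the growth bound of Lemma \ref{lem_boundedgrowth} (Proposition \ref{prop_existence_blowup}); under hypothesis (ii) boundedness may fail, and the paper instead extracts a nonzero renormalized limit $u_{p,r_n}/\Vert u_{p,r_n}\Vert_{H^1}$ (Proposition \ref{prop_renormalized_blowup}) and then invokes an $\eps$-regularity statement formulated with errors \emph{relative} to $\Vert u_{a,b,c}\Vert_{H^1}$ (Theorem \ref{mr_typeIII_IV} / Proposition \ref{prop_convpolyblow_up}), precisely because of the non-compactness. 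Your argument as written simply assumes this difficulty away, and in case (ii) it would fail.

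The second gap is in the form of your epiperimetric inequality and the conversion step. In $H^2$ a competitor must match \emph{both} $u$ and $\partial_r u$ on $\partial\D_1$; a competitor with prescribed trace only is not admissible, and the $2$-homogeneous extension $\overline{\varphi}(r\theta)=r^2\varphi(\theta)$ against which you compare is in general not in $H^2$ (its radial derivative jumps), which is exactly why the paper cannot run the classical Spolaor--Velichkov scheme and instead proves the decay $W(u,e^{-1})-\Theta/2\le(1-\eta)(W(u,1)-\Theta/2)$ directly, with competitors depending on the pair $(u|_{\partial},\partial_r u|_{\partial})$ (Lemma \ref{lem_competitor}, Theorem \ref{th_epiperimetry}). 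Moreover, your statement that $\frac{d}{dr}W(u,r)$ is bounded below by $\Vert\partial_r u-2u/r\Vert^2_{L^2(\partial\D_r)}/r^3$ is not what the monotonicity formula gives: Theorem \ref{Th_Monotonicity_exp} yields control only of the second-order radial quantities $\partial_{t,t}v$ and $\partial_{t,\theta}v$, not of $\partial_t v$ (which is the rescaled version of $\partial_r u-2u/r$); recovering control of $\partial_t v$ is a nontrivial extra output of the paper's epiperimetric inequality (the term $\Vert\partial_t u\Vert^2_{H^1(\Cy_0\setminus\Cy_1)}$ in Theorem \ref{th_epiperimetry_exp} and Lemma \ref{lem_control_variation_u}), not a consequence of the Weiss identity. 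So both your claimed differential inequality and your Cauchy-in-$L^2(\partial\D_1)$ argument rest on identities that do not hold here; the correct route is the variation bound $\Vert u_r-u_{e^{-1}r}\Vert_{H^1(\D_1\setminus\D_{e^{-1}})}\le C\sqrt{W(u,r)-\Theta/2}$ iterated over scales as in Proposition \ref{prop_convpolyblow_up}.
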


In particular, this implies that any point that is not of Lebesgue density $1$ in $\Spt(u)$ has density $\frac{1}{2}$ (corresponding to type I blow-ups) or $\frac{t_1}{2\pi}$ (corresponding to type II blow-ups).\\

We now state the first $\eps$-regularity theorem: any minimizer that is $H^1$-close to a flat solution is itself smooth.
\begin{theorem}[$\eps$-regularity : flat case]\label{mr_typeI}
There exist $\alpha,\kappa\in (0,1)$, $\eps,C>0$ with the following property: for any $u\in\M(\D_1)$ such that
\[\Vert u-u^{\mathrm{I}}\Vert_{H^1(\D_1)}\leq\eps,\]
 there exists an analytic function $h:\left(-\frac{1}{2},\frac{1}{2}\right)\to \left(-\frac{1}{2},\frac{1}{2}\right)$ such that
$$ \Vert h\Vert_{\mathcal{C}^{1,\alpha}\left(\left(-\frac{1}{2},\frac{1}{2}\right)\right)}\leq C\Vert u-u^{\mathrm{I}}\Vert_{H^1(\D_1)}^\kappa$$
and 
$$\D_{\frac{1}{2}}\cap\Spt(u)=\left\{(x,y)\in\D_{\frac{1}{2}}:y> h(x)\right\}.$$
\end{theorem}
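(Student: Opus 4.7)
The plan is to parlay the quantified blow-up convergence of Theorem \ref{mr_blowup} into a $\mathcal{C}^{1,\gamma}$ graph description of $\partial\Spt(u)$, and then upgrade to analyticity by a partial hodograph--Legendre transform.

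\textbf{Step 1 (Every free boundary point in $\D_{3/4}$ has a type I blow-up).} The flat profile satisfies $W(u^{\mathrm{I}},\cdot)\equiv\pi/2$, which, from the classification preceding Theorem \ref{mr_total}, is the minimum value of $W(\cdot,0)$ attained at a free boundary point. For $\eps$ small, a standard compactness argument for minimizers of $E$ promotes the $H^1$-closeness to $H^2$-closeness on $\D_{7/8}$, so that at some fixed intermediate scale $r_0\sim 1/8$ one has
\[
W(u(p+\cdot),r_0)\le \pi/2+o_\eps(1)\quad \text{uniformly in }p\in\D_{3/4}.
\]
Combined with the monotonicity of $W(u(p+\cdot),\cdot)$ and the structural lower bound $W(u(p+\cdot),0)\ge\pi/2$ for every $p\in\partial\Spt(u)$, this forces $W(u(p+\cdot),0)=\pi/2$ for every $p\in\partial\Spt(u)\cap\D_{3/4}$. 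In particular, hypothesis (i) of Theorem \ref{mr_blowup} holds (the density cannot be $1$, otherwise $W$ would equal $\pi$) and the resulting blow-up must be of type I.

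\textbf{Step 2 (Quantified blow-up at each point).} Theorem \ref{mr_blowup} then provides, at each $p\in\partial\Spt(u)\cap\D_{3/4}$, a unique unit vector $\nu(p)\in\Sp$ with $u_{p,r}\to u^{\mathrm{I}}\circ \mathrm{rot}_{\nu(p)}=:u^{\mathrm{I}}_{\nu(p)}$ in $H^1(\D_1)$ and
\[
\|u_{p,r}-u^{\mathrm{I}}_{\nu(p)}\|_{H^1(\D_1)}\le C r^\gamma
\]
for all $r$ below a uniform threshold. Evaluating at $r\simeq 1/4$ and using the $H^2$-closeness from Step 1 gives $|\nu(p)-e_2|\le C\|u-u^{\mathrm{I}}\|_{H^1(\D_1)}^{\kappa_1}$ for some $\kappa_1>0$.

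\textbf{Step 3 (Hölder normal, hence $\mathcal{C}^{1,\gamma}$ graph).} For $p,q\in\partial\Spt(u)\cap\D_{1/2}$ and $r=2|p-q|$, the two rescalings $u_{p,r}$ and $u_{q,r}$ differ by a translation of amplitude $\le 1/2$. The triangle inequality combined with Step~2 yields
\[
\|u^{\mathrm{I}}_{\nu(p)}-u^{\mathrm{I}}_{\nu(q)}(\cdot-(q-p)/r)\|_{H^1(\D_{1/2})}\le C r^\gamma,
\]
and since the family $\{u^{\mathrm{I}}\circ \mathrm{rot}\}$ is non-degenerate at this scale, this enforces $|\nu(p)-\nu(q)|\le C|p-q|^\gamma$. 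The implicit function theorem applied to the vanishing condition along the direction $\nu(p)$ then produces, on a fixed neighborhood of the origin, a $\mathcal{C}^{1,\gamma}$ function $h$ with $\Spt(u)\cap\D_{1/2}=\{y>h(x)\}$ and $\|h\|_{\mathcal{C}^{1,\gamma}}\le C\|u-u^{\mathrm{I}}\|_{H^1(\D_1)}^\kappa$ for some $\kappa>0$.

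\textbf{Step 4 (Analyticity).} On this graph $u$ is biharmonic in $\Spt(u)$, vanishing with its gradient on $\partial\Spt(u)$, and the first variation with respect to the domain gives $(\Delta u)^2=1$ on $\partial\Spt(u)$ with sign $+1$ fixed by proximity to $u^{\mathrm{I}}$. A partial hodograph--Legendre transform in the variable $\partial_y u$ (which is non-degenerate near the boundary by flatness, since $\partial_y u^{\mathrm{I}}=y_+$) reduces the problem to a nonlinear elliptic system on a fixed half-disk with analytic boundary data; analyticity of $h$ and of $u|_{\Spt(u)}$ then follows from the classical analytic regularity theory (Morrey, Kinderlehrer--Nirenberg).

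The main obstacle is Step~3: converting the pointwise rate of Theorem \ref{mr_blowup} into a uniform Hölder modulus for $p\mapsto\nu(p)$. This requires that the constants in Theorem \ref{mr_blowup} be uniform over boundary points in a compact set, and uses the joint continuity of $W$ in the base point, a recurring technicality in Weiss-type frameworks which must be handled carefully here because $W$ involves $H^2$ quantities while our closeness assumption is only in $H^1$.
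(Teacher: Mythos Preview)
Your Step 1 contains a genuine gap: the ``structural lower bound'' $W(u(p+\cdot),0)\ge\pi/2$ for every $p\in\partial\Spt(u)$ is not established anywhere in the paper, and in fact the open problem of the explosion set $\mathcal{E}_u$ in Theorem \ref{mr_total} is precisely that points with $W(u(p+\cdot),0)<\pi$ and no blow-up cannot currently be ruled out. The value $\pi/2$ is the minimum of $W$ \emph{among homogeneous minimizers}; it only becomes a lower bound for $W(u(p+\cdot),0)$ once you already know a blow-up exists at $p$. Your upper bound $W(u(p+\cdot),r_0)\le\pi/2+o_\eps(1)$ is fine, but without the lower bound you cannot conclude $W(u(p+\cdot),0)=\pi/2$, cannot invoke the density alternative of Theorem \ref{mr_blowup}, and therefore cannot exclude that some boundary point in $\D_{3/4}$ is an explosion point. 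The paper's Proposition \ref{prop_C1a} circumvents this by a geometric cone-touching argument: it first identifies a set $G$ of boundary points admitting an exterior cone of opening $>2\pi-t_1$, so that the support has density $<t_1/(2\pi)$ there and Proposition \ref{prop_existence_blowup} yields a type I blow-up; only afterwards (Claim (f)) does it show that $G$ exhausts all boundary points in the relevant region.

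Your Step 4 also glosses over a difficulty the paper addresses explicitly. The partial hodograph transform of Kinderlehrer--Nirenberg--Spruck requires $u$ to be $\mathcal{C}^4$ up to the free boundary, while Step 3 only gives $\mathcal{C}^{1,\alpha}$ (moreover, the variable $\partial_y u$ you propose vanishes on the boundary, so that particular transform is degenerate). The paper instead rewrites the problem as an overdetermined Stokes system, applies a conformal map to a fixed disk (Lemma \ref{lem_higherreg}), and bootstraps the regularity of the conformal map via Schauder estimates; only once $\mathcal{C}^\infty$ is reached does it invoke \cite{KNS79} for analyticity.
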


We now state the second $\eps$-regularity theorem at angular boundary points: one notable difference in the statement is that we now assume that $W(u,0)\geq \frac{t_1}{2}$. This hypothesis is verified for instance when there exists one blow-up of type other than I at the origin, by Lemma \ref{lem_conv_blowup}.
\begin{theorem}[$\eps$-regularity : angular case]\label{mr_typeII}
There exist $\nu,\mu\in (0,1)$, $\eps,C>0$ with the following property: for any $u\in\M(\D_1)$ such that
\[W(u,0)\geq\frac{t_1}{2},\qquad \Vert u-u^{\mathrm{II}}\Vert_{H^1(\D_1)}\leq\eps,\]

then there exists a diffeomorphism $\Phi\in\mathcal{C}^{1,\gamma}\left(\D_{\frac{1}{2}},\D_{\frac{1}{2}}\right)$ such that
$$\Phi(0)=0,\ D\Phi(0)=I_2,\ \Vert \Phi-\mathrm{id}\Vert_{\mathcal{C}^{1,\nu}\left(\D_{\frac{1}{2}}\right)}\leq C\Vert u-u^{\mathrm{II}}\Vert_{H^1(\D_1)}^\mu$$
and
$$\Spt(u)\cap\D_{1/2}=\Phi\left(\Spt(u^{\mathrm{II}})\cap\D_{1/2}\right),$$
where $\Spt(u^{\mathrm{II}})$ is the union of two half-lines meeting with the angle $t_1$.
\end{theorem}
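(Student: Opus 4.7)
The plan is to reduce the angular $\epsilon$-regularity to a combination of two ingredients already in the paper: Theorem \ref{mr_blowup} applied at the origin (to pin down the blow-up as $u^{\mathrm{II}}$ with a polynomial rate), and Theorem \ref{mr_typeI} applied at every other boundary point of $\Spt(u)$ in $\D_{1/2}$ (to describe $\partial \Spt(u)$ as two regular arcs). The diffeomorphism $\Phi$ is then built by connecting these arcs to the two rays bounding $\Spt(u^{\mathrm{II}})$.

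First I would identify the blow-up at $0$. The hypothesis $W(u,0)\geq t_1/2$ rules out type I blow-ups (which satisfy $W=\pi/2$). Using the minimizer estimates of \cite{GM24} to upgrade the $H^1$-closeness $\|u-u^{\mathrm{II}}\|_{H^1(\D_1)}\leq\eps$ to $H^2$-closeness on $\D_{3/4}$, one obtains $|W(u,r)-W(u^{\mathrm{II}},r)|\leq C\eps^\alpha$ for some $r\in(1/2,3/4)$, so $W(u,r)\leq t_1/2 + C\eps^\alpha$. By monotonicity and the hypothesis, $t_1/2\leq W(u,0)\leq t_1/2+C\eps^\alpha$, which in particular excludes types III and IV (whose value of $W$ is $\pi$). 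Since $\liminf_{r\to 0}|\Spt(u)\cap\D_r|/(\pi r^2)$ is the density of a type II blow-up, namely $t_1/(2\pi)<1$, case (i) of Theorem \ref{mr_blowup} applies, giving $\|u_{0,r}-u_{0,0}\|_{H^1(\D_1)}=\mathcal{O}(r^\gamma)$ where $u_{0,0}$ is a type II solution. The $H^1$-proximity of $u=u_{0,1}$ to $u^{\mathrm{II}}$ and a continuous-rotation selection fix $u_{0,0}=u^{\mathrm{II}}$ exactly (for $\eps$ small).

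Next I would propagate regularity to the two arcs of $\partial\Spt(u)$. For $p\in\partial\Spt(u)\cap\D_{1/2}\setminus\{0\}$, set $\rho=|p|$. The rescaling $u_{0,2\rho}$ is $\mathcal{O}(\rho^\gamma)$-close in $H^1(\D_1)$ to $u^{\mathrm{II}}$, so at the rescaled point $p/(2\rho)\in\partial\Spt(u^{\mathrm{II}})\sm\{0\}$, the function $u_{p,\rho/4}$ is $\mathcal{O}(\rho^{\gamma'})$-close in $H^1(\D_1)$ to the flat solution $u^{\mathrm{I}}\circ\mathrm{rot}$ (with the rotation determined by which boundary ray of $\Spt(u^{\mathrm{II}})$ carries $p/(2\rho)$). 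Theorem \ref{mr_typeI} then produces a local analytic parametrization of $\partial\Spt(u)$ near $p$, with $\mathcal{C}^{1,\alpha}$ seminorm controlled by $\rho^{\gamma'\kappa}$ after undoing the scaling. Standard interpolation between scales yields two arcs $\gamma_1,\gamma_2:[0,1/2]\to\R^2$ of class $\mathcal{C}^{1,\nu}$ with $\gamma_i(0)=0$ and $\gamma_i'(0)$ equal to the tangent vectors of the two rays bounding $\Spt(u^{\mathrm{II}})$, the latter point forced by the exact identification of the blow-up at $0$ in the previous step.

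Finally I would construct $\Phi$ explicitly in sector coordinates. Parametrize $\Spt(u^{\mathrm{II}})$ by $(\rho,\theta)\in[0,1/2]\times[0,t_1]$ and set $\Phi(\rho\cos\theta,\rho\sin\theta)=\tfrac{t_1-\theta}{t_1}\gamma_1(\rho)+\tfrac{\theta}{t_1}\gamma_2(\rho)$ inside the sector, extended by a similar angular interpolation outside. The bounds on $\gamma_1,\gamma_2$ give $\Phi\in\mathcal{C}^{1,\nu}$ with $\Phi(0)=0$, $D\Phi(0)=I_2$, and $\|\Phi-\mathrm{id}\|_{\mathcal{C}^{1,\nu}}\leq C\eps^\mu$. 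Injectivity (and hence the diffeomorphism property) follows from $D\Phi$ being close to the identity, at least after a small further smoothing near $0$.

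The main obstacle is the first step: producing the rate $\gamma$ at type II configurations requires the epiperimetric inequality at the angular homogeneous profile of opening $t_1$ (the transcendental root of $\tan t=t$ in $(\pi,2\pi)$), whose optimality and the precise exponent are the core analytic input. A secondary technical point is that in the second step the constants in Theorem \ref{mr_typeI} must be applied uniformly over scales $\rho\to 0$; this is handled by scale invariance of $E(\cdot;D)$, with the final Hölder exponent $\nu$ being (some explicit function of) $\gamma$ and $\kappa$.
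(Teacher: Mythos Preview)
Your overall architecture matches the paper's proof: obtain a polynomial decay rate at the origin from the angular epiperimetric machinery, apply the flat $\eps$-regularity (Proposition~\ref{prop_C1a}/Theorem~\ref{mr_typeI}) at every other boundary point via rescaling, and assemble a $\mathcal{C}^{1,\nu}$ diffeomorphism in polar coordinates. The paper carries this out through Proposition~\ref{prop_convpolyblow_up} and Proposition~\ref{prop_II_C1a}, then defines $\Phi(re^{i\theta})=re^{i(\theta+\theta^-(r)+\frac{\theta^+(r)-\theta^-(r)}{t_1}\theta)}$, which is a purely angular deformation at fixed radius rather than your linear interpolation of the two arc parametrizations; both constructions are acceptable.

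There is, however, a genuine circularity in your first step. You invoke Theorem~\ref{mr_blowup} under hypothesis (i), justifying the density condition by saying the density equals $t_1/(2\pi)$ ``since'' the blow-up is type~II --- but the existence of a blow-up is precisely what Theorem~\ref{mr_blowup} would give you, so you cannot use its conclusion to verify its own hypothesis. Neither (i) nor (ii) of Theorem~\ref{mr_blowup} is available a priori here (you only have $W(u,0)\geq t_1/2$, not $\geq\pi$). The paper avoids this by calling Proposition~\ref{prop_convpolyblow_up} directly: its hypotheses are exactly $W(u,0)\geq\frac{\Theta}{2}$ and $\|u-\overline{u}\|_{H^1(\D_1)}\leq c_2\|\overline{u}\|_{H^1(\D_1)}$, which are the assumptions of the theorem you are proving. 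A second, related point: Theorem~\ref{mr_blowup} as stated only gives $\mathcal{O}(r^\gamma)$, without tracking the dependence on $\eps$, whereas Proposition~\ref{prop_convpolyblow_up} yields the sharper $\min(r,\eps)^\gamma$ bound. You need that $\eps$-dependence to obtain $\|\Phi-\mathrm{id}\|_{\mathcal{C}^{1,\nu}}\leq C\eps^\mu$ at the end, so your chain of estimates as written would not close.
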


The last $\eps$-regularity result concerns homogeneous solutions of type III and IV (which we do not differentiate at this stage), that we remind are of the form
$$u_{a,b,c}(x,y)=a(x^2+y^2)+b(x^2-y^2)+2cxy$$
for some $(a,b,c)\in\R^3$. The main feature of this case (compared to Theorem \ref{mr_typeI} and \ref{mr_typeII}) is that the set of homogeneous solutions of type III, IV is non-compact.
\begin{theorem}[$\eps$-regularity : nodal and isolated case]\label{mr_typeIII_IV}
There exist $\eps,\gamma,C>0$ such that the following holds: let $a,b,c\in\R$ and $u\in\M(\D_1)$ such that
$$0\in\partial\Spt(u),\quad W(u,0)\geq\pi,\quad \Vert u-u_{a,b,c}\Vert_{H^1(\D_1)}\leq \eps \Vert u_{a,b,c}\Vert_{H^1(\D_1)}.$$
Then there exists $a',b',c'\in\R$ such that
$$|a'-a|+|b'-b|+|c'-c|\leq C\Vert u-u_{a,b,c}\Vert_{H^1(\D_1)}^\gamma\Vert u_{a,b,c}\Vert_{H^1(\D_1)}^{1-\gamma}$$
and such that for any $r\in (0,1]$:
$$\Vert r^{-2}u(r\cdot)-u_{a',b',c'}\Vert_{H^1(\D_1)}\leq C\min\left(r,\frac{\Vert u-u_{a,b,c}\Vert_{H^1(\D_1)}}{\Vert u_{a,b,c}\Vert_{H^1(\D_1)}}\right)^\gamma\Vert u_{a,b,c}\Vert_{H^1(\D_1)}.$$
\end{theorem}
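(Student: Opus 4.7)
The plan is to combine the Weiss-type monotonicity formula $W(u,\cdot)$ from \eqref{eq_WDKV} with an epiperimetric inequality adapted to the non-compact $3$-dimensional family $\{u_{a,b,c}\}_{(a,b,c)\in\R^3}$ of type III/IV solutions. By the scaling homogeneity $u_{\lambda a,\lambda b,\lambda c}=\lambda u_{a,b,c}$, I normalize so that $\|u_{a,b,c}\|_{H^1(\D_1)}=1$ and write $\eta:=\|u-u_{a,b,c}\|_{H^1(\D_1)}\leq\eps$; the conclusion then reduces to finding $(a',b',c')$ with $|(a',b',c')-(a,b,c)|\leq C\eta^\gamma$ and $\|u_r-u_{a',b',c'}\|_{H^1(\D_1)}\leq C\min(r,\eta)^\gamma$ for every $r\in(0,1]$, where $u_r(z):=r^{-2}u(rz)$.

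First, since $W(u,\cdot)$ is nondecreasing and its value at every $2$-homogeneous minimizer belongs to $\{\tfrac{\pi}{2},\tfrac{t_1}{2},\pi\}$, the hypothesis $W(u,0)\geq\pi$ forces $W(u,0)=\pi$. Because $u_{a,b,c}$ itself is $2$-homogeneous with $W\equiv\pi$, $H^1$-closeness together with trace estimates on $\partial\D_1$ yields $W(u,1)\leq\pi+C\eta$, so that monotonicity gives $0\leq W(u,r)-\pi\leq C\eta$ for all $r\in(0,1]$.

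The main technical step is an epiperimetric inequality for types III/IV: there exist $\theta\in(0,1)$ and $\eps_0>0$ such that whenever the Cauchy trace of a function $\phi$ on $\partial\D_1$ is $\eps_0$-close to that of some normalized $u_{a,b,c}$, there is a competitor $v\in H^2(\D_1)$ with the same trace satisfying
\[
W(v,1)-\pi\leq(1-\theta)\bigl(W(c_\phi,1)-\pi\bigr),
\]
where $c_\phi$ is the $2$-homogeneous extension of $(\phi,\partial_r\phi)|_{\partial\D_1}$. I would take $v$ to be the nearest family member $u_{a^*,b^*,c^*}$ (which, being biharmonic, is automatically competitive in energy) and prove the inequality by contradiction and compactness: renormalizing a hypothetical contradicting sequence in the orthogonal complement of the tangent space $T:=\mathrm{span}\{x^2+y^2,x^2-y^2,2xy\}$ at $u_{a,b,c}$ produces a $2$-homogeneous biharmonic polynomial limit, which must however lie in $T$, a contradiction. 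Feeding this into the derivative identity of the monotonicity formula produces a differential inequality $\tfrac{d}{dr}(W(u,r)-\pi)\geq\tfrac{c_0}{r}(W(u,r)-\pi)$, which integrates to $W(u,r)-\pi\lesssim\eta\,r^{2\gamma}$; the corresponding $H^1$-deviation of $u_r$ from $2$-homogeneity on each dyadic annulus is then summable, so the best approximants $(a_k,b_k,c_k)$ at scales $r_k=2^{-k}$ form a Cauchy sequence with $|(a_{k+1},b_{k+1},c_{k+1})-(a_k,b_k,c_k)|\leq C\min(r_k,\eta)^\gamma$, whose limit $(a',b',c')$ together with the telescoping sum deliver the claimed two-regime bound.

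The principal obstacle, in contrast to Theorems \ref{mr_typeI} and \ref{mr_typeII} where the target is essentially unique modulo a compact symmetry group, is the non-compactness of the blow-up family: a naive epiperimetric inequality using only the $2$-homogeneous competitor must fail along the three neutral directions tangent to $\{u_{a,b,c}\}$. The fix is to pick an element of the family itself as competitor, effectively quotienting out $T$. A secondary subtlety is treating type III ($a^2=b^2+c^2$, where $\{u_{a,b,c}=0\}$ is a line and small perturbations can generate a persistent free boundary along it) uniformly with type IV; the compactness argument accommodates both cases simultaneously because the tangent space $T$ coincides, and the epiperimetric gain comes from Fourier modes on $\partial\D_1$ orthogonal to $T$.
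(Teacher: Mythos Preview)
Your overall strategy (monotonicity $+$ epiperimetric inequality $+$ iteration) matches the paper, which simply invokes Proposition~\ref{prop_convpolyblow_up} with $\Theta=2\pi$. However, the epiperimetric step as you formulate it has a genuine gap that is precisely the main technical difficulty of the paper. You write the inequality in terms of the $2$-homogeneous extension $c_\phi$ of the Cauchy data $(\phi,\partial_r\phi)|_{\partial\D_1}$, and propose the nearest family member $u_{a^*,b^*,c^*}$ as competitor. But for a $2$-homogeneous function $r^2g(\theta)$ one has $\partial_r(r^2g)=2rg$, so the radial derivative on $\partial\D_1$ is determined by $\phi$; a $2$-homogeneous extension matching \emph{both} $\phi$ and $\partial_r\phi$ generically does not exist, and in particular neither $c_\phi$ nor $u_{a^*,b^*,c^*}$ belongs to $u+H^2_0(\D_1)$. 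The paper discusses exactly this obstruction at the start of Section~\ref{sec_epi} and works around it by constructing explicit competitors in exponential coordinates (buckling-mode decomposition in Lemmas~\ref{lem_competitor}--\ref{lem_bihext}), proving directly the \emph{discrete} decay $W(u,e^{-1})-\pi\leq(1-\eta)(W(u,1)-\pi)$ rather than a pointwise differential inequality---which, as the paper notes, they do not know how to obtain.

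Two secondary issues: your claim $W(u,1)\leq\pi+C\eta$ from $H^1$-closeness is not justified, since $W$ involves second-order traces (the $rN'(u,r)$ term contains $\partial_{r,r}u$); the paper instead uses only the crude bound $|W(u,\tfrac12)|\lesssim\Vert u\Vert_{H^1(\D_1)}^2$ from Lemma~\ref{lem_W_cacciopoli}, then applies the discrete epiperimetric a \emph{fixed} number $K$ of times to reach $W(u,\overline r)-\pi\leq\alpha$ for a small universal $\alpha$, and only afterwards interpolates to recover the $\min(r,\eta)^\gamma$ dependence. Also, your compactness/contradiction scheme for the epiperimetric gain would need a valid competitor to contradict against; once the $H^2$-boundary matching is handled (as the paper does constructively, including a separate treatment via biharmonic extension when $\Spt(u(0,\cdot))=\Sp$ in Lemma~\ref{lem_bihext}), the quotienting by $T=\mathrm{span}\{1,\cos 2\theta,\sin 2\theta\}$ that you describe is indeed what happens.
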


Finally, we state a generalization of Theorem \ref{mr_total} to a notion a quasi-minimizer.

\begin{theorem}\label{mr_total_quasi}
Let $D\subset\R^2$ be an open set, and $u\in H^2(D,\R)$ such that for any $v\in H^2(D,\R)$ with $\{u\neq v\}\Subset D$ we have
$$\int_{D}\left(|\Delta u|^2+\chi_{u\neq 0}\right)\leq\int_{D}\left(|\Delta v|^2+\chi_{v\neq 0}\right)+\Vert v-u\Vert_{L^2(D)}.$$
Then $u\in \mathcal{C}_\loc^{1,\gamma}(D)$ for every $\gamma\in (0,1)$, and there is a partition
$$D\cap\partial \Spt(u)=\mathcal{R}_u\sqcup\mathcal{A}_u\sqcup\mathcal{N}_u\sqcup\mathcal{J}_u\sqcup\mathcal{E}_u,$$
where 
\begin{itemize}
    \item 
$\mathcal{R}_u$ is the regular boundary of $\Spt(u)$, meaning every point for which $\Spt(u)$ is the epigraph of a $\mathcal{C}^{1,\alpha}$ function in a neighbourhood,  $\mathcal{R}_u$ is relatively open in $\partial\Spt(u)$, and $\Delta u|_{\Spt(u)}$ is continuous up to $\mathcal{R}_u$ with value $\pm 1$,
\item the sets $\mathcal{A}_u$, $\mathcal{N}_u$, $\mathcal{J}_u$, $\mathcal{E}_u$ satisfy the same conclusions as in Theorem \ref{mr_total}.
\end{itemize}
\end{theorem}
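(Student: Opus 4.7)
The core observation is that the quasi-minimality error is strictly subordinate to the bulk energy at small scales. Setting $u_{p,r}(z) := r^{-2}u(p+rz)$ and $v_r(z) := r^{-2}v(p+rz)$ for any competitor, a direct rescaling of the assumed competitor inequality on $\D_r(p)$ gives
\[\int_{\D_1}\!\bigl(|\Delta u_{p,r}|^2+\chi_{u_{p,r}\neq 0}\bigr)\leq \int_{\D_1}\!\bigl(|\Delta v_r|^2+\chi_{v_r\neq 0}\bigr) + r\,\|v_r - u_{p,r}\|_{L^2(\D_1)},\]
since the bulk term scales as $r^2$ while $\|v-u\|_{L^2(\D_r(p))} = r^3\|v_r-u_{p,r}\|_{L^2(\D_1)}$. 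In particular every blow-up of $u$ is a genuine element of $\M(\R^2)$, so the classification from Section \ref{subsec_classification} applies verbatim and homogeneous blow-ups are still of type (I)--(IV).

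First I would rerun the argument of \cite{GM24} to obtain $u\in \mathcal{C}^{1,\alpha}_\loc(D)$ together with non-degeneracy and density estimates for $\Spt(u)$: their comparison with biharmonic replacements on small balls uses only the minimality inequality, and the extra $\|v-u\|_{L^2}$ term can be absorbed as a subquadratic perturbation, preserving the Caccioppoli and Morrey estimates. Next I would revisit the Dipierro--Karakhanyan--Valdinoci monotonicity formula \eqref{eq_WDKV}: testing against the $2$-homogeneous extension $z\mapsto (|z|/r)^2 u(p+rz/|z|)$ on $\D_r(p)$ and applying the quasi-minimality, the additional $L^2$ term contributes a positive power of $r$ after rescaling, so one obtains that $r\mapsto W(u(p+\cdot),r) + C r^\beta$ is non-decreasing for some $\beta\in(0,1)$. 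This almost-monotonicity lets me define $W(u,0)$, extract subsequential blow-ups, and, via the scaling observation above, conclude that every blow-up is both $2$-homogeneous and a minimizer.

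For uniqueness of blow-ups and quantitative convergence, I would rerun the epiperimetric-inequality scheme of Theorems \ref{mr_blowup}--\ref{mr_typeIII_IV}. The epiperimetric inequality provides a geometric contraction of the $H^1$-deviation from a reference homogeneous solution at each dyadic scale, whereas the quasi-minimality contributes a power-of-$r$ error at the same scale; a Morrey-type iteration then yields Hölder decay of $u_{p,r}$ to a unique blow-up, possibly with a slightly smaller exponent than in the minimizer case. The partition $D\cap\partial\Spt(u) = \mathcal{R}_u \sqcup \mathcal{A}_u \sqcup \mathcal{N}_u \sqcup \mathcal{J}_u \sqcup \mathcal{E}_u$ then follows exactly as in Section \ref{sec_reg}. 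In the $\eps$-regularity arguments at types I and II, the Euler--Lagrange system inside $\Spt(u)$ becomes $\Delta^2 u \in L^\infty$ with overdetermined conditions $u = |\nabla u| = 0$ and $\Delta u = \pm 1$ up to an $L^\infty$ error; classical boundary regularity for the bilaplacian then produces the $\mathcal{C}^{1,\alpha}$ graph description of $\mathcal{R}_u$ and the continuous trace $\Delta u|_{\Spt(u)} = \pm 1$, replacing the analytic conclusion of Theorem \ref{mr_total}.

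The main obstacle is quantitative bookkeeping: the $L^2$ perturbation has to be propagated carefully through both the almost-monotone $W$ and the epiperimetric contraction, and the Hölder exponent controlling the convergence of blow-ups must be chosen small enough that the contraction dominates the quasi-minimality error summed over scales. The loss of analyticity on $\mathcal{R}_u$ is intrinsic, since the quasi-minimality inequality admits $L^\infty$ forcing in the Euler--Lagrange equation; on the other hand, the qualitative descriptions of $\mathcal{A}_u$, $\mathcal{N}_u$, $\mathcal{J}_u$, and $\mathcal{E}_u$ depend only on the blow-up classification and the almost-monotonicity of $W$, and are therefore identical to the minimizer case.
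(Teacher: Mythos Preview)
Your overall strategy matches the paper's: rescale to see that the quasi-minimality error enters with a factor $r$, so blow-ups are genuine minimizers; establish almost-monotonicity of $W$ with a power-of-$r$ correction; rerun the epiperimetric scheme with this correction; and accept the loss of analyticity on $\mathcal{R}_u$ since the Euler--Lagrange equation now has a right-hand side. The paper does exactly this in Section~\ref{sec_quasireg}, obtaining $W(u,r)+\lambda_0\|u\|_{H^1(\D_1)}\sqrt{r}$ monotone and then $W^{quasi}$ in the epiperimetric step.

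There is, however, a genuine gap in your derivation of the almost-monotonicity. You propose to test against the $2$-homogeneous extension $z\mapsto (|z|/r)^2 u(p+rz/|z|)$. This is \emph{not} an admissible competitor: for a fourth-order problem the competitor must agree with $u$ in $H^2$ outside $\D_r$, which forces both the trace and the normal derivative to match on $\partial\D_r$. The $2$-homogeneous extension has radial derivative $2u/r$ on $\partial\D_r$, generically different from $\partial_r u$, so it is not in $u+H^2_0(\D_r)$. The paper flags this obstruction explicitly at the start of Section~\ref{sec_mono} and instead obtains the monotonicity (and its quasi-minimal version, Theorem~\ref{Th_Monotonicity_quasi}) via \emph{inner variations} $u\circ(\mathrm{Id}+t\phi)^{-1}$, working in the exponential coordinates $v(t,\theta)=e^{2t}u(e^{-t+i\theta})$ with radial reparametrizations $v(\psi_\eps(t),\theta)$. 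These are automatically $H^2$ competitors, and the quasi-minimality error then produces the $\sqrt{r}$ correction. Once you replace your homogeneous-extension step by this inner-variation argument, the rest of your outline goes through as written.
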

The discussion of this notion of quasi-minimizer as well as applications to the buckling problem can be found in section \ref{sec_quasireg}.

\subsection{Notations and outline of the paper}\label{subsec_notations}

In all the paper, we make use of the following notation: we write $a\lesssim b$ when $a,b$ are two quantities such that $b>0$ and $a\leq Cb$ for some universal constant $C>0$, i.e. some constant that does not depend on any parameter.

We denote $\D_r$  the centered disk of radius $r$ in $\R^2$, and $\D_{p,r}=p+\D_r$ for $p\in\R^2$. We also let $\Sp=\partial\D_1$, that we identify with $\frac{\R}{2\pi\Z}$. 

For any $k\in\N$, $q\in [1,+\infty[$, any smooth bounded open set $D\subset\R^2$, and any measurable function $u:D\to \R$ with $k$ derivative in the distributional sense we let by convention
$$\Vert u\Vert_{W^{k,q}(D)}=\left(\sum_{\alpha\in \N^2 : \alpha_1+\alpha_2\leq k}\int_{D}|\partial_x^{\alpha_1}\partial_y^{\alpha_2} u|^q\right)^{\frac{1}{q}}.$$
We denote $H^k(D)=W^{k,2}(D)$, and $H^k_0(D)$ the closure of $\mathcal{C}^\infty_c(D,\R)$ functions with respect to the $H^k(D)$ norm.

For a function $u:D\to\R$, $p\in D$, $r>0$, we let
\begin{equation}\label{eq_defupr}
u_{p,r}(z)=\frac{u(p+rz)}{r^2}.
\end{equation}
the associated rescaling defined on $\frac{D-p}{r}$. When $p=0$ we simply write $u_r:=u_{0,r}$.
In particular, if $u$ is a local minimizer of $E(\cdot;D)$ (which we remind is written $u\in\M(D)$), then $u_{p,r}\in\M\left(\frac{D-p}{r}\right)$.

In all the paper, when $u$ is a $\mathcal{C}^1$ function, we denote by $\Spt(u)$ the set of points $p$ where either $u(p)$ or $\nabla u(p)$ is non-zero.

The plan of the paper is as follows.
\begin{itemize}[label=\textbullet]
\item In section \ref{sec_prel_est}, we prove several low regularity estimate on $u$ that will be repeatedly used later. This includes a log-lipschitz bound on $\nabla u$ depending on the $H^1$ norm of $u$ in a neighbourhood, as well as a nondegeneracy lemma that states that a minimizer cannot be ``too  small'' near a point of the support. The methods of proofs are relatively standard with respect to the classical Alt-Caffarelli problem, based on \cite{AC81}, \cite{DT15}, \cite{V23}. The main difference is that there is no maximum principle nor notion of viscosity solutions in our setting.
\item In section \ref{sec_mono} we introduce and prove the monotonicity formula for the renormalized energy $W(u,r)$ (defined in \eqref{eq_WDKV}) and several of its consequences, namely a classification of $2$-homogeneous minimizers as well as a proof of Theorem \ref{mr_blowup} about existence of blow-ups. Even though the monotonicity formula was already found in \cite{DKV20}, we propose a new way to tackle the computation, which most importantly leads to a new interpretation of the corrective term. The proof is based on the criticality of the solution for the energy, with respect to inner variation (i.e. comparison of $E(u;\D_1)$ with $E(u\circ (\mathrm{id}+t\zeta);\D_1)$ at $t\to 0$, for any vector field $\zeta\in\mathcal{C}^\infty_c(\D_1,\R^2)$) similarly to \cite{W98}. The monotonicity implies in particular a maximal speed of growth of the $H^1(\D_r)$ norm of $u$ with respect to $r$ (given by Lemma \ref{lem_boundedgrowth}). This property is central in the proof of existence of blow-ups under weak geometrical hypothesis (being a non-Lebesgue point of the support of $u$).
\item In section \ref{sec_epi} we prove an epiperimetric inequality for the renormalized energy $W$, that takes the form
$$W(u,e^{-1})\leq (1-\eta)W(u,1)+\eta\frac{\Theta}{2}$$
when $u$ is sufficiently close to a blow-up of opening $\Theta\in\{\pi,t_1,2\pi\}$. This is obtained by constructing, for almost every $r$ in the interval $[e^{-1},1]$, a competitor $v$ for $u$ in $\D_r$ such that $v-u\in H^2_0(\D_r)$ while the energy of $v$ is well-controlled by the boundary data $u|_{\partial\D_r}$, $\partial_r u|_{\partial\D_r}$.
\item In section \ref{sec_reg} we prove Theorems  \ref{mr_blowup}, \ref{mr_typeI}, \ref{mr_typeII}, and \ref{mr_typeIII_IV}. This is obtained by iterating the epiperimetric inequality to deduce geometrical information on the boundary (see Proposition \ref{prop_convpolyblow_up}). The main difficulty compared to the classical Alt-Caffarelli problem is that blow-ups are only known to exist under some a priori condition, either on the density of the support or on a lower bound of $W(u,r)$: we obtain the $\mathcal{C}^{1,\alpha}$ regularity of the boundary near type I blow-ups by a careful analysis of contact points of the boundary with a cone from outside the support. Higher regularity is obtained by a conformal hodograph transform on the associated overdetermined Stokes equation. The combination of these results implies Theorem \ref{mr_total}.
\item In section \ref{sec_quasireg}, we define a suitable notion of quasi-minimizers of $E(\cdot;D)$ and prove Theorem \ref{mr_total_quasi}. We then apply these results to the minimization of the first buckling eigenvalue \eqref{expr_Lambda1}. Our quasiminimality is similar to the quasiminimality condition from \cite{MTV17}; a central point is that a suitable quasiminimality condition must be sufficiently strong to extract information from small inner variations, which is not the case for weaker quasiminimality conditions (like the one found in \cite{DT15} for classical Alt-Caffarelli problem). The reason for this is that the monotonicity formula is proved via inner variations of $u$, and it is unclear whether it can be accessed differently.
\end{itemize}

\section{Preliminary estimates}\label{sec_prel_est}

In this section we state and prove several regularity estimates on minimizers: a Cacciopoli-type estimate (Lemma \ref{lem_Cacciopoli}) to show that the $H^1$ norm of $u$ locally controls its energy, a $\mathrm{BMO}$ estimate (Lemma \ref{lem_BMO}) on $\Delta u$ that implies the gradient of $u$ is almost Lipschitz (Lemma \ref{lem_C1log}), a non-degeneracy lemma (Lemma \ref{lem_nondegeneracy}) that shows a minimizer always carry some minimal threshold of energy near the boundary of the support, and finally a lemma showing that a bounded sequence of minimizers converges (after extraction) to some minimizer in a strong way. The first two results already appear in some form in \cite{GM24} or \cite{S16} and our proofs in this section are not original in this regard, however we carry them here with a more explicit statement for later uses.

In this section, we will need to consider the following slightly more general version: for any $\lambda\geq 0$ we let $\M_\lambda(D)$ be the minimizers (in the sense given in the beginning of the introduction) of the energy
$$u\in H^2(D)\mapsto E_\lambda (u;D):=\int_{D}\left(|\Delta u|^2+\lambda\chi_{u\neq 0}\right).$$

We start by a remark on our definition of minimality, showing that it is equivalent to consider compactly supported perturbations, or perturbations up to the boundary with fixed traces of $u,\nabla u$.
\begin{lemma}
Let $u\in H^2(\D_1)$, $\lambda\geq 0$, then $u\in\M_\lambda(\D_1)$ if and only if for every $v\in u+H^2_0(\D_1)$ we have $E_\lambda(u;\D_1)\leq E_\lambda(v;\D_1)$.
\end{lemma}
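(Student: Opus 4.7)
The plan is to prove the two implications separately; the nontrivial direction is showing that minimality against compactly supported perturbations implies minimality against all $v \in u + H^2_0(\D_1)$.

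The reverse implication is essentially immediate: if $\{u \neq v\} \Subset \D_1$, then $v - u \in H^2(\D_1)$ has support compactly contained in $\D_1$. Extending by zero and convolving with a standard mollifier (of parameter smaller than $\mathrm{dist}(\Spt(v-u),\partial\D_1)$) produces a $C^\infty_c(\D_1)$ approximation, so $v - u \in H^2_0(\D_1)$ and the second notion of minimality applies.

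For the forward implication, fix $v \in u + H^2_0(\D_1)$ and set $w = v - u \in H^2_0(\D_1)$. I will construct a sequence $v_n$ with $\{u \neq v_n\}\Subset \D_1$ such that $v_n \to v$ in $H^2$ and $\limsup_n |\{v_n \neq 0\}| \leq |\{v \neq 0\}|$, and then apply the first minimality criterion and pass to the limit. Choose cutoffs $\phi_n \in C^\infty_c(\D_1)$ with $0 \leq \phi_n \leq 1$, $\phi_n \equiv 1$ on $\D_{1 - 2/n}$, $|\nabla \phi_n| \lesssim n$, $|\nabla^2 \phi_n| \lesssim n^2$, and define $v_n = u + \phi_n w$. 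Since $\Spt(\phi_n w) \Subset \D_1$, we indeed have $\{u \neq v_n\} \Subset \D_1$. The key technical point is that $\phi_n w \to w$ in $H^2(\D_1)$: the membership $w \in H^2_0(\D_1)$ yields Hardy-type inequalities of the form $\int w^2/d^4 + \int |\nabla w|^2/d^2 \lesssim \|w\|_{H^2}^2$ (where $d$ denotes distance to $\partial \D_1$), and these precisely absorb the $\mathcal{O}(n^2)$ and $\mathcal{O}(n)$ blow-ups of the cutoff derivatives against the shrinking support $\D_1 \setminus \D_{1-2/n}$ on which $d \lesssim 1/n$; the remaining contributions vanish by dominated convergence.

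Once $v_n \to v$ in $H^2$ is established, $\int|\Delta v_n|^2 \to \int |\Delta v|^2$. For the characteristic function term, observe that $v_n \equiv v$ on $\{\phi_n = 1\}$, so $\{v_n \neq 0\} \subset \{v \neq 0\} \cup \{\phi_n < 1\}$, giving $|\{v_n \neq 0\}| \leq |\{v \neq 0\}| + |\{\phi_n < 1\}|$ with the latter tending to $0$. Combining, $\limsup_n E_\lambda(v_n;\D_1) \leq E_\lambda(v;\D_1)$, and together with $E_\lambda(u;\D_1) \leq E_\lambda(v_n;\D_1)$ this yields $E_\lambda(u;\D_1)\leq E_\lambda(v;\D_1)$. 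The only subtle point in the argument is the $H^2$ convergence of the cutoff sequence, which uses the $H^2_0$ hypothesis crucially and would fail for a general $v \in H^2(\D_1)$ sharing the same boundary traces as $u$ only in a weaker sense.
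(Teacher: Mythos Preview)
Your argument is correct, but it takes a different route from the paper. The paper fixes $r\in(0,1)$ and a cutoff $\eta\in C^\infty_c(\D_1)$ with $\eta\equiv 1$ on $\overline{\D_r}$, observes that $(1-\eta)w\in H^2_0(\D_1\setminus\overline{\D_r})$, and invokes the \emph{definition} of $H^2_0$ as a closure to produce $\chi_n\in C^\infty_c(\D_1\setminus\overline{\D_r})$ with $\chi_n\to(1-\eta)w$ in $H^2$; the competitor is $u+\eta w+\chi_n$, and the characteristic-function term is controlled crudely on the annulus by $\lambda\pi(1-r^2)$, which is sent to zero by a second limit $r\nearrow 1$. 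Your approach instead builds an explicit approximating sequence $\phi_n w$ with scaled cutoffs and relies on the second-order Hardy inequalities $\int w^2/d^4+\int|\nabla w|^2/d^2\lesssim\|w\|_{H^2}^2$ for $w\in H^2_0(\D_1)$ to get $\phi_n w\to w$ in $H^2$ directly. The trade-off: the paper's proof is more elementary (no Hardy-type inequality needed, only the abstract density defining $H^2_0$) at the cost of a double limit, while yours is a single limit but imports a nontrivial analytic ingredient. Both are clean; if you keep your version, it would be worth stating the second-order Hardy inequality on $\D_1$ explicitly, since it is doing the real work.
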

\begin{proof}
Assume $u\in \M(\D_1)$, let $w\in H^2_0(\D_1)$, $r\in (0,1)$, and $\eta\in\mathcal{C}^\infty_c(\D_1)$ such that $\eta\equiv 1$ on $\ov{\D_r}$. Then $(1-\eta)w\in H^2_0(\D_1\setminus\ov{\D_r})$, so there exists some sequence $\chi_n\in\mathcal{C}^\infty_c(\D_1\setminus\ov{\D_r})$ that converges to $(1-\eta)w$ in $H^2(\D_1\setminus\D_r)$. Since $u\in\M(\D_1)$, we have $E_\lambda(u;\D_1)\leq E_\lambda(u+\eta w+\chi_n;\D_1)$, and
$$\limsup_{n\to +\infty} E_\lambda(u+\eta w+\chi_n;\D_1)\leq E_\lambda(u+w;\D_1)+\lambda\pi(1-r^2)$$
by the $H^2$ convergence of $\chi_n$ to $(1-\eta)w$. We conclude by taking $r$ arbitrarily close to $1$.
\end{proof}

We then prove a Cacciopoli-type estimate: in this result we only use the fact that $u$ is biharmonic on its support, with a competitor of the form $e^{t\phi}u$ for some smooth function $\phi$ with compact support.
\begin{lemma}\label{lem_Cacciopoli}
Let $r\in \left[\frac{1}{2},1\right)$, let $\lambda\geq 0$ and $u\in \M_\lambda(\D_1)$. Then
\[\Vert  u\Vert_{H^2(\D_r)}\lesssim \frac{1}{(1-r)^2}\Vert u\Vert_{H^1(\D_1\setminus\D_r)}.\]
\end{lemma}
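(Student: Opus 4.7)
The plan is to use the minimality of $u$ against a one-parameter family of ``cutoff'' competitors that preserve the penalty term, reducing the inequality to a pure $L^2$-estimate on $\Delta u$, and then to upgrade this via a standard hole-filling iteration and a Friedrichs estimate.

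First I would fix radii $r \leq s < t < 1$ and let $\eta \in \mathcal{C}^\infty_c(\D_t)$ be a cutoff with $\eta \equiv 1$ on $\D_s$ and $|\nabla^k \eta| \lesssim (t-s)^{-k}$ for $k = 1, 2$. Set $v := (1-\eta)u$. The key observation, which uses nothing about $u$ beyond minimality, is that $\{v \neq 0\} \subseteq \{u \neq 0\}$, so that $\chi_{v \neq 0} \leq \chi_{u \neq 0}$ pointwise. Since $v - u = -\eta u$ is compactly supported in $\D_t \Subset \D_1$, minimality yields
\[ \int_{\D_1} |\Delta u|^2 + \lambda \int_{\D_1} \chi_{u \neq 0} \leq \int_{\D_1} |\Delta v|^2 + \lambda \int_{\D_1}\chi_{v \neq 0}, \]
and the penalty terms cancel favourably. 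Since $v \equiv 0$ on $\D_s$ and $v \equiv u$ outside $\D_t$, this reduces to $\int_{\D_t}|\Delta u|^2 \leq \int_{\D_t\setminus\D_s}|\Delta v|^2$.

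Next I would expand $\Delta v = (1-\eta)\Delta u - 2\nabla\eta\cdot\nabla u - u \Delta\eta$ and apply Young's inequality with a small parameter $\varepsilon > 0$: using $(1-\eta)^2 \leq 1$, this gives
\[ \int_{\D_t\setminus\D_s}|\Delta v|^2 \leq (1+\varepsilon)\int_{\D_t\setminus\D_s}|\Delta u|^2 + \frac{C_\varepsilon}{(t-s)^4}\|u\|_{H^1(\D_t\setminus\D_s)}^2. \]
Inserting this into the minimality bound and using $\int_{\D_t} = \int_{\D_s} + \int_{\D_t\setminus\D_s}$, the hole-filling trick (adding $\varepsilon\int_{\D_s}|\Delta u|^2$ to both sides) yields, with $\theta := \varepsilon/(1+\varepsilon) \in (0,1)$,
\[ \int_{\D_s}|\Delta u|^2 \leq \theta \int_{\D_t}|\Delta u|^2 + \frac{C_\varepsilon}{(t-s)^4}\|u\|_{H^1(\D_1\setminus\D_r)}^2. \]
The standard iteration lemma (e.g.\ \cite[Lemma~6.1]{} in Giaquinta–Martinazzi) applied to $f(s) := \int_{\D_s}|\Delta u|^2$ on $[r, 1]$ then yields
\[ \int_{\D_r}|\Delta u|^2 \lesssim \frac{1}{(1-r)^4}\,\|u\|_{H^1(\D_1\setminus\D_r)}^2. \]

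Finally, I would upgrade from $\Delta u$ to full $H^2$ control. Take a second cutoff $\xi \in \mathcal{C}^\infty_c(\D_{(r+1)/2})$ with $\xi \equiv 1$ on $\D_r$ and $|\nabla^k\xi|\lesssim (1-r)^{-k}$, so that $\xi u \in H^2_0(\D_{(r+1)/2})$. The Friedrichs identity (and Poincaré inequality) on $H^2_0$ of a disk gives $\|\xi u\|_{H^2} \lesssim \|\Delta(\xi u)\|_{L^2}$; expanding $\Delta(\xi u)$ and using the Caccioppoli bound above on $\D_{(r+1)/2}$ delivers the desired estimate after taking a square root. The main technical point is the hole-filling step in the second paragraph, where the choice of $\eta$ is tailored so that the coefficient $(1-\eta)^2 \leq 1$ of $|\Delta u|^2$ can be absorbed via Young's inequality into the LHS; all other cutoff terms involve only lower-order derivatives of $u$ and integrate naturally into the $H^1$ norm on the annulus.
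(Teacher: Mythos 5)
Your proof is correct, and it takes a genuinely different route from the paper. The paper exploits only the criticality of $u$ under the support-preserving family $t\mapsto e^{t\eta^4}u$ (after switching to the $|\nabla^2 u|^2$ form of the energy via integration by parts): differentiating at $t=0$ gives the exact orthogonality $\int\nabla^2(\eta^4u):\nabla^2u=0$, from which Young's inequality yields the Caccioppoli bound in one stroke, with no iteration; the full $H^2$ norm is then recovered from $\Vert v\Vert_{H^2(\D_r)}^2\lesssim\Vert\nabla^2v\Vert_{L^2(\D_r)}^2+\Vert v\Vert_{L^2(\partial\D_r)}^2$. You instead use the truncation competitor $v=(1-\eta)u$, whose support is contained in that of $u$, so the penalty terms drop out with the right sign (this is also why your constant is independent of $\lambda$, as in the paper); the price is that the coefficient of $|\Delta u|^2$ on the annulus is only $\le 1$ rather than exactly cancelling, so you need the hole-filling trick and the Widman/Giaquinta iteration lemma (legitimate here since $f(s)=\int_{\D_s}|\Delta u|^2$ is bounded by $\Vert\Delta u\Vert_{L^2(\D_1)}^2$), followed by a second cutoff and the identity $\int|\nabla^2w|^2=\int|\Delta w|^2$ on $H^2_0$ plus Poincaré to upgrade to the full $H^2$ norm; the exponents match ($(1-r)^{-4}$ for the squared quantities, hence $(1-r)^{-2}$ after taking roots). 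What the paper's route buys is that it uses only a vanishing first variation along a support-preserving deformation, which is exactly what survives for the quasi-minimizers of Section 6 with only an $O(r^2)$ error; your truncation route is more elementary and uses the full minimality inequality, but it would also adapt to the quasi-minimal setting with an extra $\mu\Vert\eta u\Vert_{L^2}$ term, so nothing essential is lost.
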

Note that this estimate does not depend on $\lambda$.
\begin{proof}
For this proof, it is more convenient to see $u$ as a local minimizer of $u\mapsto \int_{\D_1}\left(|\nabla^2u|^2+\lambda\chi_{u\neq 0}\right)$ (see remark \ref{rem:ipp}). Let $\eta\in\mathcal{C}^{\infty}_c(\D _1,[0,1])$ to be fixed later. The optimality condition on $u$ gives

\[\left.\frac{d}{dt}\right|_{t=0}\int_{\D_1}\left(|\nabla^2(e^{\eta^4 t}u)|^2+\lambda\chi_{e^{\eta^4 t}u\neq 0}\right)=0,\]
which after computations develops as $\int_{\D_1}\nabla^2(\eta^4 u):\nabla^2 u=0$, with
$$\nabla^2(\eta^4 u)=\eta^4\nabla^2 u+8\eta^3\nabla\eta\otimes \nabla u+4\left(3\eta^2\nabla\eta\otimes\nabla\eta+\eta^3\nabla^2\eta\right)u.$$
The previous relation becomes
\begin{align*}
\int_{\D _1}\eta^4|\nabla^2  u|^2&=-\int_{\D _1}\eta^2\nabla^2 u : \left(8\eta\nabla\eta\otimes\nabla u+4\left(3(\nabla\eta\otimes \nabla \eta)+\eta\nabla^2\eta\right)u\right)\\
&\leq \frac{1}{2}\int_{\D_1}\eta^4|\nabla^2u|^2+\frac{1}{2}\int_{\D_1}\left|8\eta\nabla\eta\otimes\nabla u+4\left(3(\nabla\eta\otimes \nabla \eta)+\eta\nabla^2\eta\right)u\right|^2,
\end{align*}

which simplifies to
\[\int_{\D _{1}}\eta^4|\nabla^2 u|^2\lesssim\int_{\D _1}\left(\left[|\nabla\eta|^4+|\eta\nabla^2\eta|^2\right]| u|^2+\eta^2|\nabla\eta|^2|\nabla  u|^2\right).\]
Now, choosing an appropriate profile $\eta$ equal to $1$ in $\D_r$, with $|\nabla^k \eta|\lesssim (1-r)^{-k}$ for $k=0,1,2$, we obtain $\Vert\nabla^2 u\Vert_{L^2(\D_r)}\lesssim (1-r)^{-2}\Vert u\Vert_{H^1(\D_1\setminus\D_r)}$.

The bound on the full $H^1(\D_r)$ norm is then obtained by the general inequality 
$$\Vert v\Vert_{H^2(\D_r)}^2\lesssim\Vert \nabla^2 v\Vert_{L^2(\D_r)}^2+\Vert v\Vert_{L^2(\partial\D_r)}^2\lesssim \Vert \nabla^2 v\Vert_{L^2(\D_r)}^2+\frac{1}{1-r}\Vert v\Vert_{H^1(\D_1\setminus\D_r)}^2,$$
for any $v\in H^2(\D_r)\cap H^1(\D_1)$, applied to $v=u$. 
\end{proof}
Next we prove an estimate on the BMO semi-norm of $\nabla^2 u$. We remind our definition of this seminorm on a smooth set $\Om$: a function $f\in L^1_\loc(\Om)$ is in $\mathrm{BMO}(\Om)$ if for every disk $\D_{p,r}\subset\Om$ we have
$$\fint_{\D_{p,r}}\left|f-\fint_{\D_{p,r}}f\right|^2\leq M^2$$
for some constant $M>0$, and the best constant $M$ is denoted $[f]_{\mathrm{BMO}(\Om)}$. Here $\fint_{U} g:=\frac{\int_{U}g}{|U|}$ by convention.

We recall that the BMO space embeds in every $L^p$ space, for $p\in [1,+\infty)$, by the John-Nirenberg Lemma \cite[Lem. 1]{JN61}, but not in $L^\infty(\Om)$.
\begin{lemma}\label{lem_BMO}
Let $ u\in\M(\D_1)$, then
\[[\nabla^2  u]_{\mathrm{BMO(\D _{1/2})}}\lesssim 1+\Vert u\Vert_{H^2(\D_1)}.\]
\end{lemma}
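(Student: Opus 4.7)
The plan is to use the Campanato characterization of BMO: it suffices to show that for every disk $\D_{p,\rho} \subset \D_{1/2}$, there is a constant matrix $A_{p,\rho}$ with
$$\int_{\D_{p,\rho}} |\nabla^2 u - A_{p,\rho}|^2 \lesssim (1 + \Vert u\Vert_{H^2(\D_1)}^2)\,\rho^2.$$
The strategy is to compare $u$ on each small disk with a biharmonic competitor and iterate. After a preliminary application of Lemma \ref{lem_Cacciopoli} on $\D_{3/4}$ (to ensure every $p\in \D_{1/2}$ has a universal room $R_0$ with $\D_{p,R_0}\subset \D_{3/4}$), I fix $\D_{p,r}\subset \D_{3/4}$ with $r\leq R_0$ and let $v\in u+H^2_0(\D_{p,r})$ be the unique minimizer of $w\mapsto \int_{\D_{p,r}}|\Delta w|^2$, so that $v$ is biharmonic in $\D_{p,r}$.

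The first key step is the estimate $\Vert \nabla^2(u-v)\Vert_{L^2(\D_{p,r})}^2 \leq \pi r^2$. Two integration-by-parts identities enter: since $\Delta^2 v=0$ in $\D_{p,r}$ and $u-v\in H^2_0(\D_{p,r})$, orthogonality yields $\int |\Delta u|^2 = \int |\Delta v|^2 + \int |\Delta(u-v)|^2$; and for every $w\in H^2_0$, integration by parts gives $\int |\nabla^2 w|^2=\int|\Delta w|^2$. Extending $v$ by $u$ outside $\D_{p,r}$ is an admissible competitor in $\M(\D_1)$, and the inequality $E(u;\D_1)\leq E(v;\D_1)$ collapses, after cancellation of the biharmonic piece, into
$$\int_{\D_{p,r}}|\Delta(u-v)|^2 \leq \int_{\D_{p,r}}(\chi_{v\neq 0}-\chi_{u\neq 0})\leq \pi r^2,$$
which combined with the $H^2_0$-identity proves the claim.

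The second step is a Campanato decay for $\nabla^2 v$: for any biharmonic function on $\D_{p,r}$, applying standard interior estimates to $v$ minus a quadratic polynomial of Hessian $A$ gives
$$\int_{\D_{p,\rho}} |\nabla^2 v - \nabla^2 v(p)|^2 \lesssim \left(\tfrac{\rho}{r}\right)^{4}\int_{\D_{p,r}}|\nabla^2 v - A|^2, \qquad \rho\leq r/2,\ A\in\R^{2\times 2}.$$
Combining this via the triangle inequality with the $H^2_0$-bound on $u-v$, and setting $\phi(r):=\inf_A \int_{\D_{p,r}}|\nabla^2 u-A|^2$ (minimized by choosing $A$ the mean), one derives the recursive inequality $\phi(\tau r)\lesssim r^2 + \tau^4 \phi(r)$ valid for any $\tau\in(0,1/2]$.

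Finally, I choose $\tau$ small so that the universal constant in front of $\tau^4$ is dominated by $\tau^2$, and iterate on the dyadic scale $R_k=\tau^k R_0$. A direct induction, with base case $\phi(R_0)\leq \Vert u\Vert_{H^2(\D_1)}^2$, yields $\phi(R_k)\lesssim (1+\Vert u\Vert_{H^2(\D_1)}^2) R_k^2$, and monotonicity of $\phi$ extends this bound to every intermediate radius. This gives the Campanato-Morrey characterization of BMO and hence the claimed estimate $[\nabla^2 u]_{\mathrm{BMO}(\D_{1/2})}\lesssim 1+\Vert u\Vert_{H^2(\D_1)}$. The only conceptually nontrivial step is the first one: the orthogonality trick that reduces the minimality gap to the uniformly bounded characteristic-function term; once this is in hand, the remainder is a textbook Campanato iteration.
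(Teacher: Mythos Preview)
Your argument is correct and follows the same Campanato-iteration scheme as the paper: replace $u$ by its biharmonic extension $v$ on each small disk, use minimality to bound the replacement error by the measure term, and iterate a decay estimate on the biharmonic part. The one structural difference is in which quantity is tracked through the iteration. The paper works with the scalar $\Delta u$: since $\Delta v$ is harmonic, the elementary decay \eqref{eq_decay} (exponent $(\rho/r)^2$) applies, and after obtaining $[\Delta u]_{\mathrm{BMO}(\D_{3/4})}\lesssim 1+\|u\|_{H^2(\D_1)}$ the paper invokes the Fefferman--Stein result \cite[Cor.~2]{FS72} to pass from $\Delta u\in\mathrm{BMO}$ to $\nabla^2 u\in\mathrm{BMO}$. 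You instead use the $H^2_0$-identity $\int|\nabla^2 w|^2=\int|\Delta w|^2$ to control the full Hessian of $u-v$ directly, and then apply interior estimates for the biharmonic matrix $\nabla^2 v$ (yielding the stronger exponent $(\rho/r)^4$), so the Campanato iteration produces $\nabla^2 u\in\mathrm{BMO}$ without any external singular-integral input. Your route is therefore more self-contained; the paper's is slightly shorter since the harmonic decay of $\Delta v$ is more immediate than the biharmonic interior estimate for $\nabla^2 v$.
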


\begin{proof}
In this proof we will only use the following property: if $ v$ is the biharmonic extension of $ u$ on some open set $\Om\subset\D_1$ - meaning $u-v\in H_0^2(\Om)$ and $\Delta^2 v=0$ in $\Om$ - then by minimality of $u$ we have
\[\fint_\Om|\Delta( u- v)|^2=\frac{1}{|\Om|}\int_{\Om}\left(|\Delta u|^2-|\Delta v|^2\right)\leq 1.\]
We recall (see for instance \cite[Lem. 1.41]{HL11}) that for any harmonic function $h:\D_{R}\to\R$ and $r<R$ we have the decay property
\begin{equation}\label{eq_decay}
\fint_{\D _{r}}\left|h-\fint_{\D _{r}}h\right|^2\lesssim \left(\frac{r}{R}\right)^{2}\fint_{\D _{R}}\left|h-\fint_{\D _{R}}h\right|^2.
\end{equation}
Let now $\D _{p,r}\subset \D _1$, $ v$ the biharmonic extension of $ u$ in $\D _{p,r}$ and $\tau\in (0,1)$: then we have 
\begin{align*}
\fint_{\D _{p,\tau r}}|\Delta  u-\langle \Delta  u\rangle_{p,\tau r}|^2&\lesssim \fint_{\D _{p,\tau r}}\left(|\Delta( u- v)|^2+|\langle\Delta  u\rangle_{p,\tau r}-\langle \Delta  v\rangle_{p,\tau r}|^2+|\Delta  v-\langle \Delta  v\rangle_{p,\tau r}|^2\right)\\
&\lesssim \tau^{-2}\fint_{\D _{p,r}}|\Delta( u- v)|^2+\tau^2\fint_{\D _{p,r}}|\Delta v-\langle \Delta v\rangle_{p,r}|^2\text{ by \eqref{eq_decay}}\\
&\lesssim \tau^{-2}+\tau^2\fint_{\D _{p,r}}\left(|\Delta( u- v)|^2+|\langle\Delta  u\rangle_{p, r}-\langle \Delta  v\rangle_{p, r}|^2+|\Delta  u-\langle \Delta  u\rangle_{p, r}|^2\right)\\
&\lesssim \tau^{-2}+\tau^2\fint_{\D _{p,r}}|\Delta  u-\langle \Delta  u\rangle_{p, r}|^2.
\end{align*}
As a consequence, letting $g(p,r):=\fint_{\D _{p,r}}|\Delta  u-\langle \Delta  u\rangle_{p, r}|^2$, we may fix a sufficiently small $\tau\in (0,1)$ such that for some (universal) constant $C\gtrsim 1$:
\[g(p,\tau r)\leq C+\frac{1}{2}g(p,r).\]
By induction on $g(p,\tau^k r)$, we obtain that for any $x\in \D _{\frac{3}{4}}$ and for any $r\in \left(0,\frac{1}{4}\right)$ we have 
$$g(p,\tau^kr)\leq \left(1+\frac{1}{2}+\hdots+\frac{1}{2^{k-1}}\right) C+\frac{1}{2^k}g(p,r).$$
Thus, for any $r\in \left(0,\frac{1}{4}\right)$, we have
\[\fint_{\D _{p,r}}|\Delta  u-\langle \Delta  u\rangle_{p, r}|^2\lesssim 1+\int_{\D _1}|\Delta  u|^2,\]
Hence $[\Delta u]_{\mathrm{BMO}(\D_{\frac{3}{4}})}\lesssim 1+\Vert \Delta u\Vert_{L^2(\D_1)}$. By \cite[Cor. 2]{FS72}, this implies $[\nabla^2 u]_{\mathrm{BMO}(\D_{\frac{1}{2}})}\lesssim 1+\Vert u\Vert_{H^2(\D_1)}$.

\end{proof}

\begin{lemma}\label{lem_C1log}
Let $ u\in\M(\D_1)$, then for any $p,q\in \D _{1/2}$:
\[|\nabla u(p)-\nabla u(q)|\lesssim |p-q|\log\left(\frac{1}{|p-q|}\right)\left(1+\fint_{\D _1}|\nabla u|^2+ u^2\right)^\frac{1}{2}.\]
\end{lemma}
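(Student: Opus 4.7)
The plan is to derive the log-Lipschitz bound for $\nabla u$ from the $\mathrm{BMO}$ estimate on $\nabla^2 u$ provided by Lemma~\ref{lem_BMO}, using that $\nabla u$ is continuous (which follows from $u \in \mathcal{C}^{1,\alpha}_{\mathrm{loc}}$, cited earlier from \cite{GM24}). First, I would combine Lemmas~\ref{lem_Cacciopoli} and~\ref{lem_BMO}---the first applied on $\D_{3/4}$, the second applied after a small rescaling to accommodate the mismatch between the disk on which BMO is controlled and the disk on which $H^2$ is controlled---to obtain
$$M := [\nabla^2 u]_{\mathrm{BMO}(\D_{1/2})} + \|\nabla^2 u\|_{L^2(\D_{1/2})} \lesssim 1 + \|u\|_{H^1(\D_1)}.$$
The scale-invariance of the BMO seminorm makes this combination routine.

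The heart of the argument is then a standard ``primitive of BMO is log-Lipschitz'' calculation. Setting $v := \nabla u$, I would estimate the dyadic oscillation of the averages $v_{p,\rho} := \fint_{\D_{p,\rho}} v$: by the elementary bound $|v_{p,\rho} - v_{p,2\rho}| \leq 4 \fint_{\D_{p,2\rho}} |v - v_{p,2\rho}|$ and Poincar\'e,
$$|v_{p,\rho} - v_{p,2\rho}| \lesssim \rho \left( \fint_{\D_{p,2\rho}} |\nabla v|^2 \right)^{1/2}.$$
Splitting $\nabla v = \bigl(\nabla v - (\nabla v)_{p,2\rho}\bigr) + (\nabla v)_{p,2\rho}$, the first piece has $L^2$-average $\leq [\nabla^2 u]_{\mathrm{BMO}}$ by definition, while the second is controlled through the classical dyadic telescope
$$(\nabla v)_{p,2\rho} - (\nabla v)_{p,1/2} = \sum_{k=1}^N \bigl[(\nabla v)_{p,\rho_{k}} - (\nabla v)_{p,\rho_{k-1}}\bigr], \qquad \rho_k := 2^{-k},$$
with $N \sim \log(1/\rho)$ terms, each bounded by $\lesssim [\nabla^2 u]_{\mathrm{BMO}}$, and base term $|(\nabla v)_{p,1/2}| \lesssim \|\nabla^2 u\|_{L^2(\D_1)}$. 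This yields
$$|v_{p,\rho} - v_{p,2\rho}| \lesssim \rho\, (1 + \log(1/\rho))\, M.$$

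Summing this geometric series, and invoking continuity of $\nabla u$ so that $v_{p,\rho} \to v(p)$, gives $|v(p) - v_{p,r}| \lesssim r \log(1/r)\, M$ for $r \in (0,1/4]$, and likewise for $q$. For $r := |p-q| \leq 1/4$, both $\D_{p,r}$ and $\D_{q,r}$ sit inside $\D_{p,3r}$ with comparable measure, so the same Poincar\'e/BMO step on $\D_{p,3r}$ yields $|v_{p,r} - v_{q,r}| \lesssim r \log(1/r)\, M$, and the three bounds combine to the desired inequality. The residual regime $|p-q| \geq 1/4$ is settled trivially by noting that summing the same dyadic estimate from scale $1/2$ down to $0$ gives $\|\nabla u\|_{L^\infty(\D_{1/2})} \lesssim M$. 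The only mildly delicate point is the dyadic telescope bounding $|(\nabla v)_{p,2\rho}|$; the rest is a routine combination of Poincar\'e and the definition of the BMO seminorm.
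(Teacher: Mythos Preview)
Your proposal is correct and follows essentially the same route as the paper: combine Lemma~\ref{lem_Cacciopoli} with Lemma~\ref{lem_BMO} to control $[\nabla^2 u]_{\mathrm{BMO}}$ by $1+\|u\|_{H^1(\D_1)}$, then use that a function whose gradient lies in $\mathrm{BMO}$ is log-Lipschitz. The only difference is packaging---the paper invokes a general Orlicz-space embedding (citing \cite[Th.~3]{C96}) for that last step, whereas you write out the standard dyadic-telescope proof of it by hand.
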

\begin{proof}
This is a consequence of Lemmas \ref{lem_Cacciopoli}, \ref{lem_BMO}, to which we apply general Orlicz space embeddings (see for instance \cite[Th. 3]{C96}).
\end{proof}

\begin{remark}
This lemma implies that when $u\in\M(\D_1)$ with $u(0)=|\nabla u(0)|=0$, then $$\forall p\in\D_{1/2},\ |\nabla u(p)|+\frac{|u(p)|}{|p|}\lesssim \left(1+\Vert u\Vert_{H^1(\D_1)}\right)|p|\log\left(\frac{1}{|p|}\right).$$
\end{remark}

The non-degeneracy property is central in the classical Alt-Caffarelli problem (see \cite[Lem. 3.4]{AC81}): it implies that blow-ups, when they exist, are not zero. The next lemma is a non-degeneracy property for minimizers of \eqref{eq_energy}, and our proof is similar to the proof of non-degeneracy in \cite{DT15}, by an induction on the energy of $u$ relying on the three following estimates: 
\begin{itemize}
\item[a) ]The $H^2$ norm of $u$ is locally controlled by its $H^1$ norm by our Cacciopoli-type estimate.
\item[b) ]The area of the support of $u$ is locally controlled by its $H^2$ norm, by minimality of $u$.
\item[c) ]The $H^1$ norm of $u$ is locally controlled by a \textbf{product} of the area of its support and its $H^2$ norm. 
The nonlinear nature of this estimate is what makes the induction work.
\end{itemize}

\begin{lemma}\label{lem_nondegeneracy}
There exists $\eps>0$ such that for any $u\in\M(\D_1)$, if $\Vert u\Vert_{H^1(\D_1)}\leq \eps$, then $u|_{\D_{1/2}}\equiv 0$.
\end{lemma}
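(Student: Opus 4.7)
The plan is to establish a nonlinear one-step estimate
$$\Vert u \Vert_{H^1(\D_{1/2})}^2 \leq C \Vert u \Vert_{H^1(\D_1)}^3,$$
valid for any $u \in \M(\D_1)$ with $\Vert u \Vert_{H^1(\D_1)}$ small enough, and then to iterate it at every point $p \in \D_{1/2}$ using scale invariance, which forces $u$ and $\nabla u$ to vanish pointwise on $\D_{1/2}$ by continuity.

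The one-step estimate combines the three ingredients (a), (b), (c) listed above. Ingredient (a) is direct from Lemma \ref{lem_Cacciopoli}: $\Vert u \Vert_{H^2(\D_{3/4})} \lesssim \Vert u \Vert_{H^1(\D_1)}$. For (b), I would fix a cutoff $\eta \in \mathcal{C}^\infty_c(\D_{5/8})$ with $\eta \equiv 1$ on $\D_{1/2}$, and test minimality against the competitor $v := (1-\eta) u \in u + H^2_0(\D_1)$. Since $v \equiv 0$ on $\D_{1/2}$, the minimality inequality reduces to
$$\int_{\D_{5/8}} |\Delta u|^2 + \left|\{u \neq 0\} \cap \D_{1/2}\right| \leq \int_{\D_{5/8}}|\Delta v|^2.$$
Expanding $\Delta v = (1-\eta)\Delta u - 2\nabla \eta \cdot \nabla u - u \Delta \eta$ and using $(a+b)^2 \leq 2a^2 + 2b^2$ yields $\int |\Delta v|^2 \leq 2 \int |\Delta u|^2 + C \Vert u \Vert_{H^1(\D_{5/8})}^2$, so after a second application of Lemma \ref{lem_Cacciopoli},
$$\left|\{u \neq 0\} \cap \D_{1/2}\right| \lesssim \Vert u \Vert_{H^1(\D_1)}^2.$$
For (c), I use Stampacchia's lemma (which gives $\nabla u = 0$ a.e. on $\{u = 0\}$) to rewrite
$$\Vert u \Vert_{H^1(\D_{1/2})}^2 = \int_{\{u \neq 0\}\cap \D_{1/2}}\left(u^2 + |\nabla u|^2\right),$$
and apply Hölder with exponent $4$ together with the two-dimensional Sobolev embedding $H^2 \hookrightarrow W^{1, 4}$ to obtain
$$\Vert u \Vert_{H^1(\D_{1/2})}^2 \lesssim \left|\{u \neq 0\} \cap \D_{1/2}\right|^{1/2} \Vert u \Vert_{H^2(\D_{3/4})}^2.$$
Combining (a), (b), (c) produces the one-step estimate.

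For the iteration, the scale-invariant quantity $E_R(p) := R^{-4}\Vert \nabla u \Vert_{L^2(\D_{p, R})}^2 + R^{-6}\Vert u \Vert_{L^2(\D_{p, R})}^2$ equals $\Vert u_{p, R} \Vert_{H^1(\D_1)}^2$ for the rescaling $u_{p, R}$ from \eqref{eq_defupr}, which is itself in $\M(\D_1)$. The one-step estimate then reads $E_{R/2}(p) \leq C E_R(p)^{3/2}$. For any $p \in \D_{1/2}$ and $R_0 := 1/2$ one has $E_{R_0}(p) \lesssim \Vert u \Vert_{H^1(\D_1)}^2 \leq \eps^2$, so for $\eps$ small enough the sequence $a_k := E_{2^{-k}R_0}(p)$ obeys a recursion that after normalization reads $b_{k+1} \leq b_k^{3/2}$ with $b_0 < 1$, giving doubly exponential decay. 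This makes $R_k^{-2}\int_{\D_{p, R_k}}(u^2 + |\nabla u|^2) \to 0$ as $k \to \infty$, and the continuity of $u$ and $\nabla u$ from Lemma \ref{lem_C1log} yields $u(p) = \nabla u(p) = 0$ at every $p \in \D_{1/2}$, hence $u \equiv 0$ there. I expect the main obstacle to be the nonlinear estimate (c): the key point is that Stampacchia's lemma lets us bound the full $H^1$ norm in terms of $|\{u \neq 0\}|$ (rather than the strictly larger topological support $\Spt(u)$, which is not directly controlled by minimality), and the resulting exponent $1/2$ of $|\{u \neq 0\}|$ is precisely what powers the super-linear iteration.
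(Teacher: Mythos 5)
Your proposal is correct and follows essentially the same route as the paper's proof: a Cacciopoli bound, a measure estimate on $\{u\neq 0\}\cap\D_{1/2}$ obtained by testing minimality against a cutoff competitor $(1-\eta)u$, a nonlinear Hölder--Sobolev step giving the exponent $3/2$, and a scaling iteration applied at every point of $\D_{1/2}$. The only differences are cosmetic: the paper exploits the orthogonality $\int|\Delta(u-\eta u)|^2=\int|\Delta u|^2+\int|\Delta(\eta u)|^2$ from biharmonicity on the support and applies the Sobolev embedding to $\eta u\in H^2_0$, where you instead use a crude quadratic expansion absorbed by Cacciopoli and Stampacchia's lemma plus Hölder over $\{u\neq 0\}$ — both are valid.
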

\begin{proof}
It is enough to prove that for any minimizer in $\D _1$ and for a small enough $\eps$,
\[\int_{\D _1}| u|^2+|\nabla  u|^2 \leq \eps\quad\text{ implies }\quad u(0)=0,\]
because this can be applied to $u_{p,1/2}$ for every $p\in\D_{1/2}$ (the definition of $u_{p,r}$ is given in \eqref{eq_defupr}).
\begin{itemize}[label=\textbullet]
\item $\int_{\D _1}|\nabla u|^2+| u|^2 \leq \eps$ implies $E(u;\D _{1/2})\lesssim \eps$. Indeed, by the Cacciopoli-type inequality from Lemma \ref{lem_Cacciopoli}, we have 
\[\int_{\D _{\frac{9}{10}}}|\Delta  u|^2\leq \|u\|^2_{H^2(\D_{\frac{9}{10}})}\lesssim \int_{\D _1\setminus\D_{\frac{9}{10}}}\left(| u|^2+|\nabla  u|^2\right)\lesssim  \eps.\]
Let now $\eta\in\mathcal{C}^\infty_c(\D _{\frac{9}{10}},[0,1])$ such that $\eta\equiv 1$ in $\D _{\frac{1}{2}}$ with $\Vert\eta\Vert_{\mathcal{C}^2(\D_1)}\lesssim 1$, we then compare the energies of $ u$ and $(1-\eta)  u$:
\[|\D _{1/2}\cap\{ u\neq 0\}|+\int_{\D _1}|\Delta u|^2\leq \int_{\D _1}|\Delta( u-\eta u)|^2.\]
Since $u$ is biharmonic on its support (see \cite[Theorem 2]{GM24}) we have
\[\int_{\D _1}|\Delta(u-\eta u)|^2=\int_{\D _1}|\Delta u|^2+|\Delta(\eta u)|^2,\]
so
\begin{align*}
|\D _{\frac{1}{2}}\cap\{ u\neq 0\}|&\leq \int_{\D _1}|\Delta(\eta u)|^2\lesssim \Vert u\Vert_{H^2(\D_{\frac{9}{10}})}^2\lesssim \eps,
\end{align*}

and thus $E(u;\D_{\frac{1}{2}})\lesssim \eps$.

\item $E( u;\D _{1})+\int_{\D _1}(|\nabla u|^2+| u|^2)\leq \eps$ implies $\int_{\D _{\frac{1}{2}}}(|\nabla u|^2+| u|^2)\lesssim \eps^{\frac{3}{2}}$.

Indeed, let $\eta$ be defined as previously, then 
\begin{align*}
\int_{\D _{\frac{1}{2}}}|\nabla  u|^2&\leq \int_{\D _1}|\nabla(\eta u)|^2\leq |\{\eta u\neq 0\}|^{\frac{1}{2}}\left(\int_{\D _1}|\nabla(\eta u)|^{4}\right)^{1/2}\\
&\lesssim  E( u;\D _1)^{\frac{1}{2}}\int_{\D _1}|\Delta(\eta u)|^{2}\text{ by the Sobolev inclusion }W^{2,2}_0(\D_1)\hookrightarrow W^{1,4}_0(\D_1)\\
&\lesssim E( u;\D _1)^{\frac{1}{2}}\left(E( u;\D _1)+\int_{\D _1}(|\nabla u|^2+| u|^2)\right)\\
&\lesssim \eps^{\frac{3}{2}}
\end{align*}
and similarly 
$$\int_{\D _{\frac{1}{2}}} u^2\leq \int_{\D_1}(\eta u)^2\leq|\{\eta u\neq 0\}|^\frac{1}{2}\left(\int_{\D_1}(\eta u)^4\right)^\frac{1}{2}\lesssim \eps^\frac{3}{2}.$$
\end{itemize}
We now proceed by induction. We let $ u_r(x):=\frac{ u(rx)}{r^2}$. Then for some constant $C_1\gtrsim 1$ we have (by the first point above):
$$E(u_{\frac{1}{2}};\D_1)+\Vert u_{\frac{1}{2}}\Vert_{H^1(\D_1)}^2\leq C_1\Vert u\Vert_{H^1(\D_1)}^2.$$
By the second point applied to $u_{\frac{1}{2}}$, for some constant $C_2\gtrsim 1$ we have
$$\Vert u_{\frac{1}{4}}\Vert_{H^1(\D_1)}^2\leq C_2\left(C_1\Vert u\Vert_{H^1(\D_1)}^2\right)^\frac{3}{2}.$$
In particular suppose that $\Vert u\Vert_{H^1(\D_1)}^2\leq \frac{1}{4}C_1^{-3}C_2^{-2}$, then we obtain by induction
$$\forall k\in\N,\ \Vert u_{4^{-k}}\Vert_{H^1(\D_1)}^2\leq 2^{-k}\Vert u\Vert_{H^1(\D_1)}^2.$$
This implies $ u(0)=0$.
\end{proof}

Finally, we prove a result about sequences of minimizers. 
\begin{lemma}\label{lem_sequence}
Let $(\lambda^{(n)})_{n\in\N^*}$ be a sequence of nonnegative numbers converging to some limit $\lambda\geq 0$, and $(u^{(n)})_{n\in\N^*}$ a sequence of $H^2(\D_1)$ such that for any $n\in\N^*$, $u^{(n)}\in \M_{\lambda^{(n)}}(\D_1)$. Suppose moreover that
$$\sup_{n\in\N^*} \Vert u^{(n)}\Vert_{H^1(\D_1)}<\infty.$$
Then there exists some extraction $(n_i)_{i\in\N^*}$, and some $u\in\M_\lambda(\D_1)$ such that
\begin{align*}
u^{(n_i)}&\underset{i\to +\infty}{\longrightarrow}u\text{ in }H^2_\loc(\D_1),\\
\lambda^{(n_i)}\chi_{u^{(n_i)}\neq 0}&\underset{i\to +\infty}{\longrightarrow}\lambda\chi_{u\neq 0}\text{ in }L^1(\D_1).
\end{align*}
\end{lemma}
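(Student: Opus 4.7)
The proof follows the standard compactness strategy for free boundary problems. First, applying Lemma \ref{lem_Cacciopoli} to each $u^{(n)}$ together with the uniform $H^1(\D_1)$ bound gives a uniform $H^2_\loc(\D_1)$ bound on the sequence. Passing to a subsequence (not relabeled), there exists $u\in H^2_\loc(\D_1)$ with $u^{(n)}\rightharpoonup u$ weakly in $H^2_\loc$, strongly in $H^1_\loc$, and, thanks to the log-Lipschitz estimate of Lemma \ref{lem_C1log}, strongly in $\mathcal{C}^1_\loc$. Uniform convergence on compacts together with openness of $\{u\neq 0\}$ yields the pointwise lower semicontinuity $\chi_{u\neq 0}\leq \liminf_n\chi_{u^{(n)}\neq 0}$; combined with the weak $L^2$ lower semicontinuity of $\Delta$ and Fatou, this gives for every $\rho<1$
\[E_\lambda(u;\D_\rho)\leq \liminf_n E_{\lambda^{(n)}}(u^{(n)};\D_\rho).\]

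For the matching upper bound, fix any admissible competitor $v\in u+H^2_0(\D_{\rho_0})$ with $\rho_0<1$. Choose $\rho_0<\rho<r'<1$ and $\eta\in\mathcal{C}^\infty_c(\D_{r'},[0,1])$ with $\eta\equiv 1$ on $\D_\rho$, and define $v^{(n)}:=\eta v+(1-\eta)u^{(n)}$: this is admissible for $u^{(n)}$, equals $v$ on $\D_\rho$, and equals $u^{(n)}$ outside $\D_{r'}$. On the annulus $A=\D_{r'}\setminus \D_\rho$ one has $v=u$, so $\Delta v^{(n)}=\eta\Delta u+(1-\eta)\Delta u^{(n)}$ up to terms that vanish in $L^2(A)$ by strong $H^1_\loc$ convergence. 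Expanding and using weak $L^2$ convergence $\Delta u^{(n)}\rightharpoonup\Delta u$ together with $\liminf_n\int_A \eta(2-\eta)|\Delta u^{(n)}|^2 \geq \int_A \eta(2-\eta)|\Delta u|^2$, I obtain the key estimate
\[\limsup_n\int_A\bigl(|\Delta v^{(n)}|^2-|\Delta u^{(n)}|^2\bigr)\leq 0.\]
Together with the trivial bound $\int_A\chi_{v^{(n)}\neq 0}\leq |A|$, the minimality of $u^{(n)}$ then gives
\[\limsup_n E_{\lambda^{(n)}}(u^{(n)};\D_\rho)\leq \int_{\D_\rho}|\Delta v|^2+\lambda\int_{\D_\rho}\chi_{v\neq 0}+\lambda|A|,\]
and letting $r'\to\rho$ kills the error $\lambda|A|$.

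Taking first $v=u$ and comparing with the previous liminf inequality forces $E_{\lambda^{(n)}}(u^{(n)};\D_\rho)\to E_\lambda(u;\D_\rho)$. Since each of the two parts of the energy individually satisfies a liminf inequality while only the sum satisfies the matching limsup, a short squeeze argument (using $\limsup(a_n+b_n)\geq \limsup a_n+\liminf b_n$) forces $\int_{\D_\rho}|\Delta u^{(n)}|^2\to\int_{\D_\rho}|\Delta u|^2$ and $\lambda^{(n)}\int_{\D_\rho}\chi_{u^{(n)}\neq 0}\to \lambda\int_{\D_\rho}\chi_{u\neq 0}$ separately. The former, combined with weak $L^2$ convergence, upgrades to strong $L^2_\loc$ convergence of $\Delta u^{(n)}$ and hence, again by Lemma \ref{lem_Cacciopoli}, to strong $H^2_\loc$ convergence. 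The latter, combined with the pointwise liminf inequality on $\chi_{u^{(n)}\neq 0}$, yields $L^1_\loc$ convergence by a reverse Fatou argument (the negative part of $\chi_{u^{(n)}\neq 0}-\chi_{u\neq 0}$ is dominated by the integrable $\chi_{u\neq 0}$ and tends to $0$ pointwise on $\{u\neq 0\}$), and then $L^1(\D_1)$ convergence follows by exhaustion since $\lambda^{(n)}\chi_{u^{(n)}\neq 0}$ is uniformly bounded. Finally, applied to an arbitrary admissible $v$, the same limsup inequality gives $E_\lambda(u;\D_\rho)\leq E_\lambda(v;\D_\rho)$, hence $u\in\M_\lambda(\D_1)$. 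The main obstacle is the annular term $\int_A|\Delta v^{(n)}|^2$, for which weak $H^2$ convergence alone is insufficient; the proof hinges on using $v=u$ on $A$ so that the weak $L^2$ convergence of $\Delta u^{(n)}$ produces the cancellation above.
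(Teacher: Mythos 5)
Your proof is correct and follows essentially the same route as the paper's: Cacciopoli compactness, weak lower semicontinuity plus Fatou for the liminf, the cut-off competitor $\eta v+(1-\eta)u^{(n)}$ with $v=u$ on a shrinking annulus for the limsup (your cancellation via the weight $\eta(2-\eta)$ is just a reorganization of the paper's $\left((1-\zeta)^2-1\right)$ term), then $v=u$ to force convergence of both energy pieces, strong $H^2_\loc$ convergence, $L^1$ convergence of the measure term, and finally minimality of the limit for general $v$. Two citations are slightly off but harmless: the $\mathcal{C}^1_\loc$ compactness via Lemma \ref{lem_C1log} is unnecessary (and that lemma is stated for $\M(\D_1)$, i.e. $\lambda=1$) since a.e. convergence from strong $H^1_\loc$ already gives the pointwise liminf of the characteristic functions, and the upgrade from strong $L^2_\loc$ convergence of $\Delta u^{(n)}$ to strong $H^2_\loc$ convergence should invoke an interior elliptic estimate applied to $u^{(n)}-u$ rather than Lemma \ref{lem_Cacciopoli}, which applies only to minimizers.
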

\begin{proof}The proof is similar to its equivalent in the Alt-Caffarelli problem and is fairly standard (see for instance \cite[Lem. 6.3]{V23}).
Since $(u^{(n)})_n$ is bounded in $H^1(\D_1)$, by the Cacciopoli inequality from Lemma \ref{lem_Cacciopoli} it is bounded in $H^2(\D_r)$ for every $r\in (0,1)$. Up to extraction we may suppose that for every $r\in (0,1)$, $(u^{(n)})_{n\in\N^*}$ converges strongly in $H^1(\D_r)$, weakly in $H^2(\D_r)$, as well as almost everywhere, to some limit $u$. By the weak $H^2$ convergence we have
$$\int_{\D_1}|\Delta u|^2\leq \liminf_{n\to+\infty}\int_{\D_1}|\Delta u^{(n)}|^2.$$

Since $u^{(n)}$ converges almost everywhere to $u$, then by Fatou lemma:
$$\int_{\D_1}\chi_{u\neq 0}\leq \int_{\D_1}\liminf_{n\to\infty}\chi_{u^{(n)}\neq 0}\leq\liminf_{n\to\infty}\int_{\D_1}\chi_{u^{(n)}\neq 0}. $$

Thus, ${E_\lambda}(u;\D_1)\leq\liminf_{n\to\infty}{E_{\lambda^{(n)}}}(u^{(n)};\D_1)$. Let now $v$ be a competitor for $u$, meaning $v\in H^2(\D_1)$ and $\{u\neq v\}\subset\D_s$ for some $s<1$. Let $r\in (s,1)$, and $\zeta\in\mathcal{C}^\infty_c(\D_1)$ a function such that $\zeta=1$ in $\D_s$, $\zeta=0$ in $\D_1\setminus\D_{r}$. Then
$$v^{(n)}:=(1-\zeta)u^{(n)}+\zeta v$$
is a valid competitor for $u^{(n)}$. By our assumption on $u^{(n)}$ we have
$${E_{\lambda^{(n)}}}(v^{(n)};\D_1)-{E_{\lambda^{(n)}}}(u^{(n)};\D_1)\geq 0.$$
We first compute
\begin{align*}
\int_{\D_1}\left(|\Delta v^{(n)}|^2-|\Delta u^{(n)}|^2\right)&=\int_{\D_1}\left(\left|(1-\zeta)\Delta u^{(n)}+\zeta\Delta v+2\nabla(v-u^{(n)})\cdot \nabla\zeta +(v-u^{(n)})\Delta \zeta\right|^2-|\Delta u^{(n)}|^2\right)\\
&=\int_{\D_1}\left(\left((1-\zeta)^2-1\right)|\Delta u^{(n)}|^2+\zeta^2|\Delta v|^2+2\zeta(1-\zeta)\Delta u\Delta v\right)+o_{n}(1)\\
&=\int_{\D_1}\left(|\Delta v|^2-|\Delta u|^2+\left((1-\zeta)^2-1\right)\left(|\Delta u^{(n)}|^2-|\Delta u|^2\right)\right)+o_{n}(1)
\end{align*}
where the $o_n(1)$ term is due to the weak $H^2(\D_1)$ (and strong $H^1(\D_1)$) convergence $v^{(n)}\to v$, which is equal to $u$ on the support of $\nabla\eta,\Delta \eta$. Next,
\begin{align*}
\lambda^{(n)}\int_{\D_1}\left(\chi_{v^{(n)}\neq 0}-\chi_{u^{(n)}\neq 0}\right)&\geq \lambda^{(n)}\int_{\D_1}\left(\chi_{\D_s}\left(\chi_{v\neq 0}-\chi_{u^{(n)}\neq 0}\right)-\chi_{\D_{r}\setminus\D_s}\right)\\
&=\lambda^{(n)}\int_{\D_1}\left(\left(\chi_{v\neq 0}-\chi_{u\neq0}\right)+\chi_{\D_s}\left(\chi_{u\neq 0}-\chi_{u^{(n)}\neq 0}\right)-\chi_{\D_r\setminus\D_s}\right),
\end{align*}
so we may rearrange $E(u^{(n)};\D_1)\leq E(v^{(n)};\D_1)$ into
\begin{align*}
E_{\lambda^{(n)}}(v;\D_1)-E_{\lambda^{(n)}}(u;\D_1)\geq &\int_{\D_1}\left(\left(1-(1-\eta)^2\right)\left(|\Delta u^{(n)}|^2-|\Delta u|^2\right)+\lambda^{(n)}\chi_{\D_s}\left(\chi_{u^{(n)}\neq 0}-\chi_{u\neq 0}\right)\right)\\
&+o_n(1)-2\pi\lambda^{(n)}(r-s).
\end{align*}
Taking $n\to +\infty$, we get
\begin{eqnarray}
E_{\lambda}(v;\D_1)-E_{\lambda}(u;\D_1)&\geq& \liminf_{n\to\infty}\int_{\D_1}\left(\left(1-(1-\eta)^2\right)\left(|\Delta u^{(n)}|^2-|\Delta u|^2\right)+\lambda^{(n)}\chi_{\D_s}\left(\chi_{u^{(n)}\neq 0}-\chi_{u\neq 0}\right)\right)\notag\\\notag
&&-2\pi\lambda (r-s)\\
&\geq &-2\pi \lambda(r-s).\label{eq_aux_sequenceproof}
\end{eqnarray}
Taking $r\searrow s$, we obtain $E_\lambda(v,\D_1)\geq E_\lambda(u;\D_1)$ so $u\in\M_\lambda(\D_1)$. Moreover, the convergence is strong in $H^2_\loc$: indeed taking $v=u$, inequality \eqref{eq_aux_sequenceproof} gives
\begin{align*}
2\pi\lambda(r-s)+\int_{\D_1}\left(1-(1-\eta)^2\right)|\Delta u|^2&\geq \liminf_{n\to\infty}\int_{\D_1}\left(1-(1-\eta)^2\right)|\Delta u^{(n)}|^2,\\
\end{align*}
which implies
\begin{align*}
2\pi\lambda(r-s)+\int_{\D_r}|\Delta u|^2&\geq \liminf_{n\to\infty}\int_{\D_s}|\Delta u^{(n)}|^2.\\
\end{align*}
By taking $r$ arbitrarily close to $s$, we obtain $\int_{\D_s}|\Delta u|^2\geq \liminf_{n\to\infty}\int_{\D_s}|\Delta u^{(n)}|^2$ for every $s$, so $u^{(n)}$ converges strongly (in $H^2(\D_s)$) to $u$. Similarly, for the convergence of the area term, inequality \eqref{eq_aux_sequenceproof} with $v=u$ gives 
$$\int_{\D_s}\lambda\chi_{u\neq 0}\geq\liminf\int_{\D_s}\lambda^{(n)}\chi_{u^{(n)}\neq 0},$$
for every $s$ (and the opposite inequality is a consequence of Fatou's lemma). By dominated convergence, $\lambda^{(n)}\chi_{u^{(n)}\neq 0}\underset{n\to +\infty}{\longrightarrow}\lambda\chi_{u\neq 0}$ in $L^1(\D_1)$.
\end{proof}

\section{Monotonicity formula and blow-ups}\label{sec_mono}

Monotonicity formulas are roughly speaking a way to express the notion that, as we zoom in on a solution, the solution can only look simpler and simpler. For the Alt-Caffarelli problem, i.e. for minimizers of
$$ u\mapsto\int_{\D_1}\left(|\nabla u|^2+\chi_{u>0}\right),$$
Weiss found in \cite{W98,W99} that the quantity
$$r\mapsto \frac{1}{r^2}\int_{\D_r}\left(|\nabla u|^2+\chi_{u>0}\right)-\frac{1}{r^3}\int_{\partial\D_r}u^2$$
is always non-decreasing, and it is constant if and only if $u$ is $1$-homogeneous. From this monotonicity alone one can deduce that blow-ups are always $1$-homogeneous, and that for any solution $u$, the $H^1(\D_r)$ norm is controlled by the $L^2(\partial\D_r)$ norm (with appropriate scaling) for small $r$. This monotonicity formula was later refined in several settings (see \cite{Weiss99}, \cite{SV19}, and also the monograph \cite{V23}) to prove the convergence to a unique blow-up by an \textit{epiperimetric} inequality (which originates from \cite{R64} with a similar estimate for the perimeter of a minimal surface, and later in \cite{Weiss99} in the setting of obstacle problems). Our interest in these techniques is due to the fact that even for classical Alt-Caffarelli problem, these epiperimetric methods use no maximum principle or notion of viscosity solutions, something which is not available in our setting.

In \cite[Th. 1.12]{DKV20}, the authors found a similar monotone quantity for higher order problem (note that their functional is not exactly the same as ours, however it is applied to points where the solution has homogeneity $2$). We start by stating again the monotonicity of this quantity in our setting, with a proof that is made slightly shorter by an exponential change of variable.

The monotonicity formula of Weiss for the classical Alt-Caffarelli problem may be obtained in several ways. The first is by comparing the energy of the solution $u$ with its $1$-homogeneous extension $|x|u\left(\frac{rx}{|x|}\right)$ in $\D_r$. The second is by using that $u$ is critical for the energy with respect to small deformation $u\left(x+t f(|x|) x\right)$ for a smooth function $f$ that approximates $\chi_{[0,r]}$.
In our setting, the $2$-homogeneous extension is not always a valid competitor (since it may create a jump in the gradient), and we do not obtain the monotonicity by the comparison with a single competitor as in the classical Alt-Caffarelli problem: however we are able to adapt the second method to our setting.

\subsection{Monotonicity formula of Dipierro, Karakhanyan and Valdinoci recast in exponential coordinates}\label{subsec_mono}

In this paragraph, we deal with the monotonicity formula. We start choosing a specific set of coordinates that is convenient to express the energy. We deduce in this set of coordinates a monotonicity formula (Theorem \ref{Th_Monotonicity_exp}) and then express it again in the usual coordinates (Theorem \ref{Th_Monotonicity_disk}).

We will denote  $\Cy_{T}=(T,+\infty)\times\Sp$ the half-infinite cylinder, such that $(t,\theta)\in \Cy_T\mapsto e^{-t+i\theta}\in\D_{e^{-T}}\setminus\{0\}$ is a bijective anticonformal map. We denote by $H^2_\lin(\Cy_T)$ the set of functions $v\in H^2_\loc(\Cy_T)$ such that for some constant $C>0$ (that may depend on $v$), we have
\begin{equation}\label{eq_def_growth}
\forall t\geq T,\ \Vert  v\Vert_{H^2(\Cy_t\setminus \Cy_{t+1})}\leq C(t-T+1).
\end{equation}

For any $\tau\in\R$, $v\in H^2_\lin(\Cy_0)$, we denote
\begin{equation}\label{defGronde}
\G(v,\tau)=\int_{\Cy_\tau}e^{-2(t-\tau)}\left\{\left(\partial_{t,t}v\right)^2+2\left(\partial_{t,\theta}v\right)^2+\left(\partial_{\theta,\theta}v\right)^2-4\left(\partial_{\theta}v\right)^2+\chi_{v\neq 0}\right\}
\end{equation}
\begin{lemma}\label{lem_change_variable_diskexp}
Let $u\in H^2(\D_1)$ such that $v\in H^2_\loc(\Cy_0)$ defined by
$$v(t,\theta)=e^{2t}u\left(e^{-t+i\theta}\right)$$
is in $H^2_\lin(\Cy_0)$. Then for every $\tau\geq 0$, we have 
\begin{align*}
\frac{1}{(e^{-\tau})^2}\int_{\D_{e^{-\tau}}}\left\{|\Delta u|^2+\chi_{u\neq 0}\right\}&=\G(v,\tau)+2\int_{\partial \Cy_\tau}\left\{2\left(\partial_tv\right)^2-4v \partial_tv+4v^2-\left(\partial_\theta v\right)^2+\partial_{t,\theta}v\partial_\theta v\right\}.
\end{align*}
\end{lemma}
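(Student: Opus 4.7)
The plan is to perform the anticonformal change of coordinates $r=e^{-t}$ on the disk, rewrite $\Delta u$ in log-polar coordinates in terms of $v$, and then reorganize the square $(\Delta u)^2$ by integration by parts against the exponential weight $w:=e^{-2(t-\tau)}$ until only the interior terms listed in $\G(v,\tau)$ and the announced boundary quadratic form remain. In polar coordinates one has $r^2\Delta u=(r\partial_r)^2u+\partial_{\theta,\theta}u$; setting $r=e^{-t}$ turns $r\partial_r$ into $-\partial_t$ and, combined with $u(e^{-t+i\theta})=e^{-2t}v(t,\theta)$, yields the key pointwise identity
\[\Delta u(e^{-t+i\theta})=\partial_{t,t}v-4\partial_t v+4v+\partial_{\theta,\theta}v.\]
Since $|dx\,dy|=e^{-2t}\,dt\,d\theta$ and $\{u\neq 0\}$ corresponds to $\{v\neq 0\}$ on the punctured disk, the left-hand side of the lemma becomes $\int_{\Cy_\tau}w\,\{(\partial_{t,t}v-4\partial_t v+4v+\partial_{\theta,\theta}v)^2+\chi_{v\neq 0}\}$, and it suffices to reduce the squared integrand to the shape of $\G(v,\tau)$ plus the boundary form.

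Expanding the square generates six cross products in addition to the four diagonal terms $(\partial_{t,t}v)^2$, $(\partial_{\theta,\theta}v)^2$, $16(\partial_tv)^2$ and $16v^2$. Each cross product will be simplified by one (occasionally two) integrations by parts, either in $t$ against the weight (using $\partial_t w=-2w$, with the boundary contribution at $t=+\infty$ killed by matching the $H^2_\lin$ linear growth hypothesis against the exponential decay of $w$, and producing a boundary contribution at $t=\tau$) or in $\theta$ (periodic, hence no boundary term). For example, $\int w\,\partial_{t,t}v\,\partial_t v=\int w(\partial_t v)^2-\frac{1}{2}\int_{\partial\Cy_\tau}(\partial_t v)^2$, and analogous one-step identities take care of $\int w\,v\,\partial_{t,t}v$, $\int w\,v\,\partial_t v$, $\int w\,v\,\partial_{\theta,\theta}v$ and $\int w\,\partial_t v\,\partial_{\theta,\theta}v$. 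The only term requiring a double integration by parts is the genuinely mixed $\int w\,\partial_{t,t}v\,\partial_{\theta,\theta}v$, which after one integration in $\theta$ and one in $t$ produces the crucial interior contribution $\int w\,(\partial_{t,\theta}v)^2$, a boundary term $\int_{\partial\Cy_\tau}\partial_{t,\theta}v\,\partial_\theta v$, and a leftover interior $-2\int w\,\partial_{t,\theta}v\,\partial_\theta v$ to be handled below.

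Collecting the contributions, the interior coefficients of $(\partial_t v)^2$ and $v^2$ must cancel identically (from $16-8-8=0$ and $16+16-32=0$), and a final integration by parts $\int w\,\partial_{t,\theta}v\,\partial_\theta v=\int w\,(\partial_\theta v)^2-\frac{1}{2}\int_{\partial\Cy_\tau}(\partial_\theta v)^2$ reduces the remaining interior coefficient of $(\partial_\theta v)^2$ to the prescribed $-4$. Aggregating the surviving boundary contributions yields exactly $2\int_{\partial\Cy_\tau}\{2(\partial_t v)^2-4v\partial_t v+4v^2-(\partial_\theta v)^2+\partial_{t,\theta}v\,\partial_\theta v\}$, which is the announced formula.

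The computation is mechanical but long and error-prone; the central subtlety is ensuring that every boundary term at $t=+\infty$ vanishes, which is precisely what the space $H^2_\lin(\Cy_0)$ is tailored for. The most delicate step is the treatment of $\int w\,\partial_{t,t}v\,\partial_{\theta,\theta}v$, since it is the unique source of the interior term $(\partial_{t,\theta}v)^2$ appearing in $\G$, and its reduction forces the subsequent bookkeeping to balance the $(\partial_\theta v)^2$ coefficient down to $-4$ and to aggregate the correct boundary form.
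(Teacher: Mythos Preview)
Your proposal is correct and follows essentially the same route as the paper: the same log-polar change of variables leading to the pointwise identity $\Delta u=\partial_{t,t}v-4\partial_tv+4v+\partial_{\theta,\theta}v$, the same expansion of the square, and the same term-by-term integrations by parts against the weight $e^{-2(t-\tau)}$ (with the $H^2_\lin$ hypothesis killing the contributions at $t=+\infty$). The only cosmetic difference is that the paper displays each of the five nontrivial integration-by-parts identities explicitly before summing, whereas you narrate the bookkeeping and spot-check the cancellations $16-8-8=0$, $16+16-32=0$ and the final $(\partial_\theta v)^2$ coefficient.
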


\begin{proof}
We express $u$ in terms of $v$ with

\[u(re^{i\theta})=r^2v(-\log(r),\theta).\]
Then
\[\Delta u(re^{i\theta})=\left(\partial_{\theta,\theta}v+\partial_{t,t}v-4\partial_tv+4v\right)(-\log(r),\theta),\]
so we may rewrite the energy of $u$ in the disk $\D_{e^{-\tau}}$ as
\begin{equation}\label{eq_lien_energie_u_v}
\int_{\D_{e^{-\tau}}}\left\{|\Delta u|^2+\chi_{u\neq 0}\right\}=\int_{\Cy_{\tau}}e^{-2t}\left\{|\partial_{\theta,\theta}v+\partial_{t,t}v-4\partial_tv+4v|^2+\chi_{v\neq 0}\right\}.
\end{equation}
We develop this expression:
\begin{align*}
\int_{\Cy_{\tau}}e^{-2(t-\tau)}&\left\{|\partial_{t,t}v-4\partial_tv+4v+\partial_{\theta,\theta}v|^2+\chi_{v\neq 0}\right\}\\
&=\int_{\Cy_\tau}e^{-2(t-\tau)}\left\{\left(\partial_{t,t}v\right)^2+16 \left(\partial_tv\right)^2+16v^2+\left(\partial_{\theta,\theta}v\right)^2+\chi_{v\neq 0}\right\}\\
&\;\;\;+\int_{\Cy_\tau}e^{-2(t-\tau)}\left\{-8 \partial_{t,t}v\partial_tv+8 \partial_{t,t}vv +2 \partial_{t,t}v\partial_{\theta,\theta}v-32 \partial_tvv-8 \partial_tv\partial_{\theta,\theta}v+8 v \partial_{\theta,\theta}v\right\}.
\end{align*}
By integration by parts we have:
\begin{align*}
\int_{\Cy_\tau}e^{-2(t-\tau)}(-8\partial_{t,t}v\partial_tv)&=\int_{\Cy_\tau}e^{-2(t-\tau)}(-8\left(\partial_tv\right)^2)+\int_{\partial \Cy_\tau}4\left(\partial_tv\right)^2\\
\int_{\Cy_\tau}e^{-2(t-\tau)}8\partial_{t,t}vv&=\int_{\Cy_\tau}e^{-2(t-\tau)}8(-\partial_tv+2v)\partial_tv+\int_{\partial \Cy_\tau}-8v\partial_tv\\
&=\int_{\Cy_\tau}e^{-2(t-\tau)}(-8\left(\partial_tv\right)^2+16v^2)+\int_{\partial \Cy_\tau}(-8v\partial_tv-8v^2)\\
\int_{\Cy_\tau}e^{-2(t-\tau)}(-32\partial_tvv)&=\int_{\Cy_\tau}e^{-2(t-\tau)}(-32v^2)+\int_{\partial \Cy_\tau}16v^2\\
\int_{\Cy_\tau}e^{-2(t-\tau)}(-8\partial_tv\partial_{\theta,\theta}v)&=\int_{\Cy_\tau}e^{-2(t-\tau)}8\partial_{t,\theta}v\partial_{\theta}v\\
&=\int_{\Cy_\tau}e^{-2(t-\tau)}8\left(\partial_{\theta}v\right)^2+\int_{\partial \Cy_\tau}-4 \left(\partial_\theta v\right)^2.
\end{align*}
In each integration by parts we used the growth estimate \eqref{eq_def_growth} so that there is no boundary term at $t=+\infty$. Finally, for almost every $\tau\geq 0$ we have
\begin{align*}
\int_{\Cy_\tau}e^{-2(t-\tau)}2\partial_{t,t}v\partial_{\theta,\theta}v&=\int_{\Cy_\tau}e^{-2(t-\tau)}(2\left(\partial_{t,\theta}v\right)^2-4\left(\partial_\theta v\right)^2)+\int_{\partial \Cy_\tau}(2\left(\partial_{\theta}v\right)^2+2\partial_{t,\theta}v\partial_{\theta}v).
\end{align*}
Summing all these contributions, we indeed get for almost every $\tau\geq 0$:

\begin{equation}\label{eq_aux_changevariable}
\begin{split}
\int_{\Cy_{\tau}}&e^{-2(t-\tau)}\left\{|\partial_{t,t}v+4\partial_tv+4v+\partial_{\theta,\theta}v|^2+\chi_{v\neq 0}\right\}\\
&=\int_{\Cy_\tau}e^{-2(t-\tau)}\left\{\left(\partial_{t,t}v\right)^2+2\left(\partial_{t,\theta}v\right)^2+\left(\partial_{\theta,\theta}v\right)^2-4\left(\partial_{\theta}v\right)^2+\chi_{v\neq 0}\right\}\\
&\;\;\;+2\int_{\partial \Cy_\tau}\left\{2\left(\partial_tv\right)^2-4v \partial_tv+4v^2-\left(\partial_\theta v\right)^2+\partial_{t,\theta}v\partial_\theta v\right\}d\theta.
\end{split}
\end{equation}

\end{proof}

\begin{definition}\label{def_minimizerG}
A function $v\in H^2_\loc(\Cy_\tau)$ is called a minimizer of $\G(\cdot ,\tau)$ if $v\in H^2_\lin(\Cy_\tau)$ and if letting $$u\left(re^{i\theta}\right)=r^2v\left(-\log(r),\theta\right),$$
we have $u\in\M(\D_{e^{-\tau}})$.
\end{definition}
In particular, for any $w\in H^2_\lin(\Cy_\tau)$ such that $\{v\neq w\}\subset \Cy_{\tau+\eps}$ (for some $\eps>0$), we have (by Lemma \ref{lem_change_variable_diskexp}):
\[\G(v,\tau)\leq \G(w,\tau).\]
The reason we do not define minimizers of $\G(\cdot,\tau)$ only by this condition, is because this does not take into account competitors for $u$ for which either $u(0)$ or $\nabla u(0)$ do not vanish. Indeed, $\G(\cdot,\tau)$ is \textit{a priori} only defined for functions that grow sufficiently slow as $t\to +\infty$.

We now define a ``corrected'' energy, for any $v\in H^2_\lin(\Cy_\tau)$:
\begin{equation}\label{def_W_ronde}
\W(v,\tau)=\G(v,\tau)+2\int_{\partial \Cy_\tau}\partial_tv(\partial_tv-\partial_{t,t}v),
\end{equation}
where the corrective term is chosen so that $\W'$ has a sign.
Due to the last term,  $\W(v,\tau)$ is \textit{a priori}  defined for almost every $\tau$ only. The next result implies in fact that $\W(v,\tau)$ is continuous in $\tau$ when $v$ is a minimizer, so it is well-defined for every $\tau> 0$.

\begin{theorem}\label{Th_Monotonicity_exp}
Let $v$ be a minimizer of $\G(\cdot\ ;0)$, then $\tau\mapsto \W(v,\tau)$ belongs to $W^{1,1}_{\loc}(\R_+)$, and its weak derivative (defined for almost every $\tau\geq 0$) is
\[\W'(v,\tau)=-4\int_{\partial \Cy_\tau}\left\{\left(\partial_{t,t}v\right)^2+\left(\partial_{t,\theta}v\right)^2\right\}.\]
\end{theorem}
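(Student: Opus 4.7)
The plan is to differentiate $\mathcal{W}(v,\tau)$ directly in $\tau$, separating the contribution of the bulk integral $\mathcal{G}(v,\tau)$ from that of the boundary correction, and then to use the minimality of $v$ through an inner variation in the $t$-direction to cancel the bulk part, leaving exactly the desired boundary expression on $\partial\mathcal{C}_\tau$.

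First, differentiating the weighted bulk integral using the identity $\frac{d}{d\tau}\int_\tau^\infty e^{-2(t-\tau)}f\,dt = 2\int_\tau^\infty e^{-2(t-\tau)}f\,dt - \int_{\{\tau\}}f$ gives, for a.e.\ $\tau>0$,
$$\frac{d}{d\tau}\mathcal{G}(v,\tau)=2\mathcal{G}(v,\tau)-\int_{\partial\mathcal{C}_\tau}\left\{(\partial_{t,t}v)^2+2(\partial_{t,\theta}v)^2+(\partial_{\theta,\theta}v)^2-4(\partial_\theta v)^2+\chi_{v\neq 0}\right\}.$$
The correction in \eqref{def_W_ronde} is an integral over the circle $\{\tau\}\times\mathbb{S}^1$, so its $\tau$-derivative is simply $2\int_{\partial\mathcal{C}_\tau}\partial_t\bigl(\partial_t v(\partial_t v-\partial_{t,t}v)\bigr)$, which expands as $2\int_{\partial\mathcal{C}_\tau}\{2(\partial_{t,t}v)^2 - \partial_t v\,\partial_{t,t,t}v - (\partial_{t,t}v)^2\}$, i.e.\ boundary terms involving up to three $t$-derivatives of $v$.

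The second step is the inner-variation identity. For $\eta\in\mathcal{C}^\infty_c((\tau,+\infty))$, set $v_s(t,\theta)=v(t+s\eta(t),\theta)$; by Lemma \ref{lem_change_variable_diskexp} the corresponding deformation of $u$ is compactly supported in $\mathbb{D}_{e^{-\tau}}$, so $\frac{d}{ds}|_{s=0}\mathcal{G}(v_s,\tau)=0$. Expanding each square, using $\partial_t v_s=(1+s\eta')\partial_t v + O(s^2)$, $\partial_{t,t}v_s=(1+s\eta')^2\partial_{t,t}v+s\eta''\partial_t v+O(s^2)$, and accounting for the weight $e^{-2(t-\tau)}$ (which contributes through its derivative in $t$ via the change of variable), one obtains a Pohozaev-type identity
$$\int_{\mathcal{C}_\tau}e^{-2(t-\tau)}\eta(t)\,P[v]\,dt\,d\theta+\int_{\mathcal{C}_\tau}e^{-2(t-\tau)}\eta'(t)\,Q[v]\,dt\,d\theta+\int_{\mathcal{C}_\tau}e^{-2(t-\tau)}\eta''(t)\,R[v]=0,$$
where $P,Q,R$ are explicit quadratic expressions in the first and second derivatives of $v$ (and involve $\chi_{v\neq 0}$). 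Approximating the characteristic function $\chi_{(\tau,+\infty)}$ by a sequence $\eta_n$ of smooth cutoffs supported in $(\tau,\tau+n)$ and passing to the limit (justified because the growth condition \eqref{eq_def_growth} makes the tails integrable against $e^{-2(t-\tau)}$) yields a clean identity that expresses $2\mathcal{G}(v,\tau)$ as a boundary integral on $\partial\mathcal{C}_\tau$ involving $\partial_t v$, $\partial_{t,t}v$, $\partial_\theta v$, $\partial_{t,\theta}v$ and lower order traces.

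Finally, substituting this bulk-to-boundary identity into the expression for $\frac{d}{d\tau}\mathcal{G}(v,\tau)$ cancels the $2\mathcal{G}(v,\tau)$ term. Adding the derivative of the correction, using integration by parts on the circle $\{\tau\}\times\mathbb{S}^1$ in the $\theta$-variable (no boundary terms thanks to periodicity) to turn quantities like $\partial_\theta v\,\partial_{t,\theta}v$ and $\partial_{\theta,\theta}v\,\partial_{t,t}v$ into expressions involving $(\partial_{t,\theta}v)^2$ and $(\partial_{t,t}v)^2$, and carefully matching all coefficients, the sum collapses to $-4\int_{\partial\mathcal{C}_\tau}\{(\partial_{t,t}v)^2+(\partial_{t,\theta}v)^2\}$. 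The $W^{1,1}_{\mathrm{loc}}$ regularity then follows: by Fubini the quantity $\int_{\partial\mathcal{C}_\tau}\{(\partial_{t,t}v)^2+(\partial_{t,\theta}v)^2\}$ is locally integrable in $\tau$ (use Lemma \ref{lem_Cacciopoli} on dyadic annuli and translate back to the cylinder), and since all terms appearing in $\mathcal{W}(v,\tau)$ are traces that are absolutely continuous in $\tau$ in the interior of $\mathcal{C}_0$, the computed weak derivative really is the distributional one.

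I expect the main obstacle to be the bookkeeping in the inner-variation identity: each weight factor $e^{-2(t-\tau)}$ contributes a $-2$ upon integration by parts against $\eta$, and every $(\partial_t v)^2$, $(\partial_{t,\theta}v)^2$, $(\partial_{\theta,\theta}v)^2$ term must end up with exactly the right coefficient so that the final bulk cancellation occurs and the boundary residue reduces cleanly to $-4[(\partial_{t,t}v)^2+(\partial_{t,\theta}v)^2]$. A secondary technical point is justifying that the term $\chi_{v\neq 0}\eta'(t)$ arising in the inner variation passes to the limit as $\eta\to\chi_{(\tau,\infty)}$, which is where the admissibility of the deformation (compact support, hence no interaction with the set $\{v=0\}$ other than as a change of variable preserving the area term) is essential.
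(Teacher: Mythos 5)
Your strategy (inner variations in the $t$-direction) is in the same spirit as the paper's, but the way you organize the computation introduces a genuine gap: two of your steps require third-order quantities that do not exist for a minimizer. First, you differentiate the boundary correction $2\int_{\partial\Cy_\tau}\partial_t v\,(\partial_t v-\partial_{t,t}v)$ in $\tau$, which produces $\partial_t v\,\partial_{t,t,t}v$ on the circle $\{t=\tau\}$ (incidentally, the expansion should read $\partial_t\bigl[\partial_tv(\partial_tv-\partial_{t,t}v)\bigr]=2\partial_tv\,\partial_{t,t}v-(\partial_{t,t}v)^2-\partial_tv\,\partial_{t,t,t}v$, not $2(\partial_{t,t}v)^2-\dots$). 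Second, in your Pohozaev identity the term $\int e^{-2(t-\tau)}\eta''\,R[v]$, in the limit $\eta\to\chi_{(\tau,\infty)}$, becomes a distributional derivative of the trace of $R[v]$ at $t=\tau$ — again a third-order object. But a minimizer only satisfies $\Delta u\in\mathrm{BMO}_\loc$, i.e. $v\in W^{2,p}_\loc$; across the free boundary third derivatives are measures (already for $u^{\mathrm{I}}=\tfrac12 y_+^2$), and the circles $\partial\Cy_\tau$ generically cross $\partial\Spt(v)$, so these traces are not integrable functions and neither manipulation is justified. Relatedly, your last paragraph assumes that the trace terms in $\W(v,\tau)$ are absolutely continuous in $\tau$; as the paper itself points out, $\W(v,\cdot)$ is a priori only defined for almost every $\tau$, and its continuity is a \emph{consequence} of the theorem, not an admissible input.

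The repair is to never leave the weak formulation, which is how the paper proceeds: for $\varphi\in\mathcal{C}^\infty_c(\R_{>0})$ one uses the single global deformation $f(t)=\int_0^te^{2s}\varphi(s)\,ds$ (vanishing near $0$, bounded together with $f',f''$, but not compactly supported), writes the vanishing first variation for $v_\eps(t,\theta)=v(\psi_\eps(t),\theta)$, and observes that this particular $f$ turns the bulk terms into $\int_0^{+\infty}\varphi'(t)\,\G(v,t)\,dt$ (via one integration by parts in $t$ against $\varphi$, using $\tfrac{d}{dt}\{-e^{-2t}\G(v,t)\}$), while the $f''$-term, after moving a single $t$-derivative off $(\partial_tv)^2$ onto $\varphi$, gives $2\int_0^{+\infty}\varphi'(t)\,\langle\partial_tv,\partial_tv-\partial_{t,t}v\rangle\,dt$. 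The resulting identity $\int_0^{+\infty}\varphi'\,\W(v,\cdot)=4\int_0^{+\infty}\varphi\,\bigl(\Vert\partial_{t,t}v\Vert^2+\Vert\partial_{t,\theta}v\Vert^2\bigr)$ is exactly the statement that $\W(v,\cdot)\in W^{1,1}_\loc$ with the claimed derivative; no pointwise differentiation of traces, no $\partial_{t,t,t}v$, and no limit of cutoffs $\eta\to\chi_{(\tau,\infty)}$ is ever needed. If you want to keep your localized, pointwise-in-$\tau$ presentation, you would have to prove the missing trace regularity first, which is essentially as hard as the theorem itself.
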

Here $\W'(v,\tau)$ designates the derivative of $\W(v,\tau)$ along the variable $\tau$, which exists in $L^1_\loc(\R_+)$ in the weak sense since $\W(v,\cdot)$ is absolutely continuous. In particular, $\W(v,\tau)$ is constant in $\tau$ if and only if $\partial_t v(t,\theta)$ is constant. 
\begin{proof}
We write $\Vert\cdot\Vert$ (resp. $\langle\cdot,\cdot\rangle$) the $L^2(\Sp)$ norm (resp. the $L^2(\Sp)$ scalar product). It is sufficient to prove the following: for any $\varphi\in\mathcal{C}^\infty_c(\R_{>0},\R)$, we have
\begin{equation}\label{eq_proof_monotonie}
\int_{0}^{+\infty}\varphi'(t)\W(v,t)dt=4\int_{0}^{+\infty}\varphi(t)\left\{\Vert \partial_{t,t}v(t)\Vert^2+\Vert \partial_{t,\theta}v(t)\Vert^2 \right\}dt.
\end{equation}

Let now $f\in\mathcal{C}^\infty(\R_+,\R)$, such that $f=0$ in a neighbourhood of $0$, and $f,f',f''$ are bounded in $L^\infty(\R_+)$ (note that $f$ does \textbf{not} have compact support in general). For any small enough $\eps$ (positive of negative), the function $t\mapsto t+\eps f(t)$ is a diffeomorphism of $\R_+$, with inverse $\psi_\eps$. Let
$$v_\eps(t,\theta)=v(\psi_\eps(t),\theta).$$
Then $v_\eps\in H^2_\lin(\Cy_0)$ is a valid competitor for $v$. We compute
\begin{align*}
\partial_{\theta}v_\eps(t,\theta)&=\partial_{\theta}v(\psi_\eps(t),\theta),\qquad \partial_{\theta,\theta}v_\eps(t,\theta)=\partial_{\theta,\theta}v(\psi_\eps(t),\theta)\\
\partial_{t}v_\eps(t,\theta)&=(1-\eps f'(t)+o(\eps))\partial_{t}v(\psi_\eps(t),\theta),\\
\partial_{t,\theta}v_\eps(t,\theta)&=(1-\eps f'(t)+o(\eps))\partial_{t,\theta}v(\psi_\eps(t),\theta),\\
\partial_{t,t}v_\eps(t,\theta)&=(1-2\eps f'(t)+o(\eps))\partial_{t,t}v(\psi_\eps(t),\theta)-(\eps f''(t)+o(\eps))\partial_{t}v(\psi_\eps(t),\theta),
\end{align*}
so
\begin{align*}
\G(v_\eps,0)=\G(v,0)+o(\eps)&-\eps\int_{\Cy_0}e^{-2t}\left\{4f'(\partial_{t,t}v)^2+2f''\partial_t v\partial_{t,t}v+4f'(\partial_{t,\theta}v)^2\right\}\\
&+\eps \int_{\Cy_0}e^{-2t}(f'-2f)\left\{(\partial_{t,t}v)^2+2(\partial_{t,\theta}v)^2+(\partial_{\theta,\theta}v)^2-4(\partial_{\theta}v)^2+\chi_{v\neq 0}\right\}.
\end{align*}
Taking the derivative at $\eps=0$ above, we have for any such $f$ that
\begin{align*}
\int_{\Cy_0}e^{-2t}(f'-2f)&\left\{(\partial_{t,t}v)^2+2(\partial_{t,\theta}v)^2+(\partial_{\theta,\theta}v)^2-4(\partial_{\theta}v)^2+\chi_{v\neq 0}\right\}-2e^{-2t}f''\partial_t v\partial_{t,t}v \\
&=4\int_{\Cy_0}e^{-2t}f'\left\{( \partial_{t,t}v)^2+( \partial_{t,\theta}v)^2\right\}.
\end{align*}
Let $\varphi\in\mathcal{C}^\infty_c(\R_{>0},\R)$, and $f(t)=\int_{0}^{t}e^{2s}\varphi(s)ds$. The function $f$ verifies the previous hypothesis (in particular it is constant for any large enough $t$, meaning it is bounded), and $f'(t)=e^{2t}\varphi(t)$, $f''(t)=e^{2t}\left(\varphi'(t)+2\varphi(t)\right)$, so the previous relation becomes
\begin{multline*}
\int_{\Cy_0}\left(\varphi(t)-2e^{-2t}\int_{0}^{t}e^{2s}\varphi(s)ds\right)\left\{(\partial_{t,t}v)^2+2(\partial_{t,\theta}v)^2+(\partial_{\theta,\theta}v)^2-4(\partial_{\theta}v)^2+\chi_{v\neq 0}\right\} \\
-2\int_{\Cy_0}\left(\varphi'(t)+2\varphi(t)\right)\partial_t v\partial_{t,t}v =4\int_{\Cy_0}\varphi(t)\left\{( \partial_{t,t}v)^2+( \partial_{t,\theta}v)^2\right\}. 
\end{multline*}
On one hand,
$$-2\int_{\Cy_0}\left(\varphi'(t)+2\varphi(t)\right)\partial_t v\partial_{t,t}v =2\int_{0}^{+\infty}\varphi'(t)\langle \partial_t v(t),\partial_t v(t)-\partial_{t,t}v(t)\rangle dt,$$
and on the other hand,
\begin{align*}
\int_{\Cy_0}&\left(\varphi(t)-2e^{-2t}\int_{0}^{t}e^{2s}\varphi(s)ds\right)\left\{(\partial_{t,t}v)^2+2(\partial_{t,\theta}v)^2+(\partial_{\theta,\theta}v)^2-4(\partial_{\theta}v)^2+\chi_{v\neq 0}\right\} \\
&=\int_{\Cy_0}\left(e^{2t}\varphi(t)-2\int_{0}^{t}e^{2s}\varphi(s)ds\right)e^{-2t}\left\{(\partial_{t,t}v)^2+2(\partial_{t,\theta}v)^2+(\partial_{\theta,\theta}v)^2-4(\partial_{\theta}v)^2+\chi_{v\neq 0}\right\} \\
&=\int_{0}^{+\infty}\left(e^{2t}\varphi(t)-2\int_{0}^{t}e^{2s}\varphi(s)ds\right)\frac{d}{dt}\left\{-e^{-2t}\G(v,t)\right\}dt\\
&=\int_{0}^{+\infty}\varphi'(t)\G(v,t)dt,
\end{align*}
by integration by parts (we remind that $\varphi$ has compact support). As a consequence, we obtain that for any $\varphi\in\mathcal{C}^\infty_c(\R_{>0},\R)$, we have
$$\int_{0}^{+\infty}\varphi'(t)\left\{\G(v,t)+2\langle\partial_t v(t),\partial_t v(t)-\partial_{t,t}v(t)\rangle\right\}dt=4\int_{0}^{+\infty}\varphi(t)\left\{\Vert \partial_{t,t}v(t)\Vert^2+\Vert\partial_{t,\theta}v(t)\Vert^2\right\}dt.$$
By the definition of $\W$ in \eqref{def_W_ronde}, we have obtained that $t\mapsto \W(v,t)$ admits a weak derivative, which is of the form given in \eqref{eq_proof_monotonie}.
\end{proof}

We may now state this theorem in usual coordinates (both will be useful). Suppose $u\in\M(\D_1)$ such that $u(0)=|\nabla u(0)|=0$, and we remind that for any $r\in (0,1)$ we let
$$u_r(x)=\frac{u(rx)}{r^2}$$
the natural $2$-homogeneous rescaling of $u$, defined in $\D_{1/r}$. To avoid notational ambiguity, we always denote $\partial_r$ the radial derivative of a function, and $\frac{d}{dr}$ the derivative with respect to the parameter $r$, in other words
\begin{align*}
\forall x\in\Sp,\ \frac{d}{dr}u_r(x)&=\frac{1}{r}(\partial_r u_r(x)-2u_r(x)).
\end{align*}
We denote
\begin{equation}\label{eq_def_N}
N(u,r)=\int_{\Sp}\left(r^2\left|\frac{d}{dr}u_r\right|^2+\left|\partial_\theta u_r\right|^2+|u_r|^2\right).
\end{equation}
In particular $N(u,r)$ verifies
$$ \Vert u_r\Vert_{L^2(\Sp)}^2+\Vert \nabla u_r\Vert_{L^2(\Sp)}^2\lesssim N(u,r)\lesssim \Vert u_r\Vert_{L^2(\Sp)}^2+\Vert \nabla u_r\Vert_{L^2(\Sp)}^2,$$
$r\mapsto N(u,r)$ is absolutely continuous, and its (weak) derivative defined almost everywhere is
\begin{equation}\label{eq_exprN}
\begin{split}
N'(u,r)=\frac{2}{r}\int_{\Sp}\left(\left(\partial_r u_r-2u_r\right)\left(\partial_{r,r}u_r-3\partial_r u_r+4u_r\right)+\left(\partial_{r,\theta}u_r-2\partial_{\theta}u_r\right)\partial_\theta u_r+\left(\partial_{r}u_r-2u_r\right) u_r\right)\\
=\frac{2}{r}\int_{\Sp}\left(\partial_r u_r\partial_{r,r}u_r-2u_r\partial_{r,r}u_r+\partial_{\theta}u_r\partial_{r,\theta}u_r-3(\partial_ru_r)^2+11 u_r\partial_r u_{r}-2(\partial_\theta u_r)^2-10u_r^2\right).
\end{split}
\end{equation}
We will also denote
\begin{equation}\label{eq_def_R}
R(u,r):=2\int_{\Sp}\left\{(\partial_\theta u_r)^2-(\partial_r u_r)^2+2u_r^2-u_r\partial_r u_r\right\}.
\end{equation}

\begin{theorem}\label{Th_Monotonicity_disk}
Let $u\in\M(\D_1)$ such that $u(0)=|\nabla u(0)|=0$, define
\begin{equation}\label{def_W_disk}
W(u,r):=\frac{1}{r^2}E(u;\D_r)+rN'(u,r)+R(u,r).
\end{equation}
Then $r\mapsto W(u,r)$ is non-decreasing, and it is constant in $r$ if and only if $u$ is $2$-homogeneous and in this case
$$W(u,r)=W(u,1)=\left|\{u\neq 0\}\cap\D_1\right|.$$

Moreover, if we denote $v(t,\theta)=r^{-2}u\left(re^{i\theta}\right)$ where $r=e^{-t}$, then $v$ is a minimizer of $\G(\cdot\ ;0)$ and for any $t\geq 0$:
$$\W(v,t)=W(u,r).$$
\end{theorem}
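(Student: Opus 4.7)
The plan is to reduce this statement to the already-proved monotonicity of Theorem \ref{Th_Monotonicity_exp} via the change of variables $v(t,\theta):=e^{2t}u(e^{-t+i\theta})$. There are three things to check: (a) $v$ is a valid minimizer of $\G(\cdot;0)$ in the sense of Definition \ref{def_minimizerG}, (b) $W(u,r)=\W(v,\tau)$ for $\tau=-\log r$, and (c) the characterization of the equality case together with the explicit value of the constant. Given (b), the monotonicity of $W$ in $r$ is automatic, since $\W'(v,\tau)\le 0$ by Theorem \ref{Th_Monotonicity_exp} and $\tau=-\log r$ decreases with $r$.

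For (a), the hypothesis $u(0)=|\nabla u(0)|=0$ combined with the log-Lipschitz estimate from Lemma \ref{lem_C1log} yields $|u(p)|\lesssim |p|^2\log(1/|p|)$ and analogous growth on the derivatives of $u$. After the change of variables this gives $\Vert v\Vert_{H^2(\Cy_t\setminus\Cy_{t+1})}\lesssim 1+t$, i.e.\ $v\in H^2_\lin(\Cy_0)$, and then Definition \ref{def_minimizerG} directly transfers the minimality from $u\in\M(\D_1)$ to $v$. For (b), which is the main technical step, I would apply Lemma \ref{lem_change_variable_diskexp} to write $\frac{1}{r^2}E(u;\D_r)=\G(v,\tau)+B_1$, where $B_1$ is the explicit boundary integral on $\partial\Cy_\tau$; then, since $\W(v,\tau)=\G(v,\tau)+B_2$ with $B_2=2\int_{\partial\Cy_\tau}\partial_tv(\partial_tv-\partial_{t,t}v)$, the identity $W(u,r)=\W(v,\tau)$ amounts to the pointwise equality $rN'(u,r)+R(u,r)=B_2-B_1$. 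Unrolling the definition of $v$ gives the transformation rules on $\partial\Cy_\tau$,
\[
v=u_r,\quad \partial_\theta v=\partial_\theta u_r,\quad \partial_tv=2u_r-\partial_r u_r,\quad \partial_{t,\theta}v=2\partial_\theta u_r-\partial_{r,\theta}u_r,\quad \partial_{t,t}v=\partial_{r,r}u_r-3\partial_r u_r+4u_r,
\]
and substituting these into $B_2-B_1$ and comparing with the explicit formulas \eqref{eq_exprN} and \eqref{eq_def_R} is a direct algebraic verification in which the $u_r^2$, $(\partial_r u_r)^2$ and $(\partial_\theta u_r)^2$ contributions recombine correctly.

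For (c), $W(u,r)$ constant in $r$ forces $\partial_{t,t}v=\partial_{t,\theta}v=0$ a.e.\ by the derivative formula of Theorem \ref{Th_Monotonicity_exp}, hence $v(t,\theta)=a(\theta)t+h(\theta)$, which in disk coordinates reads $u(re^{i\theta})=r^2(h(\theta)-a(\theta)\log r)$. Ruling out nontrivial $a$ is the most delicate point: I expect to use that $u$ is biharmonic on $\Spt(u)$, which after substitution forces $a^{(4)}+4a''=0$ on the support, and then to combine this with the $C^{1,\alpha}$ regularity and the free boundary structure at $\partial\Spt(u)$ (which cannot accommodate a $\log r$ term along a conical free boundary) to conclude $a\equiv 0$, so that $u=r^2h(\theta)$ is genuinely $2$-homogeneous. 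In this case $v=h(\theta)$ is $t$-independent, all boundary terms involving a $t$-derivative vanish, and $\W(v,\tau)=\G(v,\tau)=\tfrac12\int_\Sp\bigl((h'')^2-4(h')^2+\chi_{h\neq 0}\bigr)$. Finally, since $h^{(4)}+4h''=0$ on $\{h\neq 0\}$ and $h=h'=0$ on $\partial\{h\neq 0\}$ (inherited from the $C^1$ regularity of $u$), two integrations by parts yield $\int_\Sp(h'')^2=4\int_\Sp(h')^2$, whence $W(u,1)=\tfrac12\int_\Sp\chi_{h\neq 0}=|\{u\neq 0\}\cap\D_1|$. The main obstacles are the algebraic matching in (b) and the argument excluding a nontrivial $\log r$ contribution in (c).
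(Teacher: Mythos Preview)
Your steps (a) and (b) are correct and match the paper's route: the identity $W(u,r)=\W(v,-\log r)$ is obtained exactly by the change of variables computation you describe, and the monotonicity then follows from Theorem \ref{Th_Monotonicity_exp}.

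The gap is in part (c). You write $v(t,\theta)=a(\theta)t+h(\theta)$, but you have already imposed $\partial_{t,\theta}v=0$, which forces $a'(\theta)=0$: the coefficient of $t$ is a \emph{constant} $c$, not a function of $\theta$. This simplification matters, because your proposed mechanism for excluding $a\not\equiv 0$ (an ODE $a^{(4)}+4a''=0$ together with ``the free boundary structure cannot accommodate a $\log r$ term along a conical free boundary'') is both unnecessary and, as stated, unworkable: if $c\neq 0$ there is no free boundary near the origin at all, since $cr^2\log r$ dominates $r^2h(\theta)$ for small $r$, so $u\neq 0$ in a punctured neighbourhood of $0$.

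The paper's argument, once you know $v(t,\theta)=h(\theta)+ct$, is short: assume $c\neq 0$; then $u\neq 0$ in $\D_{\bar r}\setminus\{0\}$ for some small $\bar r$, so by comparison with the biharmonic extension (minimality of $u$ with full support) $u$ is biharmonic in $\D_{\bar r}$. But $\Delta(cr^2\log r)=c(4\log r+4)$ and $\Delta(\log r)=2\pi\delta_0$ in $\R^2$, so $\Delta^2 u$ carries a nonzero Dirac mass at the origin, contradicting biharmonicity. Hence $c=0$ and $u$ is $2$-homogeneous.

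Your computation of the value $W(u,1)=|\{u\neq 0\}\cap\D_1|$ via $h^{(4)}+4h''=0$ and integration by parts is correct (and in fact more explicit than what the paper writes at that step).
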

Although the corrective term looks different, once everything is developed we find that the normalized energy of \eqref{def_W_disk} is equal to the one defined in \cite[Th. 1.12]{DKV20} (up to a scalar factor), also recalled in \eqref{eq_WDKV}.
\begin{proof}
By Lemma \ref{lem_change_variable_diskexp}, we have
$$\G(v,\tau)=e^{2\tau}E\left(u;\D_{e^{-\tau}}\right)-2\int_{\partial \Cy_\tau}\left\{2\left(\partial_tv\right)^2-4v \partial_tv+4v^2-\left(\partial_\theta v\right)^2+\partial_{t,\theta}v\partial_{\theta} v\right\}d\theta.$$
We replace $v(t,\theta)=e^{2t}u\left(e^{-t+i\theta}\right)$, so
\begin{align*}
\partial_t v(t,\theta)&=2e^{2t}u\left(e^{-t+i\theta}\right)-e^{t}\partial_ru\left(e^{-t+i\theta}\right),\\
\partial_{t,\theta} v(t,\theta)&=2e^{2t}u_{\theta}\left(e^{-t+i\theta}\right)-e^{t}\partial_{r,\theta}u\left(e^{-t+i\theta}\right),\\
\partial_{t,t} v(t,\theta)&=4e^{2t}u\left(e^{-t+i\theta}\right)-3e^{t}\partial_ru\left(e^{-t+i\theta}\right)+\partial_{r,r}u\left(e^{-t+i\theta}\right),\\
\partial_\theta v(t,\theta)&=e^{2t}\partial_\theta u\left(e^{-t+i\theta}\right).
\end{align*}
Denoting $r=e^{-\tau}$, the boundary integral term above becomes
\begin{align*}
-2\int_{\partial \Cy_\tau}&\left\{2\left(\partial_tv\right)^2-4v \partial_tv+4v^2-\left(\partial_\theta v\right)^2+\partial_{t,\theta}v\partial_{\theta} v\right\}d\theta\\
&=-\frac{2}{r}\int_{\partial\D_r}\left(2\left(\frac{2u}{r^2}-\frac{\partial_r u}{r}\right)^2-4\frac{u}{r^2}\left(\frac{2u}{r^2}-\frac{\partial_r u}{r}\right)+4\frac{u^2}{r^4}-\frac{(\partial_\theta u)^2}{r^4}+\frac{\partial_{\theta}u}{r^2}\left(\frac{2\partial_\theta u}{r^2}-\frac{\partial_{r,\theta} u}{r}\right)\right)\\
&=-\frac{2}{r}\int_{\partial\D_r}\left(8\frac{u^2}{r^4}+2\frac{(\partial_r u)^2}{r^2}-8\frac{u\partial_ru}{r^3}-8\frac{u^2}{r^4}+4\frac{u\partial_r u}{r^3}+4\frac{u^2}{r^4}+\frac{(\partial_\theta u)^2}{r^4}-\frac{\partial_\theta u\partial_{r,\theta}u}{r^3}\right)\\
&=-\frac{2}{r}\int_{\partial\D_r}\left(4\frac{u^2}{r^4}+2\frac{(\partial_r u)^2}{r^2}-4\frac{u\partial_ru}{r^3}+\frac{(\partial_\theta u)^2}{r^4}-\frac{\partial_\theta u\partial_{r,\theta}u}{r^3}\right).
\end{align*}
We obtain
$$\G(v,\tau)=\frac{1}{r^2}E(u;\D_r)-\frac{2}{r}\int_{\partial\D_r}\left(4\frac{u^2}{r^4}+2\frac{(\partial_r u)^2}{r^2}-4\frac{u\partial_ru}{r^3}+\frac{(\partial_\theta u)^2}{r^4}-\frac{\partial_\theta u\partial_{r,\theta}u}{r^3}\right).$$
Then from the definition of $\W(v,\tau)$ \eqref{def_W_ronde},
\begin{align*}
\W(v,\tau)&=\G(v,\tau)+2\int_{\partial\Cy_\tau}\partial_t v (\partial_{t}v-\partial_{t,t}v)\\
&=\G(v,\tau)+\frac{2}{r}\int_{\partial \D_r}\left(\frac{\partial_r u}{r}-2\frac{u}{r^2}\right)\left(\partial_{r,r}u-2\frac{\partial_r u}{r}+2\frac{u}{r^2}\right)\\
&=\G(v,\tau)+\frac{2}{r}\int_{\partial \D_r}\left(\frac{\partial_{r}u\partial_{r,r}u}{r}-2\frac{u\partial_{r,r}u}{r^2}-2\frac{(\partial_r u)^2}{r^2}+6\frac{u\partial_r u}{r^3}-4\frac{u^2}{r^4}\right).
\end{align*}
So replacing the expression of $\G(v,\tau)$ found above we obtain
\begin{equation}\label{eq_explicit_expressionW}
\W(v,\tau)=\frac{1}{r^2}E(u;\D_r)+\frac{2}{r}\int_{\partial\D_r}\left(\frac{\partial_r u\partial_{r,r}u}{r}-2\frac{u\partial_{r,r}u}{r^2}+\frac{\partial_{\theta}u\partial_{r,\theta}u}{r^3}-4\frac{(\partial_r u)^2}{r^2}+10\frac{u\partial_r u}{r^3}-\frac{(\partial_\theta u)^2}{r^4}-8\frac{u^2}{r^4}\right).
\end{equation}
This is the same expression as in \eqref{eq_WDKV}. Moreover, the boundary integral term on the right-hand side is exactly the sum of $rN'(u,r)$ (see \eqref{eq_exprN}) and $R(u,r)$ (see \eqref{eq_def_R}).

Suppose now that  $r\mapsto W(u,r)$ is constant. Then $t\mapsto \W(v,t)$ is constant, so by Theorem \ref{Th_Monotonicity_exp}, $\partial_{t,\theta}v$ and $\partial_{t,t}v$ are identically zero. Thus for some constant $c\in\R$, we have
$$v(t,\theta)=v(0,\theta)+ct.$$
We denote $g(\theta)=v(0,\theta)$, then $u$ may be rewritten as
$$u(re^{i\theta})=cr^2\log(r)+r^2g(\theta).$$
Assume by contradiction $c$ is non-zero. Since $u\in\mathcal{C}^{1}(\D_1,\R)$ (by Lemma \ref{lem_C1log}), in particular $g$ is bounded, so there is some sufficiently small radius $\ov{r}$ such that $u\neq 0$ in $\D_{\ov{r}}\setminus\{0\}$.
Let $h$ be the biharmonic extension of $u$ in $\D_{\ov{r}}$. Since $u$ has full support in $\D_{\ov{r}}$ we have by minimality of $u$
$$\int_{\D_{\ov{r}}}|\Delta(u-h)|^2=\int_{\D_{\ov{r}}}\left(|\Delta u|-|\Delta h|^2\right)\leq 0,$$
so $u$ is biharmonic in $\D_{\ov{r}}$. This is in contradiction with the fact that $c\neq 0$.

As a consequence, $u$ is of the form
$$u(re^{i\theta})=r^2g(\theta),$$
meaning it is $2$-homogeneous. Reciprocally if $u$ is of this form, then we see that $$W(u,r)=\W(v,-\log(r))=\W(v,0)=W(u,1)$$ for every $r<1$, so $W(u,\cdot)$ is constant.
\end{proof}

Although the boundary term in the definition of $W$ \eqref{def_W_disk} is  not continuous with respect to $H^2$, due to its structure the energy $W$ still verifies a type of Cacciopoli inequality:

\begin{lemma}\label{lem_W_cacciopoli}
Let $u\in\M(\D_1)$ such that $u(0)=|\nabla u(0)|=0$, then
$$\left|W\left(u,\frac{1}{2}\right)\right|\lesssim \Vert u\Vert_{H^1(\D_1)}^2.$$
\end{lemma}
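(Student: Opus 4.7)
The plan is to bypass the fact that the boundary integrals appearing in $W(u,1/2)$ involve second derivatives on $\partial \D_{1/2}$, for which no useful trace bound follows from $\Vert u\Vert_{H^1(\D_1)}$. Since $r \mapsto W(u,r)$ is non-decreasing by Theorem \ref{Th_Monotonicity_disk},
\begin{equation*}
8\int_{3/8}^{1/2} W(u, r)\, dr \;\leq\; W(u, 1/2) \;\leq\; 8 \int_{1/2}^{5/8} W(u, r)\, dr,
\end{equation*}
so it is enough to show $\left|\int_a^b W(u, r)\, dr\right| \lesssim \Vert u\Vert^2_{H^1(\D_1)}$ for $[a,b]\in\{[3/8,1/2],[1/2,5/8]\}$. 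After integrating $r$ over such a nondegenerate interval, Fubini in polar coordinates converts the boundary pieces of $W$ into volume integrals on the annulus $\D_b\setminus\D_a$, where they become manageable by Cauchy--Schwarz.

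From \eqref{eq_explicit_expressionW}, one writes $W(u,r) = \frac{1}{r^2}E(u;\D_r) + \frac{1}{r}\int_{\partial \D_r} T$, where $T$ is a fixed linear combination of bilinear terms in $\{u,\partial_r u,\partial_{r,r} u,\partial_\theta u,\partial_{r,\theta} u\}$ with coefficients bounded on $[a,b]$. For the volume piece,
\begin{equation*}
\int_a^b \frac{E(u;\D_r)}{r^2}\, dr \;\lesssim\; \int_{\D_b}|\Delta u|^2 + \bigl|\{u\neq 0\}\cap \D_b\bigr|.
\end{equation*}
The first summand is $\lesssim \Vert u\Vert^2_{H^2(\D_{3/4})} \lesssim \Vert u\Vert^2_{H^1(\D_1)}$ by Lemma \ref{lem_Cacciopoli}. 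For the second, I would repeat the argument from the first bullet of the proof of Lemma \ref{lem_nondegeneracy}: comparing $u$ with $(1-\eta)u$ for a cutoff $\eta\in\mathcal{C}^\infty_c(\D_{3/4},[0,1])$ equal to $1$ on $\D_b$, and using biharmonicity of $u$ on $\{u\neq 0\}$ to cancel the cross term, yields $|\{u\neq 0\}\cap \D_b|\lesssim \Vert u\Vert^2_{H^2(\D_{3/4})}$ without any smallness assumption, and one last application of Cacciopoli closes this piece.

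For the boundary piece, Fubini in polar coordinates gives
\begin{equation*}
\int_a^b \frac{1}{r}\int_{\partial \D_r} T\, dr \;=\; \int_{\D_b\setminus\D_a} \frac{T(x)}{|x|}\, dx,
\end{equation*}
and since $|x|\sim 1$ on this annulus, this is controlled by $\int_{\D_b\setminus\D_a}|T|$. Each term of $T$ is a product of two factors among $u$, $|\nabla u|$, $|\nabla^2 u|$; Cauchy--Schwarz then bounds each contribution by $\Vert u\Vert_{H^1(\D_1)}\Vert u\Vert_{H^2(\D_b)}$ or $\Vert u\Vert^2_{H^1(\D_1)}$, and a final use of Lemma \ref{lem_Cacciopoli} yields the desired $\lesssim \Vert u\Vert^2_{H^1(\D_1)}$. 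The main obstacle is precisely the absence of an $H^2$ trace on $\partial \D_{1/2}$ from an $H^1(\D_1)$ bulk control; the monotonicity formula is exactly what lets us average in $r$ and avoid this.
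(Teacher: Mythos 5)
Your proof is correct, and its skeleton --- use the monotonicity of $W$ to trap $W(u,\tfrac12)$ between averages over nearby radii, convert the boundary integrals into bulk integrals over an annulus, and conclude with the Cacciopoli estimate of Lemma \ref{lem_Cacciopoli} --- is the same as the paper's. The implementations differ in how the two awkward pieces are handled. For the boundary part, the paper keeps the structure $W=\frac{1}{r^2}E+rN'+R$ and integrates by parts in $r$ against a smooth weight $\varphi$ (so that only $N$ and $R$, which contain first-order traces, ever appear); you instead average the explicit formula \eqref{eq_explicit_expressionW} directly and keep the second-order terms $\partial_{r,r}u$, $\partial_{r,\theta}u$, which is harmless because on the annulus $\{3/8\leq|x|\leq 5/8\}$ Cauchy--Schwarz together with Lemma \ref{lem_Cacciopoli} controls even $\Vert u\Vert_{H^2}^2$ by $\Vert u\Vert_{H^1(\D_1)}^2$. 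For the term $|\{u\neq 0\}\cap\D_b|$, the paper bounds it crudely by a constant and then absorbs that constant using the nondegeneracy Lemma \ref{lem_nondegeneracy} (whence its case split $\Spt(u)\cap\D_{1/2}=\emptyset$ and the lower bound $\Vert u\Vert_{H^1(\D_1)}\gtrsim 1$); you avoid this by re-running the competitor comparison of $u$ against $(1-\eta)u$ from the first bullet of the proof of Lemma \ref{lem_nondegeneracy}, which indeed yields $|\{u\neq 0\}\cap\D_b|\lesssim\Vert u\Vert^2_{H^2(\D_{3/4})}$ with no smallness assumption, the cross term cancelling because $u$ is biharmonic on $\Spt(u)$. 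Your route thus dispenses with the case distinction and the quantitative nondegeneracy constant, at the price of invoking minimality once more through the cut-off competitor; since both ingredients are already available in the paper, the difference is organizational rather than substantive, but the argument is complete as written.
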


\begin{proof}
If $\Spt(u)\cap\D_{\frac{1}{2}}=\emptyset$ there is nothing to prove: without loss of generality we assume this is not the case. By the non-degeneracy Lemma \ref{lem_nondegeneracy} this implies $\Vert u\Vert_{H^1(\D_1)}\gtrsim 1$.

Let $\varphi\in\mathcal{C}^\infty_c\left(\left]\frac{1}{2},\frac{3}{4}\right[,\R_+\right)$ be such that $\int_{0}^{1}\varphi(s)ds=1$ and $|\varphi'|\lesssim 1$, then by the monotonicity of $r\mapsto W(u,r)$:
\begin{align*}
W\left(u,\frac{1}{2}\right)&\leq \int_{1/2}^{3/4}\varphi(r)W(u,r)dr\\
&=\int_{1/2}^{3/4}\varphi(r)\left(\frac{1}{r^2}E(u;\D_r)+rN'(u,r)+R(u,r)\right)dr\\
&=\int_{1/2}^{3/4}\left(\frac{\varphi(r)}{r^2}E(u;\D_r)-(r\varphi(r))'N(u,r)+\varphi(r)R(u,r)\right)dr\\
&\lesssim 1+\Vert u\Vert_{H^2(\D_{\frac{3}{4}})}^2\\
&\lesssim 1+ \Vert u\Vert_{H^1(\D_{1})}^2\text{ by lemma }\ref{lem_Cacciopoli} \\
&\lesssim \Vert u\Vert_{H^1(\D_{1})}^2\text{ since }\Vert u\Vert_{H^1(\D_1)}\gtrsim 1.
\end{align*}
Similarly, choosing $\varphi\in\mathcal{C}^{\infty}_c\left(\left]\frac{1}{4},\frac{1}{2}\right[,\R_+\right)$ such that $\int_{0}^{1}\varphi(s)ds=1$ and $|\varphi'|\lesssim 1$ we get the opposite estimate.
\end{proof}

The monotonicity formula extends to minimizers of $E_0(u,D):=\int_{D}|\Delta u|^2$ i.e. biharmonic functions. We denote 
\begin{equation}\label{eq_defW0}
W_0(u,r):=\frac{1}{r^2}E_0(u;\D_r)+rN'(u,r)+R(u,r).
\end{equation}
Let $u\in H^2(\D_1)$ be a biharmonic function: by using its Goursat decomposition ($h_1+(x^2+y^2)h_2$ for $h_1,h_2$ harmonic in $\D_1$), $u$ can be decomposed as
\begin{equation}\label{eq_Goursat}
u\left(re^{i\theta}\right)=\sum_{n\in\Z}\left(a_n r^{|n|+2}+b_nr^{|n|}\right)e^{in\theta}
\end{equation}
for some complex coefficients $(a_n)_{n\in\Z},(b_n)_{n\in\Z}$, such that the sum converges absolutely in $\D_1$. 
\begin{lemma}\label{lem_computationWbih}
Let $u\in H^2(\D_1)$ be a biharmonic function, and let $(a_n)_{n\in\Z}$, $(b_n)_{n\in\Z}$ be the coefficients defined in the decomposition \eqref{eq_Goursat}, then for any $r\in (0,1)$ we have
$$
W_0(u,r)=8\pi\sum_{n\in\Z}\Big\{|n|^3r^{2|n|}|a_n|^2+2|n|^2(|n|-2)r^{2|n|-2}\Re(\ov{a_n}b_n)+(|n|-2)(|n|^2-2|n|+2)r^{2|n|-4}|b_n|^2\Big\}.
$$
\end{lemma}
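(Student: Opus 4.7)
The strategy is to reduce the identity to a computation on a single Fourier mode via orthogonality, then check the resulting coefficients by direct algebra.

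Both $\frac{1}{r^2}\int_{\D_r}|\Delta u|^2$ and each of the seven boundary integrals making up $rN'(u,r)+R(u,r)$ (as expressed in \eqref{eq_explicit_expressionW}) are quadratic in $u$. Writing $u_n(r,\theta):=(a_n r^{m+2}+b_n r^m)e^{in\theta}$ with $m:=|n|$, the $L^2(\Sp)$-orthogonality of $\{e^{in\theta}\}_{n\in\Z}$ (preserved under all derivatives at play) kills every cross-mode term, so $W_0(u,r)=\sum_{n\in\Z}W_0(u_n,r)$. Using $\Delta(r^\alpha e^{in\theta})=(\alpha^2-n^2)r^{\alpha-2}e^{in\theta}$ and $m^2=n^2$, the part $b_n r^m e^{in\theta}$ lies in $\ker\Delta$, and so $\Delta u_n=4(m+1)a_n r^m e^{in\theta}$; integrating yields
\[\frac{1}{r^2}\int_{\D_r}|\Delta u_n|^2=16\pi(m+1)|a_n|^2 r^{2m}.\]

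For the boundary correction I will use
\[\partial_r u_n=\bigl((m+2)a_n r^{m+1}+mb_n r^{m-1}\bigr)e^{in\theta}, \qquad \partial_{r,r}u_n=\bigl((m+2)(m+1)a_n r^m+m(m-1)b_n r^{m-2}\bigr)e^{in\theta},\]
together with $\partial_\theta u_n=in\,u_n$ and $\partial_{r,\theta}u_n=in\,\partial_r u_n$. Each boundary integral $\int_{\partial\D_r}f\,\bar g\,d\sigma$ reduces to $2\pi r$ times the product of the radial coefficients, and then decomposes into three pieces proportional to $|a_n|^2 r^{2m}$, $\Re(\bar a_n b_n)r^{2m-2}$, and $|b_n|^2 r^{2m-4}$, each with a polynomial coefficient in $m$ (two factors of $\partial_\theta$ produce $n^2=m^2$ in the real part).

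Collecting the energy contribution and the seven boundary contributions reduces the lemma to three polynomial identities in $m$: the total coefficients must collapse to $8\pi m^3$, $16\pi m^2(m-2)$, and $8\pi(m-2)(m^2-2m+2)$ respectively. This is the only substantial step, and it is pure algebraic bookkeeping. Useful sanity checks along the way: $u\equiv b_0$ (pure $m=0$ harmonic term) gives $W_0=-32\pi b_0^2/r^4$, matching $(m-2)(m^2-2m+2)|_{m=0}=-4$; and $u=x^2-y^2$ ($m=2$ harmonic, $a_n=0$) gives $W_0=0$, matching $(m-2)|_{m=2}=0$. The main obstacle is simply keeping the algebra straight across the seven boundary terms simultaneously.
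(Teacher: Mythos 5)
Your proposal is correct and follows essentially the same route as the paper: a direct computation of $W_0$ on the Goursat/Fourier decomposition, with the quadratic form diagonalizing over the modes so that each $n$ contributes terms in $|a_n|^2$, $\Re(\ov{a_n}b_n)$, $|b_n|^2$ whose coefficients are then collected by elementary algebra (your identified targets, including the split $8\pi m^3 = 16\pi(m+1) + 8\pi(m^3-2m-2)$ between bulk and boundary contributions, are exactly what that algebra yields). The only cosmetic difference is that the paper first rescales to $r=1$ via $W_0(u,r)=W_0(u_r,1)$ and organizes the boundary terms through the Fourier coefficients $c_n[u]$, $c_n[\partial_r u]$, $c_n[\Delta u]$ rather than mode by mode at general $r$, which is the same computation.
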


\begin{proof}
For this computation, we will directly use definition \eqref{eq_WDKV}.\\
Let us first notice that for any $r<1$, $W_0(u,r)=W_0(u_r,1)$, and the coefficients associated to the Goursat decomposition of $u_r$ are $(r^{|n|}a_n,r^{|n|-2}b_n)_{n\in\Z}$: we lose no generality in proving the result at $r=1$ only, for a function $u$ that is biharmonic in a neighbourhood of $\ov{\D_1}$.

For a function $f:\ov{\D_1}\to\C$  we define
$$c_n[f]=\frac{1}{2\pi}\int_{\partial\D_1}e^{-in\theta}f(e^{i\theta})d\theta$$
the Fourier coefficients of $f$.
Then we compute 
\begin{align*}
c_n[u]&=a_n+b_n,\\
c_n[\partial_r u]&=(|n|+2)a_n+|n|b_n,\\
c_n[\partial_{\theta,\theta}u]&=-|n|^2(a_n+b_n),\\
c_n[\Delta u]&=4(|n|+1)a_n.
\end{align*}
We deduce $E_0(u;\D_1)=\int_{\D_1}\left|\sum_{n\in\Z}4(|n|+1)a_n r^{|n|}e^{in\theta}\right|^2=8\pi\sum_{n\in\Z}(2|n|+2)|a_n|^2$, and 
\begin{align*}
&N'(u,1)+R(u,1)=\int_{\partial \D_1}\left\{2\partial_r u\Delta u-10 (\partial_ru)^2-4u\Delta u+24u \partial_ru+4\partial_{\theta}u \partial_{r,\theta}u-16u^2-6(\partial_{\theta}u)^2\right\}\\
&=2\pi\sum_{n\in\Z}\Big\{2((|n|+2)a_n+|n|b_n)4(|n|+1)\ov{a_n}-10\big|(|n|+2)a_n+|n|b_n\big|^2-16(|n|+1)a_n(\ov{a_n}+\ov{b_n})\Big\}\\
&~\qquad+2\pi\sum_{n\in\Z}\Big\{(24+4|n|^2)((|n|+2)a_n+|n|b_n)(\ov{a_n}+\ov{b_n})-(16+6|n|^2)|a_n+b_n|^2\Big\}\\
&=8\pi\sum_{n\in\Z}\Big\{\left(|n|^3-2|n|-2\right)|a_n|^2+2|n|^2(|n|-2)\Re(\ov{a_n}b_n)+|n|(|n|-2)(|n|^2-2|n|+2)|b_n|^2\Big\}.
\end{align*}
Then $W_0(u,1)=E_0(u;\D_1)+N'(u,1)+R(u,1)$ simplifies to the expression of the lemma with $r=1$.
\end{proof}

\begin{remark}
Let $u\in H^2(\D_1)$ be a biharmonic function, and we write $(a_n,b_n)_{n\in\Z}$ the coefficient in the Goursat decomposition \eqref{eq_Goursat} of $u$. Then for any $r\in (0,1)$ we have
\begin{align*}
W_0'(u,r)=&\frac{16\pi}{r}\sum_{n\in\Z}|n|^4r^{2|n|}|a_n|^2+2|n|^2(|n|-1)(|n|-2)r^{2|n|-2}\Re(\ov{a_n}b_n)\\
&+\frac{16\pi}{r}\sum_{n\in\Z}(|n|-2)^2(|n|^2-2|n|+2)r^{2|n|-4}|b_n|^2.
\end{align*}
The discriminant of the $n$-th term (seen as a Hermitian form of $r^{|n|}a_n,r^{|n|-1}b_n$) is $-4|n|^4(|n|-2)^2$: for $n\notin \{-2,0,2\}$, the $n$-th term is positive definite, and for $n\in\{-2,0,2\}$, it is positive semi-definite. We find again the monotonicity of $W_0$ in this special case.
Another observation is that 
$$\frac{1}{8\pi}W_0(u,r)=-4r^{-4}|b_0|^2-r^{-2}\left(|b_1|^2+|b_{-1}|^2\right)+\mathcal{O}_{r\to 0}(1).$$
In particular, $W_0(u,0)$ is finite if and only if $u(0)=|\nabla u(0)|=0$. This implies that if $v\in\M(\D_1)$ and $0\in\Spt(v)$, then necessarily $W(v,r)\to -\infty$ as $r\to 0$.
\end{remark}

\subsection{Classification of $2$-homogeneous minimizers}\label{subsec_classification}

From the monotonicity formula of Theorem \ref{Th_Monotonicity_disk}, we expect that limits of $r\mapsto u_r:=r^{-2}u(r\cdot)$ as $r\to 0$, when they exist with a strong enough notion of convergence, should be $2$-homogeneous. For this reason we classify every $2$-homogeneous minimizers; we will see there are only four types. We start with a lemma on biharmonic functions in angular sectors. For any $\om\in (0,2\pi]$ we denote by
$$S_\om:=\{re^{i\theta}:0<r<1,\ 0<\theta<\om\}$$
the angular sector. As mentioned previously, we will denote by $t_1$ the unique fixed point of $t\in (\pi,2\pi)\mapsto \tan(t)$.
\begin{lemma}\label{lem_angularsector}
Let $\om\in (0,2\pi]$, and $u\in H^2\left(S_\om\right)\setminus\{0\}$ such that $u=\partial_\theta u=0$ on $\partial S_\om\setminus\partial\D_1$ and $\Delta^2 u=0$ on $S_\om$. Suppose $u$ is of the form
$u(re^{i\theta})=r^2b(\theta)$ (for some $b\in H^2_0([0,\om])$). Then either
\begin{itemize}
\item[(i) ] $\om=\pi$ and $b$ is a multiple of
\begin{equation}\label{def_bI}
b^{\mathrm{I}}(\theta)=\frac{1}{2}\sin(\theta)_+^2.
\end{equation}
\item[(ii) ] $\om=t_1$ and $b$ is a multiple of
\begin{equation}\label{def_bII}
b^{\mathrm{II}}(\theta)=\frac{1}{4}\left(1-\cos(2\theta)-\frac{2}{t_1}\left(\theta-\frac{\sin(2\theta)}{2}\right)\right)\chi_{0\leq \theta\leq t_1}.
\end{equation}
\item[(iii) ] $\om=2\pi$ and $b$ is a multiple of
\begin{equation}\label{def_bIII}
 b^{\mathrm{III}}(\theta)=\frac{1}{2}\sin(\theta)^2.
\end{equation}
\end{itemize}
\end{lemma}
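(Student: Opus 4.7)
The plan is to reduce biharmonicity of the ansatz $u(re^{i\theta}) = r^2 b(\theta)$ to a fourth-order linear ODE for $b$ on $[0,\omega]$, classify the admissible openings via the clamped boundary conditions, and finally identify $b$ explicitly in each case.

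First I would compute, via the polar form of the Laplacian, that $\Delta u = 4b(\theta) + b''(\theta)$, which is $r$-independent, so $\Delta^2 u = r^{-2}(b'''' + 4b'')$. Hence $\Delta^2 u = 0$ in $S_\omega$ is equivalent to the ODE $b'''' + 4b'' = 0$ on $(0,\omega)$, whose general solution is
\[
b(\theta) = A + B\theta + C\cos(2\theta) + D\sin(2\theta).
\]
The membership $b \in H^2_0([0,\omega])$ encodes the four clamped conditions $b(0) = b'(0) = b(\omega) = b'(\omega) = 0$. The conditions at $0$ give $A = -C$ and $B = -2D$, so
\[
b(\theta) = C\bigl(\cos(2\theta) - 1\bigr) + D\bigl(\sin(2\theta) - 2\theta\bigr).
\]

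Next I would write $b(\omega) = 0$ and $b'(\omega) = 0$ as a $2\times 2$ linear system in $(C,D)$ and require a nontrivial kernel. A short computation reduces the vanishing of the determinant (using $1 - \cos(2\omega) = 2\sin^2(\omega)$ and $\sin(2\omega) = 2\sin(\omega)\cos(\omega)$) to
\[
\sin(\omega)\bigl(\sin(\omega) - \omega\cos(\omega)\bigr) = 0.
\]
In $(0, 2\pi]$, the factor $\sin(\omega) = 0$ gives $\omega \in \{\pi, 2\pi\}$, while $\tan(\omega) = \omega$ (the alternative $\cos(\omega) = 0$ being inconsistent) has only the solution $\omega = t_1$, by the definition of $t_1$. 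For any other $\omega$ the system forces $(C,D) = (0,0)$, contradicting $u \not\equiv 0$.

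Finally, in each admissible case, the kernel is one-dimensional and I would match the resulting $b$ with the explicit formulas: for $\omega = \pi$ and $\omega = 2\pi$ the condition $b(\omega) = 0$ forces $D = 0$, and the remaining multiple of $\cos(2\theta) - 1 = -2\sin^2(\theta)$ coincides (up to scaling) with $b^{\mathrm{I}}$ and $b^{\mathrm{III}}$ respectively. For $\omega = t_1$, using $\sin(t_1) = t_1\cos(t_1)$ one finds $\sin(2t_1) - 2t_1 = -2t_1\sin^2(t_1)$ and $\cos(2t_1) - 1 = -2\sin^2(t_1)$, so $b(t_1) = 0$ collapses to $C + D t_1 = 0$; this matches exactly the pair $(C,D) = (-\tfrac{1}{4}, \tfrac{1}{4 t_1})$ obtained by expanding the formula for $b^{\mathrm{II}}$. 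I do not anticipate any genuine difficulty: the main point is the algebraic simplification of the determinant to an $\sin(\omega)$-$\tan(\omega)$ factorization, and the mild bookkeeping in the $t_1$ case, which the identity $\tan(t_1) = t_1$ reduces to a single linear relation.
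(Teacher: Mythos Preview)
Your proposal is correct and follows essentially the same approach as the paper: both derive the ODE $b^{(4)}+4b''=0$, use the basis $\{1,\theta,\cos(2\theta),\sin(2\theta)\}$, impose the clamped conditions, and reduce existence of a nontrivial solution to the factorization $\sin(\omega)\bigl(\sin(\omega)-\omega\cos(\omega)\bigr)=0$. The only cosmetic difference is that the paper packages the two functions vanishing to order one at $0$ as $\chi(\theta)=\theta-\tfrac{1}{2}\sin(2\theta)$ and $\chi'(\theta)=1-\cos(2\theta)$, writing the generator of the kernel uniformly as $\chi(\omega)\chi'(\theta)-\chi'(\omega)\chi(\theta)$, whereas you handle the three cases separately; this is purely a matter of presentation.
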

\begin{proof}
The equation verified by $u$ rewrites as
$$
\begin{cases}
\partial_{\theta}^4b+4\partial_\theta^2 b=0 \qquad \text{ in }(0,\om),\\
b(0)=b'(0)=b(\om)=b'(\om)=0,
\end{cases}
$$
so $b$ must be a linear combination of the solutions of the homogeneous differential equation, that is of $\cos(2\theta)$, $\sin(2\theta)$, $1$, $\theta$. Denote $\chi(\theta)=\theta-\frac{\sin(2\theta)}{2}$, the conditions $b(0)=b'(0)=0$ implies that $b$ must be a linear combination of $\chi(\theta)$ and $\chi'(\theta)=1-\cos(2\theta)$, so for some $(\alpha,\beta)\in\R^2\setminus\{(0,0)\}$:
$$
\begin{cases}
\alpha\chi(\om)+\beta\chi'(\om)&=0\\
\alpha\chi'(\om)+\beta\chi''(\om)&=0.
\end{cases}
$$
There is a nonzero solution if and only if $\chi''(\om)\chi(\om)=\chi'(\om)^2$, meaning
$$\sin(\om)\left(\om\cos(\om)-\sin(\om)\right)=0.$$
The only solutions in $(0,2\pi]$ are $\om=\pi,t_1,2\pi$, and in each case the space of solutions has dimension $1$ and is generated by the function
$$\theta\mapsto \chi(\om)\chi'(\theta)-\chi'(\om)\chi(\theta),$$
which gives $b^{\mathrm{I}},b^{\mathrm{II}},b^{\mathrm{III}}$ (up to some multiplicative constant).
\end{proof}
\begin{lemma}\label{lem_shapeder}
Let $u\in\M(\D_1)$ such that $\partial\Spt(u)=\{(x,y)\in\D_1:y=0\}$. 
\begin{itemize}[label=\textbullet]
\item If $\Spt(u)$ is a half-disk, then  the trace of $\Delta u|_{\Spt(u)}$ on $\partial\Spt(u)$ is $+1$ or $-1$.
\item If $\Spt(u)$ is two half-disks, then $u$ is biharmonic in $\D_1$ and $|\Delta u|\geq 1$ on $\partial\Spt(u)$.
\end{itemize}
\end{lemma}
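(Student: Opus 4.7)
The plan is a shape-derivative argument: in each case I will construct an admissible competitor by a small explicit deformation of $u$ and extract pointwise information on $\Delta u$ at the free boundary from the first-order minimality condition. Throughout, I will use two standard facts: $u$ is biharmonic on the interior of its support (cf.\ \cite{GM24}), hence real-analytic there; and $u\in\mathcal{C}^1(\D_1)$ by Lemma~\ref{lem_C1log} together with $u\equiv 0$ outside $\Spt(u)$, so $u=\partial_y u=\partial_x u=\partial_{xy}u=0$ along the free boundary $\{y=0\}\cap\D_1$. By classical elliptic regularity for the biharmonic operator with clamped boundary conditions on a flat piece of boundary, $u$ is moreover $\mathcal{C}^\infty$ up to $\{y=0\}\cap\D_1$ (away from $x=\pm 1$) on each side where it is nontrivial.

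For the half-disk case I will assume without loss of generality $\Spt(u)=\D_1\cap\{y>0\}$ and introduce the inner variation $v_t(x,y) := u(x,y - t\varphi(x)\eta(y))$, extended by zero below the deformed free boundary, with $\varphi\in\mathcal{C}^\infty_c((-1,1))$ of small support and $\eta\in\mathcal{C}^\infty(\R)$ a cutoff equal to $1$ near $y=0$. This extension by zero lies in $H^2(\D_1)$ by the clamped conditions and agrees with $u$ near $\partial\D_1$, so $v_t$ is admissible. A direct Taylor expansion, combined with the boundary identities $\partial_y u=\partial_{xy}u=0$ on $\{y=0\}$ and an integration by parts in the spirit of Noether's theorem for the functional $\int(\Delta u)^2$ (using $\Delta^2 u=0$ in $\{y>0\}$ to annihilate the bulk contributions), should reduce the linear term of $E(v_t;\D_1)-E(u;\D_1)$ in $t$ to
\[
-t\int_{-1}^{1}\varphi(x)\bigl[1-(\Delta u(x,0^+))^2\bigr]dx.
\]
Minimality for both signs of $t$ will then force $(\Delta u(x,0^+))^2\equiv 1$ pointwise, and continuity of the trace of $\Delta u$ on the connected segment $\{y=0,|x|<1\}$ will pin it to the constant value $+1$ or $-1$.

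For the two-half-disks case the argument will split in two. To prove $u$ is biharmonic in all of $\D_1$, I would use \emph{outer} variations $u+t\phi$ with $\phi\in\mathcal{C}^\infty_c(\D_1)$: since $u$ is biharmonic and nontrivial on each open half-disk, its zero set there is a real-analytic variety of dimension at most one, hence $|\{u=0\}\cap\D_1|=0$; the same holds generically for $u+t\phi$, so the area term $\int\chi_{u\neq 0}$ is unchanged to first order and minimality collapses to $\int_{\D_1}\Delta u\,\Delta\phi=0$ for every $\phi$, i.e.\ $\Delta^2 u=0$ in $\D_1$ distributionally. Consequently $u$ is biharmonic throughout $\D_1$, and $\Delta u$ is harmonic (in particular real-analytic) across the axis.

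The last and hardest step will be to establish $|\Delta u|\geq 1$ on $\{y=0\}$. I would fix $x_0\in(-1,1)$ and compare $u$ with the ``strip-killing'' competitor
\[
v_\varepsilon(x,y) \;=\; u(x,y)\bigl[1-\psi(x)\bigl(1-\chi(y/\varepsilon)\bigr)\bigr],
\]
where $\psi\in\mathcal{C}^\infty_c(\R;[0,1])$ has plateau $\psi\equiv 1$ on $[x_0-r,x_0+r]$ and support in $[x_0-r-\delta,x_0+r+\delta]$, and $\chi\in\mathcal{C}^\infty(\R;[0,1])$ is an even cutoff with $\chi\equiv 0$ on $[-a,a]$, $\chi\equiv 1$ outside $[-b,b]$, $\chi'(\pm a)=\chi'(\pm b)=0$. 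This $v_\varepsilon$ lies in $H^2(\D_1)$, matches $u$ outside a compact set, and vanishes on the rectangle $[x_0-r,x_0+r]\times[-a\varepsilon,a\varepsilon]$ of area $\approx 4ar\varepsilon$. Using the Taylor expansion $u(x,y)=\tfrac{y^2}{2}\Delta u(x,0)+O(y^3)$ near the axis and setting $K(s):=(s^2\chi(s)/2)''$, a direct computation yields
\[
E(v_\varepsilon;\D_1)-E(u;\D_1) \;=\; 2\varepsilon\!\int_{x_0-r}^{x_0+r}\!\!(\Delta u(x,0))^2\!\left[\int_a^b K(s)^2\,ds - b\right]dx - 4ar\varepsilon + O(\delta\varepsilon) + o(\varepsilon),
\]
where the $O(\delta\varepsilon)$ accounts for the bounded contribution of the $\psi$-transition zone (controlled because $u=O(y^2)$ on the axis). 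Sending $\delta\to 0^+$, then $\varepsilon\to 0^+$, then $r\to 0^+$ using continuity of $\Delta u$ yields $(\Delta u(x_0,0))^2\geq a/(\int_a^b K^2\,ds - b)$. The main obstacle will be verifying that the supremum of the right-hand side over admissible triples $(a,b,\chi)$ equals exactly $1$: this reduces to a one-dimensional cubic-spline problem whose Euler--Lagrange equation $\xi^{(4)}=0$, with $\xi(s):=s^2\chi(s)/2$, forces $\xi$ to be cubic; solving the $4\times 4$ linear system and expressing the result via the ratio $t:=b/(b-a)$ gives $(\int_a^b K^2\,ds - b)/a = t(3t^2-3t+1)$, which decreases to $1$ as $a\to 0^+$. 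This sharp value $1$ will yield the clean conclusion $(\Delta u(x_0,0))^2\geq 1$.
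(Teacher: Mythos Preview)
Your half-disk argument and the biharmonicity part of the two-half-disks case are essentially the paper's approach: the paper also uses inner variations for the first and the full-measure support for the second (comparing with the biharmonic extension rather than outer variations $u+t\phi$, which is marginally simpler but equivalent).

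The genuine difference is in proving $|\Delta u|\geq 1$ for the two-half-disks case. The paper's argument is much shorter: it deforms \emph{only the upper half} via $u\circ(\mathrm{Id}+t\phi)^{-1}$ and leaves the lower half fixed; the competitor is valid only when $\phi^y(x,0)\geq 0$ (otherwise the deformed upper half overlaps $\{y<0\}$), so minimality gives a one-sided inequality, and the same blow-up computation as in the half-disk case yields $(\Delta^+u(0))^2\geq 1$. Your strip-killing competitor is a legitimate alternative and the cubic-spline computation is correct (indeed $(\int_a^bK^2-b)/a=t(3t^2-3t+1)$ with $t=b/(b-a)$, monotone in $t>1$ since its derivative is $(3t-1)^2$, with infimum $1$ as $a\to 0^+$); note however that this infimum is not attained, so you conclude $(\Delta u)^2\geq 1$ only after a final $a\to 0$ limit. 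The paper's one-sided inner variation avoids this optimization entirely.

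One technical slip: your order of limits ``$\delta\to 0^+$, then $\varepsilon\to 0^+$'' is backwards. The $\psi$-transition zone contributes a term of size $u\psi''(1-\chi)\sim \varepsilon^2/\delta^2$ to $\Delta(v_\varepsilon-u)$, which is $O(1)$ only when $\varepsilon\lesssim\delta$; if $\delta\to 0$ first this blows up. Your asymptotic $O(\delta\varepsilon)+o(\varepsilon)$ is correct precisely when $\varepsilon\to 0$ first with $\delta$ fixed, then $\delta\to 0$, then $r\to 0$; just swap the first two limits.
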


\begin{proof}
If $u\in\M(\D_1)$, then for any $\phi=(\phi^x,\phi^y)\in\mathcal{C}^\infty_c(\D_1,\R^2)$ we have
$$\left.\frac{d}{dt}\right|_{t=0^+}E\left(u\circ \left(\mathrm{Id}+t\phi\right)^{-1};\D_1\right)\geq 0,$$
which is equivalent to the following inner variation relation (see \cite[Lem. 4.4.]{DKV20} for a detailed computation):
\begin{equation}\label{eq_innervariation}
\forall\phi\in\mathcal{C}^\infty_c(\D_1,\R^2),\ \int_{\D_1}\left(|\Delta u|^2+\chi_{u\neq 0}\right)\nabla\cdot\phi\geq 2\int_{\D_1}\Delta u \left(2 \nabla^2 u : \nabla\phi+\nabla u\cdot \Delta\phi\right)
\end{equation}
and when we replace $\phi$ with $-\phi$, we have an equality above. Suppose that $\Spt(u)=\{(x,y)\in\D_1:y>0\}$, then $u$ is smooth up to the boundary (since it is biharmonic with Dirichlet condition for $u$ and $\nabla u$ along a smooth boundary), so $$u_r\underset{r\to 0}{\longrightarrow}\frac{\Delta^+ u(0)}{2}y_+^2$$ in every $\mathcal{C}^k(\D_{1}\cap\{y>0\})$ space (here $\Delta^+$ designate the trace of $\Delta u|_{\{y>0\}}$ on $\{y=0\}$). Replacing $u$ with $u_r$ in the inner variation relation \eqref{eq_innervariation} gives, as $r\to 0$, and for any $\phi\in\mathcal{C}^\infty_c(\D_1,\R^2)$,
$$\int_{\D_1\cap\{y>0\}}\left(|\Delta^+ u(0)|^2+1\right)\nabla\cdot\phi=2\int_{\D_1\cap\{y>0\}}\Delta^+ u(0) \left(2 \Delta^+ u(0) e_y\otimes e_y : \nabla\phi+\Delta^+ u(0)(ye_y)\cdot \Delta \phi\right),$$
which after integration by parts gives
$$\left((\Delta^+ u(0))^2-1\right)\int_{\D_1\cap\{y=0\}}\phi^y=0.$$
And so $(\Delta^+ u)^2=1$ on $\{(x,y)\in\D_1:y=0\}$.

Suppose now that $\Spt(u)=\{(x,y)\in\D_1:y\neq 0\}$. Let $\phi=(\phi^x,\phi^y)\in\mathcal{C}^\infty_c(\D_1,\R^2)$ such that $\phi^y(x,0)\geq 0$ for any $x$, and let
$$u^t=\begin{cases}
u\circ \left(\mathrm{Id}+t\phi\right)^{-1}&\text{ in }\left(\mathrm{Id}+t\phi\right)(\{y>0\})\\
u&\text{ in }\{y<0\}\\
0&\text{ elsewhere}.
\end{cases}$$

Then $u^t$ is a valid competitor for $u$ (note that this would not be true when replacing $\phi$ with $-\phi$ due to overlapping of $\left(\mathrm{Id}+t\phi\right)(\{y>0\})$ and $\{y<0\}$): by the inner variation \eqref{eq_innervariation}, this gives $|\Delta^+ u(0)|\geq 1$. Finally, let $v$ be the biharmonic extension of $u$ in $\D_1$: then by minimality we have $E(u;\D_1)\leq E(v;\D_1)$ and since $u$ has full support in $\D_1$, then $\int_{\D_1}|\Delta u|^2\leq\int_{\D_1}|\Delta v|^2$, so $u$ must be equal to its own biharmonic extension.
\end{proof}

\begin{proposition}\label{lem_carac_blowup}
Let $u\in\M(\D_1)\setminus\{0\}$, suppose that $u$ is $2$-homogeneous. Then $u$ belongs to one of the four types of functions described in \eqref{def_uI} \eqref{def_uII}, \eqref{def_uIII}, \eqref{def_uIV}.
\end{proposition}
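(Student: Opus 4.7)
Since $u$ is $2$-homogeneous, write $u(re^{i\theta}) = r^2 g(\theta)$. The fact that $u \in H^2(\D_1)$ translates, after integration in polar coordinates, to $g \in H^2(\Sp)$, and hence $g \in \mathcal{C}^1(\Sp)$ by the 1D Sobolev embedding. Let $\Omega := \{\theta \in \Sp : (g(\theta), g'(\theta)) \neq (0,0)\}$, an open subset of $\Sp$ corresponding to $\Spt(u) \cap \Sp$; by $\mathcal{C}^1$ regularity, $g(\theta_0) = g'(\theta_0) = 0$ at every $\theta_0 \in \partial \Omega$. My first aim is to show that on each connected component $I$ of $\Omega$, $g$ solves the ODE $g'''' + 4g'' = 0$. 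Indeed, $u$ is biharmonic on the open set $\{u \neq 0\}$ by minimality; across any ray $\theta = \theta_0$ with $g(\theta_0) = 0$ but $g'(\theta_0) \neq 0$, the set $\{u = 0\}$ is locally a smooth line, so small smooth perturbations do not alter $\chi_{u \neq 0}$ in measure, forcing $u$ to be biharmonic through the ray. Thus $u$ is biharmonic on the whole sector $\{r > 0,\ \theta \in I\}$, which yields the ODE on $I$.

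\textbf{Classification of $\Omega$.} If $\Omega = \Sp$, then periodicity kills the linear term and $g(\theta) = A + C\cos(2\theta) + D\sin(2\theta)$, so $u = u_{A,C,D}$. A direct trigonometric check shows $A^2 = C^2 + D^2$ forces $g$ to have double zeros, contradicting $\Omega = \Sp$; hence $A^2 \neq C^2 + D^2$ and $u$ is of type IV. If $\Omega \subsetneq \Sp$, then $\Omega$ is a disjoint union of open arcs, and on each such arc $I$ with endpoints $\alpha, \beta$, $g$ solves the ODE with clamped boundary data $g(\alpha)=g'(\alpha)=g(\beta)=g'(\beta)=0$. Lemma~\ref{lem_angularsector} then shows that each such arc has length in $\{\pi, t_1\}$, with $g|_I$ a scalar multiple of a translate of $b^{\mathrm{I}}$ or $b^{\mathrm{II}}$. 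Since $t_1 > \pi$, the length constraint $\sum |I| \leq 2\pi$ leaves only three possibilities: (B1) a single arc of length $\pi$ (support: a half-disk), (B2) a single arc of length $t_1$ (support: a sector of opening $t_1$), (B3) two antipodal arcs of length $\pi$ (support: two half-disks).

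\textbf{Fixing the amplitude.} In case (B1), Lemma~\ref{lem_shapeder} (first bullet) applies directly and gives $\Delta u \in \{-1, +1\}$ on the boundary; since $u = s y_+^2/2$ has $\Delta u = s$ on its support, $s \in \{-1, +1\}$: type I. In case (B2), the boundary is locally a line at every point $p \neq 0$ of $\partial \Spt(u)$; applying the inner variation identity \eqref{eq_innervariation} with a test field compactly supported in a small disk $\D_{p,\rho}$ with $0 \notin \ov{\D_{p,\rho}}$ (repeating the computation in the first part of the proof of Lemma~\ref{lem_shapeder}, which only uses the flat local structure of the boundary) yields $|\Delta u|_{\Spt(u)}(p)| = 1$. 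Since $\Delta u = s(1 - 2\theta/t_1)$ equals $\pm s$ on the two boundary rays, $|s| = 1$: type II. In case (B3), Lemma~\ref{lem_shapeder} (second bullet) gives that $u$ is biharmonic on $\D_1$ with $|\Delta u| \geq 1$ on the zero line; the biharmonicity across the boundary line forces the two half-disk multipliers to agree, so $u = \lambda y^2/2$ up to rotation with $|\lambda| \geq 1$: type III. Combining all cases yields the stated classification.

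\textbf{Main obstacle.} The most delicate point is the extension of biharmonicity across rays $\theta = \theta_0$ with $g(\theta_0) = 0$ and $g'(\theta_0) \neq 0$, which is needed to classify $g$ on a full connected component of $\Omega$ (rather than only on the open subset $\{g \neq 0\}$). The key observation is that a smooth $\mathcal{C}^1$ zero curve of $u$ has measure zero and remains of measure zero under smooth perturbations, so the $\chi_{u \neq 0}$ term is locally stationary and the Euler--Lagrange equation $\Delta^2 u = 0$ holds across such rays.
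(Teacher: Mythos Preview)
Your approach is essentially the same as the paper's, and the overall structure is correct. In fact you are more careful than the paper in two respects: you spell out why $u$ remains biharmonic across rays where $g(\theta_0)=0$ but $g'(\theta_0)\neq 0$ (the paper simply asserts biharmonicity on each connected component of $\Spt(u)$), and you note that Lemma~\ref{lem_shapeder} must be applied \emph{locally}, away from the vertex, to pin down the amplitude in the type~II case.

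There is, however, one genuine omission in your case analysis. You write that Lemma~\ref{lem_angularsector} forces each arc of $\Omega$ to have length in $\{\pi, t_1\}$, but the lemma explicitly allows $\omega=2\pi$ as well. You must rule out the possibility that $\Omega$ is a single arc of length $2\pi$, i.e.\ $\Omega=\Sp\setminus\{\theta_0\}$. This is immediate once noticed: on such an arc $g$ would be a multiple of $b^{\mathrm{III}}(\theta)=\tfrac12\sin^2\theta$, which vanishes together with its derivative at the midpoint $\theta_0+\pi$; that midpoint would then also lie outside $\Omega$, contradicting the assumption that the arc is a single connected component of $\{(g,g')\neq 0\}$. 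Once this sentence is added, your cases (B1)--(B3) are indeed exhaustive. (The paper's list keeps the $\omega=2\pi$ outcome and the two-half-disk outcome as separate items, both ending at type~III.)
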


\begin{proof}

Since $u\in\M(\D_1)$, by Lemma \ref{lem_C1log}, $\nabla u$ is continuous. Therefore, $\Spt(u)(=\{u\neq 0\}\cup\{\nabla u \neq 0\})$ is an open cone, and on each connected component, $u$ must be a biharmonic function with Dirichlet boundary conditions. By Lemma \ref{lem_angularsector}, each angular sector (that is not the full disk $\D_1$) must have an opening $\pi,t_1$ or $2\pi$. This reduces to five different cases, up to rotation:
\begin{itemize}[label=\textbullet]
\item $u=cu^{\mathrm{I}}$ for some $c\in\R$, and Lemma \ref{lem_shapeder} gives $c=\pm 1$.
\item $u=cu^{\mathrm{II}}$ for some $c\in\R$, and Lemma \ref{lem_shapeder} gives $c=\pm 1$.
\item $u=cu^{\mathrm{III}}$ for some $c\in\R$, and Lemma \ref{lem_shapeder} gives $|c|\geq 1$.
\item $u(x,y)=c^+u^{\mathrm{I}}(x,y)\chi_{y>0}+c^-u^{\mathrm{I}}(x,-y)\chi_{y<0}$ for some $c^+,c^-\in\R$, and Lemma \ref{lem_shapeder} gives $c^+=c^-$, so $u=c^+u^{\mathrm{III}}$, and this is already covered in the previous case.
\item $\partial\Spt(u)$ is reduced to a point. In this case, $u$ is a general $2$-homogeneous biharmonic function: using it Goursat decomposition, we may write $u=(x^2+y^2)v+w$ where $v,w$ are harmonic and respectively $0$-homogeneous (meaning it is constant) and $2$-homogeneous (meaning it is a linear combination of $xy$ and $x^2-y^2$), so it is written as in \eqref{def_uIV}. Note that when $a^2=b^2+c^2$, the solution is in fact of type III. 
\end{itemize}
\end{proof}

\begin{definition}\label{def_Mhom}
We let $\M_{\mathrm{hom}}$ be the set of non-zero $2$-homogeneous functions $u$ in $\M(\D_1)$. We have a splitting
$$\M_{\mathrm{hom}}=\M_{\mathrm{hom}}^{\pi}\sqcup \M_{\mathrm{hom}}^{t_1}\sqcup \M_{\mathrm{hom}}^{2\pi},$$
such that $u\in\M_{\mathrm{hom}}^{\Theta}$ if and only if $u\in\M_{\mathrm{hom}}$ and $\Theta=2|\Spt(u)\cap\D_1|$. Similarly, we write $\mathscr{B}_{\mathrm{hom}}$ the set of non-zero minimizers of $\G(\cdot,0)$ that are invariant with respect to the $t$ variable, and we have a splitting
$$\mathscr{B}_{\mathrm{hom}}=\mathscr{B}_{\mathrm{hom}}^{\pi}\sqcup \mathscr{B}_{\mathrm{hom}}^{t_1}\sqcup \mathscr{B}_{\mathrm{hom}}^{2\pi}$$
such that $u\in \M_{\mathrm{hom}}^{\Theta}$ if and only if $u(re^{i\theta})=r^2b(\theta)$ where $b$ (seen as a function of $t,\theta$ that is constant with respect to $t$) belongs to $\mathscr{B}_{\mathrm{hom}}^\Theta$.
\end{definition}
\begin{lemma}\label{lem_separation_blowup}
There exists $d>0$ such that for any $u,v\in\M_{\mathrm{hom}}$ with $|\Spt(u)\cap\D_1|\neq |\Spt(v)\cap\D_1|$, we have $\Vert u-v\Vert_{H^1(\D_1)}\geq d$.
\end{lemma}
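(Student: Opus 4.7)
The plan is to argue by contradiction via compactness, leveraging the classification of $\M_{\mathrm{hom}}$ in Proposition \ref{lem_carac_blowup}. By that classification, $|\Spt(u) \cap \D_1|$ takes only three values: $\pi/2$ (type I), $t_1/2$ (type II) and $\pi$ (types III and IV), so it suffices to separate these three families in $H^1(\D_1)$. I would suppose for contradiction that there exist sequences $(u_n), (v_n) \subset \M_{\mathrm{hom}}$ with $|\Spt(u_n) \cap \D_1| \neq |\Spt(v_n) \cap \D_1|$ and $\|u_n - v_n\|_{H^1(\D_1)} \to 0$. Up to subsequences, each of $u_n$ and $v_n$ belongs to a single family, and after possibly swapping the two sequences I may assume that $u_n$ is of type I or II for every $n$.

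Next I would extract strong $H^1$ limits. For type I or II, each $u_n$ is a sign times a rotation of the fixed profile $u^{\mathrm{I}}$ or $u^{\mathrm{II}}$, so parameterized by the compact set $\{\pm 1\} \times \Sp$, with a fixed positive $H^1(\D_1)$ norm $c_0 > 0$; hence, up to extraction, $u_n \to u_\infty$ strongly in $H^1(\D_1)$, with $u_\infty$ itself a type I or II profile, vanishing identically on an open sector of $\D_1$. Since $\|u_n - v_n\|_{H^1(\D_1)} \to 0$, the sequence $(v_n)$ is bounded in $H^1(\D_1)$. Each possible family for $v_n$ is parameterized by finitely many real parameters --- sign and rotation for type II, $(\lambda,\theta) \in \{|\lambda|\geq 1\}\times\Sp$ for type III, and $(a,b,c) \in \R^3$ with $a^2 \neq b^2 + c^2$ for type IV --- and in each case the $H^1(\D_1)$ norm controls these parameters. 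Extracting once more yields $v_n \to v_\infty$ strongly in $H^1(\D_1)$, where $v_\infty$ is either a type II or type III function, or, if $v_n$ is type IV, a general $2$-homogeneous biharmonic polynomial (possibly the zero function, or a degenerate limit with $a^2 = b^2 + c^2$).

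The contradiction then comes from $u_\infty = v_\infty$ in $H^1(\D_1)$. If $v_n$ is of type II, both limits are fixed profiles whose supports have the distinct measures $t_1/2$ and $\pi/2$, impossible. If $v_n$ is of type III or IV, then $v_\infty$ is a $2$-homogeneous biharmonic polynomial: either $v_\infty \equiv 0$, contradicting $\|u_\infty\|_{H^1(\D_1)} = c_0 > 0$, or $v_\infty$ is nonzero and hence vanishes only on a Lebesgue-negligible set, contradicting the fact that $u_\infty$ vanishes on an open sector. The main technical point to watch is that type IV has a non-compact and non-closed parameterization in $H^1(\D_1)$ (letting $(a,b,c) \to 0$ sends $u_{a,b,c}$ to zero, and approaching $a^2 = b^2 + c^2$ exits the family), but I expect this to cause no real difficulty: the only problematic limit $v_\infty = 0$ is ruled out by the uniform positive lower bound on $\|u_n\|_{H^1(\D_1)}$ inherited from the compact type I/II family, and any other degenerate limit is still a biharmonic polynomial and yields the same contradiction.
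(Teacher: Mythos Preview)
Your compactness argument is correct, but the paper takes a shorter and more direct route. Rather than arguing by contradiction and extracting limits, the paper works with a fixed pair $u,v$ and exploits the geometry directly: assuming $|\Spt(u)\cap\D_1|<|\Spt(v)\cap\D_1|$, there is always a sector $S$ of opening at least $\frac{t_1-\pi}{2}$ with $u\equiv 0$ on $S$ and $S\subset\Spt(v)$; picking a disk $\D_{p,1/10}\subset S$ with $p\in\partial\D_{1/2}$, the nondegeneracy Lemma~\ref{lem_nondegeneracy} applied to $v$ gives $\Vert u-v\Vert_{H^1(\D_1)}\geq\Vert v\Vert_{H^1(\D_{p,1/10})}\gtrsim 1$. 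This avoids all subsequence extraction and the case analysis on the non-compact type III/IV families that you had to handle carefully. Your approach, on the other hand, does not invoke the nondegeneracy lemma at all and relies only on the classification and the elementary fact that a nonzero $2$-homogeneous biharmonic polynomial cannot vanish on an open sector, which is a self-contained alternative.
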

\begin{proof}
Suppose $|\Spt(u)\cap\D_1|<|\Spt(v)\cap\D_1|$. Since the supports of $u$ and $v$ have an opening $\pi,t_1$ (for $u$), $t_1,2\pi$ (for $v$), there exists a sector $S\subset\D_1$ of opening at least $\frac{t_1-\pi}{2}$ such that $u=0$ on $S$ and $S\subset\Spt(v)$. In particular there exists $p\in\partial\D_{\frac{1}{2}}$ such that $\D_{p,\frac{1}{10}}\subset S$: by the nondegeneracy Lemma \ref{lem_nondegeneracy} applied to $v$ on this disk we get $$\Vert u-v\Vert_{H^1(\D_1)}\geq \Vert v\Vert_{H^1(\D_{p,\frac{1}{10}})}\gtrsim 1.$$
\end{proof}

\begin{remark}\label{rem_minimality_blowup_conjecture}
We will show below that functions of type I described in \eqref{def_uI} are in fact minimizers, see remark \ref{rem_minimality_blowup}. 
We conjecture (but do not prove) that every functions of type II and III described in \eqref{def_uII}, \eqref{def_uIII} also belong to $\M(\D_1)$.

On the contrary, by the non-degeneracy Lemma \ref{lem_nondegeneracy}, we know that the function $u_{a,b,c}$ described in \eqref{def_uIV} cannot be a minimizer when the parameters $a,b,c$ are too small. In fact, in the work in preparation \cite{GM26}, H.-C. Grunau and M. Müller prove that for any $(a,b,c)\in\R^3$, the function $u_{a,b,c}$ belong to $\M(\D_1)$ when $|a|\geq \frac{1}{4}$, which includes the case of type III minimizers. The situation for $u^{\mathrm{II}}$ and $u_{a,b,c}$ with $|a|<\frac{1}{4}$ and $(b,c)\neq (0,0)$ remains open.
\end{remark}

\subsection{Existence of blow-ups}

The first use of the monotonicity formula of Theorem \ref{Th_Monotonicity_disk} is to show that any blow-up is necessarily $2$-homogeneous.

\begin{lemma}\label{lem_conv_blowup}
Let $u\in \M(\D_1)$, assume $0\in\partial\Spt(u)$, and let $r_1\geq r_2\geq r_3\geq \hdots \to 0$ such that
$$\sup_{n\in\N^*}\Vert u_{r_n}\Vert_{H^1(\D_1)}<\infty.$$
Then there exists some extraction $(n_i)_{i\geq 1}$, and some $2$-homogeneous minimizer $v$ as characterized in Lemma \ref{lem_carac_blowup} such that
\begin{align*}
u_{r_{n_i}}&\underset{i\to+\infty}{\longrightarrow}v\text{ in }H^2_\loc(\D_1),\\
\end{align*}
and $W(v,r)=W(u,0)$ for any $r\in (0,1)$. 
\end{lemma}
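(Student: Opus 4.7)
The plan is first to extract a limit by compactness, then force $2$-homogeneity through the monotonicity formula. Since each $u_{r_n}$ restricted to $\D_1$ belongs to $\M(\D_1)$ (minimality descends to subdomains), the hypothesis $\sup_n \|u_{r_n}\|_{H^1(\D_1)} < \infty$ lets me apply Lemma \ref{lem_sequence} with constant $\lambda^{(n)} \equiv 1$: up to extraction, $u_{r_{n_i}} \to v$ in $H^2_\loc(\D_1)$, $\chi_{u_{r_{n_i}} \neq 0} \to \chi_{v \neq 0}$ in $L^1(\D_1)$, and $v \in \M(\D_1)$. To rule out $v \equiv 0$, I pick a sequence $p_n \in \Spt(u) \cap \D_{r_n/8}$ (available since $0 \in \partial \Spt(u)$) and invoke the scale-invariant version of Lemma \ref{lem_nondegeneracy} applied at the point $p_n$ with scale $r_n/4$: after rescaling, this yields $\|u_{r_n}\|_{H^1(\D_{1/2})} \gtrsim 1$, which persists through the strong $H^1(\D_{1/2})$ convergence, so $v \neq 0$.

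For $2$-homogeneity, I first need $W(u,0) \in \R$. The scaling identity $W(u_r, \rho) = W(u, r\rho)$ and the Cacciopoli-type estimate of Lemma \ref{lem_W_cacciopoli} give $|W(u, r_n/2)| = |W(u_{r_n}, 1/2)| \lesssim \|u_{r_n}\|_{H^1(\D_1)}^2 \lesssim 1$; combined with the monotonicity of $r \mapsto W(u,r)$ from Theorem \ref{Th_Monotonicity_disk}, this forces $W(u,0) \in \R$. I then pass to exponential coordinates by setting $w_n(t,\theta) := e^{2t} u_{r_n}(e^{-t+i\theta})$ and $w(t,\theta) := e^{2t} v(e^{-t+i\theta})$, so that $w_n \to w$ in $H^2_\loc(\Cy_0)$ and $\W(w_n, \tau) = W(u, r_n e^{-\tau})$. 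For every fixed $\tau > 0$, both $\W(w_n, 0)$ and $\W(w_n, \tau)$ converge to $W(u,0)$, so the monotonicity formula (Theorem \ref{Th_Monotonicity_exp}) gives
\begin{equation*}
4 \int_0^\tau \int_{\Sp} \left((\partial_{t,t}w_n)^2 + (\partial_{t,\theta} w_n)^2\right) d\theta \, ds = \W(w_n, 0) - \W(w_n, \tau) \underset{n \to \infty}{\longrightarrow} 0,
\end{equation*}
and strong $L^2_\loc$ convergence of the second derivatives passes to the limit on the left-hand side, yielding $\partial_{t,t}w = \partial_{t,\theta}w = 0$ a.e. Hence $\partial_t w$ is a constant $c$ and $w(t,\theta) = g(\theta) + ct$. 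Exactly as at the end of the proof of Theorem \ref{Th_Monotonicity_disk}, a nonzero $c$ would give $v$ full support in a punctured neighbourhood of $0$ and force $v$ to be biharmonic there (by biharmonic comparison), which is incompatible with the logarithmic singularity of $r^{-2}v$; so $c = 0$, $v(re^{i\theta}) = r^2 g(\theta)$ is $2$-homogeneous, and Lemma \ref{lem_carac_blowup} classifies $v$ among the four listed types.

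Finally, Theorem \ref{Th_Monotonicity_disk} gives that $W(v, \cdot)$ is constant, equal to $|\{v \neq 0\}\cap \D_1|$. The main remaining obstacle is identifying this constant with $W(u,0)$: the boundary term in $W$ is not $H^2$-continuous in $r$, so $W(u_{r_n}, \rho) \to W(v, \rho)$ need not hold for every $\rho$. However, by Fubini applied to the $H^2_\loc$ convergence $u_{r_n} \to v$, the traces of $u_{r_n}$, $\nabla u_{r_n}$ and $\nabla^2 u_{r_n}$ on $\partial \D_\rho$ converge in $L^2(\partial \D_\rho)$ for a.e.\ $\rho \in (0,1)$; combined with the $L^1$ convergence of the characteristic functions, this yields $W(u_{r_n}, \rho) \to W(v, \rho)$ for a.e.\ $\rho$. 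Since also $W(u_{r_n}, \rho) = W(u, r_n \rho) \to W(u, 0)$, the identity $W(v, \rho) = W(u,0)$ holds for a.e.\ $\rho$, hence for every $\rho \in (0,1)$ by the constancy of $W(v, \cdot)$.
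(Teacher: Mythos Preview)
Your argument is correct and reaches the conclusion, but it follows a different path than the paper. The paper never works with the derivative of $\W$ directly; instead it introduces an averaged functional $W_\phi(v,s)=\int_0^1\phi(\rho)W(v,s\rho)\,d\rho$ with $\phi\in\mathcal{C}^\infty_c((1/2,1),\R_+)$, rewrites it via an integration by parts in $\rho$ so that only $H^2_\loc$ and $L^1$ quantities appear, and passes to the limit to obtain $W_\phi(v,\cdot)$ constant; the sandwich $W(v,s/2)\leq W_\phi(v,s)\leq W(v,s)$ then forces $W(v,\cdot)$ constant and hence $v$ is $2$-homogeneous by Theorem~\ref{Th_Monotonicity_disk}. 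Your route is the reverse: you integrate the explicit formula for $\W'$ from Theorem~\ref{Th_Monotonicity_exp} to force $\partial_{t,t}w=\partial_{t,\theta}w=0$ directly (thus $2$-homogeneity first), and only afterwards identify the constant value $W(v,\cdot)=W(u,0)$ via trace convergence on a.e.\ circle. The averaging trick is slicker because it sidesteps entirely the issue that $W$ involves second-order traces.

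Two small points worth tightening. First, the integration $\int_0^\tau$ in your monotonicity step touches $t=0$, which lies on the boundary of $\Cy_0$ where $H^2_\loc$ convergence gives nothing; you should instead integrate over $[\tau_1,\tau_2]$ with $\tau_1>0$ (the argument is unchanged since $W(u,r_ne^{-\tau_1})-W(u,r_ne^{-\tau_2})\to 0$ for any such pair). Second, strong $L^2$ convergence of $\nabla^2 u_{r_{n_i}}$ yields convergence of circular traces only in $L^1_\rho$, not for a.e.\ $\rho$; a further subsequence extraction is needed for your Fubini step, which is harmless since the target identity $W(v,\rho)=W(u,0)$ is subsequence-independent.
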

\begin{proof}
Since $u_{r_n}$ is bounded in $H^1(\D_1)$, in particular $u(0)=|\nabla u(0)|=0$. The sequence $(u_{r_{n}})_n$ satisfies the hypothesis of Lemma \ref{lem_sequence}, so after extraction we may suppose $u_{r_n}$ converges in $H^2_\loc(\D_1)$ to some limit $v$.
Let $\phi\in\mathcal{C}^\infty_c(]1/2,1[,\R_+)$ be such that $\int_{0}^{1}\phi(s)ds=1$, and
$$W_\phi(v,s):=\int_{0}^{1}\phi(\rho)W(v,s\rho)ds.$$
Then $W_\phi(v,s)$ may be rewritten as
\begin{align*}
W_\phi(v,s)&=\int_{0}^{1}\phi(\rho)\left(\frac{E(v;\D_{\rho s})}{(\rho s)^2}+\rho s N'(v,\rho s)+R(v,\rho s)\right)d\rho\\
&=\int_{0}^{1}\left\{\phi(\rho)\left(\frac{E(v;\D_{\rho s})}{(\rho s)^2}- N(v,\rho s)+R(v,\rho s)\right)-\rho\phi'(\rho)N(v,\rho s)\right\}d\rho.\\
\end{align*}
Since $u_{r_{n}}$ converges to $v$
in $H^2_\loc(\D_1)$ and $\chi_{u_{r_n}\neq 0}$ converges in $L^1(\D_1)$ to $\chi_{u\neq 0}$ (by Lemma \ref{lem_sequence}), then for fixed $s$:
$$W_\phi(v,s)=\lim_{n\to +\infty}W_\phi(u_{r_n},s)=\lim_{n\to +\infty}W_\phi(u,sr_n),$$
and we see the expression on the right-hand side is independent of $s$ (by the monotonicity of $W$), so $W_\phi(v,\cdot)$ is constant. Since $W(v,s/2)\leq W_\phi(v,s)\leq W(v,s)$ for any $s\in (0,1)$, this implies $W(v,\cdot)$ is constant.

By Theorem \ref{Th_Monotonicity_disk},  $v$ is therefore $2$-homogeneous. Moreover, $v$ cannot be zero, since otherwise $u_{r_n}$ would be zero in $\D_{1/2}$ for some large enough $n$ by the nondegeneracy Lemma \ref{lem_nondegeneracy}, which would contradict the fact that $0\in\partial\Spt(u)$. As a consequence, by Lemma \ref{lem_carac_blowup}, $v$ is equal to one of the four types of solutions described in \eqref{def_uI}, \eqref{def_uII}, \eqref{def_uIII}, \eqref{def_uIV}.
\end{proof}

Our goal in the rest of the section is to prove the two following results, on the existence of blow-ups under some a priori information on the solution.

\begin{proposition}\label{prop_existence_blowup}
Let $u\in \M(\D_1)$ such that 
$$\limsup_{r\to 0}\frac{|\{u=0\}\cap\D_{r}|}{|\D_{r}|}>0.$$
Then there exists a blow-up of type I or II at the origin.
\end{proposition}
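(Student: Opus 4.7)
The plan is to apply the compactness Lemma \ref{lem_conv_blowup} along a sequence realising the positive-density hypothesis, and then rule out blow-ups of types III and IV purely on density grounds, leaving only types I and II.

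First, I would establish the pointwise vanishing $u(0) = |\nabla u(0)| = 0$ from the density hypothesis. Since $u$ is $\mathcal{C}^1$ near the origin (Lemma \ref{lem_C1log}), if $u(0) \neq 0$ then by continuity $u$ would be non-zero in a small disk, contradicting $|\{u = 0\} \cap \D_r| > 0$ for arbitrarily small $r$; if instead $u(0) = 0$ but $\nabla u(0) \neq 0$, then $\{u = 0\} \cap \D_r$ would be trapped in an $o(|x|)$-neighbourhood of the line $\{x : \nabla u(0) \cdot x = 0\}$, and so would have zero Lebesgue density at $0$, again contradicting the hypothesis. I would also remark that the statement is only meaningful when $0 \in \partial \Spt(u)$: otherwise $u$ vanishes identically near $0$ and every rescaling is trivially zero, so no blow-up of type I or II can exist.

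Next, I would extract an $H^1$-bound on the rescalings $u_r$. Having $u(0) = |\nabla u(0)| = 0$, the Cacciopoli-type estimate of Lemma \ref{lem_W_cacciopoli} gives $W(u, 1/2) \lesssim \|u\|_{H^1(\D_1)}^2$, and the monotonicity of $r \mapsto W(u, r)$ (Theorem \ref{Th_Monotonicity_disk}) then bounds $W(u, r)$ uniformly on $(0, 1/2]$. The quantitative $H^1$-version of the monotonicity, that is, the growth lemma \ref{lem_boundedgrowth} mentioned in the outline, converts this into $\sup_{r \in (0, 1/2]} \|u_r\|_{H^1(\D_1)} < \infty$. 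This step is the main subtle point of the argument, because the correction terms appearing in $W$ are not sign-definite and an $H^1$-control on $u_r$ does not come for free from the bare monotonicity of $W$.

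Finally, I would pick any sequence $r_n \to 0$ realising $|\{u = 0\} \cap \D_{r_n}| \geq \delta \pi r_n^2$ for some $\delta > 0$. Applying Lemma \ref{lem_conv_blowup} (which uses Lemma \ref{lem_sequence} internally to get $L^1$-convergence of the indicator functions), a subsequence of $u_{r_n}$ converges in $H^2_{\loc}(\D_1)$ to a non-zero $2$-homogeneous minimizer $v$, with
\[
|\{v = 0\} \cap \D_1| \;=\; \lim_{n \to \infty} \frac{|\{u = 0\} \cap \D_{r_n}|}{r_n^2} \;\geq\; \delta \pi \;>\; 0.
\]
The classification Proposition \ref{lem_carac_blowup} then concludes: $2$-homogeneous minimizers of types III and IV are quadratic polynomials whose zero set is contained in a finite union of lines through the origin, hence Lebesgue-negligible. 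Consequently $v$ must be of type I or II, which is the desired blow-up.
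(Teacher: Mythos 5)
Your overall scheme (compactness along a good sequence via Lemma \ref{lem_conv_blowup}, then ruling out types III and IV because their zero sets are Lebesgue-negligible) is the same as the paper's, and your first and last steps are fine. But the middle step, where you claim $\sup_{r\in(0,1/2]}\Vert u_r\Vert_{H^1(\D_1)}<\infty$, contains a genuine gap. Lemma \ref{lem_boundedgrowth} goes in the wrong direction for your purpose: it is a \emph{lower} bound on $N(u,r)$ (it prevents $N$ from decaying too fast as you zoom in), so it cannot convert the uniform bound $W(u,r)\leq W(u,1/2)$ into an upper bound on $N(u,r)$ or on $\Vert u_r\Vert_{H^1(\D_1)}$. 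Moreover, no such implication can hold without using the density hypothesis: as the paper points out right after Lemma \ref{lem_Ncontrol}, the (expected) minimizer $u(x,y)=\lambda y^2$ has $W(u,1)=\pi$ but $N(u,1)=c\lambda^2$, so a bound on $W$ alone does not control the $H^1$ norm of rescalings. This is precisely why the paper's Theorem \ref{mr_total} must allow for an explosion set $\mathcal{E}_u$; a uniform-in-$r$ bound is neither available nor needed here.

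The missing ingredient is Lemma \ref{lem_Ncontrol}, which is where the density of $\{u=0\}$ actually enters the $H^1$ estimate: if $|\{u=0\}\cap\D_r|\geq\delta|\D_r|$ then $N(u,r)\leq C_\delta\left(1+W(u,r)_+\right)\leq C_\delta\left(1+W(u,1)_+\right)$. The $\limsup$ hypothesis gives a sequence $r_n\to 0$ with density $\geq 2\delta$ at scale $r_n$, hence density $\geq\delta$ on the whole range $r\in(\tau r_n,r_n)$ with $\tau=\sqrt{1-\delta}$; integrating the bound on $N(u,\cdot)$ over this interval controls $\Vert u_{r_n}\Vert_{H^1(\D_1\setminus\D_\tau)}$, and the Cacciopoli estimate of Lemma \ref{lem_Cacciopoli} then bounds $\Vert u_{r_n}\Vert_{H^1(\D_\tau)}$. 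This yields the $H^1$ bound \emph{along the sequence} $r_n$ only, which is all that Lemma \ref{lem_conv_blowup} requires; from there your concluding step (convergence of the indicator functions, positivity of $|\{v=0\}\cap\D_1|$, and the classification of Proposition \ref{lem_carac_blowup}) is correct and agrees with the paper.
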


When there is no such density estimate of $\{u=0\}$, we do not know yet whether a blow-up exists. However, we can prove that there is a nonzero ``renormalized'' blow-up in the following sense.

\begin{proposition}\label{prop_renormalized_blowup}
Let $u\in\M(\D_1)$ such that $u(0)=|\nabla u(0)|=0$ and $(r_n)_{n\in\N^*}$ a sequence converging to $0$ such that
$$\lim_{r\to 0}\Vert u_{r_n}\Vert_{H^1(\D_1)}=+\infty.$$
Then there exists some subsequence $r_{n_k}$, and some nonzero $2$-homogeneous biharmonic function $v$, such that
$$\frac{u_{r_{n_k}}}{\Vert u_{r_{n_k}}\Vert_{H^1(\D_1)}}\underset{k\to +\infty}{\longrightarrow}v \text{ in }H^2_\loc(\R^2).$$
\end{proposition}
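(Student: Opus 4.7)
The plan is to normalize the rescalings $u_{r_n}$ by their $H^1(\D_1)$ norm, extract a subsequential limit by compactness, and identify it as a $2$-homogeneous biharmonic function via a passage to the limit in the monotonicity formula combined with the explicit structure of biharmonic functions given by Lemma \ref{lem_computationWbih}.

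Let $\lambda_n := \Vert u_{r_n}\Vert_{H^1(\D_1)} \to +\infty$ and $w_n := u_{r_n}/\lambda_n$. A direct scaling computation shows $w_n \in \M_{\mu_n}(\D_{1/r_n})$ with $\mu_n := \lambda_n^{-2} \to 0$ and $\Vert w_n\Vert_{H^1(\D_1)} = 1$. Writing $W_{\mu_n}$ for the natural analogue of the monotone quantity \eqref{def_W_disk} with $\chi$ weighted by $\mu_n$, the scaling law yields the crucial identity
\[
W_{\mu_n}(w_n, \rho) \;=\; \frac{W(u, r_n \rho)}{\lambda_n^2},
\]
so monotonicity of the left-hand side in $\rho$ is inherited from that of $W(u, \cdot)$, and the upper bound $W(u, r_n \rho) \leq W(u, 1)$ (valid whenever $r_n \rho \leq 1$) translates into $W_{\mu_n}(w_n, \rho) \leq W(u, 1)/\lambda_n^2 \to 0$, a fact that does not depend on whether $W(u, 0^+)$ is finite or $-\infty$.

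To promote $\Vert w_n\Vert_{H^1(\D_1)} = 1$ to uniform $H^1_\loc(\R^2)$ bounds, I use the scale-invariant identity $\Vert u_r\Vert_{H^1(\D_R)} \lesssim_R \Vert u_{rR}\Vert_{H^1(\D_1)}$ to reduce the matter to showing $\Vert u_{r_n R}\Vert_{H^1(\D_1)} \leq C(R)\lambda_n$ for every fixed $R > 0$; this bounded growth of rescaled $H^1$ norms is precisely the content of Lemma \ref{lem_boundedgrowth} referenced in the outline, a consequence of the monotonicity of $W(u, \cdot)$ combined with the Cacciopoli-type bound of Lemma \ref{lem_W_cacciopoli}. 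The Cacciopoli estimate of Lemma \ref{lem_Cacciopoli} then upgrades the bound to $H^2_\loc(\R^2)$, and applying Lemma \ref{lem_sequence} on each $\D_R$ with $\mu_n \to 0$ and diagonalizing yields a subsequence of $(w_n)$ converging strongly in $H^2_\loc(\R^2)$ to some $w \in \M_0(\R^2)$, i.e., a biharmonic function on $\R^2$. Strong convergence at scale $1$ preserves the unit norm, so $w \neq 0$; and $w_n(0) = \nabla w_n(0) = 0$ (coming from $u(0) = |\nabla u(0)| = 0$) pass to the limit by uniform $\mathcal{C}^{1, \alpha}$ estimates on minimizers, giving $w(0) = |\nabla w(0)| = 0$.

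For the $2$-homogeneity, I work with the smoothed quantity $W_\phi(v, r) := \int_0^1 \phi(\rho)\, W(v, \rho r)\, d\rho$ introduced in the proof of Lemma \ref{lem_conv_blowup}, where $\phi \in \mathcal{C}^\infty_c((1/2, 1), \R_+)$ with $\int\phi = 1$: after integration by parts in $\rho$, $W_\phi$ only involves first-order boundary contributions in $v$ and is therefore continuous with respect to strong $H^2_\loc$ convergence. Combining the previous identity with monotonicity yields
\[
W_{\mu_n, \phi}(w_n, r) \;\leq\; \frac{W(u, 1)}{\lambda_n^2} \underset{n \to \infty}{\longrightarrow} 0,
\]
so passing to the limit gives $W_{0, \phi}(w, r) := \int_0^1 \phi(\rho)\, W_0(w, \rho r)\, d\rho \leq 0$ for every $r > 0$. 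On the other hand, Lemma \ref{lem_computationWbih} applied to the biharmonic $w$ with $w(0) = |\nabla w(0)| = 0$ (so that the Goursat coefficients $b_0, b_{\pm 1}$ vanish) gives $W_0(w, r) \geq 0$ pointwise, hence $W_{0, \phi}(w, r) \geq 0$. Therefore $W_0(w, \rho r) = 0$ for all $\rho$ in the support of $\phi$ (and then for all $r > 0$ by varying $\phi$), and inspecting Lemma \ref{lem_computationWbih} term by term shows that this forces $a_{\pm 1} = a_{\pm 2} = 0$ and $a_n = b_n = 0$ for $|n| \geq 3$, so that $w(re^{i\theta}) = a_0 r^2 + b_2 r^2 e^{2i\theta} + b_{-2} r^2 e^{-2i\theta}$ is a nonzero $2$-homogeneous biharmonic polynomial, as claimed. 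The main obstacle is the uniform $H^1_\loc(\R^2)$ bound on $(w_n)$: the monotonicity formula combined with Lemma \ref{lem_W_cacciopoli} gives only the one-sided bound $|W(u, r/2)| \lesssim \Vert u_r\Vert_{H^1(\D_1)}^2$, and deducing polynomial-in-$R$ growth of $\Vert u_{r_n R}\Vert_{H^1(\D_1)}$ in terms of $\lambda_n$ is the delicate technical step.
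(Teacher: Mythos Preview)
Your argument is correct, and for the identification of the limit as $2$-homogeneous you take a genuinely different (and cleaner) route than the paper. The paper proves only $W_0(v,r)\leq 0$ and then argues in two steps: first it lets $r\to+\infty$ in the explicit Goursat expansion of Lemma~\ref{lem_computationWbih} to kill all coefficients except $a_0,b_0,b_{\pm1},b_{\pm2}$, and second it invokes the log-Lipschitz bound of Lemma~\ref{lem_C1log} (passed to the limit) to force $b_0=b_{\pm1}=0$. You instead observe $w(0)=|\nabla w(0)|=0$ \emph{first} (so $b_0=b_{\pm1}=0$), which makes each term of the Hermitian form in Lemma~\ref{lem_computationWbih} nonnegative; sandwiching with $W_{0,\phi}\leq 0$ then gives $W_0\equiv 0$ and all remaining coefficients drop out in one shot. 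This avoids the separate $r\to+\infty$ analysis and the appeal to Lemma~\ref{lem_C1log} at the end.

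Two places deserve a sharper write-up. First, your claim that Lemma~\ref{lem_boundedgrowth} directly yields $\Vert u_{r_nR}\Vert_{H^1(\D_1)}\leq C(R)\lambda_n$ is too quick: that lemma controls the boundary quantity $N(u,\cdot)$, not the bulk $H^1$ norm, and the passage between them requires picking (by the mean-value argument the paper makes explicit) some $s_n\in[r_n/2,r_n]$ with $N(u,s_n)\lesssim\lambda_n^2$ before applying the growth bound. Second, ``strong convergence at scale $1$ preserves the unit norm'' is only valid once you have $H^2_{\loc}(\R^2)$ convergence on a disk strictly larger than $\D_1$, since Lemma~\ref{lem_sequence} on $\D_1$ alone gives only $H^2_{\loc}(\D_1)$; this is fine because you establish the $H^1_{\loc}(\R^2)$ bound first, but the dependence should be made explicit. (The paper sidesteps this by proving nonvanishing via Lemma~\ref{lem_nondegsequences}, which bounds $\Vert w_n\Vert_{H^1(\D_{1/2})}$ from below directly.)
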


\begin{remark}\label{rem_minimality_blowup}
We may now prove that the function defined in \eqref{def_uI} belongs to $\M(\D_1)$ by a blow-up argument. Indeed, for every small $\eps>0$ consider $u^{\eps}\in\M(\D_1)$ verifying $(u^{\eps},\partial_r u^{\eps})=(\eps,0)$ on $\partial\D_1$. Such minimizer exists by standard arguments. Let $\eta\in\mathcal{C}^\infty_c(\R,\R)$ with $\eta\equiv 1$ near $0$, then
$$E(u^{\eps};\D_1)\leq E\left(\eps\eta\left(\frac{1-|\cdot|}{\sqrt{\eps}}\right);\D_1\right)=\mathcal{O}_{\eps\to 0}\left(\sqrt{\eps}\right).$$
So by the nondegeneracy Lemma \ref{lem_nondegeneracy}, $u^{\eps}=0$ in $\D_{\frac{1}{2}}$ for a sufficiently small $\eps>0$ that we fix. Let $r>0$ be the largest radius such that $u^{\eps}=0$ in $\D_r$, and $p\in\partial\D_{r}\cap\partial\Spt(u^\eps)$. Since $\Spt(u^\eps)$ has density at most $\frac{1}{2}$ at $p$, then by Proposition \ref{prop_existence_blowup}, $u^\eps(p+\cdot)$ admits a blow-up of type I, which must then be a minimizer.
\end{remark}

The proofs of Proposition \ref{prop_existence_blowup}, \ref{prop_renormalized_blowup} rely on the structure of $W(u,r)$. We remind that by Theorem \ref{Th_Monotonicity_disk},  $W(u,r)$ is decomposed as
$$W(u,r):=\frac{1}{r^2}E(u;\D_r)+rN'(u,r)+R(u,r),$$
where $N,R$ are defined in \eqref{eq_def_N}, \eqref{eq_def_R}. Since $N(u,r)\gtrsim \Vert u_r\Vert_{L^2(\partial\D_1)}^2+\Vert \nabla u_r\Vert_{L^2(\partial\D_1)}^2\gtrsim R(u,r)$, there exists some universal constant $\kappa>0$ (which could be made explicit, although we will not need it) such that for any function $u\in H^2(\D_1)$, $r\in (0,1)$:
\begin{equation}\label{def_kappa}
 |R(u,r)|\leq \kappa N(u,r).
\end{equation}

The next lemma uses this decomposition of $W(u,r)$ to ensure that for any $u\in\M(\D_1)$, $u_{r}$ cannot be too small compared to $u$ for $r$ close to $1$, in a uniform way.
\begin{lemma}\label{lem_boundedgrowth}
Let $u\in \M(\D_1)$ such that $u(0)=|\nabla u(0)|=0$, $r\in (0,1)$, $\kappa$ the constant from \eqref{def_kappa}. Then
\[N(u,r)\geq r^\kappa N(u,1)-\frac{1-r^\kappa}{\kappa}W(u,1).\]
\end{lemma}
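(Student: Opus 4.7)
The plan is to reduce the inequality to an ODE estimate for an appropriate rescaling of $N(u,r)$. Set $f(r) := r^{-\kappa} N(u,r)$ on $(0,1]$. Since $N(u,\cdot)$ is absolutely continuous, so is $f$, and at almost every $r$ we have
\[
f'(r) = r^{-\kappa-1}\bigl(rN'(u,r) - \kappa N(u,r)\bigr).
\]
The idea is that the defining identity $W(u,r) = r^{-2}E(u;\D_r) + rN'(u,r) + R(u,r)$ allows us to replace $rN'(u,r)$ by boundary-only expressions plus the nonnegative energy term, after which the bound $|R(u,r)| \leq \kappa N(u,r)$ (the definition of $\kappa$ in \eqref{def_kappa}) neutralises the $-\kappa N(u,r)$ term.

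Concretely, I would use $rN'(u,r) = W(u,r) - r^{-2}E(u;\D_r) - R(u,r)$ to rewrite
\[
f'(r) = r^{-\kappa-1}\bigl(W(u,r) - r^{-2}E(u;\D_r) - R(u,r) - \kappa N(u,r)\bigr).
\]
Since $E(u;\D_r) \geq 0$, $R(u,r) \geq -\kappa N(u,r)$, and $W(u,r) \leq W(u,1)$ by the monotonicity statement of Theorem \ref{Th_Monotonicity_disk}, the bracket is bounded above by $W(u,1)$, giving the pointwise estimate $f'(r) \leq r^{-\kappa-1} W(u,1)$ for a.e.\ $r \in (0,1)$.

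Integrating this from $r$ to $1$, using $\int_r^1 s^{-\kappa-1}\,ds = \tfrac{r^{-\kappa}-1}{\kappa}$, we obtain
\[
f(1) - f(r) \leq \tfrac{1}{\kappa}(r^{-\kappa}-1)\,W(u,1),
\]
i.e.\ $r^{-\kappa} N(u,r) \geq N(u,1) - \tfrac{r^{-\kappa}-1}{\kappa} W(u,1)$. Multiplying through by $r^\kappa$ yields the claimed inequality. The only genuine subtlety is to make sure all the manipulations are justified, namely that $N(u,r)$ is absolutely continuous on $[r_0,1]$ for every $r_0 > 0$ (which follows from $u \in H^2_{\mathrm{loc}}(\D_1 \setminus \{0\})$, in fact from the $\mathcal{C}^{1,\alpha}$ estimate of Lemma \ref{lem_C1log}) and that the identity from Theorem \ref{Th_Monotonicity_disk} holds a.e.; no further ingredient beyond $|R| \leq \kappa N$, positivity of the energy, and monotonicity of $W$ is needed, so there is no real obstacle once the correct integrating factor $r^{-\kappa}$ is identified.
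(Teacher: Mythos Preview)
Your proof is correct and follows essentially the same approach as the paper: use the decomposition $W(u,r)=r^{-2}E(u;\D_r)+rN'(u,r)+R(u,r)$, combine $E\geq 0$, $|R|\leq \kappa N$, and the monotonicity $W(u,r)\leq W(u,1)$ to obtain the differential inequality $\frac{d}{dr}\bigl(r^{-\kappa}N(u,r)\bigr)\leq r^{-\kappa-1}W(u,1)$, then integrate from $r$ to $1$. The only cosmetic difference is that you introduce $f(r)=r^{-\kappa}N(u,r)$ first and compute its derivative, whereas the paper derives the inequality $rN'-\kappa N\leq W(u,1)$ and then recognises the integrating factor.
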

A scale-invariant statement of this lemma is that for any $s<r$, $u\in\M(\D_r)$ such that $u(0)=|\nabla u(0)|=0$:
\[N(u,s)\geq \left(\frac{s}{r}\right)^\kappa N(u,r)-\frac{1-\left(\frac{s}{r}\right)^\kappa}{\kappa}W(u,r).\]
This is obtained by applying Lemma \ref{lem_boundedgrowth} to $u_r$ with a radius $\frac{s}{r}$.
\begin{proof}[Proof of Lemma \ref{lem_boundedgrowth}]
We start from $W(u,r)\leq W(u,1)$, (given by the monotonicity from Theorem \ref{Th_Monotonicity_disk}), and we decompose $W(u,r)$ as:
\[\frac{1}{r^2}E(u;\D_r)+rN'(u,r)+R(u,r)\leq W(u,1).\]
Using the estimate $|R|\leq\kappa N$ (see \eqref{def_kappa}), and the fact that the energy $E$ is nonnegative, we get
\[rN'(u,r)-\kappa N(u,r)\leq W(u,1)\]
which may be rewritten as
\[\frac{d}{dr}\frac{N(u,r)}{r^\kappa}\leq \frac{W(u,1)}{r^{\kappa+1}}.\]
We integrate this from $r(<1)$ to $1$:
\[N(u,1)-\frac{N(u,r)}{r^\kappa}\leq \left(\frac{1}{r^\kappa}-1\right)\frac{W(u,1)}{\kappa},\]
which is the result.
\end{proof}

\begin{lemma}\label{lem_nondegsequences}
There exists $\sigma>0$ such that the following holds: let $(u^{(n)})_n$ be a sequence in $\M(\D_1)$ such that
$$u^{(n)}(0)=|\nabla u^{(n)}(0)|=0,\qquad N(u^{(n)},1)\underset{n\to +\infty}{\longrightarrow}+\infty,\qquad W(u^{(n)},1)\leq \sigma N(u^{(n)},1).$$
Then there exists a subsequence $(n_k)_{k\geq 1}$ and a nonzero biharmonic function $v:\D_1\to\R$ such that
$$\frac{u^{(n_k)}}{\Vert u^{(n_k)}\Vert_{H^1(\D_1)}}\underset{k\to +\infty}{\longrightarrow}v\text{ in }H^2_\loc(\D_1).$$
\end{lemma}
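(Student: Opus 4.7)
The plan is to normalize $\tilde u^{(n)}:=u^{(n)}/\|u^{(n)}\|_{H^1(\D_1)}$, extract a biharmonic limit $v$ via Lemma \ref{lem_sequence}, and rule out $v\equiv 0$ by combining Lemma \ref{lem_boundedgrowth} with an a priori two-sided comparison $\|u^{(n)}\|_{H^1(\D_1)}^2\asymp N(u^{(n)},1)$. The main obstacle I anticipate is the upper bound $\|u^{(n)}\|_{H^1(\D_1)}^2\lesssim N(u^{(n)},1)$, which I obtain by comparing $u^{(n)}$ with its biharmonic extension and invoking elliptic regularity of the biharmonic Dirichlet problem at low regularity.

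Concretely, let $\hat u^{(n)}$ be the biharmonic function on $\D_1$ with trace $(u^{(n)}|_{\partial\D_1},\partial_r u^{(n)}|_{\partial\D_1})$, and set $w_n:=u^{(n)}-\hat u^{(n)}\in H^2_0(\D_1)$. Two integrations by parts give $\int_{\D_1}\Delta w_n\Delta\hat u^{(n)}=0$, whence $\int|\Delta u^{(n)}|^2=\int|\Delta\hat u^{(n)}|^2+\int|\Delta w_n|^2$. Plugging into $E(u^{(n)};\D_1)\leq E(\hat u^{(n)};\D_1)$ yields
$$\int_{\D_1}|\Delta w_n|^2 \leq \int_{\D_1}\bigl(\chi_{\hat u^{(n)}\neq 0}-\chi_{u^{(n)}\neq 0}\bigr)\leq\pi,$$
and the standard inequality $\|w\|_{H^1(\D_1)}\lesssim\|\Delta w\|_{L^2(\D_1)}$ on $H^2_0(\D_1)$ gives $\|w_n\|_{H^1(\D_1)}\leq C$ uniformly. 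Meanwhile, elliptic regularity for the biharmonic Dirichlet problem provides a continuous solution operator $H^{1/2}(\partial\D_1)\times H^{-1/2}(\partial\D_1)\to H^1(\D_1)$, and together with the continuous inclusions $H^1\hookrightarrow H^{1/2}$, $L^2\hookrightarrow H^{-1/2}$ on $\partial\D_1$ one obtains
$$\|\hat u^{(n)}\|_{H^1(\D_1)}^2\lesssim\|u^{(n)}\|_{H^1(\partial\D_1)}^2+\|\partial_r u^{(n)}\|_{L^2(\partial\D_1)}^2\lesssim N(u^{(n)},1).$$
Combining, $\|u^{(n)}\|_{H^1(\D_1)}^2\lesssim N(u^{(n)},1)+1$; since $N(u^{(n)},1)\to\infty$, this forces $\|u^{(n)}\|_{H^1(\D_1)}\to\infty$ and, for $n$ large, $N(\tilde u^{(n)},1)=N(u^{(n)},1)/\|u^{(n)}\|_{H^1(\D_1)}^2\geq c_0>0$.

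Now $\tilde u^{(n)}$ minimizes $E_{\lambda^{(n)}}$ with $\lambda^{(n)}=1/\|u^{(n)}\|_{H^1(\D_1)}^2\to 0$ and $\|\tilde u^{(n)}\|_{H^1(\D_1)}=1$, so Lemma \ref{lem_sequence} yields (after extraction) $\tilde u^{(n_k)}\to v$ in $H^2_\loc(\D_1)$ with $v\in\M_0(\D_1)$, i.e.\ $v$ biharmonic. Suppose for contradiction $v\equiv 0$. Then for every $r<1$ the strong $H^2(\D_r)$ convergence combined with the trace theorem gives $N(\tilde u^{(n_k)},r)\to 0$. On the other hand, since $W_{\lambda^{(n)}}(\tilde u^{(n)},1)=W(u^{(n)},1)/\|u^{(n)}\|_{H^1(\D_1)}^2\leq\sigma N(\tilde u^{(n)},1)$, applying Lemma \ref{lem_boundedgrowth} to $\tilde u^{(n)}$ (whose proof extends to $\M_\lambda$ for any $\lambda\geq 0$) gives
$$\left(r^\kappa-\frac{(1-r^\kappa)\sigma}{\kappa}\right)N(\tilde u^{(n)},1)\leq N(\tilde u^{(n)},r).$$
Choosing $\sigma$ small enough and $r$ close enough to $1$ that the bracketed factor exceeds $1/2$, we deduce $N(\tilde u^{(n_k)},1)\to 0$, contradicting $N(\tilde u^{(n)},1)\geq c_0$. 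Hence $v\not\equiv 0$, completing the proof.
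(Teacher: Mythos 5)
Your argument has one genuine gap, at the step ``since $N(u^{(n)},1)\to\infty$, this forces $\Vert u^{(n)}\Vert_{H^1(\D_1)}\to\infty$''. The inequality you established, $\Vert u^{(n)}\Vert_{H^1(\D_1)}^2\lesssim N(u^{(n)},1)+1$, is an \emph{upper} bound on $\Vert u^{(n)}\Vert_{H^1(\D_1)}$, and an upper bound cannot force the norm to blow up: it is perfectly consistent with $\Vert u^{(n)}\Vert_{H^1(\D_1)}$ staying bounded (or even tending to $0$ along a subsequence) while $N(u^{(n)},1)\to\infty$, since $N(u,1)$ is a boundary quantity that the interior $H^1$ norm does not control. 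This matters for your proof: you need $\lambda^{(n)}=\Vert u^{(n)}\Vert_{H^1(\D_1)}^{-2}\to 0$ in order for Lemma \ref{lem_sequence} to produce a limit in $\M_0(\D_1)$, i.e.\ a biharmonic $v$; if $\lambda^{(n)}$ has a subsequential limit $\lambda\in(0,\infty)$ the limit is only a minimizer of $E_\lambda$, and if $\Vert u^{(n)}\Vert_{H^1(\D_1)}\to 0$ along a subsequence the compactness lemma does not apply at all. So, as written, the conclusion that $v$ is biharmonic is not established.

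The missing fact is true, but it requires the \emph{reverse} comparison $N(u^{(n)},1)\lesssim\Vert u^{(n)}\Vert_{H^1(\D_1)}^2$, and this is exactly where the hypothesis $W(u^{(n)},1)\leq\sigma N(u^{(n)},1)$ with $\sigma$ small must enter: apply Lemma \ref{lem_boundedgrowth} to the unnormalized $u^{(n)}$ to get $N(u^{(n)},r)\geq 2^{-\kappa-1}N(u^{(n)},1)$ for all $r\in\left[\frac{1}{2},1\right]$, then integrate in $r$ and use $\int_{1/2}^{1}N(u^{(n)},r)\,dr\lesssim\Vert u^{(n)}\Vert_{H^1(\D_1\setminus\D_{1/2})}^2$ to conclude $\Vert u^{(n)}\Vert_{H^1(\D_1)}^2\gtrsim N(u^{(n)},1)\to\infty$. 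This is precisely how the paper opens its proof; it then normalizes by $M_n=2\int_{1/2}^{1}N(u^{(n)},r)\,dr$ and obtains nondegeneracy of the limit by a second application of Lemma \ref{lem_boundedgrowth} at a radius $r_n$ realizing this average, whereas you obtain it from your upper bound, which does legitimately give $N(\tilde u^{(n)},1)\geq c_0>0$. With this one repair the rest of your proof is correct: the comparison with the biharmonic extension together with the transposition estimate $\Vert\hat u\Vert_{H^1(\D_1)}\lesssim\Vert u\Vert_{H^{1/2}(\partial\D_1)}+\Vert\partial_r u\Vert_{H^{-1/2}(\partial\D_1)}$ (which on the disk can be checked directly from the Goursat decomposition, in the spirit of Lemma \ref{lem_computationWbih}) is a valid alternative route to the lower bound $N(\tilde u^{(n)},1)\geq c_0$, and your final contradiction is sound; note also that you can avoid extending Lemma \ref{lem_boundedgrowth} to $\M_\lambda$ by simply applying it to $u^{(n)}$ and dividing the resulting inequality by $\Vert u^{(n)}\Vert_{H^1(\D_1)}^2$.
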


\begin{proof}
Let $(u^{(n)})$ be such a sequence, for a value $\sigma>0$ that will be fixed sufficiently small. By Lemma \ref{lem_boundedgrowth} we have, for all $r\in \left[\frac{1}{2},1\right]$, 
\[N(u^{(n)},r)\geq 2^{-\kappa}N(u^{(n)},1)-\frac{1-2^{-\kappa}}{\kappa}W(u^{(n)},1)\geq 2^{-\kappa-1}N(u^{(n)},1)\text{ for a small enough }\sigma.\]
Let $M_n=2\int_{\frac{1}{2}}^{1}N(u^{(n)},r)dr$. Then $M_n\geq 2^{-\kappa-1}N(u^{(n)},1)\to +\infty$ and by the Cacciopoli inequality of Lemma \ref{lem_Cacciopoli} we have $$\Vert u^{(n)}\Vert_{H^1(\D_1)}^2\lesssim M_n\lesssim \Vert u^{(n)}\Vert_{H^1(\D_1)}^2.$$
It is enough to prove that $M_n^{-1/2}u^{(n)}$ converges to some non-zero biharmonic function. Since this sequence is bounded in $H^1(\D_1)$, and $M_n^{-1/2}u^{(n)}\in\M_{M_n^{-1}}(\D_1)$, by Lemma \ref{lem_sequence} it admits (after extraction of a subsequence $n_k$) some limit $v$ in the $H^2_\loc(\D_1)$ sense such that $v\in\M_0(\D_1)$ (meaning $v$ is biharmonic).

We now prove that $v$ is not identically zero. There exists some $r_n\in \left[\frac{1}{2},1\right]$ such that $M_n=N(u^{(n)},r_n)$, and by application of Lemma \ref{lem_boundedgrowth} we have, for all $r\in \left[\frac{1}{4},\frac{1}{2}\right](\subset \left[\frac{r_n}{4},r_n\right])$,
\[N(u^{(n)},r)\geq 4^{-\kappa}M_n-\frac{1-4^{-\kappa}}{\kappa}W(u^{(n)},1)\geq 4^{-\kappa-1}M_n\text{ for a small enough }\sigma.\]
In particular, $\Vert M_n^{-1/2}u^{(n)}\Vert_{H^1(\D_{1/2})}^2$ is bounded from below by a positive constant. Since the convergence of $M_{n_k}^{-1/2}u^{(n_k)}$ to $v$ is strong in $H^1(\D_{1/2})$, necessarily $v$ is non-zero.
\end{proof}

The next lemma is central for the existence of blow-ups: it states that at points where $\Spt(u)^c$ has a positive density, $W(u,r)$ controls $N(u,r)$ from above. Since $r\mapsto W(u,r)$ is nondecreasing, this will imply a uniform bound of $N(u,r)$ as $r\to 0$.
\begin{lemma}\label{lem_Ncontrol}
Let $\delta\in (0,1)$, there exists $C_\delta>0$ such that for any $u\in \M(\D_1)$ with $u(0)=|\nabla u (0)|=0$ and
\[|\{u\neq 0\}\cap\D_1|\leq (1-\delta)|\D_1|,\]
we have
\[N(u,1)\leq C_\delta(1+W(u,1)_+).\]
\end{lemma}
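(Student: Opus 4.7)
The plan is to argue by contradiction using the compactness principle from Lemma \ref{lem_nondegsequences}. Suppose the estimate fails: then for every $C>0$ there exists $u\in\M(\D_1)$ satisfying the hypotheses with $N(u,1) > C(1+W(u,1)_+)$. Extracting such a sequence $(u^{(n)})_{n\geq 1}$ forces $N(u^{(n)},1)\to +\infty$ together with $W(u^{(n)},1)_+ = o\bigl(N(u^{(n)},1)\bigr)$. Since $W(u^{(n)},1) \leq W(u^{(n)},1)_+$, this implies that for the fixed universal $\sigma$ of Lemma \ref{lem_nondegsequences}, the inequality $W(u^{(n)},1) \leq \sigma N(u^{(n)},1)$ holds for all $n$ large enough.

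Having matched the hypotheses of Lemma \ref{lem_nondegsequences}, I would extract a subsequence (still denoted $u^{(n)}$) such that $u^{(n)}/\Vert u^{(n)}\Vert_{H^1(\D_1)}$ converges in $H^2_\loc(\D_1)$ to a nonzero biharmonic function $v\in H^2(\D_1)$. A further diagonal extraction gives almost everywhere convergence on $\D_1$. The crucial structural input is that $v$, being biharmonic on the connected open set $\D_1$, is real-analytic there, so its zero set $\{v=0\}\cap \D_1$ is Lebesgue negligible.

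I would then use the density hypothesis to produce a contradiction with this negligibility. Pick $r<1$ with $r^2 > 1-\delta$, say $r^2 = 1 - \delta/2$. The density bound gives
\[|\{u^{(n)} = 0\}\cap \D_r| \geq \pi r^2 - |\{u^{(n)}\neq 0\}\cap \D_1| \geq \pi r^2 - (1-\delta)\pi > 0.\]
At every point $x$ where $v(x)\neq 0$, almost everywhere convergence of $u^{(n)}/\Vert u^{(n)}\Vert_{H^1(\D_1)}$ to $v$ ensures $u^{(n)}(x)\neq 0$ for $n$ large, so $\chi_{\{v\neq 0\}}\leq \liminf_{n\to\infty}\chi_{\{u^{(n)}\neq 0\}}$ a.e.\ on $\D_r$. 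Fatou's lemma then yields
\[|\{v\neq 0\}\cap\D_r|\leq \liminf_{n\to\infty}|\{u^{(n)}\neq 0\}\cap\D_r| \leq (1-\delta)\pi,\]
hence $|\{v=0\}\cap\D_r|\geq \pi r^2 - (1-\delta)\pi > 0$, contradicting real analyticity of $v$.

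The main obstacle I anticipate is not any measure-theoretic step but the calibration at the start: one must verify that the failure of the estimate with a $1+W(u,1)_+$ right-hand side is precisely what feeds Lemma \ref{lem_nondegsequences}. Formulating the right-hand side this way (rather than with $W(u,1)$ alone, which could be very negative and trivially satisfy the inequality) is what allows the reduction $W\leq \sigma N$ to go through. Everything else—the compactness, the passage to a.e.\ limits, and the appeal to analytic-continuation for biharmonic functions—is standard once this contradictory regime is isolated.
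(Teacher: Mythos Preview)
Your proposal is correct and follows essentially the same approach as the paper: contradict via Lemma \ref{lem_nondegsequences}, extract a nonzero biharmonic limit, then use Fatou plus analyticity to reach a contradiction with the density hypothesis. Your version is in fact slightly more careful than the paper's, since you localize to a compact subdisk $\D_r$ to exploit the $H^2_\loc$ convergence before invoking a.e.\ convergence and Fatou; the paper glosses over this point.
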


Note that this lemma fails without the hypothesis on the support: we expect $u:(x,y)\mapsto \lambda y^2$ to be a solution in $\M(\D_1)$ for any $\lambda\geq 1$, but $W(u,1)=\pi$, whereas $N(u,1)= c\lambda^2$ for some $c>0$, so the estimate cannot hold for large enough $\lambda$.

\begin{proof}
Suppose that for some sequence of minimizers $u^{(n)}$ verifying this density estimate we have $N(u^{(n)},1)\geq n(1+W(u^{(n)},1)_+)$. In particular Lemma \ref{lem_nondegsequences} applies for large enough $n$: up to the extraction of a subsequence, $\frac{u^{(n)}}{\Vert u^{(n)}\Vert_{H^1(\D_1)}}$ converges in $H^2_\loc(\D_1)$ to a non-zero biharmonic function $v$.

By Fatou's lemma we have $|\{v=0\}|\geq \limsup_{n\to +\infty}|\{u^{(n)}=0\}|\geq \delta$, which is a contradiction since, for instance, any biharmonic function is analytic, so its support must have full measure.
\end{proof}

We now deduce the proofs of Proposition \ref{prop_existence_blowup} and \ref{prop_renormalized_blowup}.
\begin{proof}[Proof of Proposition \ref{prop_existence_blowup}]
Let $r_n\to 0$ be a sequence such that for some $\delta\in\left( 0,\frac{1}{2}\right)$, we have
$$\frac{|\{u=0\}\cap\D_{r_n}|}{|\D_{r_n}|}>2\delta.$$
Let $\tau=\sqrt{1-\delta}$, such that for any $r\in (\tau r_n,r_n)$, we have
$$\frac{|\{u=0\}\cap\D_{r}|}{|\D_{r}|}>\delta.$$

Then for every such $n$, $r\in (\tau r_n,r_n)$, we have by Lemma \ref{lem_Ncontrol}
$$N(u,r)\leq C_\delta (1+W(u,r)_+)\leq C_\delta (1+W(u,1)_+),$$
where $C_\delta$ is the constant from Lemma \ref{lem_Ncontrol}. We integrate this in $r\in (\tau r_n,r_n)$:
$$\Vert u_{r_n}\Vert_{H^1(\D_{1}\setminus\D_\tau)}^2\lesssim \int_{\tau }^{1}N(u,sr_n)ds\lesssim \delta C_\delta(1+W(u,1)_+),$$
and by the Cacciopoli inequality of Lemma \ref{lem_Cacciopoli}, $\Vert u_{r_n}\Vert_{H^1(\D_\tau)}$ is also bounded independently of $n$. So we can apply Lemma \ref{lem_conv_blowup} to the sequence $u_{r_n}$, and there exists some subsequence $u_{r_{n_k}}$ that converges in $H^2_\loc(\D_1)$ to some $v\in\M_{\mathrm{hom}}$. Since the density estimate is still verified at the limit, necessarily $v\in\M_{\mathrm{hom}}^{\mathrm{I}}\sqcup \M_{\mathrm{hom}}^{\mathrm{II}}$.
\end{proof}

\begin{proof}[Proof of Proposition \ref{prop_renormalized_blowup}]
By the Cacciopoli inequality of Lemma \ref{lem_Cacciopoli},
$$\Vert u_{r_n}\Vert_{H^1(\D_1)}^2\lesssim \int_{r_n/2}^{r_n}N(u,s)\frac{ds}{s}\lesssim \Vert u_{r_n}\Vert_{H^1(\D_1)}^2,$$
so there exists some $s_n\in \left[\frac{r_n}{2},r_n\right]$ such that
$$\Vert u_{r_n}\Vert_{H^1(\D_1)}^2\lesssim N(u,s_n)\lesssim \Vert u_{r_n}\Vert_{H^1(\D_1)}^2\underset{n\to+\infty}{\longrightarrow}+\infty.$$
Since $W(u_{s_n},1)=W(u,s_n)\leq W(u,1)$, Lemma \ref{lem_nondegsequences} applies to $u_{s_n}$ for large enough $n$: up to extracting a subsequence, we may suppose that  $u_{s_{n}}/\Vert u_{s_{n}}\Vert_{H^1(\D_1)}$ converges in $H^2_\loc(\D_1)$ to some nonzero biharmonic function $v$.

Let $R>2$: $u_{s_n}$ is well-defined in $\D_R$ for large enough $n$, and we now prove that up to extracting another subsequence, $u_{s_n}$ converges to $v$ in $H^2_\loc(\D_R)$. By Lemma \ref{lem_boundedgrowth} applied to $u$ with the radii $s_n$, $s_nr$ for $r\in[R/2,R]$, we have 
\[N(u,s_n)\geq R^{-\kappa} N(u,s_nr)-\frac{1-R^{-\kappa}}{\kappa}W(u,s_nr)_+.\]
By integrating this estimate in $r\in\left[\frac{R}{2},R\right]$ we get
\[\Vert u_{s_n}\Vert_{H^1(\D_{R}\setminus\D_{\frac{R}{2}})}^2\lesssim R^\kappa\left( N(u,s_n)+W(u,1)_+\right).\]
Thus $u_{s_n}$ is locally bounded in $H^1(\D_R)$ for any $R>0$: up to extracting a subsequence, we obtain that $u_{s_n}/\Vert u_{s_n}\Vert_{H^1(\D_1)}$ converges in $H^2_\loc(\R^2)$ to $v$.

Next we prove that $v$ is a sum of $k$-homogeneous biharmonic functions for $k\in\{0,1,2\}$. Since $v$ is biharmonic, it admits a decomposition (as in \eqref{eq_Goursat}):
$$
v\left(re^{i\theta}\right)=\sum_{n\in\Z}\left(a_n r^{|n|+2}+b_nr^{|n|}\right)e^{in\theta},
$$
for some complex coefficients $(a_n)_{n\in\Z},(b_n)_{n\in\Z}$.
Proving $v$ is a sum of $k$-homogeneous function for $k\leq 2$ is thus equivalent to proving that every coefficient other than $a_0$, $b_0$, $b_{\pm 1}$, $b_{\pm 2}$ vanishes.

For any $r>0$, we have
\begin{align*}
W_0(v,r)&=\lim_{n\to +\infty} W_0\left(\frac{u_{s_n}}{\Vert u_{s_n}\Vert_{H^1(\D_1)}},r\right)=\lim_{n\to +\infty}\frac{W_0(u_{s_n},r)}{\Vert u_{s_n}\Vert_{H^1(\D_1)}^2}\\
&\leq\limsup_{n\to +\infty} \frac{W(u,s_nr)}{\Vert u_{s_n}\Vert_{H^1(\D_1)}^2}\leq\limsup_{n\to +\infty}\frac{W(u,1)}{\Vert u_{s_n}\Vert_{H^1(\D_1)}^2}\\
&=0\text{ since }\Vert u_{s_n}\Vert_{H^1(\D_1)}\to +\infty.
\end{align*}
By Lemma \ref{lem_computationWbih} we have the explicit expression
$$
W_0(v,r)=8\pi\sum_{n\in\Z}H_n\left(r^{|n|}a_n,r^{|n|-2}b_n\right),
$$
where $H_n$ is the Hermitian form
$$H_n(z,w)=|n|^3|z|^2+2|n|^2(|n|-2)\Re(\ov{z}w)+(|n|-2)(|n|^2-2|n|+2)|w|^2.$$
Note that $H_n$ is positive definite for $|n|\geq 3$; using this, we get
\begin{align*}
\forall r>0,\ 0\geq& \frac{W_0(v,r)}{8\pi}\geq\sum_{|n|\leq 2}H_n\left(r^{|n|}a_n,r^{|n|-2}b_n\right)\\
=&-4r^{-4}|b_0|^2+2r^2\left(|a_1|^2+|a_{-1}|^2\right)-2\left(\Re(\ov{a_1}b_1)+\Re(\ov{a_{-1}}b_{-1})\right)\\
&-r^{-2}\left(|b_1|^2+|b_{-1}|^2\right)+8r^4\left(|a_2|^2+|a_{-2}|^2\right),
\end{align*}
so $a_{\pm 2}=a_{\pm 1}=0$, and
$$\sum_{|n|\leq 2}H_n\left(r^{|n|}a_n,r^{|n|-2}b_n\right)=-4r^{-4}|b_0|^2-r^{-2}\left(|b_1|^2+|b_{-1}|^2\right).$$
Now, for any $N\in\Z$ such that $|N|\geq 3$ we have 
$$0\geq \frac{W_0(v,r)}{8\pi}\geq H_N\left(r^{|N|}a_N,r^{|N|-2}b_N\right)-4r^{-4}|b_0|^2-r^{-2}\left(|b_1|^2+|b_{-1}|^2\right).$$
If $a_N\neq 0$, the highest degree term of the right-hand side is $|N|^3|a_N|^2r^{2|N|}$, which is a contradiction as $r\to +\infty$. So necessarily $a_N=0$, and the previous inequality becomes
$$0\geq (|N|-2)(|N|^2-2|N|+2)|b_N|^2r^{2(|N|-2)}-4r^{-4}|b_0|^2-r^{-2}\left(|b_1|^2+|b_{-1}|^2\right).$$
Since $|N|\geq 3$, then necessarily $b_N=0$ (otherwise the inequality above does not hold as $r\to +\infty$). This ends the proof that $v$ is a sum of $k$-homogeneous biharmonic polynomials for $k\leq 2$.

Finally, we prove that $v$ is $2$-homogeneous. Indeed, for any $n\in\N^*$, for any $p\in\D_{\frac{1}{2}}$, by Lemma \ref{lem_C1log} applied to $u_{r_n}$ we have
$$|u_{r_n}(p)|\leq C |p|^2\log\left(\frac{1}{|p|}\right)\Vert u_{r_n}\Vert_{H^1(\D_1)},$$
for some universal constant $C>0$. As a consequence, we have for any $p\in\D_{\frac{1}{2}}$:
$$|v(p)|\leq C |p|^2\log\left(\frac{1}{|p|}\right).$$
Since $v$ is already known to be a polynomial of degree $2$, this implies that $v$ is $2$-homogeneous.
\end{proof}

\section{Epiperimetric inequality}\label{sec_epi}

Our goal is now to prove a quantitative version of the monotonicity formula of Theorems \ref{Th_Monotonicity_exp} and \ref{Th_Monotonicity_disk}. 
In the classical Alt-Caffarelli problem (see for instance \cite{SV19}, or \cite[Lem. 12.14.]{V23}), this is based mainly on two observations: first, the derivative of the normalized energy $$W^{\mathrm{AC}}(u,r)=\frac{1}{r^2}\int_{\D_r}\left(|\nabla u|^2+\chi_{u>0}\right)-\frac{1}{r^3}\int_{\partial\D_r}u^2,$$
with respect to $r$ is bounded from below by $\frac{W^{\mathrm{AC}}(u^r,r)-W^{\mathrm{AC}}(u,r)}{r}$, where $u^r$ is the $1$-homogeneous extension of $u$ in $\D_r$. The second observation is that for some universal constant $\eta>0$, we have
$$W^{\mathrm{AC}}(u,r)<(1-\eta) W^{\mathrm{AC}}(u^r,r)+\eta\frac{\pi}{2}.$$
Here $\frac{\pi}{2}$ is the energy of homogeneous minimizers. This leads to a differential inequality of the form
$$\frac{d}{dr}\left(W^{\mathrm{AC}}(u,r)-\frac{\pi}{2}\right)\geq \frac{\eta}{(1-\eta)r}\left(W^{\mathrm{AC}}(u,r)-\frac{\pi}{2}\right).$$
When $W^{\mathrm{AC}}(u,0)\geq\frac{\pi}{2}$, this integrates to an explicit rate of convergence 
$$\forall r\in (0,1),\ W^{\mathrm{AC}}(u,r)-\frac{\pi}{2}\leq r^\frac{\eta}{1-\eta} \left(W^{\mathrm{AC}}(u,1)-\frac{\pi}{2}\right).$$
See \cite[Lem. 12.14]{V23} for a more extensive discussion on the epiperimetric inequality and on how to deduce geometric information from this decay.

Neither of these observations is known in our setting: the (2-)homogeneous extension of $u$ in $\D_r$ is generally not a valid competitor for $u$ (since the radial derivatives may not match on each side of $\partial\D_r$, so the $2$-homogeneous extension does not belong to $H^2$), and it is unclear whether the derivative of $W(u,r)$ is bounded from below by $\frac{W(u^r,r)-W(u,r)}{r}$, where $u^r$ would be some other ``natural'' competitor for $u$ in $\D_r$.

Instead of proving a differential inequality on $W(u,r)$, we prove directly the growth rate estimate
$$W(u,e^{-1})-\frac{\Theta}{2}\leq (1-\eta)\left(W(u,1)-\frac{\Theta}{2}\right),$$
for some universal constant $\eta\in (0,1)$, when $u$ is sufficiently close to some homogeneous solution of opening $\Theta\in\{\pi,t_1,2\pi\}$. Here $e^{-1}$ does not play any particular role and could be replaced by any constant strictly lower than $1$.

We also obtain a control of the variation $\Vert u_r-u_s\Vert_{H^1(\D_1)}$ at different scales $0<s<r<1$ by $\sqrt{W(u,1)-\frac{\Theta}{2}}$. This will be enough to obtain the uniqueness and rate of convergence of blow-ups.

\begin{theorem}\label{th_epiperimetry}
Let $\Theta\in\{\pi,t_1,2\pi\}$. There exist constants $c_1>0$, $\eta\in(0,1)$ such that the following holds: let $u\in \M(\D_1)$, $
\ov{u}\in \M_{\mathrm{hom}}^{\Theta}$ if $\Theta\in\{\pi,t_1\}$ (resp. $\ov{u}\in\mathrm{Span}(x^2+y^2,x^2-y^2,xy)$ if $\Theta=2\pi$) such that
$$\Vert u-\ov{u}\Vert_{H^1(\D_1\setminus\D_{e^{-1}})}\leq c_1\Vert \ov{u}\Vert_{H^1(\D_1)},\quad\text{ and }\quad W(u,0)\geq \frac{\Theta}{2}.$$
Then
$$
W(u,e^{-1})-\frac{\Theta}{2}\leq (1-\eta)\left( W(u,1)-\frac{\Theta}{2}\right),$$
and
$$
\Vert u-u_{e^{-1}}\Vert_{H^1(\D_1\setminus\D_{e^{-1}})}\leq C_1\sqrt{W(u,1)-\frac{\Theta}{2}}.
$$
\end{theorem}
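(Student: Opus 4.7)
Work in the exponential coordinates of Subsection~\ref{subsec_mono}: setting $v(t,\theta)=e^{2t}u(e^{-t+i\theta})$, Theorem~\ref{Th_Monotonicity_disk} gives $W(u,r)=\W(v,-\log r)$, so the first claimed inequality becomes
$$\W(v,1)-\tfrac{\Theta}{2}\leq (1-\eta)\left(\W(v,0)-\tfrac{\Theta}{2}\right).$$
Using the monotonicity identity $\W(v,0)-\W(v,1)=4\int_{0}^{1}\left(\|\partial_{tt}v\|^{2}+\|\partial_{t\theta}v\|^{2}\right)dt$ from Theorem~\ref{Th_Monotonicity_exp}, this is equivalent to a coercivity estimate
$$\W(v,1)-\tfrac{\Theta}{2}\leq C\int_{0}^{1}\left(\|\partial_{tt}v\|^{2}+\|\partial_{t\theta}v\|^{2}\right)dt,$$
that is, the excess energy at scale $e^{-1}$ is controlled by how much $v$ depends on $t$ (equivalently, how far $u$ is from $2$-homogeneous) over $[0,1]$.

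\textbf{Competitor and averaging.} Following the outline from the introduction, for almost every $r\in[e^{-1},1]$ I would construct a competitor $w_r\in u+H^{2}_{0}(\D_r)$ of the form $w_r(\rho e^{i\theta})=\rho^{2}b_r(\theta)+\zeta(\rho/r)\psi_r(\rho,\theta)$, where $b_r\colon\Sp\to\R$ is the angular profile of the homogeneous minimizer of opening $\Theta$ closest to $u|_{\partial\D_r}$ (picked in the rotation family for $\Theta\in\{\pi,t_1\}$, in the three-parameter family of \eqref{def_uIV} for $\Theta=2\pi$), $\psi_r$ is a biharmonic correction matching the boundary traces of $u-\rho^{2}b_r$ and its radial derivative on $\partial\D_r$, and $\zeta$ is a cutoff localizing $\psi_r$ near $\partial\D_r$. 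Minimality of $u$ in $\D_r$ gives $E(u;\D_r)\leq E(w_r;\D_r)$; since $N(u,r)$ and $R(u,r)$ depend only on $u,\partial_r u,\partial_\theta u,\partial_{r\theta}u$ on $\partial\D_r$, this translates into $W(u,r)\leq \tfrac{\Theta}{2}+G_r(u)$, where $G_r(u)$ is a quadratic boundary functional in the deviation $(u|_{\partial\D_r}-r^{2}b_r,\partial_r u|_{\partial\D_r}-2rb_r)$. By monotonicity and $\int_{e^{-1}}^{1}r^{-1}dr=1$,
$$W(u,e^{-1})-\tfrac{\Theta}{2}\leq \int_{e^{-1}}^{1}\left(W(u,r)-\tfrac{\Theta}{2}\right)\frac{dr}{r}\leq \int_{e^{-1}}^{1}G_r(u)\,\frac{dr}{r},$$
which in exponential coordinates becomes a quadratic form in $(v(t,\cdot)-b,\partial_tv(t,\cdot))$ integrated over $t\in[0,1]$.

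\textbf{Spectral gap and second inequality.} Decomposing in Fourier modes in $\theta$ diagonalizes $G_r$: the modes $|n|\leq 2$ present in the homogeneous profile $b_r$ are absorbed by optimizing the free parameters of $b_r$, while the unstable modes (e.g.\ $n=\pm 1$, visible in the explicit Hermitian form $H_n$ of Lemma~\ref{lem_computationWbih} where $H_n$ is indefinite) are killed by the vanishing $u(0)=|\nabla u(0)|=0$ and the assumption $W(u,0)\geq\tfrac{\Theta}{2}$, which rules out blow-ups of strictly smaller opening. The remaining modes yield a strictly coercive positive-definite form with a constant $<1$ relative to the homogeneous contribution, producing $\eta>0$. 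For the second inequality, $u-u_{e^{-1}}$ in exp coordinates reads $v(t,\theta)-v(t+1,\theta)=-\int_{t}^{t+1}\partial_sv(s,\theta)\,ds$, so $\|u-u_{e^{-1}}\|_{H^1(\D_1\setminus\D_{e^{-1}})}^{2}\lesssim \int_{0}^{2}(\|\partial_tv\|^{2}+\|\partial_{t\theta}v\|^{2})\,dt$, and a Poincar\'e-type inequality in $t$ combined with the coercivity estimate of step~1 bounds this by $W(u,1)-\tfrac{\Theta}{2}$.

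\textbf{Main obstacle.} The most delicate step is the construction and estimation of $w_r$ in the type~II case ($\Theta=t_1$), where the homogeneous extension $\rho^{2}b_r$ has a free boundary at angular opening $t_1$, and the biharmonic correction $\psi_r$ must vanish to first order on \emph{both} this free boundary and $\partial\D_r$. The angular value $t_1$, defined as the unique fixed point of $\tan$ in $(\pi,2\pi)$, is precisely what ensures that the leading-order boundary-layer energy exactly cancels the homogeneous contribution to $W(u,r)$, leaving only a strictly smaller quadratic in the higher Fourier modes. Quantifying this cancellation to extract an explicit $\eta$ is the crux of the argument.
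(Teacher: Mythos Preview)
Your overall plan---pass to exponential coordinates, build a competitor at each level, average, and close via the monotonicity identity---matches the paper's architecture. But two of the steps you sketch have genuine gaps that the paper spends most of Section~\ref{sec_epi} resolving.

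\textbf{Diagonalization.} Your claim that ``decomposing in Fourier modes in $\theta$ diagonalizes $G_r$'' is incorrect when $\Theta\in\{\pi,t_1\}$. The angular profiles live in $H^2_0([0,\omega])$ for some $\omega$ close to $\Theta$, with \emph{clamped} boundary conditions $b=b'=0$ at both endpoints; Fourier modes on $\Sp$ do not respect this and do not diagonalize the quadratic form $\|b''\|^2-4\|b'\|^2$ appearing in $\G$. The right basis is the \emph{buckling eigenbasis} $(b_{n,\omega})_{n\geq 1}$ of $\partial_\theta^4=-\mu_{n,\omega}\partial_\theta^2$ on $[0,\omega]$ (Subsection~\ref{subsec_buckling_basis}): it is orthonormal for $\langle\partial_\theta\cdot,\partial_\theta\cdot\rangle$, \emph{not} for the $L^2$ inner product, and the eigenvalue $\mu_{n,\omega}=4$ occurs precisely at the three special pairs $(\omega,n)=(\pi,1),(t_1,2),(2\pi,3)$. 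This is what drives the gap condition \eqref{eq_gapestimate} and hence $\eta>0$. Fourier \emph{is} the right basis only in the case $\Theta=2\pi$ with full support, which the paper treats separately via the explicit biharmonic extension in Lemma~\ref{lem_bihext}.

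\textbf{Control of $\|\partial_t v\|$.} Your second inequality relies on bounding $\int_0^2\|\partial_t v\|^2$ by $W(u,1)-\tfrac{\Theta}{2}$ via ``a Poincar\'e-type inequality in $t$''. There is no such inequality: nothing forces $\partial_t v$ to vanish at any fixed $t$. The monotonicity formula only yields $\|\partial_{tt}v\|^2+\|\partial_{t\theta}v\|^2$, and the passage from $\partial_{t\theta}v$ to $\partial_t v$ requires a Poincar\'e in $\theta$, valid only when $\Spt(u(t,\cdot))\neq\Sp$. The paper handles this by building the competitor so that the comparison itself carries an explicit negative term $-2\|\partial_t v(0,\cdot)\|^2$ (Corollary~\ref{cor_epiest}): when the support is proper this comes from the $\theta$-Poincar\'e; when the support is full (necessarily $\Theta=2\pi$) it is extracted directly from the Goursat coefficients of the biharmonic extension (Lemma~\ref{lem_bihext}). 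This extra term is then fed back into the averaging argument (equations~\eqref{eq_aux_2}--\eqref{eq_aux_5}) to produce the $\|\partial_t u\|_{H^1}^2$ appearing in Theorem~\ref{th_epiperimetry_exp}, which is exactly what Lemma~\ref{lem_control_variation_u} needs for the second inequality.

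A secondary point: your competitor $w_r=\rho^2 b_r+\zeta\psi_r$ matches first-order traces of $u$ on $\partial\D_r$, but $W(u,r)$ contains the term $rN'(u,r)$ which involves $\partial_{rr}u$ on $\partial\D_r$; so $E(u;\D_r)\leq E(w_r;\D_r)$ does not directly give $W(u,r)\leq\tfrac{\Theta}{2}+G_r(u)$. The paper circumvents this by working with $\G(v,\tau)$ and a cutoff $\phi$ in $t$, integrating by parts so that the second-order boundary terms are absorbed into the monotonicity identity \eqref{eq_proofepi_monotonicity}.
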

Similarly to Section \ref{sec_mono}, it is more convenient to first prove the epiperimetric inequality in exponential coordinates. In all that follows, a function of $\theta$ will also be seen, by an abuse of notation, as a function of $(t,\theta)$ that is constant with respect to $t$.
\begin{theorem}\label{th_epiperimetry_exp}
Let $\Theta\in\{\pi,t_1,2\pi\}$. There exist constants $c_0>0$, $\eta\in(0,1)$, such that the following holds: let $u\in H^2_\lin(\Cy_0)$ be a minimizer of $\G(\cdot,0)$, assume that there exists $b\in\mathscr{B}_{\mathrm{hom}}^{\Theta}$ if $\Theta\in\{\pi,t_1\}$ (resp. $b\in\mathrm{Span}(1,\cos(2\theta),\sin(2\theta))$ if $\Theta=2\pi$) such that
\begin{equation}\label{epiperimetry_firsthypothesis}
\Vert u-b\Vert_{H^1(\Cy_0\setminus\Cy_1)}\leq c_0\Vert b\Vert_{H^1(\Sp)}\quad\text{ and }\quad\W(u,1)\geq \frac{\Theta}{2}.
\end{equation}
Then
\[\W(u,1)\leq \W(u,0)-\eta\left(\left(\W(u,0)-\frac{\Theta}{2}\right) +\Vert \partial_t u\Vert_{H^1(\Cy_0\setminus\Cy_1)}^2\right).\]
\end{theorem}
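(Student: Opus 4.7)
I would prove the epiperimetric inequality by a competitor construction, exploiting the minimality of $u$ on cylinders $\Cy_\tau$. For almost every $\tau \in (0, 1)$ the Cauchy data $(u(\tau, \cdot), \partial_t u(\tau, \cdot)) \in H^2(\Sp) \times H^1(\Sp)$ exists. I would build a competitor $v_\tau \in H^2_\lin(\Cy_\tau)$ matching this data at $t = \tau$ and tending to $b$ deeper in the cylinder, e.g.
$$v_\tau(t, \theta) := b(\theta) + \phi(t - \tau)\bigl[u(\tau, \theta) - b(\theta)\bigr] + \psi(t - \tau)\, \partial_t u(\tau, \theta) \quad \text{for } \tau \leq t \leq \tau + 1,$$
and $v_\tau \equiv b$ for $t \geq \tau + 1$, where $\phi, \psi \in \mathcal{C}^2([0, 1])$ are fixed cutoff profiles satisfying $\phi(0) = 1, \phi'(0) = 0, \psi(0) = 0, \psi'(0) = 1$ and vanishing together with their first derivatives at $s = 1$. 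By construction $v_\tau - u \in H^2_0(\Cy_\tau)$, so minimality gives $\G(u, \tau) \leq \G(v_\tau, \tau)$; since the boundary correction in \eqref{def_W_ronde} depends only on the Cauchy data at $t = \tau$, which is shared by $u$ and $v_\tau$, this upgrades to $\W(u, \tau) \leq \W(v_\tau, \tau)$.

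\textbf{Competitor energy and contraction.} Expanding $\G(v_\tau, \tau)$ and using the identity $\G(b, \tau) = \Theta/2$ (which follows from the Dirichlet problem $\partial_\theta^4 b + 4 \partial_\theta^2 b = 0$ of Lemma \ref{lem_angularsector} combined with the energy evaluation of Theorem \ref{Th_Monotonicity_disk} at homogeneous minimizers), one estimates
$$\G(v_\tau, \tau) \leq \frac{\Theta}{2} + C_0 \bigl(\Vert u(\tau, \cdot) - b\Vert_{H^2(\Sp)}^2 + \Vert \partial_t u(\tau, \cdot)\Vert_{H^1(\Sp)}^2\bigr),$$
so that $\W(u, \tau) - \Theta/2 \leq C_0\,\mathcal{E}_\tau$ for a.e.\ $\tau \in (0, 1)$, with $\mathcal{E}_\tau$ denoting the right-hand side above. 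Integrating against a suitable cutoff in $\tau$ and using the monotonicity $\W(u, 1) \leq \W(u, \tau)$ of Theorem \ref{Th_Monotonicity_exp},
$$\W(u, 1) - \frac{\Theta}{2} \lesssim \Vert u - b\Vert_{H^2(\Cy_0 \setminus \Cy_1)}^2 + \Vert \partial_t u\Vert_{H^1(\Cy_0 \setminus \Cy_1)}^2,$$
where the $H^2$ norm of $u - b$ comes from upgrading the $H^1$ hypothesis via Cacciopoli (Lemma \ref{lem_Cacciopoli}). The final step is to convert this into the target form: the $\Vert \partial_t u\Vert_{H^1}^2$ term is already of the required shape, and the quadratic structure of the competitor's excess energy at $b$, combined with the identity $\W(u, 0) - \W(u, 1) = 4 \int_0^1\int_\Sp\{(\partial_{tt}u)^2 + (\partial_{t\theta}u)^2\}$ from the monotonicity formula, allows a Poincar\'e-type argument in $t$ (justified by the closeness of $u$ to the $t$-invariant function $b$) to absorb a fixed positive fraction of $\W(u, 0) - \Theta/2$.

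\textbf{Main obstacles.} Two points are delicate. First, the competitor $v_\tau$ must respect the support structure encoded by $\chi_{v_\tau \neq 0}$. For $\Theta = 2\pi$ the function $b$ has full support and this is automatic, but for $\Theta \in \{\pi, t_1\}$ the set $\Spt(b)$ is a proper arc and the perturbation $u(\tau, \cdot) - b$ may spill outside it, producing spurious area. I would neutralize this by multiplying $v_\tau$ by an auxiliary $\theta$-cutoff supported in $\Spt(b)$ and absorbing the resulting quadratic error into $\mathcal{E}_\tau$, using the smallness hypothesis on $c_0$ and the continuity estimate of Lemma \ref{lem_C1log} to ensure that the trace $u(\tau, \cdot)$ is pointwise close to $b$. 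Second, the strict contraction factor $\eta > 0$ hinges on a spectral gap for the Hessian of $\G$ at $b$; because the family $\mathscr{B}_{\mathrm{hom}}^\Theta$ carries continuous symmetries (rotations, and for $\Theta = 2\pi$ a three-parameter linear family), this Hessian is degenerate along the corresponding directions. I would overcome this by optimizing the choice of $b$ within its orbit to minimize $\Vert u - b \Vert_{H^1}$, thereby quotienting out the symmetry modes; the hypothesis $\W(u, 1) \geq \Theta/2$ is then exactly what rules out the one remaining unstable direction (rescaling by a scalar, which would push $\W$ strictly below $\Theta/2$). A compactness-contradiction scheme, rescaling $u - b$ by $\Vert u - b\Vert_{H^1}$ and letting $c_0 \to 0$, would package this spectral analysis uniformly across the three cases of $\Theta$ without explicit computation of $\eta$.
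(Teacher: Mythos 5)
Your skeleton — a slice-wise competitor matching the Cauchy data $(u(\tau,\cdot),\partial_t u(\tau,\cdot))$, integration in $\tau$ against a cutoff, and absorption of second-derivative errors through the identity $\W(u,0)-\W(u,1)=4\int_0^1(\Vert\partial_{t,t}u\Vert^2+\Vert\partial_{t,\theta}u\Vert^2)\,dt$ — is indeed the paper's (Corollary \ref{cor_epiest} fed into the proof of Theorem \ref{th_epiperimetry_exp}). The gap is in where the contraction factor $\eta$ is supposed to come from. Your slice estimate only bounds $\W(u,\tau)-\frac{\Theta}{2}$ by the squared $H^2\times H^1$ distance of the Cauchy data to the fixed profile $b$, and you plan to convert this into a decay by a spectral-gap/compactness argument at $b$ after quotienting symmetries, with $\W(u,1)\ge\frac{\Theta}{2}$ removing "the one remaining unstable direction (amplitude rescaling)". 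But the excess $\G(\text{trace},\cdot)-\frac{\Theta}{2}$ is \emph{not} a nondegenerate quadratic minimum at $b$ modulo symmetries: since $\mu_{i(\Theta),\Theta}=4$, rescaling the amplitude of the principal mode leaves the slice energy exactly equal to $\frac{\Theta}{2}$ (a flat, non-symmetry direction; in particular it does not push $\W$ below $\frac{\Theta}{2}$, contrary to what you assert), and changing the angular opening $\om$ of the support — also not a symmetry — couples with the amplitude to give traces arbitrarily $H^2(\Sp)$-close to $b$ with slice energy \emph{strictly below} $\frac{\Theta}{2}$: for the first mode of amplitude $a$ on a sector of opening $(1+\delta)\pi$ the excess is $\frac{\pi\delta}{2}+\frac{\mu_{1,(1+\delta)\pi}-4}{2}a^2=\delta\left(\frac{\pi}{2}-4a^2\right)+\mathcal{O}(\delta^2)a^2$, an indefinite degenerate form in (opening, amplitude). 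Hence $\Vert u(\tau,\cdot)-b\Vert_{H^2}^2$ is controlled neither by $\left(\G(u(\tau,\cdot),\tau)-\frac{\Theta}{2}\right)_+$ nor by the drop $\W(u,0)-\W(u,1)$, and the compactness-contradiction scheme does not close. This is exactly why the paper does not expand around $b$: the slice-wise gain in Lemma \ref{lem_competitor} is measured relative to the homogeneous extension of the \emph{actual} trace and is obtained constructively, with the critical variable-opening mode handled by slowly deforming the opening in $t$ with the normalized profile of Lemma \ref{lem_lowermode_improvement}, higher modes by Lemma \ref{lem_highermodes_deletion_positive} and negative-eigenvalue modes by Lemma \ref{lem_highermodes_deletion_negative}.

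Two further steps fail as written. First, the $\Vert\partial_t u\Vert_{H^1(\Cy_0\setminus\Cy_1)}^2$ term cannot be produced by "a Poincaré-type argument in $t$": the monotonicity identity controls only $\partial_{t,t}u$ and $\partial_{t,\theta}u$, and $\partial_t u$ has no vanishing in $t$, so a perturbation of $b$ linear in $t$ (for which $\partial_{t,t}u\equiv0$) shows $\Vert\partial_t u\Vert_{L^2}$ is not dominated by the drop. The paper manufactures the needed $-2\Vert\partial_t u\Vert^2$ inside the slice-wise inequality itself, via the Poincaré inequality in $\theta$ (legitimate because $\{\partial_t u(t,\cdot)\neq0\}\subset\Spt(u(t,\cdot))\subsetneq\Sp$) or, in the full-support case, via the biharmonic-extension computation of Lemma \ref{lem_bihext}; your competitor provides no counterpart of either mechanism. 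Second, your repair of the support constraint by multiplying the competitor by a $\theta$-cutoff supported in $\Spt(b)$ is incompatible with matching the Cauchy data at $t=\tau$: for $\Theta\in\{\pi,t_1\}$ the trace $u(\tau,\cdot)$ generically does not vanish on all of $\Sp\setminus\Spt(b)$ (its support is only close to, not contained in, $\Spt(b)$), so after the cutoff $v_\tau-u\notin H^2_0(\Cy_\tau)$ and the minimality comparison is no longer valid. The paper avoids this by letting the competitor's support have opening different from $\Theta$ at $t=\tau$ and deforming it gradually in $t$.
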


Theorem \ref{th_epiperimetry} will be obtained as a direct consequence of Theorem \ref{th_epiperimetry_exp}. A central point in the statement of Theorem \ref{th_epiperimetry_exp} is that the monotonicity formula of Theorem \ref{Th_Monotonicity_exp} only gives a control
$$\Vert\partial_{t,t} u\Vert_{L^2(\Cy_0\setminus\Cy_1)}^2+\Vert\partial_{t,\theta} u\Vert_{L^2(\Cy_0\setminus\Cy_1)}^2\leq \frac{\W(u,0)-\W(u,1)}{4},$$
and it is unclear whether $\Vert\partial_t u\Vert_{L^2(\Cy_0\setminus\Cy_1)}^2$ is also controlled by the right-hand side, which explains the presence of the term $\|\partial_t u\|_{H^1(\Cy_0\setminus\Cy_1)}^2$ in the conclusion of Theorem \ref{th_epiperimetry_exp}). When $u$ vanishes on a sufficiently large subset  of  $\Cy_0\setminus\Cy_1$ we may apply a Poincaré inequality to $\partial_t u$, whereas the case where the support of $u$ is \textit{close} to $\Cy_0\setminus\Cy_1$ will be handled separately.

The proof of Theorem \ref{th_epiperimetry_exp} relies on a comparison of the solution $u$ with an appropriate competitor. The construction of the competitor is summarized in the two lemmas below, and their proof will occupy subsections \ref{subsec_singlemode} to \ref{subsec_biharmonic}.

We recall that $\Spt(u)$ designates every point where either $u$ or $\nabla u$ does not vanish, and we restrict it to $\Sp$ for functions only depending on $\theta$.
\begin{lemma}\label{lem_competitor}
Let $\Theta\in\{\pi,t_1,2\pi\}$. There exist $c,\eps,C>0$ such that for any $u\in H^2(\Sp)$, $v\in H^1(\Sp)$, with $\{v\neq 0\}\subset\Spt(u)$ and
\begin{equation}\label{eq_supportcondition}
\begin{cases}
(c,\Theta-c)\subset \Spt(u)\subset (-c,\Theta+c)&\text{ when }\Theta\in\{\pi,t_1\},\\
(c,\pi-c)\cup(\pi+c,2\pi-c)\subset \Spt(u)\subset (0,2\pi)&\text{ when }\Theta=2\pi,\\
\end{cases}
\end{equation}
 there exists $U\in H^2_\lin(\Cy_0)$ such that $U(0,\theta)=u(\theta)$, $\partial_{t}U(0,\theta)=v(\theta)$ and
\begin{equation}\label{eq_epiest}
\G(U,0)\leq \G(u,0)-\eps\left(\G(u,0)-\frac{\Theta}{2}\right)_++C\int_{\Sp}(\partial_\theta v)^2.
\end{equation}
\end{lemma}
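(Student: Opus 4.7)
The plan is to construct $U(t,\theta)$ as a sum of two pieces: a ``main'' extension $U_0$ of $u$ with $\partial_t U_0(0,\cdot)=0$, and a ``velocity'' correction $V$ with $V(0,\cdot)=0$ and $\partial_t V(0,\cdot)=v$. The main work goes into $U_0$, where the design must make the $t$-profile of the excess modes decay strictly faster than the trivial $t$-invariant extension, thereby producing the gain $\varepsilon(\G(u,0) - \Theta/2)_+$.

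First I set up a spectral framework. Under \eqref{eq_supportcondition}, introduce the quadratic form
$$Q(f) := \int_{\Spt(u)}\bigl\{(\partial_{\theta\theta}f)^2 - 4(\partial_\theta f)^2\bigr\}d\theta$$
on $H^2(\Sp)$-functions vanishing outside $\Spt(u)$. By Lemma \ref{lem_angularsector}, for $\Theta\in\{\pi,t_1\}$ the kernel of the associated Euler--Lagrange operator $\partial_\theta^4+4\partial_\theta^2$ with the right Dirichlet conditions is one-dimensional and close to $\R b^{\Theta}$ (see \eqref{def_bI}--\eqref{def_bII}); for $\Theta=2\pi$ it is three-dimensional, close to $\mathrm{Span}(1,\cos 2\theta,\sin 2\theta)$ restricted to the two arcs. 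Let $\pi_0 u$ denote the $L^2(\Spt(u))$-projection of $u$ onto this kernel and set $w := u - \pi_0 u$. A Poincaré-type argument gives a uniform spectral gap $\delta_\Theta>0$ with $Q(w)\geq \delta_\Theta\|w\|_{H^1(\Sp)}^2$, and since $\pi_0 u$ lies in the kernel one computes
$$\G(u,0) = \tfrac{1}{2}|\Spt(u)| + \tfrac{1}{2}Q(w) + (\text{boundary terms of order } c).$$

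Next I take $U_0(t,\theta) := \pi_0 u(\theta) + \eta(t)w(\theta)$, where $\eta\in C^\infty_c([0,\infty))$ is a \emph{fixed} profile with $\eta(0)=1$, $\eta'(0)=0$, chosen so that with $\|f\|_e^2:=\int_0^\infty e^{-2t}f(t)^2\,dt$ the quantity
$$I(\eta) := \|\eta\|_e^2 + \delta_\Theta^{-1}\bigl(\|\eta''\|_e^2 + 2\|\eta'\|_e^2\bigr)$$
is strictly less than $1/2$. The trivial choice $\eta\equiv 1$ gives $I(\eta)=1/2$, so any small perturbation such as $\eta(t)=(1+\alpha t)e^{-\alpha t}$ with $\alpha>0$ small produces the strict inequality. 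A direct integration-by-parts computation, using that $\pi_0 u$ lies in the kernel of $\partial_\theta^4+4\partial_\theta^2$ (so the cross terms in $Q$ between $\pi_0 u$ and $w\eta$ reduce to boundary terms of order $c$), yields
$$\G(U_0,0) \leq \tfrac{|\Spt(u)|}{2} + I(\eta)\,Q(w) + O(c)\sqrt{Q(w)},$$
and hence a gain of $\bigl(\tfrac{1}{2}-I(\eta)\bigr)Q(w)\gtrsim \varepsilon(\G(u,0)-\Theta/2)_+$ once $c$ is chosen small enough relative to $I(\eta)$.

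For $V$ I would work mode-by-mode in an orthonormal basis $(\phi_n)$ of $L^2(\Spt(u))$ adapted to the support (Fourier series on each component arc). Writing $v=\sum_n c_n\phi_n$, set
$$V(t,\theta):=\chi(t)\sum_n c_n t\,e^{-(1+\lambda_n)t}\phi_n(\theta),$$
with $\lambda_n$ comparable to $\sqrt{\text{eigenvalue}}$ and $\chi$ a smooth compactly-supported cutoff equal to $1$ near $0$. This gives $V(0,\cdot)=0$ and $\partial_t V(0,\cdot)=v$, and since $\{v\neq 0\}\subset\Spt(u)$ one has $\Spt(V)\subset\R_+\times\Spt(u)$, so the addition of $V$ does not enlarge $\chi_{U\neq 0}$. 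Mode-by-mode bookkeeping of the four terms in $\G$ (using $\partial_{\theta\theta}$ only on $\phi_n$, never on $v$ directly, which respects the $H^1$ regularity of $v$) yields a contribution $\leq C(1+\lambda_n)|c_n|^2$ per mode, summing to $\leq C\int_\Sp(\partial_\theta v)^2$ since the zero-frequency contribution vanishes from $V(0,\cdot)=0$. Setting $U:=U_0+V$ and collecting terms gives the desired estimate \eqref{eq_epiest}. The main obstacle I expect is the case $\Theta=2\pi$: the three-dimensional kernel and the disconnected support force the spectral gap argument to deal with the coupling between the two arcs (via biharmonic Goursat data), and the constant $\delta_\Theta$ must be shown to depend continuously on the position/length of $\Spt(u)$ within the $c$-neighbourhood allowed by \eqref{eq_supportcondition}, which requires a compactness argument on the admissible supports together with a non-degeneracy check that the kernel stays of the expected dimension.
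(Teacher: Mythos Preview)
Your overall architecture (split $U=U_0+V$, damp the excess modes of $u$, then add a velocity correction) matches the paper's strategy. However, the spectral gap you invoke is false for $\Theta\in\{t_1,2\pi\}$, and this breaks the construction of $U_0$.

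Concretely: decompose $u$ on each component $[\theta_k,\theta_k+\om_k]$ of $\Spt(u)$ in the buckling eigenbasis $(b_{n,\om_k})$ with eigenvalues $\mu_{n,\om_k}$, so that $Q(b_{n,\om_k})=\mu_{n,\om_k}-4$. When $\Theta=t_1$ the main interval has $\om_0\approx t_1$ and $\mu_{1,\om_0}\approx 1.95<4$; when $\Theta=2\pi$ (single-arc case) one has $\mu_{1,\om_0},\mu_{2,\om_0}<4$. These low modes are \emph{not} in the kernel of $\partial_\theta^4+4\partial_\theta^2$, so they land in your $w$, and for them $Q$ is strictly negative. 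Your claimed Poincar\'e inequality $Q(w)\geq\delta_\Theta\|w\|_{H^1}^2$ therefore fails, and with it the bound $\G(U_0,0)\leq|\Spt(u)|/2+I(\eta)Q(w)$: a single decaying profile $\eta$ with $\|\eta\|_e^2<1/2$ makes the contribution $(\mu_n-4)\|\eta\|_e^2$ \emph{less} negative than the trivial $(\mu_n-4)/2$, so the energy goes up on these modes. The paper resolves this by using a \emph{different} profile for each eigenvalue range: a decaying one for $\mu_n\geq 5$ (Lemma~\ref{lem_highermodes_deletion_positive}) and a \emph{growing} one for $\mu_n\leq 3$ (Lemma~\ref{lem_highermodes_deletion_negative}).

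There is a second gap even for $\Theta=\pi$. When $u$ is purely the near-4 mode, say $u=M\,b_{i(\Theta),\om_0}$ with $\om_0\neq\Theta$, you have $w=0$ and $U_0\equiv u$, hence $\G(U_0,0)=\G(u,0)$ with no gain at all; yet $(\G(u,0)-\Theta/2)_+\approx |\om_0-\Theta|M^2$ can be arbitrarily large. The paper treats this case separately (Lemma~\ref{lem_lowermode_improvement}) by deforming the \emph{support width} $\om_0$ toward $\Theta$ along $t$, which is a genuinely nonlinear move not captured by any linear mode damping. Finally, you do not account for the cross terms between $U_0$ and $V$ in $\G(U_0+V,0)$: these produce contributions of size $\|\partial_{\theta\theta}w\|\,\|\partial_\theta v\|$ that cannot be bounded by $C\|\partial_\theta v\|^2$ alone; the paper absorbs them via an extra gain $-a\|\partial_{\theta\theta}w\|^2$ coming from the higher-mode removal (Lemma~\ref{lem_highermode_improvement}) together with a cancellation device (Lemma~\ref{lem_defchiom}) that kills the low-mode cross terms.
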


Here $\G(u,0)= \frac{1}{2}\int_{\Sp}\left(( \partial_{\theta,\theta}u)^2-4( \partial_\theta u)^2+\chi_{\{u\neq 0\}}\right)$, since $(t,\theta)\mapsto u(\theta)$ is constant with respect to $t$ ; in a way we are comparing the energy of $U(t,\theta)$ with the energy of the homogeneous extension $(t,\theta)\mapsto u(\theta)$, and the last term takes into account the effect of the boundary condition $\partial_t U(0,\theta)=v(\theta)$ on the estimate.

The support condition \eqref{eq_supportcondition}, and more particularly the case disjunction on the value of $\Theta$, is based on the following observation (see the proof of Theorem \ref{th_epiperimetry_exp}): assume $u\in H^2_\lin(\Cy_0)$ is a minimizer of $\G(\cdot,0)$, such that $\frac{\Vert u-b\Vert_{H^1(\Cy_0\setminus\Cy_1)}}{\Vert b\Vert_{H^1(\Sp)}}$ is small for some $b\in\mathscr{B}_{\mathrm{hom}}^\Theta$ with $\Theta\in\{\pi,t_1,2\pi\}$:
\begin{itemize}
    \item 
if $\Theta\in\{\pi,t_1\}$, then up to a translation the support of $b$ is $(0,\Theta)$, and we may prove that the support of $u(t,\cdot)$ for $t\in \left(\frac{1}{4},\frac{3}{4}\right)$ verifies the support assumption \eqref{eq_supportcondition}.
\item
if $\Theta=2\pi$, then up to a translation the support of $b$ is either $\Sp$ or $(0,\pi)\cup(\pi,2\pi)$; when $u$ is sufficiently close to $b$, the support of $u(t,\cdot)$ contains $(c,\pi-c)\cup(\pi+c,2\pi-c)$. Lemma \ref{lem_competitor} treats this case under the additional assumption that $\Spt(u(t,\cdot))$ is not $\Sp$. The case where $\Spt(u)=\Sp$ is purposefully set aside, and is treated separately by the following lemma.
\end{itemize}

\begin{lemma}\label{lem_bihext}
Let $v\in H^2_\lin(\Cy_0)$ be a nonzero minimizer of $\G(\cdot,0)$ such that $|\Spt(v(0,\cdot))|=2\pi$, then
$$
\G(v,0)\leq  \G(v(0,\cdot),0)-\frac{1}{10}\left(\G(v(0,\cdot),0)-\pi\right)_++6\Vert\partial_{t,\theta}v(0,\cdot)\Vert^2-2\Vert\partial_{t}v(0,\cdot)\Vert^2.
$$
\end{lemma}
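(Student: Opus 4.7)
The hypothesis $|\Spt(v(0,\cdot))|=2\pi$ asserts, via the $\mathcal{C}^1$ regularity of the disk function $u$ associated to $v$, that $u$ has full support in a neighbourhood of $\partial\D_1$ in $\D_1$ and is therefore biharmonic there. I will exploit the minimality of $u$ against an explicit polynomial competitor on the disk, and then transfer the resulting inequality to the cylinder via Lemma \ref{lem_change_variable_diskexp}.

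Writing $V_0=v(0,\cdot)$ and $V_1=\partial_t v(0,\cdot)$, consider the degree-$3$ competitor
\[
U(re^{i\theta})=r^2\bigl(V_0(\theta)+(1-r)V_1(\theta)\bigr).
\]
A direct check gives $U|_{\partial\D_1}=V_0$ and $\partial_r U|_{\partial\D_1}=2V_0-V_1$, which matches $u$ and $\partial_r u$ on $\partial\D_1$ via the chain-rule identity $\partial_r u=2rv-r\partial_t v$, so $U-u\in H^2_0(\D_1)$. Since $U$ is generically non-zero in $\D_1$, one has $\int_{\D_1}\chi_{U\neq 0}\leq\pi$, and minimality of $u$ gives $E(u;\D_1)\leq E(U;\D_1)\leq\int_{\D_1}|\Delta U|^2+\pi$. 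Applying Lemma \ref{lem_change_variable_diskexp} separately to $u$ and to $U$, and observing that the cylinder function $V(t,\theta)=e^{2t}U(e^{-t+i\theta})$ satisfies $V(0,\cdot)=v(0,\cdot)$ and $\partial_t V(0,\cdot)=\partial_t v(0,\cdot)$ (hence all boundary integrals appearing in that lemma coincide for $u$ and $U$), this translates directly to the cylinder inequality $\G(v,0)\leq\G(V,0)$.

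Using $\Delta(r^2 f(\theta))=4f+f''$ and $\Delta(r^3 g(\theta))=r(9g+g'')$, the quantity $\int_{\D_1}|\Delta U|^2$ expands in polar coordinates and, after integration by parts on $\Sp$, becomes an explicit quadratic form in $V_0,V_0',V_0'',V_1,V_1',V_1''$. Combining with the trace boundary terms of Lemma \ref{lem_change_variable_diskexp}, the pure-$V_0$ contributions collapse to $\G(v(0,\cdot),0)-\pi$, leaving
\[
\G(V,0)=\G(v(0,\cdot),0)+Q(V_0,V_1)
\]
for a specific quadratic form $Q$ whose pure-$V_0$ part vanishes. The remaining task is to dominate $Q$ by $-\tfrac{1}{10}(\G(v(0,\cdot),0)-\pi)_++6\Vert V_1'\Vert^2-2\Vert V_1\Vert^2$. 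Expanding in Fourier modes on $\Sp$, one checks that on each mode $|n|\geq 3$ the $|c_n|^2$-part of $Q$ is a negative multiple (of ratio at least $1/5$) of the corresponding mode contribution to $\G(v(0,\cdot),0)-\pi=\pi\sum_n n^2(n^2-4)|c_n|^2$, yielding the $\tfrac{1}{10}$ improvement with a safety margin; the $V_1$-quadratic and $V_0$-$V_1$ cross terms are absorbed through Young's inequality and the Poincar\'e-type bound $n^2|d_n|^2\leq n^4|d_n|^2$ into $+6\Vert V_1'\Vert^2-2\Vert V_1\Vert^2$.

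The main obstacle is the treatment of the low Fourier modes $|n|\leq 2$, which receive no improvement factor (because $n^2(n^2-4)\leq 0$) but still contribute $V_1$-terms to $Q$; these are absorbed by the negative coefficient of $\Vert V_1\Vert^2$ on the right-hand side, which effectively subsidises the lack of high-mode improvement in exchange for a non-zero $\partial_t v(0,\cdot)$ being energetically favourable for this competitor.
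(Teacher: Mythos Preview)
Your polynomial competitor $U(re^{i\theta})=r^2\bigl(V_0(\theta)+(1-r)V_1(\theta)\bigr)$ is too crude and the argument breaks down in two independent places.

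\textbf{No improvement in the pure $V_0$ part.} You correctly observe that the pure-$V_0$ part of $Q$ vanishes, but then claim that the $|c_n|^2$-part of $Q$ is a negative multiple of $n^2(n^2-4)$ for $|n|\geq 3$. These two statements are incompatible: if $Q$ has no pure-$V_0$ part, it contributes nothing to the $|c_n[V_0]|^2$-coefficients, and there is simply no source for the $-\tfrac{1}{10}(\G(v(0,\cdot),0)-\pi)_+$ term. Concretely, when $V_1\equiv 0$ your $U$ is the $2$-homogeneous extension $r^2 V_0$, whose cylinder energy is exactly $\G(v(0,\cdot),0)$, so you recover $\G(v,0)\leq \G(v(0,\cdot),0)$ with no gain.

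\textbf{Wrong growth in $V_1$.} The cubic piece $-r^3 V_1$ contributes $\Delta(-r^3 V_1)=-r(9V_1+V_1'')$ to $\Delta U$, so $\int_{\D_1}|\Delta U|^2$ contains a term $\frac{1}{4}\|9V_1+V_1''\|^2$ whose $n$-th Fourier coefficient is $\frac14(9-n^2)^2|y_n|^2$. After combining with the other terms, the net $|y_n|^2$-coefficient in $\G(V,0)$ behaves like $n^4/12$ for large $|n|$, whereas the right-hand side of the target inequality only allows $6n^2-2$. The bound therefore fails for every $|n|\geq 9$, and no Young-type absorption can fix a quartic term into a quadratic budget.

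The paper's proof avoids both problems by taking the \emph{biharmonic} extension $\tilde u$ of the trace data as the competitor. Its energy is computed exactly via the Goursat decomposition: the mode-$n$ contribution to $E(\tilde u;\D_1)$ is $4(|n|+1)\pi\,|y_n+(|n|-2)x_n|^2$, which after subtracting the boundary trace terms produces the pure-$|x_n|^2$ coefficient $4n^2(|n|-2)$, strictly below $n^2(n^2-4)$ by a factor $4/(|n|+2)\leq 4/5$ for $|n|\geq 3$---this is where the $0.1$ improvement actually comes from. Simultaneously the pure-$|y_n|^2$ coefficient is $4(|n|-1)$, which grows only linearly in $|n|$ and fits comfortably under $6n^2-2$. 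The biharmonic extension is not a convenience but the essential mechanism: it is the unique energy-minimising extension of the boundary data, and any cruder competitor loses both the improvement and the correct high-frequency behaviour.
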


The proof of Lemma \ref{lem_competitor} is broken down into several steps, where we construct competitors for increasingly complex boundary conditions $(u,v)$:
\begin{itemize}
\item In subsection \ref{subsec_singlemode}, we treat the case where $v=0$, and $u$ is a single one-dimensional buckling mode (see subsection \ref{subsec_buckling_basis} for an introduction of this basis).
\item In subsection \ref{subsec_highermode}, we suppose $v=0$, for a general $u$.
\item In subsection \ref{subsec_partialt}, we remove the $v=0$ hypothesis and prove the full estimate of \eqref{eq_epiest}.
\end{itemize}

Lemma \ref{lem_bihext} is obtained by a direct comparison with the biharmonic extension in the disk model, in subsection \ref{subsec_biharmonic}.

Finally, in subsection \ref{subsec_conclusion_epiperimetry} we deduce the epiperimetric inequality from Lemmas \ref{lem_competitor} and \ref{lem_bihext}.

\subsection{Buckling eigenbasis}\label{subsec_buckling_basis}

We will denote by $\Vert\cdot\Vert$ (resp. $\langle\cdot,\cdot\rangle$) the $L^2(\Sp)$ norm (resp. scalar product). Let $(b_{n})_{n\geq 1}$ be an eigenbasis associated to the buckling operator in the segment $[0,\pi]$: in other words it is an orthonormal basis of $H^1_0([0,\pi])$ for the scalar product $\langle \partial_\theta b_n,\partial_\theta b_m\rangle=\delta_{n,m}$, such that $b_n\in H^2_0([0,\pi])$ and $\partial_\theta^4 b_n=-\mu_n \partial_\theta^2 b_n$ where $\mu_n$ is the $n$-th buckling eigenvalue. The eigenvalues $(\mu_n)_{n\geq 1}$ can be defined as the $n$-th positive root of the function
\begin{equation}\label{eq_numerical_eigenvalues}
\mu\mapsto \sin\left(\frac{\pi \sqrt{\mu}}{2}\right)\left(\pi\sqrt{\mu} \cos\left(\frac{\pi \sqrt{\mu}}{2}\right)-2\sin\left(\frac{\pi \sqrt{\mu}}{2}\right) \right).
\end{equation}
In particular $\mu_n=(n+1)^2$ for odd $n$, and $n^2<\mu_n<(n+2)^2$ for even $n$. The first few values are
$$\mu_1=4,\ \mu_2=\left(\frac{2t_1}{\pi}\right)^2\approx 8.183,\ \mu_3=16,\ \mu_4\approx 24.187,\ \hdots$$
and $b_{1}(\theta)=\sqrt{\frac{2}{\pi}}\sin(\theta)_+^2$. For any $\om>0$, we denote
\begin{equation}\label{eq_exprb_nmu_n}
b_{n,\om}(\theta)=\sqrt{\frac{\om}{\pi}}b_n\left(\frac{\pi}{\om}\theta\right),\ \mu_{n,\om}=\frac{\pi^2}{\om^2}\mu_n
\end{equation}
Such that $b_{n,\om}$ is a $\langle\partial_\theta\cdot,\partial_\theta\cdot\rangle$-orthonormal basis of $H^1_0([0,\om])$, such that $b_{n,\om}\in H^2_0([0,\om])$ and $\partial_\theta^4 b_{n,\om}=-\mu_{n,\om}\partial_{\theta}^2b_{n,\om}$.
A central point is that $(b_{n,\om})$ is \textbf{not} orthogonal for the $L^2([0,\om])$ scalar product ($\langle b_1,b_3\rangle\neq 0$ for instance, since both do not change sign). We will use the following estimate.

\begin{lemma}\label{lem_estbn}
For any $\om\in (0,2\pi]$, for any $n\geq 1$, we have
$$ \Vert b_{n,\om}\Vert^2\lesssim \frac{1}{\mu_{n,\om}}.$$
\end{lemma}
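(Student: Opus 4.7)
The plan is to reduce to the normalized interval $[0,\pi]$ via scaling and then use the buckling ODE to represent $b_n$ explicitly as a Dirichlet-solved Laplace equation with an oscillatory right-hand side. Since $b_{n,\om}(\theta) = \sqrt{\om/\pi}\,b_n(\pi\theta/\om)$ and $\mu_{n,\om} = \pi^2\mu_n/\om^2$, a change of variables shows that $\Vert b_{n,\om}\Vert^2 \lesssim 1/\mu_{n,\om}$ is equivalent to the uniform-in-$n$ estimate $\Vert b_n\Vert_{L^2([0,\pi])}^2 \lesssim 1/\mu_n$, to which we restrict.

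Set $c := -\partial_\theta^2 b_n$. The ODE $\partial_\theta^4 b_n + \mu_n\,\partial_\theta^2 b_n = 0$ rewrites as $\partial_\theta^2 c + \mu_n c = 0$, so $c(\theta) = A\cos(\sqrt{\mu_n}\,\theta) + B\sin(\sqrt{\mu_n}\,\theta)$ for some $A,B \in \R$. Since $c/\mu_n$ is a particular solution of $-\partial_\theta^2 u = c$, adjusting by an affine function to impose $b_n(0) = b_n(\pi) = 0$ yields the representation
\[
b_n = \frac{c-\hat c}{\mu_n},
\]
where $\hat c$ is the affine interpolation of $c$ between $\theta = 0$ and $\theta = \pi$. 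Note that the clamped conditions on $\partial_\theta b_n$ are not needed for this representation; they would only enter in deriving the eigenvalue equation determining $\mu_n$ itself.

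Taking $L^2$ norms and using the triangle inequality gives $\Vert b_n\Vert^2 \leq 2(\Vert c\Vert^2 + \Vert\hat c\Vert^2)/\mu_n^2$. The normalization $\Vert\partial_\theta b_n\Vert^2 = 1$ combined with the identity $\Vert\partial_\theta^2 b_n\Vert^2 = \mu_n\Vert\partial_\theta b_n\Vert^2$ (integration by parts against the ODE) gives $\Vert c\Vert^2 = \mu_n$. For $\hat c$, one has the pointwise bound $|\hat c| \leq \max(|c(0)|,|c(\pi)|) \lesssim \sqrt{A^2+B^2}$ on $[0,\pi]$, so it suffices to show $A^2 + B^2 \lesssim \mu_n$.

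The main obstacle is obtaining this last estimate uniformly in $n$: from a direct evaluation of the integral,
\[
\Vert c\Vert^2 = \tfrac{\pi}{2}(A^2+B^2) + O\!\left(\tfrac{A^2+B^2}{\sqrt{\mu_n}}\right),
\]
which for $\mu_n$ above a universal threshold yields $A^2+B^2 \leq 4\Vert c\Vert^2/\pi = 4\mu_n/\pi$, hence $\Vert\hat c\Vert^2 \lesssim \mu_n$. This is the only step where uniformity in $n$ enters nontrivially: it expresses the fact that $\cos(\sqrt{\mu_n}\,\cdot)$ and $\sin(\sqrt{\mu_n}\,\cdot)$ become asymptotically $L^2$-orthonormal on $[0,\pi]$ as $\sqrt{\mu_n}\to\infty$. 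Combining everything gives $\Vert b_n\Vert^2 \lesssim 1/\mu_n$ for all $\mu_n$ above the threshold, while the claim is trivial for the finitely many remaining eigenvalues.
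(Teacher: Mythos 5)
Your proof is correct, but it follows a genuinely different route from the paper. The paper reduces to $\om=\pi$ in the same way, then splits according to the parity of $n$ and works with explicit formulas for the eigenfunctions: for odd $n$, $b_n$ is proportional to $1-\cos((n+1)\theta)$ with $\mu_n=(n+1)^2$, and for even $n$ it is proportional to $\sin(\sqrt{\mu_n}\theta)-\beta_n\theta$ on the symmetric interval, with $|\beta_n|\lesssim 1$ extracted from the clamped boundary conditions; the bound then follows from an $L^\infty$ estimate on $f_n$ together with $\Vert\partial_\theta f_n\Vert^2\gtrsim\mu_n$. You instead never write $b_n$ explicitly: setting $c=-\partial_\theta^2 b_n$, which solves $c''+\mu_n c=0$, you obtain the representation $b_n=(c-\hat c)/\mu_n$ with $\hat c$ the affine interpolation, and you close the estimate using $\Vert c\Vert^2=\mu_n$ (from the normalization and the identity $\Vert\partial_\theta^2 b_n\Vert^2=\mu_n\Vert\partial_\theta b_n\Vert^2$) together with the asymptotic $L^2$-orthonormality of $\cos(\sqrt{\mu_n}\,\cdot)$ and $\sin(\sqrt{\mu_n}\,\cdot)$, which gives $A^2+B^2\lesssim\mu_n$ above a universal threshold; the finitely many low eigenvalues are absorbed into the constant. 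Your argument buys uniformity without any parity case split and without solving the transcendental eigenvalue condition, whereas the paper's computation is shorter once the explicit eigenfunctions from Lemma \ref{lem_angularsector}-type analysis are at hand and gives slightly more information (the explicit form of $b_n$). One small remark for accuracy: while the clamped conditions are indeed not needed for the representation $b_n=(c-\hat c)/\mu_n$, they do enter your proof through the integration by parts giving $\Vert c\Vert^2=\mu_n\Vert\partial_\theta b_n\Vert^2$, so they are not entirely dispensable.
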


\begin{proof}
It is enough to prove this for $\om=\pi$, by change of variable.

When $n$ is odd, we have $\mu_{n}=(n+1)^2$ and
$$b_{n}(\theta)=\sqrt{\frac{2}{\pi}}\frac{1-\cos((n+1)\theta)}{n+1}$$
And so $\Vert b_n\Vert^2\lesssim \frac{1}{(n+1)^2}=\frac{1}{\mu_n}$.

When $n$ is even, the computation is simpler on the interval $\left[-\frac{\pi}{2},+\frac{\pi}{2}\right]$. We know that $\mu_{n}\in \left( n^2,(n+2)^2\right)$ and $\theta\mapsto b_n\left(\theta+\frac{\pi}{2}\right)$ is a scalar multiple of the function
$$f_n(\theta)=\sin\left(\sqrt{\mu_n}\theta\right)-\beta_n \theta$$
where $\beta_n\in\R$ is chosen such that $f_n\left(\pm\frac{\pi}{2}\right)=f_n'\left(\pm\frac{\pi}{2}\right)=0$, meaning
\begin{equation}\label{eq_defbeta_n}
\sin\left(\frac{\pi\sqrt{\mu_n}}{2}\right)=\beta_n\frac{\pi}{2},\qquad \sqrt{\mu_n}\cos\left(\frac{\pi\sqrt{\mu_n}}{2}\right)=\beta_n.
\end{equation}
The first condition gives $|\beta_n|\lesssim 1$, so $|f_n|\lesssim 1$. Then, using these two estimates,
$$\Vert b_n\Vert^2=\frac{\Vert f_n\Vert^2}{\Vert\partial_\theta f_n\Vert^2}\lesssim \frac{1}{\int_{-\pi/2}^{+\pi/2}\left(\sqrt{\mu_n}\sin(\sqrt{\mu_n}\theta)-\beta_n\right)^2d\theta}\lesssim \frac{1}{\mu_n}.$$
\end{proof}

From the expression \eqref{eq_exprb_nmu_n}, we see that for $\om\in (0,2\pi]$, $\mu_{n,\om}=4$ is only verified in three cases:
\begin{itemize}
\item[(I) ]$\om=\pi$, $n=1$. Then we have $\mu_{1,\pi}=4$, $\mu_{2,\pi}\approx 8.187$.
\item[(II) ]$\om=t_1$, $n=2$. Then we have $\mu_{1,t_1}\approx 1.955$, $\mu_{2,t_1}=4$, $\mu_{3,t_1}\approx 7.821$.
\item[(III) ]$\om=2\pi$, $n=3$. Then we have $\mu_{2,2\pi}\approx 2.046$, $\mu_{3,2\pi}=4$, $\mu_{4,2\pi}\approx 6.047$.
\end{itemize}
For $\Theta\in \{\pi,t_1,2\pi\}$, we will denote
$$i(\Theta)=\begin{cases}1&\text{ if }\Theta=\pi,\\ 2&\text{ if }\Theta=t_1,\\ 3&\text{ if }\Theta=2\pi,\end{cases}$$
and the numerical values above give us the following estimate: for any $\om\in (0,2\pi]$ such that $|\om-\Theta|\leq 0.1\Theta$, $n\in\N^*$, we have
\begin{equation}\label{eq_gapestimate}
n<i(\Theta)\Rightarrow \mu_{n,\om}\leq 3,\qquad n>i(\Theta)\Rightarrow \mu_{n,\om}\geq 5.
\end{equation}
Similarly, if $\om\leq \frac{\pi}{2}$, then $\mu_{n,\om}\geq 5$ for any $n\in\N^*$.
\subsection{Single mode case}\label{subsec_singlemode}

We start by proving Lemma \ref{lem_competitor} under the simplest boundary conditions:  $v=0$, and $u$ is a single buckling mode on a segment with eigenvalue close to $4$, meaning the first buckling mode (for angle close to $\pi$), or the second buckling mode (for angles close to $t_1$), or the third buckling mode (for angles close to $2\pi$). The proof in each case is exactly the same, so we prove all three at the same time.

\begin{lemma}\label{lem_lowermode_improvement}
Let $\Theta\in\{\pi,t_1,2\pi\}$. There exist $c,\eps>0$ such that for any $a\in\R^*$, $\om\in [\Theta-c,\Theta+c]\cap (0,2\pi)$, if one denotes 
$$u(\theta)=ab_{i(\Theta),\om}(\theta),$$
then there exists $U\in H^2_\lin(\Cy_0)$ such that $U(0,\theta)=u(\theta)$, $\partial_{t}U(0,\theta)=0$ and
\begin{equation}
\G(U,0)\leq \G(u,0)-\eps\left(\G(u,0)-\frac{\Theta}{2}\right)_+.
\end{equation}
\end{lemma}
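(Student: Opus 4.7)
The plan is to construct the competitor $U$ as a separable product
\[U(t,\theta)=\phi(t)\,b_{i(\Theta),\omega}(\theta),\]
with profile $\phi\in H^2_\lin(\R_+)$ satisfying $\phi(0)=a$, $\phi'(0)=0$, and then to exhibit an explicit choice of $\phi$ for which $\G(U,0)\leq\G(u,0)-\eps(\G(u,0)-\Theta/2)_+$. Using the normalization $\int(b_{i,\omega}')^2=1$ and the buckling identity $\int(b_{i,\omega}'')^2=\mu_{i,\omega}$, the trivial extension $\phi\equiv a$ gives $\G(u,0)=\tfrac{1}{2}(a^2(\mu_{i,\omega}-4)+\omega)$. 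Since $\mu_{i(\Theta),\Theta}=4$ and $\mu_{i,\omega}=\pi^2\mu_{i(\Theta)}/\omega^2$, one has $\mu_{i,\omega}-4=4(\Theta^2-\omega^2)/\omega^2$, so $(\G(u,0)-\Theta/2)_+$ is positive only in the two regimes (A) $\omega\leq\Theta$ and $a^2\geq\Theta/8$, or (B) $\omega\geq\Theta$ and $a^2\leq\Theta/8$. In both regimes $|\omega-\Theta|$ can be made small by taking the constant $c$ small.

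For any such $\phi$ the energy splits as
\[
\G(U,0)= \|b_{i,\omega}\|_{L^2}^{2}\!\int_0^\infty\!e^{-2t}(\phi'')^2
+2\!\int_0^\infty\!e^{-2t}(\phi')^2
+(\mu_{i,\omega}-4)\!\int_0^\infty\!e^{-2t}\phi^2
+\omega\!\int_0^\infty\!e^{-2t}\chi_{\phi\neq 0},
\]
and I would treat the two regimes with different profiles. In regime (A), since $\mu_{i,\omega}-4\geq 0$, the weighted $L^2$ mass $\int e^{-2t}\phi^2$ is what one wants to decrease below its trivial value $a^2/2$; an exponentially decaying profile $\phi(t)=a(1+\alpha t)e^{-\alpha t}$ satisfies the required boundary conditions and, optimized at $\alpha^2$ proportional to $\mu_{i,\omega}-4$, yields an improvement of order $a^2(\mu_{i,\omega}-4)^2$. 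In regime (B), since $a$ is bounded above by $\sqrt{\Theta/8}$, one instead uses a compactly supported cubic Hermite cutoff $\phi(t)=aP(t/T)$ with $P(s)=(1-s)^2(1+2s)\chi_{[0,1]}(s)$, so that $\chi_{\phi\neq 0}$ vanishes for $t\geq T$: the indicator term drops by $\omega e^{-2T}/2$, while the derivative and weighted $L^2$ contributions are only of order $a^2/T^4+a^2(4-\mu_{i,\omega})/T^2$, controlled by fixing $T$ depending only on $\Theta$ and the universal bound $\|b_{i,\omega}\|_{L^2}\lesssim 1$ from Lemma \ref{lem_estbn}.

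The main obstacle is obtaining a \emph{uniform} $\eps>0$: both naive constructions above yield improvements of order $a^2(\omega-\Theta)^2$, while the target $(\G(u,0)-\Theta/2)_+$ is generically first order in $\omega-\Theta$. To close this gap I expect to refine the ansatz either by adding a small correction term such as $a\gamma\,t^2 e^{-\alpha t}b_{i,\omega}(\theta)$, whose cross-product with $a$ in the $(\mu_{i,\omega}-4)\int e^{-2t}\phi^2$ piece produces a first-order gain in $\gamma$, or by using an angular modulation $U(t,\theta)=a\,b_{i(\Theta),\omega(t)}(\theta)$ where $\omega(t)$ is driven from $\omega$ toward the interior critical point of $\omega'\mapsto a^2(\mu_{i,\omega'}-4)+\omega'$, which lies at $(2a^2\pi^2\mu_{i(\Theta)})^{1/3}$ and coincides with $\Theta$ precisely when $a^2=\Theta/8$. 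Verifying the $H^2_\lin$ membership of the competitor and balancing the error terms uniformly in $a$ (splitting, if necessary, at $a^2\sim\Theta/8$ so that the two constructions overlap on a common range) constitutes the bulk of the bookkeeping; the hypothesis that $c$ be sufficiently small ensures that the Taylor expansions around $\omega=\Theta$ remain valid throughout.
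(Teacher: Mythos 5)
Your setup and the reduction to the scalar energy $\G(U,0)=\Vert b_{i(\Theta),\om}\Vert_{L^2}^2\int e^{-2t}(\phi'')^2+2\int e^{-2t}(\phi')^2+(\mu_{i(\Theta),\om}-4)\int e^{-2t}\phi^2+\om\int e^{-2t}\chi_{\phi\neq 0}$ are correct, but the proof stops exactly at the decisive step, and the gap is not merely bookkeeping. Within your separable ansatz $U(t,\theta)=\phi(t)b_{i(\Theta),\om}(\theta)$ the angular support is frozen, and in the regime $\om<\Theta$ (so $\lambda:=\mu_{i(\Theta),\om}-4>0$ is small) with $a$ large the lemma cannot be reached by any choice of $\phi$: the excess is of order $a^2|\om-\Theta|$, while a Cauchy--Schwarz estimate with the weight $e^{-2t}$ (write $\phi=a(1+h)$, $h(0)=h'(0)=0$, and bound $\lambda\left|\int e^{-2t}h\right|\lesssim\lambda\Vert h'\Vert_{L^2(e^{-2t})}$) shows the reduction of the $\lambda$-term net of derivative cost is at most $O(a^2\lambda^2)=O(a^2(\om-\Theta)^2)$, and truncating the support on $[T,\infty)$ costs at least $\sim a^2e^{-2T}$ in derivatives while gaining only $\tfrac{\om}{2}e^{-2T}$, so it never helps once $a^2\gtrsim 1$. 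Hence no uniform $\eps$ exists in this family; the ratio gain/excess degenerates like $|\om-\Theta|$. Your first proposed fix, adding $a\gamma t^2e^{-\alpha t}b_{i(\Theta),\om}(\theta)$, is still proportional to $b_{i(\Theta),\om}$, i.e.\ still in the frozen-support family, and after balancing the linear-in-$\gamma$ cross term against the quadratic-in-$\gamma$ derivative cost the net optimal gain is again $O(a^2\lambda^2)$, so it fails for the same scaling reason.

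The only workable route among those you name is the second one, and it is what the paper actually does: the competitor is $U(t,\theta)=a\sqrt{1+\delta_0}\,b_{i(\Theta),\Theta}\bigl(\theta/(1+\delta_0\chi_\eps(t))\bigr)$ with $\om=(1+\delta_0)\Theta$, i.e.\ the opening itself is modulated in $t$. But the mechanism that produces a uniform $\eps$ — absent from your sketch — is twofold. First, one does not try to beat the full excess: it suffices to prove $\G(U,0)-\tfrac{\Theta}{2}\leq(1-\eps)\bigl(\G(u,0)-\tfrac{\Theta}{2}\bigr)$, so the required gain is only $\eps$ times a first-order quantity. Second, the modulation profile $\chi_\eps(t)=1-9\eps(1-(1+t)e^{-t})$ is calibrated so that $\int_0^{+\infty}e^{-2t}(\chi_\eps(t)-(1-\eps))\,dt=0$, see \eqref{eq_equilibrium}; with this normalization the first-order-in-$\delta_0$ contributions on the two sides cancel identically in the decomposition \eqref{eq_proof_lowermode}, and what remains is a strictly negative second-order term $\approx-5\eps\delta_0^2a^2$ (from $F_\eps''(0)<0$) which absorbs both the $O(a^2\eps^2\delta_0^2)$ time-derivative cost and the $O(a^2|\delta_0|^3)$ Taylor remainder, provided $|\delta_0|\lesssim\eps$ — this is precisely where the smallness of $c$ is used. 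Your suggestion of driving $\om(t)$ toward the critical point of $\om'\mapsto a^2(\mu_{i(\Theta),\om'}-4)+\om'$ points at the right family of deformations, but as stated it supplies no such cancellation mechanism, and it gives no argument in the degenerate regime $a^2\approx\Theta/8$ where that critical point coincides with $\Theta$ and the excess itself is second order; so the uniformity of $\eps$, which is the entire content of the lemma, remains unproved in your proposal.
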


\begin{proof}
We denote in this proof $$\Gn(U,0)=\G(U,0)-\frac{\Theta}{2}$$ 
so that our goal is to find some competitor $U$ such that $\Gn(U,0)\leq \Gn(u,0)-\eps\Gn(u,0)_+$. When $\Gn(u,0)\leq 0$, this is obvious by taking $U(t,\theta)=u(\theta)$ for every $t\geq 0$: we therefore  suppose in what follows that $\Gn(u,0)>0$. We let $\eps>0$ to be fixed sufficiently small later, and
\[\chi_\eps(t)=1-9\eps(1-(1+t)e^{-t}),\]
which  is positive in $\R_+$ (for small enough $\eps$), decreasing, with $\chi_\eps(0)=1$, $\chi_\eps'(0)=0$, $\chi_{\eps}(t)\underset{t\to+\infty}{\longrightarrow}1-9\eps$ and
\begin{equation}\label{eq_equilibrium}
\int_{0}^{+\infty}e^{-2t}\left(\chi_\eps(t)-(1-\eps)\right)dt=0 . 
\end{equation}
We define $\delta_0\in\R$ by $$\om=(1+\delta_0)\Theta,$$
and for any $\delta$ we write
$$\phi_{\delta}(\theta)=b_{i(\Theta),\Theta}\left(\frac{\theta}{1+\delta}\right)$$

so that $u(\theta)=a\sqrt{1+\delta_0}\phi_{\delta_{0}}(\theta)$. We define our competitor $U=U(t,\theta)$ by
\[U(t,\theta)=a\sqrt{1+\delta_0}\phi_{\delta_0\chi_\eps(t)}(\theta).\]
First observe that $U(0,\theta)=u(\theta)$, $\partial_t U(0,\theta)=0$ (since $\chi_\eps(0)=1$, $\chi_\eps'(0)=0$) and $U\in H^2_\lin(\Cy_0)$ (since $\chi_\eps\to 1-9\eps$, and $\chi_{\eps}'$, $\chi_{\eps}''\to 0$ at $+\infty$). Then
\begin{align*}
\Gn(U,0)=&\int_{0}^{+\infty}e^{-2t}\left\{\Vert \partial_{t,t}U\Vert^2+2\Vert\partial_{t,\theta}U\Vert^2\right\}dt\\
&+\int_{0}^{+\infty}e^{-2t}\left\{\Vert \partial_{\theta,\theta}U\Vert^2-4\Vert\partial_{\theta}U\Vert^2+\Theta\delta_0\chi_\eps\right\}dt,
\end{align*}
so
\begin{equation}\label{eq_proof_lowermode}
\begin{split}
\Gn(U,0)-(1-\eps)\Gn(u,0)=&a^2(1+\delta_0)\int_{0}^{+\infty}e^{-2t}\left\{\Vert \partial_{t,t}\phi_{\delta_0\chi_\eps}\Vert^2+2\Vert\partial_{t,\theta}\phi_{\delta_0\chi_\eps}\Vert^2\right\}dt\\
&+(1-a^2(1+\delta_0))\Theta\delta_0\int_{0}^{+\infty}e^{-2t}\left\{ \chi_\eps -(1-\eps)\right\}dt\\
&+a^2(1+\delta_0)\int_{0}^{+\infty}e^{-2t}\left\{\Vert \partial_{\theta,\theta}\phi_{\delta_0\chi_\eps}\Vert^2-4\Vert\partial_{\theta}\phi_{\delta_0\chi_\eps}\Vert^2+\Theta\delta_0\chi_\eps\right\}dt\\
&-a^2(1+\delta_0)(1-\eps)\int_{0}^{+\infty}e^{-2t}\left\{\Vert \partial_{\theta,\theta}\phi_{\delta_0}\Vert^2-4\Vert\partial_{\theta}\phi_{\delta_0}\Vert^2+\Theta\delta_0\right\}dt.
\end{split}
\end{equation}
Since $|\chi_\eps'|+|\chi_\eps''|\lesssim \eps$, the first line of \eqref{eq_proof_lowermode} is bounded by 
$$A_1(1+\delta_0)a^2\eps^2\delta_0^2$$ for some universal constant $A_1>0$. The second line is zero by our normalization condition \eqref{eq_equilibrium}. Now we estimate the sum of the third and fourth lines, which is equal to $a^2(1+\delta_0)F(\delta_0)$, where
\begin{align*}
F_\eps(\delta):&=\int_{0}^{+\infty}e^{-2t}\left(f(\delta\chi_\eps)-(1-\eps)f(\delta)\right)dt,\\
f(\delta):&=\Vert \partial_{\theta,\theta}\phi_{\delta}\Vert^2-4\Vert\partial_{\theta}\phi_{\delta}\Vert^2+\Theta\delta.
\end{align*}
We first compute $f(\delta)$ explicitly:
\begin{align*}
f(\delta)&=\Theta\delta+\int_{0}^{\Theta(1+\delta)}\left\{\frac{1}{(1+\delta)^4}\partial_{\theta,\theta}b_{i(\Theta),\Theta}\left(\frac{\theta}{1+\delta}\right)^2-\frac{4}{(1+\delta)^2}\partial_{\theta}b_{i(\Theta),\Theta}\left(\frac{\theta}{1+\delta}\right)^2\right\}d\theta\\
&=\Theta\delta+4\left(\frac{1}{(1+\delta)^3}-\frac{1}{1+\delta}\right).
\end{align*}
In particular $f(0)=0$, $f''(0)=40$, and there exists $C>0$ such that for any $\delta\in [-1/2,1/2]$, we have
\[|f(\delta)-f'(0)\delta-20\delta^2|\leq C|\delta|^3.\]
Then
\begin{align*}
F_\eps(0)&=\int_{0}^{+\infty}e^{-2t}\left(f(0)-(1-\eps)f(0)\right)dt=0\qquad\text{ since }f(0)=0\\
F_\eps'(0)&=\int_{0}^{+\infty}e^{-2t}\left(\chi_\eps(t)f'(0)-(1-\eps)f'(0)\right)dt=0\qquad\text{ by \eqref{eq_equilibrium}}\\
F_\eps''(0)&=\int_{0}^{+\infty}e^{-2t}\left(\chi_\eps(t)^2f''(0)-(1-\eps)f''(0)\right)dt\\
&=f''(0)\int_{0}^{+\infty}e^{-2t}\left((1-9(1-(1+t)e^{-t})\eps)\chi_\eps(t)-(1-\eps)\right)dt\\
&=40\int_{0}^{+\infty}e^{-2t}\left(-9(1-(1+t)e^{-t})\eps)\chi_\eps(t)\right)dt\text{ by \eqref{eq_equilibrium}}\\
&=-20\left(1-\frac{45}{16}\eps\right)\eps.
    \end{align*}
Moreover, for any $\delta\in [-1/2,1/2]$ we can estimate uniformly
\begin{align*}
\left|F_\eps(\delta)+10\left(1-\frac{45}{16}\eps\right)\eps\delta^2\right|=&\left|F_\eps(\delta)-F_\eps(0)-F_\eps'(0)\delta-\frac{1}{2}F_\eps''(0)\delta^2\right|\\
\leq& \int_{0}^{+\infty}e^{-2t}\left|f(\delta\chi_\eps)-f(0)-f'(0)\delta\chi_\eps-\frac{1}{2}f''(0)(\delta\chi_\eps)^2\right|dt\\
&+ (1-\eps)\int_{0}^{+\infty}e^{-2t}\left|f(\delta)-f(0)-f'(0)\delta-\frac{1}{2}f''(0)\delta^2\right|dt\\
\leq& A_2 |\delta|^3,
\end{align*}
for some $A_2>0$ that is independent of $\eps$: in particular, for $\eps\leq\frac{8}{45}$ we obtain
$$F_\eps(\delta)\leq -5\eps\delta^2+A_2|\delta|^3.$$
Going back to \eqref{eq_proof_lowermode}, we may now bound
\begin{align*}
\Gn(U,0)-(1-\eps)\Gn(u,0)=&a^2(1+\delta_0)\int_{0}^{+\infty}e^{-2t}\left\{\Vert \partial_{t,t}\phi_{\delta_0\chi_\eps}\Vert^2+2\Vert\partial_{t,\theta}\phi_{\delta_0\chi_\eps}\Vert^2\right\}dt\\
&+a^2(1+\delta_0)F(\delta_0)\\
\leq& (1+\delta_0)a^2\left(A_1\eps^2\delta_0^2-5\eps\delta_0^2+A_2|\delta_0|^3\right).
\end{align*}
Thus we may fix a small enough $\eps\leq\min\left(\frac{8}{45},\frac{1}{A_1}\right)$ such that for any $\delta_0$ verifying $|\delta_0|\leq \frac{\eps}{A_2}$, the right-hand side above is negative so
\begin{align*}
\Gn(U,0)\leq (1-\eps)\Gn(u,0).
\end{align*}
\end{proof}

In the case $\Theta=2\pi$, there is an additional possibility we have to consider here: the support of $u$ is the union of two disjoint segments of length close to $\pi$, and $u$ is a first buckling eigenmode on each connected component of the support.

\begin{lemma}\label{lem_lowermode_improvement_double}
There exist $c,\eps>0$ such that for any $a_1,a_2\in\R^*$, $\om_1,\om_2\in [\pi-c,\pi+c]$ with $\om_1+\om_2\leq 2\pi$, for any $\beta\in [0,2\pi-\om_1-\om_2]$,
if one denotes
$$u(\theta)=\begin{cases} a_1b_{1,\om_1}(\theta)& \text{ if }\theta\in [0,\om_1]\\ a_2b_{1,\om_2}(\theta-\om_1-\beta)& \text{ if }\theta\in [\om_1+\beta,\om_1+\beta+\om_2]\end{cases},$$
then there exists $U\in H^2_\lin(\Cy_0)$ such that $U(0,\theta)=u(\theta)$, $\partial_{t}U(0,\theta)=0$ and
\begin{equation}
\G(U,0)\leq \G(u,0)-\eps\left(\G(u,0)-\pi\right)_+
\end{equation}
\end{lemma}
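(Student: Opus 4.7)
The proof will follow the pattern of Lemma \ref{lem_lowermode_improvement}, treating the two disjoint sectors of $u$ simultaneously: deform each sector independently with the single-sector $\chi_\eps$ construction, and shift their placement in $\Sp$ so that the supports remain disjoint for all $t\geq 0$. If $\G(u,0)\leq\pi$, the choice $U(t,\theta)=u(\theta)$ is admissible, so I assume $\G(u,0)>\pi$. Writing $\om_i=\pi(1+\delta_i^0)$ with $|\delta_i^0|\leq c/\pi$, the constraint $\om_1+\om_2+\beta\leq 2\pi$ gives $\delta_1^0+\delta_2^0\leq-\beta/\pi\leq 0$. Let $\chi_\eps$ and $\phi_\delta(\theta)=b_{1,\pi}(\theta/(1+\delta))$ be as in the proof of Lemma \ref{lem_lowermode_improvement}.

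The competitor is defined by
\[U(t,\theta)=a_i\sqrt{1+\delta_i^0}\,\phi_{\delta_i^0\chi_\eps(t)}\bigl(\theta-p_i(t)\bigr)\qquad\text{on the }i\text{-th sector,}\quad i=1,2,\]
and zero elsewhere, where $p_i(t)$ is the left endpoint of the $i$-th sector at time $t$. I choose the $p_i(t)$ so that the two gaps in $\Sp\setminus\Spt(U(t,\cdot))$ have sizes $\beta\chi_\eps(t)$ and $(2\pi-\om_1-\om_2-\beta)\chi_\eps(t)$ respectively. This is feasible because the sector lengths $\pi(1+\delta_i^0\chi_\eps(t))$ plus these two gaps sum exactly to $2\pi$; both gaps are non-negative; and at $t=0$ the support of $U$ matches that of $u$. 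Using $\chi_\eps(0)=1$ and $\chi_\eps'(0)=0$, I get $p_i'(0)=0$, hence $\partial_tU(0,\cdot)=0$, and a direct computation gives $|p_i'(t)|+|p_i''(t)|\lesssim\eps(|\delta_1^0|+|\delta_2^0|)$ uniformly in $t$, so $U\in H^2_\lin(\Cy_0)$.

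Since the two sectors of $U(t,\cdot)$ are disjoint for all $t$, $\G(U,0)=\G(U|_1,0)+\G(U|_2,0)$. On each sector, $U|_i$ is a $t$-dependent translation of the ``unshifted'' deformation $V_i(t,\theta):=a_i\sqrt{1+\delta_i^0}\phi_{\delta_i^0\chi_\eps(t)}(\theta)$; translation-invariance of the pure-$\theta$ integrals and of $\chi_{\neq 0}$ means that the shift only perturbs the time derivatives via chain-rule terms in $p_i'$ and $p_i''$, and Cauchy--Schwarz gives the universal bound
\[\G(U|_i,0)-\G(V_i,0)\leq C a_i^2\eps^2(|\delta_1^0|+|\delta_2^0|)^2.\]
The proof of Lemma \ref{lem_lowermode_improvement} (applied with $\Theta=\pi$, $i(\Theta)=1$) gives, for $c$ small enough,
\[\G(V_i,0)-(1-\eps)\G(u|_i,0)\leq (1+\delta_i^0)a_i^2\bigl[A_1\eps^2-5\eps+A_2|\delta_i^0|\bigr](\delta_i^0)^2.\]

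Summing over $i$, using the identity $\G(u,0)-\pi=\sum_i(\G(u|_i,0)-\pi/2)$ and the inequality $\sum_i(\G(u|_i,0)-\pi/2)_+\geq(\G(u,0)-\pi)_+$, the claimed estimate $\G(U,0)\leq\G(u,0)-\eps(\G(u,0)-\pi)_+$ reduces to absorbing the shift correction into the quadratic savings $-5\eps a_i^2(\delta_i^0)^2$ from each sector. The main obstacle is that the shift correction on sector $i$ involves the cross-term $\eps^2a_i^2(\delta_j^0)^2$ ($j\neq i$), which is not directly dominated by the sector-$i$ saving when $a_j\gg a_i$. The resolution is that in that asymmetric regime, the positivity $\G(u,0)>\pi$ forces $|\delta_j^0|\gtrsim|\delta_i^0|$ only at the price of a correspondingly large $\G(u|_j,0)-\pi/2$ saving on sector $j$; a term-by-term reshuffling then yields a non-positive right-hand side provided $\eps$ and $c=c(\eps)$ are chosen sufficiently small, which concludes the proof with a (possibly smaller) universal $\eps>0$.
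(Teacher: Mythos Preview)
Your final ``reshuffling'' argument has a genuine gap. Take $\delta_2^0=0$, $\delta_1^0<0$ small, $a_1^2$ just above $\pi/8$ (so that $\G(u|_1,0)-\pi/2\approx 4\rho|\delta_1^0|$ for some small $\rho>0$), and $a_2$ arbitrarily large. Then $\G(u|_2,0)=\pi/2$ exactly, the only epiperimetric saving is on sector~1 and is of order $\eps\rho|\delta_1^0|$, while your shift correction on sector~2 is of order $\eps^2a_2^2(\delta_1^0)^2$. For $a_2\to\infty$ the correction overwhelms every saving, and no ``positivity forces large $\G(u|_j,0)-\pi/2$'' mechanism rescues you: sector~1's gain is fixed and sector~2's is zero. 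The issue is that your gap-scaling prescription only fixes $p_2-p_1$, and with the generic bound $|p_i'|\lesssim\eps(|\delta_1^0|+|\delta_2^0|)$ the cross-term cannot be absorbed. (A careful choice of the remaining global-rotation freedom can actually give $|p_i'|\lesssim\eps|\delta_i^0|$, which would close the argument, but you neither state nor use this.)

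The paper avoids the shifts entirely. It assumes without loss of generality that $\beta\geq\tfrac12(2\pi-\om_1-\om_2)$, applies Lemma~\ref{lem_lowermode_improvement} to each sector separately to get competitors $U_1,U_2$, and then sets
\[
U(t,\theta)=U_1(t,\theta)+U_2(t,\om_1+\om_2+\beta-\theta),
\]
reflecting the second piece so that its \emph{fixed} endpoint sits at $\om_1+\om_2+\beta$ while its moving endpoint faces the larger gap $\beta$. Disjointness then reduces to $(2\pi-\om_1-\om_2)\chi_\eps(t)\geq 2\pi-\om_1-\om_2-\beta$, which holds since $\chi_\eps\geq 1-9\eps\geq\tfrac12$. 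With no time-dependent translation there is no correction term at all: additivity gives $\G(U,0)=\G(U_1,0)+\G(U_2,0)$, and summing the two single-sector inequalities together with $x_++y_+\geq(x+y)_+$ yields the result in two lines.
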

\begin{proof}
Again the case $\G(u,0)\leq\pi$ is direct by choosing $U(t,\theta)=u(\theta)$, so we assume $\G(u,0)>\pi$.

Without loss of generality (up to switching the roles of the two sectors) we suppose that $\beta\geq \frac{2\pi-\om_1-\om_2}{2}$. Let $U_1,U_2$ be the competitors built in Lemma \ref{lem_lowermode_improvement} for $a_1b_{1,\om_1}$, $a_2b_{1,\om_2}$ respectively. We let $\eps$ be the constant from Lemma \ref{lem_lowermode_improvement}, so that
$\Spt(U_i(t,\cdot))=[0,\pi+(\om_i-\pi)\chi_{\eps}(t)]$
where we remind $\chi_\eps(t)=1-9\eps(1-(1+t)e^{-t})$ takes values between $1-9\eps$ and $1$. We define
$$
U(t,\theta)=U_1(t,\theta)+U_2(t,\om_1+\om_2+\beta-\theta).
$$
We claim the support of the two terms on the right-hand side are disjoint: indeed for any $t\geq 0$, we have
\begin{align*}
\Spt(U_1(t,\cdot))&=[0,\pi+(\om_1-\pi)\chi_{\eps}(t)],\\
\Spt(U_2(t,\om_1+\om_2+\beta-\cdot))&=[\om_1+\om_2+\beta-\pi-(\om_2-\pi)\chi_{\eps}(t),\om_1+\om_2+\beta]
\end{align*}
and the condition
$$\pi+(\om_1-\pi)\chi_{\eps}(t)\leq \om_1+\om_2+\beta-\pi-(\om_2-\pi)\chi_{\eps}(t)$$
reduces to
$$(2\pi-\om_1-\om_2)\chi_\eps(t)\geq 2\pi-\om_1-\om_2-\beta.$$
By our hypothesis on $\beta$, the right-hand side is smaller than $\frac{2\pi-\om_1-\om_2}{2}$, while the left-hand side is larger than $(1-9\eps)(2\pi-\om_1-\om_2)$: since $\eps$ may be chosen smaller than $\frac{1}{18}$, this condition is always verified. Thus, we have by additivity
\begin{align*}
\G(U,0)&=\G(U_1,0)+\G(U_2,0)\\
&\leq \G(a_1b_{1,\om_1},0)+\G(a_2b_{1,\om_2},0)-\eps\left(\G(a_1b_{1,\om_1},0)-\frac{\pi}{2}\right)_+-\eps\left(\G(a_2b_{1,\om_2},0)-\frac{\pi}{2}\right)_+\\
&\leq \G(u,0)-\eps\left(\G(u,0)-\pi\right)_+\text{ by the general inequality }x_++y_+\geq (x+y)_+,\ \forall x,y\in\R.
\end{align*}
This concludes the proof.
\end{proof}

\subsection{Removal of higher modes}\label{subsec_highermode}

We now prove Lemma \ref{lem_competitor} in the case where $v=0$, and $u$ is a general function such that $\Spt(u)$ verifies the hypothesis of Lemma \ref{lem_competitor}. We reduce the problem to the single mode case to obtain the estimate \eqref{eq_epiest}.

We first prove some useful computations. Let $u\in H^2_0([0,\om])$, written in the eigenbasis $(b_{n,\om})$ as
\[u(\theta)=\sum_{n\geq 1}u_n b_{n,\om}(\theta)\]
where $(u_n)_{n\geq 1}$ is uniquely defined, and $\sum_{n\geq 1}\mu_{n,\om}|u_n|^2=\Vert\partial_{\theta,\theta} u\Vert^2_{L^2([0,\om])}<\infty$. For any uniformly bounded sequence of functions $f_n\in\mathcal{C}^2(\R_+,\R)$  with $f_n(0)=1,f_n'(0)=0$, we let
\[u^{(f_n)_{n\geq 1}}(t,\theta)=\sum_{n\geq 1}u_nf_n(t)b_{n,\om}(\theta).\]
In particular, $u^{(f_n)}\in H^2_\lin(\Cy_0)$ verifies $u^{(f_n)_{n\geq 1}}(0,\theta)=u(\theta)$, $\partial_{t}|_{t=0}u^{f,N}(t,\theta)=0$.

\begin{lemma}\label{lemma_decomposition}
There exists some universal constant $C_0>0$ such that the following holds: for any $\om\in (0,2\pi]$, $u\in H^2_0([0,\om])$, $(f_n)_{n\geq 1}$ a sequence of uniformly bounded functions in $\mathcal{C}^2(\R_+,\R)$ such that $f_n(0)=1$, $f_n'(0)=0$, and $u^{(f_n)_{n\geq 1}}$ defined as above, we have
\begin{equation}\label{est_decomposition}
\begin{split}
\G(u^{(f_n)_{n\geq 1}},0)&\leq\om\int_{0}^{+\infty}e^{-2t}\chi_{\exists n:f_n(t)\neq 0}dt+\sum_{n\geq 1}Q_{\mu_{n,\om}-4}(f_n)u_n^2,
\end{split}
\end{equation}
where
\begin{equation}\label{eq_Q}
Q_\lambda(f):=\int_{0}^{+\infty}e^{-2t}\left\{C_0f''(t)^2+2f'(t)^2+\lambda f(t)^2\right\}dt.
\end{equation}
Moreover, when $f_n\equiv 1$ for every $n$ and $|\{u\neq 0\}|=\om$, then \eqref{est_decomposition} is an equality, and in this case we have
$$\G(u^{(1)_{n\geq 1}},0)=\G(u,0)=\frac{\om}{2}+\sum_{n\geq 1}\frac{\mu_{n,\om}-4}{2}u_n^2.$$
\end{lemma}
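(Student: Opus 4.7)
The plan is to expand every derivative appearing in $\G$ in the $\omega$-buckling eigenbasis and exploit the two orthogonality relations it enjoys. Using the Dirichlet conditions and the eigenvalue equation $\partial_\theta^4 b_{n,\omega}=-\mu_{n,\omega}\partial_{\theta\theta}b_{n,\omega}$, integration by parts yields
\[\int_0^\omega \partial_\theta b_{n,\omega}\partial_\theta b_{m,\omega}=\delta_{nm},\qquad \int_0^\omega \partial_{\theta\theta} b_{n,\omega}\partial_{\theta\theta} b_{m,\omega}=\mu_{n,\omega}\delta_{nm},\]
so for $v:=u^{(f_n)_{n\geq 1}}$, differentiating term by term (which is legitimate since the uniform bound on $f_n,f_n',f_n''$ combined with $\sum u_n^2 =\|\partial_\theta u\|^2<\infty$ gives absolute convergence of every relevant series), one gets
\[\int_0^\omega (\partial_{t\theta}v)^2=\sum_n u_n^2 f_n'(t)^2,\quad \int_0^\omega (\partial_\theta v)^2=\sum_n u_n^2 f_n(t)^2,\quad \int_0^\omega (\partial_{\theta\theta}v)^2=\sum_n u_n^2 f_n(t)^2 \mu_{n,\omega}.\]

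The only subtlety, as the text flags, is controlling $\int_0^\omega (\partial_{tt}v)^2$ since $(b_{n,\omega})$ is \emph{not} $L^2$-orthogonal. The idea is to avoid expanding this integral mode by mode and instead treat the partial sum $g(\theta):=\sum_n u_n f_n''(t) b_{n,\omega}(\theta)$ as an element of $H^1_0([0,\omega])$: by Parseval in the $\partial_\theta$-orthonormal basis, $\|g'\|_{L^2}^2=\sum_n u_n^2 f_n''(t)^2$, and by the Poincaré inequality on $[0,\omega]$ (with first Dirichlet eigenvalue $\pi^2/\omega^2$), $\|g\|_{L^2}^2 \leq \frac{\omega^2}{\pi^2}\|g'\|_{L^2}^2 \leq 4\sum_n u_n^2 f_n''(t)^2$ since $\omega\leq 2\pi$. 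This gives the universal constant $C_0=4$. For the indicator, since $v(t,\cdot)\in H^2_0([0,\omega])$ one always has $|\{v(t,\cdot)\neq 0\}|\leq \omega$, and this set is empty whenever every $f_n(t)$ vanishes, yielding the refined bound $\omega\chi_{\exists n:f_n(t)\neq 0}$.

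Assembling the four contributions, multiplying by $e^{-2t}$ and integrating in $t$ regroups the modes one by one into $\sum_n u_n^2 Q_{\mu_{n,\omega}-4}(f_n)$, which is exactly \eqref{est_decomposition}. For the equality case $f_n\equiv 1$ with $|\{u\neq 0\}|=\omega$: the Poincaré step is vacuously tight because $f_n''\equiv 0$, the $\theta$-integrals are exact by orthogonality, and the support bound is saturated since $\{v\neq 0\}=\{u\neq 0\}$ a.e.; computing $Q_{\mu_{n,\omega}-4}(1)=\tfrac{\mu_{n,\omega}-4}{2}$ and $\omega\int_0^{+\infty}e^{-2t}dt=\omega/2$ then gives the closed-form identity $\G(u,0)=\tfrac{\omega}{2}+\sum_n \tfrac{\mu_{n,\omega}-4}{2}u_n^2$. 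The main obstacle is really just the lack of $L^2$-orthogonality in the $\partial_{tt}$ term: a direct Cauchy–Schwarz via Lemma \ref{lem_estbn} would couple different modes and destroy the decoupled $Q_\lambda(f_n)$ structure on the right-hand side, whereas the Poincaré argument isolates the $f_n''$-coefficients cleanly.
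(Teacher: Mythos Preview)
Your proof is correct, and for the $\partial_{tt}$ term you actually take a different (and more elementary) route than the paper. The paper expands $\int_0^\omega (\partial_{tt}v)^2$ as the double sum $\sum_{n,m}\langle b_{n,\omega},b_{m,\omega}\rangle u_n u_m f_n''f_m''$, bounds each $|\langle b_{n,\omega},b_{m,\omega}\rangle|\leq \|b_{n,\omega}\|\,\|b_{m,\omega}\|$, and then applies Cauchy--Schwarz in $n$ together with Lemma~\ref{lem_estbn} to obtain $C_0=\sup_{\omega\in(0,2\pi]}\sum_{n\geq 1}\|b_{n,\omega}\|^2<\infty$. Your Poincar\'e argument on $H^1_0([0,\omega])$ bypasses Lemma~\ref{lem_estbn} entirely and gives the explicit value $C_0=4$; it is shorter and self-contained. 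One minor correction: your closing remark that ``a direct Cauchy--Schwarz via Lemma~\ref{lem_estbn} would couple different modes and destroy the decoupled $Q_\lambda(f_n)$ structure'' is not accurate --- that is precisely what the paper does, and the decoupling survives because the double sum factors as $\bigl(\sum_n \|b_{n,\omega}\|\,|u_n|\,(\int e^{-2t}f_n''^2)^{1/2}\bigr)^2$, to which Cauchy--Schwarz in $n$ applies cleanly. Both arguments yield the same inequality with a universal $C_0$; yours just happens to be sharper and need less machinery.
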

\begin{proof}
We compute directly
\begin{align*}
\G(u^{(f_n)_{n\geq 1}},0)=&\int_{0}^{+\infty}e^{-2t}\left(\int_{\Sp}\chi_{u^{(f_n)_{n\geq 1}}\neq 0}\right)dt+\int_{0}^{+\infty}e^{-2t}\left\{\left\Vert\sum_{n\geq 1}f_n''(t)u_n b_{n,\om}\right\Vert^2+2\left\Vert\sum_{n\geq 1}f_n'(t)u_n \partial_\theta b_{n,\om}\right\Vert^2\right\}dt\\
&+\int_{0}^{+\infty}e^{-2t}\left(\left\Vert\sum_{n\geq 1}f_n(t)u_n\partial_{\theta,\theta} b_{n,\om}\right\Vert^2-4\left\Vert\sum_{n\geq 1}f_n(t)u_n\partial_{\theta} b_{n,\om}\right\Vert^2\right)dt\\
\leq&\,\om\int_{0}^{+\infty}e^{-2t}\chi_{\exists n:f_n(t)\neq 0}dt+\sum_{n,m\geq 1}\int_{0}^{+\infty}e^{-2t}f_n''(t)f_m''(t)dt\langle b_{n,\om},b_{m,\om}\rangle u_n u_m\\
&+2\sum_{n\geq 1}\int_{0}^{+\infty}e^{-2t}f_n'(t)^2dt\ u_n^2+\sum_{n\geq 1}(\mu_{n,\om}-4)\int_{0}^{+\infty}e^{-2t}f_n(t)^2dt\ u_n^2
\end{align*}
Here we recall the orthogonality properties $\langle\partial_{\theta} b_{n,\om},\partial_{\theta} b_{m,\theta}\rangle =\delta_{n,m}$, $\langle\partial_{\theta,\theta} b_{n,\om},\partial_{\theta,\theta} b_{m,\theta}\rangle =\mu_{n,\om}\delta_{n,m}$, but $\langle b_{n,\om},b_{m,\om}\rangle$ is generally not zero; by Cauchy-Schwarz inequality, we have
\begin{align*}
\sum_{n,m\geq 1}\left(\int_{0}^{+\infty}e^{-2t}f_n''(t)f_m''(t)dt\right)\langle b_{n,\om},b_{m,\om}\rangle u_n u_m&\leq \left(\sum_{n\geq 1}\left(\int_{0}^{+\infty}e^{-2t}f_n''(t)^2dt\right)^{\frac{1}{2}}\Vert b_{n,\om}\Vert |u_n|\right)^2\\
&\leq \left(\sum_{n\geq 1}\Vert b_{n,\om}\Vert^2\right)\left(\sum_{n\geq 1}\int_{0}^{+\infty}e^{-2t}f_n''(t)^2dt\ u_n^2\right).
\end{align*}
Denote $C_0=\sup_{\om\in (0,2\pi]}\left(\sum_{n\geq 1}\Vert b_{n,\om}\Vert^2\right)$, $C_0$ is finite since by Lemma \ref{lem_estbn}, $\Vert b_{n,\om}\Vert^2\lesssim \mu_{n,\om}^{-1}=\frac{\om^2}{\pi^2}\mu_{n}^{-1}$ which is summable (we recall that $\mu_n^{-1}=\mathcal{O}\left(\frac{1}{n^2}\right)$). Then we identify

\begin{align*}
\G(u^{(f_n)_{n\geq 1}},0)\leq\om \int_{0}^{+\infty}e^{-2t}\chi_{\exists n:f_n(t)\neq 0}dt+\sum_{n\geq 1}Q_{\mu_{n,\om}-4}(f_n)u_n^2
\end{align*}
with an equality in the chain of inequality above when every $f_n$ is constant and the measure of the support of $u$ is $\om$. In this case $Q_\lambda(1)=\frac{\lambda}{2}$, which concludes the proof.
\end{proof}

The constant function $f\equiv 1$ is clearly not a minimizer of the integral $Q_\lambda(f)$ for $\lambda>0$, since it does not verify the associated Euler-Lagrange equation. It is however non-trivial whether there is a function with \textbf{compact support} and quantifiably lower energy $Q_\lambda$ for every $\lambda\geq 1$: this is shown in the following Lemma.
\begin{lemma}\label{lem_highermodes_deletion_positive}
There exist $\eta>0$, $f\in\mathcal{C}^2_c([0,+\infty[,[0,1])$ verifying $f(0)=1$, $f'(0)=0$, such that for any $\lambda\geq 1$ we have
\[Q_{\lambda}(f)\leq (1-\eta)Q_{\lambda}(1),\]
where $Q_\lambda$ is defined in \eqref{eq_Q}.
\end{lemma}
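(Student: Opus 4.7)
The plan is to reduce the uniform-in-$\lambda$ statement to a single scalar strict inequality at $\lambda=1$, and then exhibit an explicit compactly supported test function by perturbing the constant $f\equiv 1$.

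First I would decompose
\begin{equation*}
Q_\lambda(f)=A(f)+\lambda B(f),\qquad A(f):=\int_0^{+\infty}e^{-2t}\bigl(C_0f''(t)^2+2f'(t)^2\bigr)dt,\quad B(f):=\int_0^{+\infty}e^{-2t}f(t)^2\,dt,
\end{equation*}
and note that $Q_\lambda(1)=\lambda/2$. Since $A(f)\geq 0$, the ratio $Q_\lambda(f)/Q_\lambda(1)=2B(f)+2A(f)/\lambda$ is non-increasing in $\lambda$, so for every $\lambda\geq 1$ one has $Q_\lambda(f)/Q_\lambda(1)\leq Q_1(f)/Q_1(1)=2Q_1(f)$. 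It therefore suffices to exhibit one admissible $f\in\mathcal{C}^2_c([0,+\infty[,[0,1])$ with $f(0)=1$, $f'(0)=0$ and $Q_1(f)<1/2$ strictly, and then to set $\eta:=1-2Q_1(f)>0$.

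Second, I would produce a non-compactly-supported improvement by a direct perturbation of $f\equiv 1$. Take $\tilde f_\epsilon(t):=(1+\epsilon t)e^{-\epsilon t}$: then $\tilde f_\epsilon(0)=1$, $\tilde f_\epsilon'(t)=-\epsilon^2te^{-\epsilon t}$ (so $\tilde f_\epsilon'(0)=0$), and $\tilde f_\epsilon$ decreases from $1$ to $0$, hence lies in $[0,1]$. An exact computation gives
\begin{equation*}
B(\tilde f_\epsilon)=\frac{2+6\epsilon+5\epsilon^2}{4(1+\epsilon)^3}=\frac{1}{2}-\frac{\epsilon^2}{4}+O(\epsilon^3),
\end{equation*}
while the elementary bounds $|\tilde f_\epsilon'|\leq\epsilon/e$ and $|\tilde f_\epsilon''|\leq\epsilon^2$ yield $A(\tilde f_\epsilon)=O(\epsilon^4)$. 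Hence $Q_1(\tilde f_\epsilon)=1/2-\epsilon^2/4+O(\epsilon^3)<1/2$ for $\epsilon>0$ small enough. This family is natural because $f\equiv 1$ is not a critical point of $Q_1$ under the constraints $f(0)=1,f'(0)=0$, so there must be a first-order direction that decreases the energy.

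Third, I would pass to compact support by a plain truncation. Fix $\epsilon$ from the previous step, choose a cutoff $\phi\in\mathcal{C}^2([0,+\infty[,[0,1])$ with $\phi\equiv 1$ on $[0,1]$ and $\phi\equiv 0$ on $[2,+\infty[$ (with $\phi,\phi',\phi''$ all vanishing on $[2,+\infty[$), and set $f(t):=\phi(t/T)\tilde f_\epsilon(t)$. Since $\phi\equiv 1$ near $0$, $f(0)=1$ and $f'(0)=0$; the function $f$ belongs to $\mathcal{C}^2_c([0,+\infty[,[0,1])$ and agrees with $\tilde f_\epsilon$ on $[0,T]$. On $[T,2T]$ the quantities $\tilde f_\epsilon^{(k)}$ and $T^{-k}\phi^{(k)}(t/T)$ are uniformly bounded in $T\geq 1$ for $k=0,1,2$, and $e^{-2t}\leq e^{-2T}$ there; this gives $|Q_1(f)-Q_1(\tilde f_\epsilon)|=O(Te^{-2T})\to 0$. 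For $T$ large enough, $Q_1(f)<1/2$, and the proof is complete.

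The argument presents no serious obstacle, but one subtle point deserves to be singled out: a naive truncation of $f\equiv 1$ would fail, because cutting off on a window $[T,T+S]$ costs $\Theta(e^{-2T}/S)$ in $A$ while saving only $\Theta(e^{-2T-2S})$ in $B$, so no choice of $(T,S)$ makes the net variation negative. One really has to perturb $f$ in the interior of the window, which is what the family $\tilde f_\epsilon$ does, and this is the only nontrivial ingredient in the proof.
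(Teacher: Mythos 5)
Your proof is correct and follows essentially the same route as the paper's: the same perturbation family $(1+\eps t)e^{-\eps t}$, the same cutoff-truncation to achieve compact support, and the same reduction of $\lambda\geq 1$ to $\lambda=1$ (your ratio-monotonicity argument, using only $A(f)\geq 0$, is marginally cleaner than the paper's, which uses $f\leq 1$). One small imprecision: the sup bound $|\tilde f_\eps'|\leq \eps/e$ by itself only gives $\mathcal{O}(\eps^2)$ for the term $2\int_0^{+\infty}e^{-2t}\tilde f_\eps'(t)^2\,dt$, not $\mathcal{O}(\eps^4)$; the claimed order follows instead from the exact formula $\tilde f_\eps'(t)=-\eps^2 t e^{-\eps t}$, and in any case even the $\mathcal{O}(\eps^2)$ bound, whose constant $e^{-2}$ is below $\frac{1}{4}$, would not spoil the conclusion $Q_1(\tilde f_\eps)<\frac{1}{2}$.
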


Note that the condition $\lambda\geq 1$ could be replaced by any condition $\lambda\geq \lambda_0$ for some $\lambda_0>0$.
\begin{proof}
It is sufficient to prove the result for $\lambda=1$, and it implies the result for any $\lambda\geq 1$. Indeed, suppose $f$ verifies the inequality for the value $\lambda=1$, since $f\leq 1$ and $f$ is not identically $1$, then for some constant $\eta'\in (0,1)$ we have
$$\int_{0}^{+\infty}e^{-2t}f(t)^2dt\leq \frac{1-\eta'}{2}$$
so for any $\lambda\geq 1$,
\begin{align*}
Q_\lambda(f)&=Q_1(f)+(\lambda-1)\int_{0}^{+\infty}e^{-2t}f(t)^2dt\\
&\leq (1-\eta)\frac{1}{2}+(1-\eta')\frac{\lambda-1}{2}\\
&\leq (1-\eta\wedge \eta')\frac{\lambda}{2}.
\end{align*}
Now we prove the result for $\lambda=1$. Let $\eps\in (0,1)$, $T>1$ to be fixed later, $\chi\in\mathcal{C}^\infty(\R,[0,1])$ equal to $1$ on $]-\infty,-1[$, $0$ on $[0,+\infty[$, and
\[f_\eps(t)=(1+\eps t)e^{-\eps t},\qquad f_{\eps,T}(t)=\chi(t-T)f_\eps(t).\]
We prove that for any sufficiently small $\eps$ we have the two estimates:
\begin{equation}\label{estimate_highermode}
(i)\ Q_1(f_\eps)\leq \left(1-\frac{1}{4}\eps^2\right)Q_1(1),\qquad  (ii)\ Q_1(f_{\eps,T})\leq \left(1+Ce^{-2T}\right)Q_1(f_\eps),
\end{equation}
for some universal constants $c,C>0$ (that are independant of $T,\eps$). The result is then obtained by fixing a sufficiently small $\eps$ such that the estimates hold, and then choosing $T$ large enough depending on $\eps$ such that $Q_1(f_{\eps,T})\leq \left(1-\frac{1}{8}\eps^2\right)Q_1(1)$.
\begin{itemize}
\item[(i)]We compute $f_\eps'(t)=-\eps^2 te^{-\eps t}$,  $f_\eps ''(t)=-\eps^2(1-\eps t)e^{-\eps t}$. So for some constant $C>0$ we may bound the first two terms of $Q_1(f_\eps)$ (see \eqref{eq_Q}) by $C\eps^4$.
Then the last term of \eqref{eq_Q} is estimated by
\begin{align*}
\int_{0}^{+\infty}&e^{-2t} f_\eps(t)^2dt=\int_{0}^{+\infty}e^{-2(1+\eps)t}(1+\eps t)^2dt= \frac{1}{2}\left(1-\frac{\eps^2}{2}+\Oo(\eps^3)\right).
\end{align*}
This proves (i) for sufficiently small $\eps$ (we remind that $Q_1(1)=\frac{1}{2}$).
\item[(ii)]We compute directly, using the (rough) estimate $f_\eps,f_\eps',f_\eps''\lesssim 1$ and $Q(f_\eps)\gtrsim 1$ (for any $\eps\in (0,1)$):
\begin{align*}
Q_1(f_{\eps,T})-Q_1(f_\eps)&\leq\int_{T-1}^{T}e^{-2t}\left(C_0(f_{\eps,T}'')^2+2(f_{\eps,T}')^2+(f_{\eps,T})^2\right)dt\\
&\leq Ce^{-2T}Q_1(f_\eps),
\end{align*}
for some constant $C>0$. So we get the estimate (ii).
\end{itemize}
This concludes the proof.
\end{proof}

We now prove an analogue of Lemma \ref{lem_highermodes_deletion_positive} for \textbf{negative} values of $\lambda$.

\begin{lemma}\label{lem_highermodes_deletion_negative}
There exists $f\in\mathcal{C}^2_c([0,+\infty[,\R)$ verifying $f(0)=1$, $f'(0)=0$, such that for any $\lambda\leq -1$ we have
\[Q_{\lambda}(f)\leq Q_{\lambda}(1),\]
where $Q_\lambda$ is defined in \eqref{eq_Q}.
\end{lemma}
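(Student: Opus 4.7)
The constant function satisfies $Q_\lambda(1)=\lambda/2$, so writing $\lambda=-\mu$ with $\mu\geq 1$, the desired inequality amounts to
\[\int_0^\infty e^{-2t}\left(C_0 (f'')^2 + 2(f')^2\right)dt \;\leq\; \mu\int_0^\infty e^{-2t}(f^2-1)\,dt.\]
Since the left-hand side is nonnegative, any proof must push $\int_0^\infty e^{-2t}f^2\,dt$ strictly above $1/2$; because $f$ has compact support, this forces $|f|>1$ on a set of positive measure, which for the weighted integral to gain must sit near the origin where $e^{-2t}$ is largest. Observe also that, once the right-hand side is positive, it is increasing in $\mu\geq 1$, so the critical case for the inequality is $\mu=1$, and the dependence on $\mu$ will actually work in our favor.

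I would construct $f$ as a small positive bump grafted onto a cutoff of the constant function. Fix once and for all a nontrivial $\phi\in\mathcal{C}^2_c([0,+\infty),[0,+\infty))$ with $\phi(0)=\phi'(0)=0$, supported in some $[0,T_0]$, and a smooth cutoff $\psi_T\in\mathcal{C}^2_c([0,+\infty),[0,1])$ equal to $1$ on $[0,T]$, vanishing on $[T+1,+\infty)$, with $\psi_T'(0)=0$ (take $T>T_0$). Set
\[f_{\varepsilon,T}(t):=\psi_T(t)(1+\varepsilon\phi(t)),\]
which belongs to $\mathcal{C}^2_c([0,+\infty),\R)$ and satisfies $f_{\varepsilon,T}(0)=1$, $f_{\varepsilon,T}'(0)=0$.

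Splitting the integrals over $[0,T_0]$ (where $\psi_T=1$) and $[T,T+1]$ (where $\phi=0$) gives, with $I_\phi:=\int_0^\infty e^{-2t}\phi\,dt>0$,
\[\int_0^\infty e^{-2t}(f_{\varepsilon,T}^2-1)\,dt \;=\; 2\varepsilon I_\phi + \Oo(\varepsilon^2) + \Oo(e^{-2T}),\]
\[\int_0^\infty e^{-2t}\left(C_0 (f_{\varepsilon,T}'')^2+2(f_{\varepsilon,T}')^2\right)dt \;=\; \Oo(\varepsilon^2) + \Oo(e^{-2T}),\]
where the implicit constants depend only on $\phi$ and on the cutoff profile. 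Crucially, the $\Oo(e^{-2T})$ in the first line comes from replacing the constant $1$ by its cutoff and is $\leq 0$ in sign, so when multiplied by $-\mu\leq -1$ it gives a nonnegative contribution of size $\mu\cdot\Oo(e^{-2T})$ to $Q_\lambda(f_{\varepsilon,T})-Q_\lambda(1)$.

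Substituting into $Q_\lambda(f_{\varepsilon,T})-Q_\lambda(1)$ and using $\mu\geq 1$ yields
\[Q_\lambda(f_{\varepsilon,T})-Q_\lambda(1)\;\leq\; -2\mu\varepsilon I_\phi \;+\; \mu\cdot \Oo(e^{-2T})\;+\;\Oo(\varepsilon^2)\;+\;\Oo(e^{-2T}).\]
The order of quantifiers is essential: first fix $\phi$ (fixing $I_\phi$ and the $\Oo$-constants), then fix $\varepsilon$ small enough that the $\Oo(\varepsilon^2)$ term is bounded by $\varepsilon I_\phi/2$, and finally fix $T$ large enough (depending on $\varepsilon$) so that $\Oo(e^{-2T})\leq \varepsilon I_\phi/2$. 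With these choices the right-hand side is bounded above by $-\mu\varepsilon I_\phi$, uniformly in $\mu\geq 1$, so $f:=f_{\varepsilon,T}$ does the job. The main subtlety is precisely the term $\mu\cdot\Oo(e^{-2T})$: at first glance it could blow up as $\mu\to\infty$ and destroy the estimate, but it is dominated by the genuinely linear-in-$\mu$ gain $-2\mu\varepsilon I_\phi$ as soon as $e^{-2T}\ll \varepsilon$, which is why $T$ must be chosen after $\varepsilon$.
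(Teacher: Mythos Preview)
Your proof is correct. The strategy matches the paper's: perturb the constant function $1$ so as to increase $\int_0^\infty e^{-2t}f^2\,dt$ above $\tfrac12$ while paying only a higher-order cost in the derivative terms, then truncate at a large time $T$ chosen after the perturbation size. The difference is only in the choice of perturbation: the paper takes $f_\eps(t)=\eps t+e^{-\eps t}$, a globally growing profile for which the gain in $\int e^{-2t}f^2$ is of order $\eps^2$ against a derivative cost of order $\eps^3$, whereas you graft a compactly supported bump $\eps\phi$ onto $1$, obtaining a gain linear in $\eps$ against a cost of order $\eps^2$. Your version is arguably more elementary since it needs no explicit integral computation, only the positivity of $I_\phi$. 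Your observation that the inequality, once the right side is positive, is monotone in $\mu\ge1$ could in fact shorten the endgame: it suffices to arrange $\int e^{-2t}(f^2-1)\,dt>0$ and then verify the single case $\mu=1$, rather than tracking the $\mu\cdot\Oo(e^{-2T})$ term separately. Either way the argument closes.
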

Note that in this case $Q_\lambda(1)=\frac{\lambda}{2}< 0$ since $\lambda<0$, in particular we have $Q_\lambda(1)\leq (1-\eta)Q_\lambda(1)$ for any $\eta>0$.

\begin{proof}
Define, as in the previous proof, $\chi\in\mathcal{C}^\infty(\R,[0,1])$ equal to $1$ on $]-\infty,-1]$, $0$ on $[0,+\infty[$. We let
$$f_\eps(t)=\eps t+e^{-\eps t},$$
for some $\eps\in (0,1)$ to be fixed. We have $f_\eps(0)=1$, $f_\eps'(0)=0$, and moreover $f_\eps'(t)=\eps\left(1-e^{-\eps t}\right)$, $f_\eps''(t)=\eps^2e^{-\eps t}$. Thus
\begin{align*}
\int_{0}^{+\infty}e^{-2t}f_\eps''(t)^2dt&\lesssim \eps^4,\qquad \int_{0}^{+\infty}e^{-2t}f_\eps'(t)^2dt\lesssim \eps^3\\
\int_{0}^{+\infty}e^{-2t}f_\eps(t)^2dt&=\frac{1}{2}+\frac{\eps^2}{4}+\mathcal{O}\left(\eps^3\right)\geq \frac{1}{2}\left(1+\frac{\eps^2}{3}\right)\text{ for small }\eps.
\end{align*}
We now fix $\eps\in (0,1)$ small enough such that the last inequality holds. Let
\[f_{\eps,T}(t)=\chi(t-T)f_\eps(t)\]
for some $T>1$ to be fixed. With the rough estimate $|f_\eps(t)|+|f_\eps'(t)|+|f_\eps''(t)|\lesssim 1+t$ (for small $\eps$), we have for any $\lambda\leq -1$:
\begin{align*}
Q_\lambda(f_{\eps,T})&=Q_\lambda(f_\eps)+(Q_\lambda(f_{\eps,T})-Q_\lambda(f_\eps))\\
&\leq \frac{\lambda}{2}\left(1+\frac{\eps^2}{3}\right)+\int_{T-1}^{+\infty}e^{-2t}\left(C_0f_{\eps,T}^{''2}+2f_{\eps,T}^{'2}-\lambda f_{\eps}^2\right)dt\\
&\leq \frac{\lambda}{2}\left(1+\frac{\eps^2}{3}-CT^2e^{-2T}\right)
\end{align*}
for some universal constant $C>0$. Thus for a large enough $T$ we have $Q_\lambda(f_{\eps,T})\leq \frac{\lambda}{2}=Q_\lambda(1)$.
\end{proof}

We now use Lemma \ref{lem_highermodes_deletion_positive} and \ref{lem_highermodes_deletion_negative}, to prove the Lemma \ref{lem_competitor} in the case where $v=0$. We prove in fact a slightly stronger estimate, with an additional control on $\partial_\theta u$ that will be useful in the next section.

We first introduce some notations: consider $u\in H^2(\Sp)$ such that $\Spt(u)\neq \Sp$. Its support is a countable union of disjoint intervals $(\theta_k,\theta_k+\om_k)_{k\in\N}$ of respective lengths $$\om_0\geq\om_1\geq\om_2\geq\hdots\to 0.$$
We let $u^k(\theta)=u(\theta_k+\theta)\chi_{0<\theta<\om_k}$ such that $u^k\in H^2_0([0,\om_k])$ is decomposed as
\begin{equation}\label{eq_fulldecompositionu}
u^k=\sum_{n\geq 1}u_n^k b_{n,\om_k}
\end{equation}
for some sequence of coefficients $(u_n^k)_{n\geq 1,k\geq 0}$.

\begin{lemma}\label{lem_highermode_improvement}
Let $\Theta\in\{\pi,t_1,2\pi\}$. There exist $c,\eps,a>0$ such that for any $u\in H^2(\Sp)$ that verifies
$$\Spt(u)\neq \Sp,\ [c,\Theta-c]\subset\Spt(u)\subset[-c,\Theta+c],$$
there exists $U\in H^2_\lin(\Cy_0)$ such that $U(0,\theta)=u(\theta)$, $\partial_{t}U(0,\theta)=0$ and
\begin{equation}
\G(U,0)\leq \G(u,0)-\eps\left(\G(u,0)-\frac{\Theta}{2}\right)_+-a\left(\left\Vert\partial_{\theta,\theta} \left(u^0-\sum_{n=1}^{i(\Theta)}u^0_n b_{n,\om_0}\right)\right\Vert^2+\sum_{k\geq 1}\Vert\partial_{\theta,\theta} u^k\Vert^2\right)
\end{equation}
where $u^k$ is defined from the decomposition of $u$ in \eqref{eq_fulldecompositionu}.
\end{lemma}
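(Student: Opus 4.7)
The plan is to build the competitor $U\in H^2_\lin(\Cy_0)$ in two temporal phases glued at some fixed time $T_1>0$. On $[0,T_1]$, I damp every component of $u$ except the dominant mode $u^0_{\mathrm{main}}:=u^0_{i(\Theta)}b_{i(\Theta),\om_0}$ on the main sector; on $[T_1,+\infty)$, I restart from $u^0_{\mathrm{main}}$ and apply the angular-variation competitor of Lemma \ref{lem_lowermode_improvement}. The matching at $t=T_1$ (common value $u^0_{\mathrm{main}}$, vanishing $\partial_t$) uses $\chi_\eps'(0)=0$ and a smooth cutoff of the damping profiles.

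Concretely, for the first phase I set
\[
U(t,\theta)=u^0_{i(\Theta)}\,b_{i(\Theta),\om_0}(\theta)+\sum_{(n,k)\neq(i(\Theta),0)}u^k_n\,f^k_n(t)\,b_{n,\om_k}(\theta-\theta_k),
\]
with each $f^k_n\in\mathcal{C}^2_c([0,T_1])$ satisfying $f^k_n(0)=1$, $(f^k_n)'(0)=0$. The hypothesis $\om_0\in[\Theta-2c,\Theta+2c]$ together with the gap estimate \eqref{eq_gapestimate} yields $|\mu_{n,\om_0}-4|\geq 1$ for every $n\neq i(\Theta)$, and $\om_k\leq 2c$ on the other sectors gives $\mu_{n,\om_k}\geq 5$ for every $n$. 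I pick $f^k_n$ from Lemma \ref{lem_highermodes_deletion_positive} whenever $\mu_{n,\om_k}-4\geq 1$ (quantitative gain $\eta\,\tfrac{\mu_{n,\om_k}-4}{2}(u^k_n)^2$), and from Lemma \ref{lem_highermodes_deletion_negative} for the finitely many modes with $\mu_{n,\om_k}-4\leq -1$ (no quantitative gain needed). Applying Lemma \ref{lemma_decomposition} sector by sector to the extension $\tilde U(t,\theta):=U(\min(t,T_1),\theta)$, and using the elementary bound $\mu-4\geq \mu/5$ when $\mu\geq 5$ to rewrite per-mode gains as an $H^2$ norm, I obtain
\[
\G(\tilde U,0)\leq \G(u,0)-\tfrac{\eta}{10}\,\mathcal{Q}-\tfrac{e^{-2T_1}}{2}\sum_{k\geq 1}\om_k,\qquad \mathcal{Q}:=\Big\Vert\partial_{\theta,\theta}\big(u^0-\textstyle\sum_{n=1}^{i(\Theta)}u^0_n b_{n,\om_0}\big)\Big\Vert^2+\sum_{k\geq 1}\Vert\partial_{\theta,\theta}u^k\Vert^2,
\]
where the second negative term comes from the compact support in $t$ of the $f^k_n$ on the small sectors.

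Denoting $V(s,\theta):=U(T_1+s,\theta)$ the second-phase profile, a direct splitting of the $t$-integral defining $\G$ gives $\G(U,0)=\G(\tilde U,0)-e^{-2T_1}\G(u^0_{\mathrm{main}},0)+e^{-2T_1}\G(V,0)$. Lemma \ref{lem_lowermode_improvement} bounds $\G(V,0)\leq \G(u^0_{\mathrm{main}},0)-\eps(\G(u^0_{\mathrm{main}},0)-\Theta/2)_+$, so
\[
\G(U,0)\leq \G(u,0)-\tfrac{\eta}{10}\mathcal{Q}-\tfrac{e^{-2T_1}}{2}\sum_{k\geq 1}\om_k-e^{-2T_1}\eps\big(\G(u^0_{\mathrm{main}},0)-\tfrac{\Theta}{2}\big)_+.
\]
The eigenmode expansion $\G(u,0)=\tfrac{|\Spt u|}{2}+\sum_{n,k}\tfrac{\mu_{n,\om_k}-4}{2}(u^k_n)^2$ combined with $\mu_{n,\om_k}-4\leq \mu_{n,\om_k}$ on the positive-gap modes yields $(\G(u,0)-\tfrac{\Theta}{2})_+\leq (\G(u^0_{\mathrm{main}},0)-\tfrac{\Theta}{2})_+ +\tfrac12\mathcal{Q}+\tfrac12\sum_{k\geq 1}\om_k$, and the lemma follows with $\eps':=\min(e^{-2T_1}\eps,e^{-2T_1})$ and $a:=\eta/10-\eps'/2>0$ provided $T_1$ is fixed large enough. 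The main obstacle is the \emph{support inflation} issue: since Lemma \ref{lem_lowermode_improvement}'s competitor has an angular support varying with $t$ by an amount $\sim\Theta\delta_0\eps$, while the damping construction keeps the support fixed at $\om_0$, a naive superposition would pay an extra area term of the same order as the main gain $\eps\Theta\delta_0/2$ and cancel it; the two-phase scheme exists precisely to prevent the two supports from ever being simultaneously active.
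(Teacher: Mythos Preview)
Your proof is correct and follows essentially the same two-phase strategy as the paper: first damp all non-dominant modes using the profiles $g,h$ from Lemmas~\ref{lem_highermodes_deletion_positive} and~\ref{lem_highermodes_deletion_negative}, then apply the angular-variation competitor of Lemma~\ref{lem_lowermode_improvement} to the surviving main mode $u^0_{i(\Theta)}b_{i(\Theta),\om_0}$. The only cosmetic difference lies in the final bookkeeping: the paper takes the gluing time equal to the fixed support $T$ of $g,h$ and sets $\ov\eps=\min(e^{-2T}\eps,e^{-2T},\eta/2)$, which directly yields $a=\eta/20$ without any further adjustment, whereas you allow $T_1\geq T$ to be chosen large enough so that $e^{-2T_1}\eps<\eta/5$ and obtain $a=\eta/10-\eps'/2$.
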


\begin{proof}
Let $\om=|\Spt(u)|$. Let $(\theta_k,\om_k,u^k,u^k_n)$ be defined from the decomposition of $u$ as in \eqref{eq_fulldecompositionu}. By our hypothesis we have $|\om_0-\Theta|\leq 2c$ and $|\om_1|\leq 2c$. We suppose $c$ is sufficiently small so that $|\om_0-\Theta|<0.1\Theta$ and $\om_1\leq \frac{\pi}{2}$ (see equation \eqref{eq_gapestimate})

Let now $g$ be the function constructed from Lemma \ref{lem_highermodes_deletion_positive}, and $h$ the function constructed from Lemma \ref{lem_highermodes_deletion_negative}. Denote $T>0$ the smallest value such that $g=h=0$ on $[T,+\infty[$.
We now define, for any $n\geq 1$, $k\geq 0$, 
$$f_{n}^{k}(t)=
\begin{cases}
g(t)&\text{ if }\mu_{n,\om_k}\geq 5\text{ i.e. }k\geq 1\text{ or }n>i(\Theta),\\
h(t)&\text{ if }\mu_{n,\om_k}\leq 3\text{ i.e. }k=0 \text{ and }n<i(\Theta),\\
1&\text{ if }|\mu_{n,\om_k}-4|<1\text{ i.e. }k=0\text{ and }n=i(\Theta).
\end{cases}
$$
We define the competitor
$$\tilde{U}(t,\theta)=\sum_{k\geq 0}\sum_{n\geq 1}f_{n}^{k}(t)u_n^k b_{n,\om_k}(\theta-\theta_k).$$
By the definition of $(f_{n}^{k})$, we have for any $t\geq T$: $$\tilde{U}(t,\theta)=u_{i(\Theta)}^0 b_{i(\Theta),\om_0}(\theta-\theta_0).$$
Denote $\eta>0$ the constant from Lemma \ref{lem_highermodes_deletion_positive}, $\eps>0$ the constant from Lemma \ref{lem_lowermode_improvement}, and $\widehat{U}(t,\theta)$ the competitor built in Lemma \ref{lem_lowermode_improvement} from the initial data $u_{i(\Theta)}^0 b_{i(\Theta),\om_0}$, so that
$$\G(\widehat{U},0)\leq \G(u_{i(\Theta)}^0 b_{i(\Theta),\om_0},0)-\eps\left(\G(u_{i(\Theta)}^0 b_{i(\Theta),\om_0},0)-\frac{\Theta}{2}\right)_+.$$
Let then
$$U(t,\theta)=\begin{cases}\tilde{U}(t,\theta)&\text{ if }t< T\\ \widehat{U}(t-T,\theta+\theta_0)&\text{ if }t\geq T\end{cases}$$
For any $k\geq 1$, since every function $(f_n^k)_{n\geq 1}$ vanishes on $[T,+\infty[$, then $$\int_{0}^{+\infty}e^{-2t}\chi_{\exists n:f_{n}^k\neq 0}dt\leq \frac{1-e^{-2T}}{2}.$$
We compute:
\begin{align*}
\G(U,0)= &\  \G(\tilde{U},0)+\left(\G(U,0)-\G(\tilde{U},0)\right)\\
\leq &\  \G(u_{i(\Theta)}^0b_{i(\Theta),\om_0},0)+(1-\eta)\sum_{n>i(\Theta)}Q_{\mu_{n,\om_0}-4}(1)|u_n^0|^2+\sum_{n<i(\Theta)}Q_{\mu_{n,\om_0}-4}(1)|u_n^0|^2\\
&+\sum_{k\geq 1}\left(\frac{1-e^{-2T}}{2}\om_k+(1-\eta)\sum_{n\geq 1}Q_{\mu_{n,\om_k}-4}(1)|u_n^k|^2\right)\\
&+e^{-2T}\left(\G(\widehat{U},0)-\G(u_{i(\Theta)}^0b_{i(\Theta),\om_0},0)\right)\text{ by Lemmas }\ref{lemma_decomposition},\ref{lem_highermodes_deletion_positive},\ref{lem_highermodes_deletion_negative}
\\
\leq &\  \G(u_{i(\Theta)}^0b_{i(\Theta),\om_0},0)-e^{-2T}\eps\left(\G(u_{i(\Theta)}^0 b_{i(\Theta),\om_0},0)-\frac{\Theta}{2}\right)_+\\
&+(1-\eta)\sum_{n>i(\Theta)}Q_{\mu_{n,\om_0}-4}(1)|u_n^0|^2+\sum_{n<i(\Theta)}Q_{\mu_{n,\om_0}-4}(1)|u_n^0|^2\\
&+\sum_{k\geq 1}\left(\frac{1-e^{-2T}}{2}\om_k+(1-\eta)\sum_{n\geq 1}Q_{\mu_{n,\om_k}-4}(1)|u_n^k|^2\right)\text{ by Lemma }\ref{lem_lowermode_improvement}.
\end{align*}
Denote $\ov{\eps}=\min\left(e^{-2T}\eps,e^{-2T},\frac{\eta}{2}\right)$. We remind that when $n<i(\Theta)$, $Q_{\mu_{n,\om_0}-4}(1)<0$ so $Q_{\mu_{n,\om_0}-4}(1)<(1-\ov{\eps})Q_{\mu_{n,\om_0}-4}(1)$. We remind also that for any $x,y\in\R$, we have $x_++y_+\geq (x+y)_+$, which we will use repeatedly to gather each term. We identify
$$\G(u,0)=\G(u_{i(\Theta)}^0b_{i(\Theta),\om_0},0)+\sum_{n\neq i(\Theta)}Q_{\mu_{n,\om_0}-4}(1)|u_n^0|^2+\sum_{k\geq 1}\left(\frac{1}{2}\om_k+\sum_{n\geq 1}Q_{\mu_{n,\om_k}-4}(1)|u_n^k|^2\right).$$
Then following the estimates above we obtain
\begin{align*}
\G(U,0)\leq&\  \G(u_{i(\Theta)}^0b_{i(\Theta),\om_0},0)-\ov{\eps}\left(\G(u_{i(\Theta)}^0 b_{i(\Theta),\om_0},0)-\frac{\Theta}{2}\right)_+\\
&+\left(1-\ov{\eps}-\frac{\eta}{2}\right)\sum_{n> i(\Theta)}Q_{\mu_{n,\om_0}-4}(1)|u_n^0|^2+\left(1-\ov{\eps}\right)\sum_{n< i(\Theta)}Q_{\mu_{n,\om_0}-4}(1)|u_n^0|^2\\
&+\sum_{k\geq 1}\left(\frac{1-\ov{\eps}}{2}\om_k+\left(1-\ov{\eps}-\frac{\eta}{2}\right)\sum_{n\geq 1}Q_{\mu_{n,\om_k}-4}(1)|u_n^k|^2\right)\\
&\leq \G(u,0)-\ov{\eps}\left(\G(u,0)-\frac{\Theta}{2}\right)_+-\frac{\eta}{2}\sum_{(n,k):n>i(\Theta)\text{ or }k\geq 1}Q_{\mu_{n,\om_k}-4}(1)|u_n^k|^2.
\end{align*}
We remind for the last line that if $(n,k)$ is such that $n>i(\Theta)$ or $k\geq 1$, then $\mu_{n,\om_k}\geq 5$ so $Q_{\mu_{n,\om_k}-4}(1)=\frac{\mu_{n,\om_k}-4}{2}\geq \frac{1}{10}\mu_{n,\om_k}$. We now conclude the proof with $a=\frac{\eta}{20}$, since
\begin{align*}
\sum_{(n,k):n>i(\Theta)\text{ or }k\geq 1}\mu_{n,\om_k}|u_n^k|^2&=\sum_{n>i(\Theta)}\mu_{n,\om_0}|u_n^0|^2+\sum_{n\geq 1}\sum_{k\geq 1}\mu_{n,\om_k}|u_n^k|^2\\
&= \left\Vert\partial_{\theta,\theta} \left(u^0-\sum_{n=1}^{i(\Theta)}u^0_n b_{n,\om_0}\right)\right\Vert^2+\sum_{k\geq 1}\Vert\partial_{\theta,\theta} u^k\Vert^2.
\end{align*}
\end{proof}

Next, we treat separately the case $\Theta=2\pi$ where the support of $u$ is the union of two disjoint segments of length close to $\pi$. The proof follows the same computations, we have separated the two for clarity.

\begin{lemma}\label{lem_highermode_improvement_double}
There exist $c,\eps,a>0$ such that for any $u\in H^2(\Sp)$ such that
$$[c,\pi-c]\cup[\pi+c,2\pi-c]\subset\Spt(u),\  [\pi-c,\pi+c] \nsubseteq \Spt(u),\ [2\pi-c,2\pi+c] \nsubseteq \Spt(u),$$
there exists $U\in H^2_\lin(\Cy_0)$ such that $U(0,\theta)=u(\theta)$, $\partial_{t}U(0,\theta)=0$ and
\begin{equation}
\G(U,0)\leq \G(u,0)-\eps\left(\G(u,0)-\pi\right)_+-a\left(\left\Vert\partial_{\theta,\theta} \left(u^0-u^0_1 b_{1,\om_0}\right)\right\Vert^2+\left\Vert\partial_{\theta,\theta} \left(u^1-u^1_1 b_{1,\om_1}\right)\right\Vert^2+\sum_{k\geq 2}\Vert\partial_{\theta,\theta} u^k\Vert^2\right).
\end{equation}
where $u^k$ is defined from the decomposition of $u$ in \eqref{eq_fulldecompositionu}.
\end{lemma}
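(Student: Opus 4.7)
The plan is to adapt the scheme of Lemma \ref{lem_highermode_improvement} but route the two leading modes through Lemma \ref{lem_lowermode_improvement_double} instead of Lemma \ref{lem_lowermode_improvement}. Let $(\theta_k,\om_k,u^k,u^k_n)$ be given by the decomposition \eqref{eq_fulldecompositionu}; the assumption forces the two largest arcs $\om_0,\om_1$ to satisfy $|\om_k-\pi|\leq 2c$, and by taking $c$ sufficiently small we may also ensure that every other $\om_k$ ($k\geq 2$) is at most $\pi/2$, so that the spectral gap estimate \eqref{eq_gapestimate} gives $\mu_{n,\om_k}\geq 5$ whenever either $k\geq 2$, or $k\in\{0,1\}$ with $n\geq 2$; whereas $|\mu_{1,\om_k}-4|<1$ when $k\in\{0,1\}$.

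With the same $g$ and $h$ as in Lemma \ref{lem_highermode_improvement} (the cutoffs from Lemmas \ref{lem_highermodes_deletion_positive} and \ref{lem_highermodes_deletion_negative}) and $T>0$ the common support size, I define
\[
f_n^k(t)=\begin{cases}1&\text{if }k\in\{0,1\}\text{ and }n=1,\\ g(t)&\text{if }\mu_{n,\om_k}\geq 5,\\ h(t)&\text{otherwise},\end{cases}
\qquad
\tilde U(t,\theta)=\sum_{k\geq 0}\sum_{n\geq 1}f_n^k(t)\,u_n^k\,b_{n,\om_k}(\theta-\theta_k).
\]
Then at $t=T$ one has $\tilde U(T,\theta)=u_1^0 b_{1,\om_0}(\theta-\theta_0)+u_1^1 b_{1,\om_1}(\theta-\theta_1)$, which is exactly the boundary data of Lemma \ref{lem_lowermode_improvement_double}. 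Let $\widehat U$ denote the competitor produced there and glue $U(t,\theta)=\tilde U(t,\theta)$ on $[0,T]$, $U(t,\theta)=\widehat U(t-T,\theta)$ for $t\geq T$; by construction $U\in H^2_\lin(\Cy_0)$, $U(0,\cdot)=u$ and $\partial_t U(0,\cdot)=0$.

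To estimate $\G(U,0)$, I apply Lemma \ref{lemma_decomposition} on each connected component to split the energy into the part contributed by the cutoff functions on $[0,T]$ and the part contributed by $\widehat U$ on $[T,+\infty)$. For the pieces driven by $g$ (the modes with $\mu_{n,\om_k}\geq 5$) Lemma \ref{lem_highermodes_deletion_positive} gives a factor $(1-\eta)$ on the corresponding $Q_{\mu_{n,\om_k}-4}(1)|u_n^k|^2$; for the pieces driven by $h$ (the modes with $\mu_{n,\om_k}\leq 3$) Lemma \ref{lem_highermodes_deletion_negative} preserves the non-positive term $Q_{\mu_{n,\om_k}-4}(1)|u_n^k|^2\leq 0$; and each cut-off arc $(\theta_k,\om_k)$ contributes at most $\tfrac{1-e^{-2T}}{2}\om_k$ to the area term. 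The $[T,+\infty)$ contribution is handled by Lemma \ref{lem_lowermode_improvement_double}, which yields
\[
\G(\widehat U,0)\leq \G\!\left(u_1^0b_{1,\om_0}+u_1^1b_{1,\om_1}(\cdot-(\theta_1-\theta_0)),0\right)-\eps\,\Big(\G(\,\cdot\,,0)-\pi\Big)_+.
\]

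Setting $\ov\eps=\min(e^{-2T}\eps,e^{-2T},\eta/2)$ and using $x_++y_+\geq (x+y)_+$ to merge the various defects, the final expression collapses to
\[
\G(U,0)\leq \G(u,0)-\ov\eps\,(\G(u,0)-\pi)_+-\tfrac{\eta}{2}\!\sum_{(n,k):\,\mu_{n,\om_k}\geq 5}Q_{\mu_{n,\om_k}-4}(1)|u_n^k|^2,
\]
exactly as in Lemma \ref{lem_highermode_improvement}. Since $Q_{\mu-4}(1)=(\mu-4)/2\geq \mu/10$ when $\mu\geq 5$, the last sum dominates $\tfrac{1}{10}\sum_{(n,k)}\mu_{n,\om_k}|u_n^k|^2$ taken over the complement of the two leading modes, which by Parseval is precisely $\|\partial_{\theta\theta}(u^0-u^0_1b_{1,\om_0})\|^2+\|\partial_{\theta\theta}(u^1-u^1_1b_{1,\om_1})\|^2+\sum_{k\geq 2}\|\partial_{\theta\theta}u^k\|^2$. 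This yields the claimed inequality with $a=\eta/20$. The only mildly delicate point is choosing $c$ small enough that both the separation \eqref{eq_gapestimate} holds on the two large arcs and the extra arcs stay in the regime $\mu_{n,\om_k}\geq 5$; everything else is bookkeeping that mirrors the single-leading-mode case verbatim.
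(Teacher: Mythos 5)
Your proposal is correct and follows essentially the same route as the paper: the same choice of cutoffs $f_n^k$, the same gluing at time $T$ with the competitor of Lemma \ref{lem_lowermode_improvement_double}, the same bookkeeping via Lemma \ref{lemma_decomposition}, and the same constants $\ov{\eps}=\min(e^{-2T}\eps,e^{-2T},\eta/2)$ and $a=\eta/20$. The only cosmetic difference is that you keep the cutoff $h$ for modes with $\mu_{n,\om_k}\leq 3$, whereas the paper simply observes that no such modes occur here since both leading arcs are close to $\pi$ and all other arcs are short.
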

\begin{proof}
Let $\om=|\Spt(u)|$. Let $(\theta_k,\om_k,u^k,u^k_n)$ be defined from the decomposition of $u$ (see \eqref{eq_fulldecompositionu}). By our hypothesis we have $|\om_0-\pi|\leq 2c$, $|\om_1-\pi|\leq 2c$, and $|\om_2|\leq 2c$: we suppose that $c$ is sufficiently small so that $|\om_1-\pi|,|\om_2-\pi|\leq 0.1\pi$, and $\om_3\leq \frac{1}{2}\pi$.

As previously, let $g$ be the function constructed from Lemma \ref{lem_highermodes_deletion_positive}. Denote $T>0$ the smallest value such that $g=0$ on $[T,+\infty[$. Let
$$f_{n}^{k}(t)=
\begin{cases}
g(t)&\text{ if }\mu_{n,\om_k}\geq 5\text{ i.e. }k\geq 2\text{ or }n\geq 2,\\
1&\text{ if }|\mu_{n,\om_k}-4|<1\text{ i.e. }k\in\{0,1\}\text{ and }n=1.
\end{cases}
$$
The last case $\mu_{n,\om_k}\leq 3$ never happens in this context. We define the competitor
$$\tilde{U}(t,\theta)=\sum_{k\geq 1}\sum_{n\geq 1}f_{n}^{k}(t)u_n^k b_{n,\om_k}(\theta-\theta_k).$$
We remind that from the estimates \eqref{eq_gapestimate}, we have exactly $$\tilde{U}(t,\theta)=u_{1}^0 b_{1,\om_0}(\theta-\theta_0)+u_{1}^1 b_{1,\om_1}(\theta-\theta_1)$$
for all $t\geq T$.
Denote $\widehat{U}(t,\theta)$ the competitor built in Lemma \ref{lem_lowermode_improvement_double} from the initial data $$\tilde{U}(T,\cdot+\theta_0)=u^{\mathrm{double}}:=u_{1}^0 b_{1,\om_0}+u_{1}^1 b_{1,\om_1}(\cdot+\theta_0-\theta_1),$$
so that
$$\G(\widehat{U},0)\leq \G(u^{\mathrm{double}},0)-\eps\left(\G(u^{\mathrm{double}},0)-\pi\right)_+.$$
Let then
$$U(t,\theta)=\begin{cases}\tilde{U}(t,\theta)&\text{ if }t< T,\\ \widehat{U}(t-T,\theta+\theta_0)&\text{ if }t\geq T.\end{cases}$$
Following the computations from the previous lemma (denoting $\eta>0$ the constant from Lemma \ref{lem_highermodes_deletion_positive}, $\eps>0$ the constant from Lemma \ref{lem_lowermode_improvement_double}), we get
\begin{align*}
\G(U,0)=&\,\G(\tilde{U},0)+\left(\G(U,0)-\G(\tilde{U},0)\right)\\
\leq&\, \G(u^{\mathrm{double}},0)+(1-\eta)\sum_{k=0}^{1}\sum_{n>1}Q_{\mu_{n,\om_k}-4}(1)|u_n^k|^2\\
&+\sum_{k\geq 2}\left(\frac{1-e^{-2T}}{2}\om_k+(1-\eta)\sum_{n\geq 1}Q_{\mu_{n,\om_k}-4}(1)|u_n^k|^2\right)\\
&+e^{-2T}\left(\G(\widehat{U},0)-\G(u^{\mathrm{double}},0)\right)\quad\text{ by Lemmas }\ref{lemma_decomposition},\ref{lem_highermodes_deletion_positive}.
\end{align*}
Then, by lemma \ref{lem_lowermode_improvement}, we have
\begin{align*}
\G(U,0)\leq&\, \G(u^{\mathrm{double}},0)-e^{-2T}\eps\left(\G(u^{\mathrm{double}},0)-\pi\right)_+\\
&+(1-\eta)\sum_{k=0}^{1}\sum_{n>1}Q_{\mu_{n,\om_k}-4}(1)|u_n^k|^2\\
&+\sum_{k\geq 2}\left(\frac{1-e^{-2T}}{2}\om_k+(1-\eta)\sum_{n\geq 1}Q_{\mu_{n,\om_k}-4}(1)|u_n^k|^2\right).
\end{align*}
Denote $\ov{\eps}=\min\left(e^{-2T}\eps,e^{-2T},\frac{\eta}{2}\right)$, and identify
$$
\G(u,0)=\G(u^{\mathrm{double}},0)+\sum_{k=0}^{1}\sum_{n>1}Q_{\mu_{n,\om_k}-4}(1)|u_n^k|^2+\sum_{k\geq 2}\left(\frac{1}{2}\om_k+\sum_{n\geq 1}Q_{\mu_{n,\om_k}-4}(1)|u_n^k|^2\right).
$$
Then
\begin{align*}
\G(U,0)&\leq \G(u,0)-\ov{\eps}\left(\G(u,0)-\pi\right)_+-\frac{\eta}{2}\sum_{(n,k):n>1\text{ or }k\geq 2}Q_{\mu_{n,\om_k}-4}(1)|u_n^k|^2\\
&\leq\G(u,0)-\ov{\eps}\left(\G(u,0)-\pi\right)_+-\frac{\eta}{20}\sum_{(n,k):n>1\text{ or }k\geq 2}\mu_{n,\om_k}|u_n^k|^2.
\end{align*}
This ends the proof with $a=\frac{\eta}{20}$, since
\begin{align*}
\sum_{(n,k):n>1\text{ or }k\geq 2}\mu_{n,\om_k}|u_n^k|^2&=\sum_{k=0}^{1}\sum_{n\geq 2}\mu_{n,\om_k}|u_n^k|^2+\sum_{k\geq 2}\sum_{n\geq 1}\mu_{n,\om_k}|u_n^k|^2\\
&= \left\Vert\partial_{\theta,\theta} \left(u^0-u^0_1 b_{1,\om_0}\right)\right\Vert^2+\left\Vert\partial_{\theta,\theta} \left(u^1-u^1_1 b_{1,\om_1}\right)\right\Vert^2+\sum_{k\geq 2}\Vert\partial_{\theta,\theta} u^k\Vert^2.
\end{align*}

\end{proof}
\subsection{General case $v\neq 0$}\label{subsec_partialt}
In this subsection, we give the proof of Lemma \ref{lem_competitor}, by handling general boundary data $(u,v)$ where $v$ is not necessarily zero. We prove it by reducing the analysis to the case where $v=0$. 
We first prove a purely technical lemma used below in the proof of Lemma \ref{lemma_removal_derivative}.

\begin{lemma}\label{lem_defchiom}
For any $\om\in [0.5\pi,2\pi]$, there exists $\chi_\om\in \mathcal{C}^\infty(\R_+,\R)$ such that $\chi_\om(t)=1$ in a neighbourhood of $0$, $\chi_\om(t)=0$ in $[1,+\infty[$, $\sup_{\om\in [0.5\pi,2\pi]}\Vert\chi_\om\Vert_{\mathcal{C}^2}\lesssim 1$ and
$$\forall \om\in [0.5\pi,2\pi],\ \forall n\in\{1,2,3\}, \ \int_{0}^{+\infty}te^{-(2+\mu_{n,\om}^{1/3})t}\chi_\om(t)dt=0.$$
\end{lemma}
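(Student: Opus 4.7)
The plan is to seek $\chi_\om$ of the form
\[
\chi_\om(t) = \eta(t) + \sum_{j=1}^{3} c_j(\om)\, \psi_j(t),
\]
where $\eta, \psi_1, \psi_2, \psi_3$ are \emph{fixed} (independent of $\om$) smooth functions and $c_j(\om) \in \R$ are coefficients to be determined. First I would fix $\eta \in \mathcal{C}^\infty(\R_+, [0,1])$ with $\eta \equiv 1$ on $[0, 1/4]$ and $\eta \equiv 0$ on $[1/2, +\infty)$, and $\psi_1, \psi_2, \psi_3 \in \mathcal{C}^\infty_c((1/2, 1))$ with pairwise disjoint supports, each contained in an interval of length $\eps$ (to be fixed later) centered at three distinct points $t_1 < t_2 < t_3$, normalized so that $\int_0^\infty \psi_j = 1$. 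By construction $\chi_\om \equiv 1$ near $0$ and $\chi_\om \equiv 0$ on $[1, +\infty)$ regardless of the coefficients $c_j(\om)$, so only the three orthogonality conditions remain to be enforced.

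Setting $\alpha_n(\om) := 2 + \mu_{n,\om}^{1/3}$, these conditions form a linear system $M(\om)\, c(\om) = -I^0(\om)$, where
\[
M(\om)_{n,j} := \int_0^\infty t\, e^{-\alpha_n(\om) t}\, \psi_j(t)\, dt, \qquad I^0_n(\om) := \int_0^\infty t\, e^{-\alpha_n(\om) t}\, \eta(t)\, dt,
\]
and both $M$ and $I^0$ are smooth functions of $\om \in [0.5\pi, 2\pi]$. It then suffices to prove $\inf_\om |\det M(\om)| > 0$: the resulting $c(\om) = -M(\om)^{-1} I^0(\om)$ will be uniformly bounded in $\om$, and consequently $\Vert\chi_\om\Vert_{\mathcal{C}^2} \lesssim 1$ uniformly.

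The main obstacle is this uniform non-degeneracy. From $\mu_1 = 4 < \mu_2 \approx 8.18 < \mu_3 = 16$, the numbers $\alpha_1(\om), \alpha_2(\om), \alpha_3(\om)$ are pairwise distinct and vary continuously on the compact interval $[0.5\pi, 2\pi]$. I would introduce the ``Dirac-like'' matrix $A(\om) := \bigl[t_j\, e^{-\alpha_n(\om) t_j}\bigr]_{n,j}$; a short computation using $|\psi_j| \lesssim 1/\eps$ and a first-order Taylor expansion of $t \mapsto t e^{-\alpha_n(\om) t}$ on each support yields $M(\om)_{n,j} = A(\om)_{n,j}(1 + O(\eps))$ uniformly in $\om$. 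To see that $A(\om)$ is invertible, observe that $A(\om) v = 0$ would force the exponential sum $x \mapsto \sum_{j=1}^3 v_j t_j e^{-t_j x}$ to vanish at the three distinct points $\alpha_1(\om), \alpha_2(\om), \alpha_3(\om)$; but by iterated Rolle's theorem applied inductively, a non-trivial real linear combination of $k$ exponentials with distinct exponents has at most $k-1$ real zeros, forcing $v = 0$. Continuity of $\alpha_n$ in $\om$ together with compactness of $[0.5\pi, 2\pi]$ then give $\inf_\om |\det A(\om)| > 0$, and fixing $\eps$ sufficiently small transfers this lower bound to $M(\om)$, which closes the argument.
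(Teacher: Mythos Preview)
Your proposal is correct and follows essentially the same approach as the paper: both write $\chi_\om$ as a fixed cutoff plus a linear combination of three fixed bump functions (approximations of Dirac masses), reduce the orthogonality conditions to a $3\times 3$ linear system, and show the coefficient matrix is uniformly invertible by first checking invertibility of the ``Dirac-limit'' matrix $A(\om)=\bigl[t_j e^{-\alpha_n(\om)t_j}\bigr]$. The only cosmetic difference is in how $\det A(\om)\neq 0$ is argued: the paper chooses $t_m=m/4$ so that $A(\om)$ becomes (up to a nonzero factor) the Vandermonde matrix in the variables $e^{-\alpha_n(\om)/4}$, whereas you use the equivalent fact that a non-trivial sum of three exponentials with distinct exponents has at most two real zeros.
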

\begin{proof}
Let $g_{n,\om}(t)=te^{-(2+\mu_{n,\om}^{1/3})t}$. Let $t_i=\frac{i}{4}$ for $i=1,2,3$. For any $\om\in [0.5\pi,2\pi]$, we have
$$\det\left((g_{n,\om}(t_m))_{1\leq n,m\leq 3}\right)\neq 0,$$
since this is (up to a nonzero scalar factor) the determinant of the non-trivial Vandermonde matrix
$$\left(e^{-\frac{2+\mu_{n,\om}^{1/3}}{4}k}\right)_{1\leq k,n\leq 3}.$$

Functions $g_{n,\om}$ depend continuously on $\om$, so we may find some approximation of unity $\sigma_1,\sigma_2,\sigma_3\in\mathcal{C}^\infty_c((0,1))$ around $t_1,t_2,t_3$ respectively such that $$\inf_{\om\in[0.5\pi,2\pi]}\left|\det\left(\left(\int_{0}^{1}g_{n,\om}(t)\sigma_{m}(t)dt\right)_{1\leq n,m\leq 3}\right)\right|>0$$
Let now $h\in\mathcal{C}^{\infty}(\R_+,\R)$ that is equal to $1$ in a neighbourhood of $0$, and to $0$ on $[1,+\infty[$, then there exists continuously defined $a_1(\om),a_2(\om),a_3(\om)\in\R$ such that for any $\om\in [0.5\pi,2\pi]$, $n\in\{1,2,3\}$, we have
$$\int_{0}^{1}\left(a_1(\om)\sigma_1+a_2(\om)\sigma_2+a_3(\om)\sigma_3\right)g_{n,\om}=\int_{0}^{1}hg_{n,\om}.$$
The function $\chi_\om(t)=h(t)-a_1(\om)\sigma_{1}(t)-a_2(\om)\sigma_2(t)-a_3(\om)\sigma_3(t)$  has all the required properties.
\end{proof}

\begin{lemma}\label{lemma_removal_derivative}
Let $\om\in (0,2\pi]$, $u\in H^2(\Sp)$, $v\in H^1(\Sp)$, with
\[\{v\neq 0\}\subset\Spt(u)=(0,\om),\] then there exists $U\in H^2_\lin(\Cy_0)$ such that $U(0,\cdot)=u$, $\partial_t U(0,\cdot)=v$, $U(t,\cdot)=u$ for all $t\geq 1$ and
\[\G(U,0)\leq \G(u,0)+C\left(\left\Vert \partial_{\theta,\theta} u\right\Vert\left\Vert \partial_{\theta} v\right\Vert+\left\Vert \partial_{\theta} v\right\Vert^2\right).\]
Moreover, if $|\om-\Theta|\leq 0.1\Theta$ for some $\Theta\in\{\pi,t_1,2\pi\}$, then we have the improved bound
\[\G(U,0)\leq \G(u,0)+C\left(\left\Vert  \partial_{\theta,\theta}\left(u-\sum_{n=1}^{i(\Theta)}u_nb_{n,\om}\right)\right\Vert \left\Vert \partial_{\theta}v\right\Vert+ \Vert \partial_{\theta}v\Vert^2\right)\]
where $(u_n)_{n\geq 1}$ are the coefficients in the decomposition $u=\sum_{n\geq 1}u_nb_{n,\om}$.
\end{lemma}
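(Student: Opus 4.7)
The plan is to construct an explicit competitor of the form $U = u + w$, where the correction $w(t,\theta)$ vanishes at $t=0$, has $\partial_t$-trace equal to $v$ there, and is cut off to vanish identically for $t \geq 1$. Working on $\Spt(u) = (0,\om)$ (after an obvious translation, WLOG), I will expand $v$ and $u$ in the buckling eigenbasis $(b_{n,\om})_{n \geq 1}$ of $H^2_0((0,\om))$: writing $v = \sum_n v_n b_{n,\om}$ and $u = \sum_n u_n b_{n,\om}$, one has $\Vert\partial_\theta v\Vert^2 = \sum_n v_n^2$, $\Vert\partial_{\theta\theta} u\Vert^2 = \sum_n \mu_{n,\om}\, u_n^2$, and the orthogonality identity $\langle \partial_{\theta\theta} b_{n,\om}, \partial_{\theta\theta} b_{m,\om}\rangle = \mu_{n,\om}\, \delta_{nm}$ will diagonalize the $u$--$w$ cross terms.

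I take
\[
w(t,\theta) := \sum_{n \geq 1} v_n\,\phi_n(t)\,b_{n,\om}(\theta),\qquad \phi_n(t) := t\, e^{-\mu_{n,\om}^{1/3} t}\,\chi_\om(t),
\]
with $\chi_\om$ the cutoff furnished by Lemma~\ref{lem_defchiom}. Since $\phi_n(0)=0$, $\phi_n'(0)=\chi_\om(0)=1$, and $\phi_n\equiv 0$ on $[1,+\infty)$, the prescribed boundary data at $t=0$ and the stabilization $U(t,\cdot)=u$ for $t\geq 1$ are immediate. Each $b_{n,\om}$ is supported in $(0,\om)$, so $\Spt(U(t,\cdot))\subset\Spt(u)$; this controls the indicator: $\int e^{-2t}(\chi_{U\neq 0}-\chi_{u\neq 0})\,dt\,d\theta\leq 0$ (using that $u$ is non-vanishing a.e.\ on its support in the cases of interest).

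Expanding $\G(U,0)-\G(u,0)$ and using the orthogonality of $(b_{n,\om})$, the cross contribution reduces to $2\sum_n (\mu_{n,\om}-4)\,u_n v_n\,I_n$, where
\[
I_n := \int_0^{+\infty} e^{-2t}\phi_n(t)\,dt = \int_0^{+\infty} t\,e^{-(2+\mu_{n,\om}^{1/3})t}\,\chi_\om(t)\,dt.
\]
The central input is Lemma~\ref{lem_defchiom}, which yields $I_n=0$ for every $n\in\{1,2,3\}$; this covers the index $i(\Theta)\leq 3$ in all three regimes $\Theta\in\{\pi,t_1,2\pi\}$. For $n\geq 4$ a direct estimate gives $|I_n|\lesssim \mu_{n,\om}^{-2/3}$, whence $|(\mu_{n,\om}-4)I_n|\lesssim \mu_{n,\om}^{1/3}$; Cauchy--Schwarz combined with $\mu_{n,\om}\geq \mu_{1,\om}\gtrsim 1$ then produces
\[
\Bigl|\sum_{n\geq 4}u_n v_n(\mu_{n,\om}-4)I_n\Bigr|\lesssim \Bigl(\sum_n\mu_{n,\om} u_n^2\Bigr)^{1/2}\Bigl(\sum_n v_n^2 \mu_{n,\om}^{-1/3}\Bigr)^{1/2}\lesssim \Vert\partial_{\theta\theta}u\Vert\,\Vert\partial_\theta v\Vert.
\]
In the regime $|\om-\Theta|\leq 0.1\,\Theta$, the cancellation of $I_n$ extends to all $n\leq i(\Theta)$, so the same Cauchy--Schwarz restricted to $n>i(\Theta)$ yields the improved bound in terms of $\Vert\partial_{\theta\theta}(u-\sum_{n\leq i(\Theta)}u_n b_{n,\om})\Vert$.

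The hard part will be the quadratic-in-$w$ contribution. The $(\partial_{\theta\theta}w)^2$ and $(\partial_\theta w)^2$ terms diagonalize cleanly in the buckling basis and are controlled by $\Vert\partial_\theta v\Vert^2$ via the decay of $\phi_n$ and the eigenvalue identity. The terms involving $\partial_{tt}w$ and $\partial_{t\theta}w$ are not diagonal in $L^2$ because $(b_{n,\om})$ is not $L^2$-orthogonal; here the bound $\Vert b_{n,\om}\Vert_{L^2}^2\lesssim 1/\mu_{n,\om}$ from Lemma~\ref{lem_estbn} is combined with Cauchy--Schwarz on the double sum together with the exponential decay $e^{-\mu_{n,\om}^{1/3}t}$ of $\phi_n$ to sum the off-diagonal contributions. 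The exponent $1/3$ in the tail of $\phi_n$ is chosen precisely so that both the cross-term estimate $|I_n|\lesssim \mu_{n,\om}^{-2/3}$ and the summability of the quadratic contribution against the weighted Parseval identity for $v$ hold simultaneously, while remaining compatible with the low-mode cancellation provided by the weight-tailored cutoff $\chi_\om$ of Lemma~\ref{lem_defchiom}.
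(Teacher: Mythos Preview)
Your proposal is correct and follows essentially the same route as the paper: the same ansatz $U=u+\sum_n v_n\,\chi_\om(t)\,t\,e^{-\mu_{n,\om}^{1/3}t}\,b_{n,\om}$, the same low-mode cancellation $I_n=0$ for $n\leq 3$ via Lemma~\ref{lem_defchiom}, the same direct bound $|(\mu_{n,\om}-4)I_n|\lesssim\mu_{n,\om}^{1/3}$ for the remaining modes, and the same use of Lemma~\ref{lem_estbn} to sum the non-diagonal $\partial_{tt}$-contribution. One small caveat: Lemma~\ref{lem_defchiom} only covers $\om\in[0.5\pi,2\pi]$, so for the first (non-improved) estimate when $\om<0.5\pi$ the cancellation $I_n=0$ is unavailable and you must apply the direct bound to \emph{all} $n\geq 1$ (the paper simply sets $\chi_\om:=\chi_{0.5\pi}$ there); this changes nothing in your Cauchy--Schwarz step.
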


\begin{proof}
For any $n\in\N^*$, we define
$$ f_{n,\om}(t)=te^{-\mu_{n,\om}^{1/3}t}.$$
Note that $f_{n,\om}(0)=0$ and $f_{n,\om}'(0)=1$. Let now $\chi_\om$ be defined from Lemma \ref{lem_defchiom}, such that
\[\int_0^{+\infty}e^{-2t}f_{n,\om}(t)\chi_\om(t)dt=0\]
for $n=1,2,3$ when $\om\geq 0.5\pi$. When $\om<0.5\pi$ we just set $\chi_\om:=\chi_{0.5\pi}$.
We define
\[U(t,\theta)=\sum_{n\geq 1}\left(u_n+\chi_\om(t)f_{n,\om}(t)v_n\right)b_{n,\om}(\theta).\]
Then $U$ verifies $U\in H^2_\lin(\Cy_0)$, $U(0,\cdot)=u$, $\partial_t U(0,\cdot)=v$, $U(t,\cdot)=u$ for $t\geq 1$, and
\begin{align*}
\G(U,0)\leq&\, \frac{\om}{2}+\int_{0}^{+\infty}e^{-2t}\left\{\left\Vert\sum_{n\geq 1}(\chi_\om f_{n,\om})''(t)v_n b_{n,\om}\right\Vert^2+2\left\Vert\sum_{n\geq 1}(\chi_\om f_{n,\om})'(t) v_n \partial_{\theta}b_{n,\om}\right\Vert^2\right\}dt\\
&+\int_{0}^{+\infty}e^{-2t}\left\{\left\Vert\sum_{n\geq 1}(u_n+(\chi_\om f_{n,\om})(t)v_n)\partial_{\theta,\theta} b_{n,\om}\right\Vert^2-4\left\Vert\sum_{n\geq 1}(u_n+(\chi_\om f_{n,\om})(t)v_n)\partial_{\theta} b_{n,\om}\right\Vert^2\right\}dt\\
=&\,\frac{1}{2}\left(\om+\Vert \partial_{\theta,\theta}u\Vert^2-4\Vert \partial_{\theta}u\Vert^2\right)\\
&+\sum_{n\geq 1}(\mu_{n,\om}-4)\left(\int_{0}^{+\infty}2e^{-2t}\chi_\om(t)f_{n,\om}(t)dt\right)u_n v_n\\
&+\sum_{n\geq 1}\left(\int_{0}^{+\infty}e^{-2t}\left\{2|(\chi_\om f_{n,\om})'(t)|^2+(\mu_{n,\om}-4)|\chi_\om f_{n,\om}(t)|^2\right\}dt\right)v_n^2\\
&+\sum_{n,m\geq 1}\left(\int_{0}^{+\infty}e^{-2t}(\chi_\om f_{n,\om})''(t)(\chi_\om f_{m,\om})''(t)dt\right)\langle b_{n,\om},b_{m,\om}\rangle v_n v_m.
\end{align*}
We handle each of these four lines separately.
\begin{itemize}[label=\textbullet]
\item  The first line is exactly $\G(u,0)$.
\item When $\om\geq 0.5\pi$ and $n=1,2,3$, this term vanishes by the orthogonality condition we imposed on $\chi_\om$ in Lemma \ref{lem_defchiom}. In all other cases we have
\begin{align*}
\left|(\mu_{n,\om}-4)\int_{0}^{+\infty}2e^{-2t}\chi_\om(t)f_{n,\om}(t)dt\right|&\lesssim \mu_{n,\om}\int_{0}^{+\infty}e^{-(2+\mu_{n,\om}^{1/3})t}tdt\lesssim \mu_{n,\om}^{1/3}.
\end{align*}
\item Using the uniform bound on $\Vert\chi_\om\Vert_{\mathcal{C}^2}$, we have
\begin{align*}
\int_{0}^{+\infty}e^{-2t}\left\{2|(\chi_\om f_{n,\om})'(t)|^2+(\mu_{n,\om}-4)|\chi_\om f_{n,\om}(t)|^2\right\}dt&\lesssim \int_{0}^{+\infty}e^{-2t}\left\{f_{n,\om}'(t)^2+\mu_{n,\om}f_{n,\om}(t)^2\right\}dt\\
&\lesssim \frac{1}{\mu_{n,\om}^{1/3}}+1\lesssim 1
\end{align*}
by the definition of $f_{n,\om}$.
\item Similarly, we have
\begin{align*}
\int_{0}^{+\infty}e^{-2t}\left|(\chi_\om f_{n,\om})''(t)\right|^2dt&\lesssim\int_{0}^{+\infty}e^{-2t}\left\{f_{n,\om}''(t)^2+f_{n,\om}'(t)^2+f_{n,\om}(t)^2\right\}dt\\
&\lesssim \mu_{n,\om}^{1/3}+\mu_{n,\om}^{-1/3}+\mu_{n,\om}^{-1}\lesssim \mu_{n,\om}^{1/3}
\end{align*}
so by Cauchy-Schwarz inequality and using the bound $\Vert b_{n,\om}\Vert\lesssim \mu_{n,\om}^{-1/2}$ from Lemma \ref{lem_estbn}, we have
\begin{align*}
\left(\int_{0}^{+\infty}e^{-2t}(\chi_\om f_{n,\om})''(t)(\chi_\om f_{m,\om})''(t)dt\right)\langle b_{n,\om},b_{m,\om}\rangle\lesssim \frac{1}{\mu_{n,\om}^{1/3}\mu_{m,\om}^{1/3}}.
\end{align*}
\end{itemize}
We gather the three estimates. First for a general $\om\in (0,2\pi]$:
\begin{align*}
\G(U,0)-\G(u,0)&\lesssim \sum_{n\geq 1}\mu_{n,\om}^{1/3}|u_n v_n|+\sum_{n\geq 1}v_n^2+\sum_{n,m\geq 1}\frac{|v_n v_m|}{\mu_{n,\om}^{1/3}\mu_{m,\om}^{1/3}}\\
&\lesssim \sum_{n\geq 1}\mu_{n,\om}^{1/2}|u_n v_n|+\sum_{n\geq 1}v_n^2+\left(\sum_{n\geq 1}\frac{|v_n|}{\mu_{n,\om}^{1/3}}\right)^2\\
&\lesssim  \left\Vert \partial_{\theta,\theta} u\right\Vert\Vert \partial_{\theta} v\Vert+\left(1+\sum_{n\geq 1}\frac{1}{\mu_{n,\om}^{2/3}}\right)\sum_{n\geq 1}|v_n|^2\\
&\lesssim  \left\Vert \partial_{\theta,\theta} u\right\Vert\left\Vert \partial_{\theta} v\right\Vert+\left\Vert \partial_{\theta}v\right\Vert^2.
\end{align*}
We used the fact that $\mu_{n,\om}\gtrsim \frac{n^2}{\om^2}$, so $\mu_{n,\om}^{-2/3}$ is summable with uniformly bounded sum in $\om$.

Then, when $|\om-\Theta|\leq 0.1\Theta$ for some $\Theta\in\{\pi,t_1,2\pi\}$, in particular $\om\geq 0.5\pi$ so using Lemma \ref{lem_defchiom} we have the stronger estimate:
\begin{align*}
\G(U,0)-\G(u,0)&\lesssim \sum_{n>i(\Theta)}\mu_{n,\om}^{1/3}|u_n v_n|+\sum_{n\geq 1}v_n^2+\sum_{n,m\geq 1}\frac{|v_n v_m|}{\mu_{n,\om}^{1/3}\mu_{m,\om}^{1/3}}\\
&\lesssim \sum_{n>i(\Theta)}\mu_{n,\om}^{1/2}|u_n v_n|+\sum_{n\geq 1}v_n^2+\left(\sum_{n\geq 1}\frac{|v_n|}{\mu_{n,\om}^{1/3}}\right)^2\\
&\lesssim  \left\Vert \partial_{\theta,\theta}\left(u-\sum_{n\leq i(\Theta)}u_nb_{n,\om}\right)\right\Vert\Vert \partial_{\theta} v\Vert+\left(1+\sum_{n\geq 1}\frac{1}{\mu_{n,\om}^{2/3}}\right)\sum_{n\geq 1}|v_n|^2\\
&\lesssim  \left\Vert \partial_{\theta,\theta}\left(u-\sum_{n\leq i(\Theta)}u_nb_{n,\om}\right)\right\Vert\left\Vert \partial_{\theta} v\right\Vert+\left\Vert \partial_{\theta}v\right\Vert^2.
\end{align*}
\end{proof}

We can now prove Lemma \ref{lem_competitor} as a combination of Lemma \ref{lemma_removal_derivative} on each interval of the support of $u$, and Lemma \ref{lem_highermode_improvement}.

\begin{proof}[Proof of Lemma \ref{lem_competitor}]

As in the previous section, we first introduce the decomposition of $u$ on the connected component of its support: we write $\Spt(u)$ as a countable union of disjoint intervals $[\theta_k,\theta_k+\om_k]$ of lengths $\om_0\geq\om_1\geq\om_2\geq\hdots\to 0$. We remind that either $$|\om_0-\Theta|\leq 2c,\ \om_1\leq 2c,$$
or 
$$\Theta=2\pi,\ |\om_0-\pi|\leq 2c,\ |\om_1-\pi|\leq 2c,\ \om_2\leq 2c.$$
We call (a) the first case, (b) the second case.
We let $u^k(\theta)=u(\theta_k+\theta)\chi_{0<\theta<\om_k}$ such that $u^k\in H^2_0([0,\om_k])$. Likewise, we let $v^k(\theta)=v(\theta_k+\theta)\chi_{0<\theta<\om_k}\in H^1_0([0,\om_k])$. 

We now construct the competitor $U$ as follows: we let $\ov{U}^k$ be the competitor defined in Lemma \ref{lemma_removal_derivative} from the initial condition $u^k,v^k$, $\ov{U}(t,\theta)=\sum_{k\geq 0}\ov{U}^k(t,\theta-\theta_k)$, and $\widehat{U}$ the competitor defined in Lemma \ref{lem_highermode_improvement} (in the case (a)) or Lemma \ref{lem_highermode_improvement_double} (in the case (b)) from the initial condition $u$. Then we let
$$U(t,\theta)=\begin{cases}\ov{U}(t,\theta) & \text{ if }t<1,\\ \widehat{U}(t-1,\theta)&\text{ if }t\geq 1.\end{cases}$$
which verifies $U(0,\theta)=u(\theta)$, $\partial_t U(t,\theta)=v(\theta)$.
We denote $c,\eps,a>0$ the constants from Lemma \ref{lem_highermode_improvement}, \ref{lem_highermode_improvement_double} (note that we may always take the minimum between each) and $C$ the constant from Lemma \ref{lemma_removal_derivative}.
\begin{itemize}
\item[(a)]We first suppose we are in the case (a) i.e. $|\om_0-\Theta|\leq 2c$, $\om_1\leq 2c$. Then by Lemma \ref{lem_highermode_improvement} we have
\begin{align*}
\G(U,0)=&\,\G(\ov{U},0)+\left(\G(U,0)-\G(\ov{U},0)\right)=\G(\ov{U},0)+e^{-2}\left(\G(\widehat{U},0)-\G(u,0)\right)\\
\leq&\, \G(u,0)+C\left(\Vert \partial_\theta v\Vert^2+\Vert \partial_\theta v^0\Vert\ \left\Vert\partial_{\theta,\theta}\left(u^0-\sum_{n\leq i(\Theta)}u_n^0b_{n,\om_0}\right)\right\Vert+\sum_{k\geq 1}\Vert \partial_\theta v^k\Vert\ \Vert\partial_{\theta,\theta}u^k\Vert\right)\\
&-e^{-2}\eps\left(\G(u,0)-\frac{\Theta}{2}\right)_+-e^{-2}a\left(\left\Vert\partial_{\theta,\theta}\left(u^0-\sum_{n\leq i(\Theta)}u_n^0b_{n,\om_0}\right)\right\Vert^2+\sum_{k\geq 1}\Vert\partial_{\theta,\theta} u^k\Vert^2\right).
\end{align*}
Now, we bound
\begin{align*}
C\Vert \partial_\theta v^0\Vert\ \left\Vert\partial_{\theta,\theta}\left(u^0-\sum_{n\leq i(\Theta)}u_n^0b_{n,\om_0}\right)\right\Vert&\leq \frac{C^2e}{4a}\Vert\partial_\theta v^0\Vert^2+e^{-2}a\left\Vert\partial_{\theta,\theta}\left(u^0-\sum_{n\leq i(\Theta)}u_n^0b_{n,\om_0}\right)\right\Vert^2,\\
\text{ and }\qquad C\Vert \partial_\theta v^k\Vert\ \Vert\partial_{\theta,\theta} u^k\Vert&\leq \frac{C^2e}{4a}\Vert\partial_\theta v^k\Vert^2+e^{-2}a\left\Vert \partial_{\theta,\theta} u^k\right\Vert^2\text{ for }k\geq 1,
\end{align*}
so
\begin{align*}
\G(U,0)&\leq \G(u,0)-e^{-2}\eps\left(\G(u,0)-\frac{\Theta}{2}\right)_+ +\left(C+\frac{C^2e^2}{4a}\right)\Vert\partial_\theta v\Vert^2,
\end{align*}
which prove the result in case (a).
\item[(b) ]This time we have
\begin{align*}
\G(U,0)=&\,\G(\ov{U},0)+\left(\G(U,0)-\G(\ov{U},0)\right)=\G(\ov{U},0)+e^{-2}\left(\G(\widehat{U},0)-\G(u,0)\right)\\
\leq&\, \G(u,0)+C\left(\Vert \partial_\theta v\Vert^2+\sum_{k=0}^{1}\Vert \partial_\theta v^k\Vert\ \left\Vert\partial_{\theta,\theta}\left(u^k-u_1^kb_{1,\om_k}\right)\right\Vert+\sum_{k\geq 2}\Vert \partial_\theta v^k\Vert\ \Vert\partial_{\theta,\theta} u^k\Vert\right)\\
&-e^{-2}\eps\left(\G(u,0)-\frac{\Theta}{2}\right)_+-e^{-2}a\left(\sum_{k=0}^{1}\left\Vert\partial_{\theta,\theta}\left(u^k-u_1^kb_{1,\om_k}\right)\right\Vert^2+\sum_{k\geq 2}\Vert\partial_{\theta,\theta} u^k\Vert^2\right).
\end{align*}
Now, we bound
\begin{align*}
C\Vert \partial_\theta v^k\Vert\ \left\Vert\partial_{\theta,\theta}\left(u^k-\sum_{n\leq i(\Theta)}u_n^kb_{n,\om_0}\right)\right\Vert&\leq \frac{C^2e}{4a}\Vert\partial_\theta v^k\Vert^2+e^{-2}a\left\Vert\partial_{\theta,\theta}\left(u^k-\sum_{n\leq i(\Theta)}u_n^kb_{n,\om_0}\right)\right\Vert^2\\
&\hspace{6.5cm}\text{ for }k=0,1\\
\text{ and }\qquad C\Vert \partial_\theta v^k\Vert\ \Vert\partial_{\theta,\theta} u^k\Vert&\leq \frac{C^2e}{4a}\Vert\partial_\theta v^k\Vert^2+e^{-2}a\left\Vert \partial_{\theta,\theta} u^k\right\Vert^2\qquad\text{ for }k\geq 2,
\end{align*}
so
\begin{align*}
\G(U,0)&\leq e^{-2}\eps\pi+(1-e^{-2}\eps)\G(\ov{U},0)+\left(C+\frac{C^2e^2}{4a}\right)\Vert\partial_\theta v\Vert^2,
\end{align*}
which prove the result in case (b).
\end{itemize}
\end{proof}

\subsection{Estimates on biharmonic extensions}\label{subsec_biharmonic}

We now prove Lemma \ref{lem_bihext}, which is an analog of Lemma \ref{lem_competitor} in the case $\Spt(u)=\Sp$. Our method in this case is a more elementary construction of competitors: we consider the competitor corresponding to the biharmonic extension in the disk coordinates. We follow closely the energy difference between a biharmonic function in $\D_1$ and its $2$-homogeneous extension.

In what follows, for any $f\in L^1(\Sp)$, we denote
$$c_n[f]=\frac{1}{2\pi}\int_{\Sp}e^{-in\theta}f(e^{i\theta})d\theta,$$
its Fourier coefficients: by the convention we chose we have
$$\Vert f\Vert_{L^2(\partial\D_1)}^2=2\pi\sum_{n\in\Z}|c_n[f]|^2.$$
\begin{proof}[Proof of Lemma \ref{lem_bihext}]
We remind that $v$ being a minimizer of $\G(\cdot,0)$ means that
$$u(re^{i\theta}):=r^2v(-\log(r),\theta)$$
belongs to $\M(\D_1)$. Consider $\tilde{u}$ the biharmonic extension of the boundary data $(u|_{\partial\D_1},\partial_r u|_{\partial\D_1})$ in $\D_1$, which can be decomposed as
$$\tilde{u}\left(re^{i\theta}\right)=\sum_{n\in\Z}\left(a_n r^{|n|+2}+b_nr^{|n|}\right)e^{in\theta},$$
for some coefficients $a_n,b_n$ (we recall this is called Goursat's decomposition, also discussed at \eqref{eq_Goursat}). As we already noticed in the proof of Lemma \ref{lem_computationWbih}, the energy $\int_{\D_1}|\Delta \tilde{u}|^2$ only depends on the $(a_n)_n$ coefficients, and more precisely $\Delta \tilde{u}=\sum_{n\in\Z}4(|n|+1)a_n r^{|n|}e^{in\theta}$, so
$$
\int_{\D_1}|\Delta \tilde{u}|^2=\int_{0}^{1}2\pi r\left|\sum_{n\in\Z}4(|n|+1)r^{|n|}e^{in\theta}\right|^2dr=16\pi\sum_{n\in\Z}(|n|+1)|a_n|^2.
$$
We may identify
\begin{align*}
c_n[u]&=c_n[\tilde{u}]=a_n+b_n,\\
c_n[\partial_r u]&=c_n[\partial_r\tilde{u}]=(|n|+2)a_n+|n|b_n,
\end{align*}
so $a_n=\frac{c_n[\partial_r u]-|n|c_n[u]}{2}$ and
$$E(\tilde{u};\D_1)=\int_{\D_1}\left(|\Delta \tilde{u}|^2+\chi_{\tilde{u}\neq 0}\right)=\pi+2\pi\sum_{n\in\Z}2(|n|+1)\Big|c_n[\partial_r u]-|n|c_n[u]\Big|^2.$$
We identify $u(e^{i\theta})=v(0,\theta)$, $\partial_r u(e^{i\theta})=2v(0,\theta)-\partial_t v(0,\theta)$, so $c_n[u]=c_n[v(0,\cdot)]$, $c_n[\partial_r u]=2c_n[v]-c_n[\partial_t v(0,\cdot)]$, meaning 
$$E(\tilde{u};\D_1)=\pi+\pi\sum_{n\in\Z}4(|n|+1)\Big|c_n[\partial_t v(0,\cdot)]+(|n|-2)c_n[v(0,\cdot)]\Big|^2.$$
Since $u\in\M(\D_1)$, we have $E(u;\D_1)\leq E(\tilde{u};\D_1)$, so by Lemma \ref{lem_change_variable_diskexp}, we get
\begin{align*}
\G(v,0)\leq&\,  \pi+\pi\sum_{n\in\Z}4(|n|+1)\Big|c_n[\partial_t v(0,\cdot)]+(|n|-2)c_n[v(0,\cdot)]\Big|^2\\
&-2\int_{\partial \Cy_\tau}\left(2\left(\partial_tv\right)^2-4v \partial_tv+4v^2-\left(\partial_\theta v\right)^2+\partial_{t,\theta}v\partial_\theta v\right)(0,\cdot)d\theta.
\end{align*}
We express everything in terms of Fourier coefficients of $v(0,\cdot)$, $\partial_t v(0,\cdot)$. To simplify notations we write $x_n=c_n[v(0,\cdot)]$, $y_n=c_n[\partial_t v(0,\cdot)]$:
\begin{align*}
\G(v,0)-\pi \leq&\, \pi\sum_{n\in\Z}4(|n|+1)\Big|y_n+(|n|-2)x_n\Big|^2\\
&+\pi\sum_{n\in\Z}-8|y_n|^2-4(n^2-4)\Re(\ov{x_n}y_n)+4(n^2-4)|x_n|^2,
\end{align*}
so
\begin{equation}\label{eq_Gvmpi}
\G(v,0)-\pi\leq\pi\sum_{n\in\Z}4n^2(|n|-2)|x_n|^2+4|n|(|n|-2)\Re(\ov{x_n}y_n)+4(|n|-1)|y_n|^2.
\end{equation}
We can also express in terms of Fourier coefficients:
\begin{equation}\label{eq_Gv0mpi}
\G(v(0,\cdot),0)-\pi=\frac{1}{2}\Vert\partial_{\theta,\theta}v(0,\cdot)\Vert^2-2\Vert\partial_{\theta}v(0,\cdot)\Vert^2=\pi\sum_{n\in\Z}n^2(n^2-4)|x_n|^2.
\end{equation}
Here we used the fact that the support of $v(0,\cdot)$ has measure $2\pi$. 
Now we compare the $n$-th term of \eqref{eq_Gvmpi} and \eqref{eq_Gv0mpi}:
\begin{itemize}[label=\textbullet]
\item if $|n|\geq 2$, then we bound $4|n|(|n|-2)\Re(\ov{x_n}y_n)\leq \frac{1}{2}n^2(|n|-2)|x_n|^2+8(|n|-2)|y_n|^2$, so
\begin{align*}
4n^2(|n|-2)|x_n|^2+4|n|(|n|-2)\Re(\ov{x_n}y_n)&+4(|n|-1)|y_n|^2\\
&\leq \frac{9}{2}n^2(|n|-2)|x_n|^2+(12|n|-20)|y_n|^2\\
&\leq 0.9 n^2(n^2-4)|x_n|^2+(12|n|-4)|y_n|^2.
\end{align*}
Here the factor ``$0.9$'' is sharp for $n=3$ in the last inequality.
\item For $n=0$, then the $0$-th terms of \eqref{eq_Gvmpi} and \eqref{eq_Gv0mpi} are respectively $-4|y_0|^2$, $0$.
\item For $n=\pm 1$, we write
\begin{align*}
-4|x_{\pm 1}|^2-4\Re(\ov{x_{\pm 1}}y_{\pm 1})\leq -3|x_{\pm 1}|^2+4|y_{\pm 1}|^2\leq (1^2(1^2-4))|x_{\pm 1}|^2+(12-4)|y_{\pm 1}|^2.
\end{align*}
\end{itemize}
Summing every contributions, we obtain 
\begin{align*}
\G(v,0)-\G(v(0,\cdot),0)&\leq -0.1\pi\sum_{n\in\Z\setminus\{-1,1\}} n^2(n^2-4)|x_n|^2 +\pi\sum_{n\in\Z}(12|n|-4)|y_n|^2\\
&\leq -0.1\left(\pi\sum_{n\in\Z} n^2(n^2-4)|x_n|^2 \right)_++2\pi\sum_{n\in\Z}(6|n|-2)|y_n|^2\\
&\leq -0.1\left(\G(v(0,\cdot),0)-\pi\right)_+ +6\Vert\partial_{t,\theta}v(0,\cdot)\Vert^2-2\Vert\partial_{t}v(0,\cdot)\Vert^2,
\end{align*}
which implies the result.
\end{proof}

\subsection{Proof of the epiperimetric inequality}\label{subsec_conclusion_epiperimetry}

As a consequence of Lemma \ref{lem_competitor} and  \ref{lem_bihext}, we obtain the following corollary.
\begin{corollary}\label{cor_epiest}
Let $\Theta\in\{\pi,t_1,2\pi\}$. There exist constants $c,\eps,C>0$ such that, if $u\in H^2_\lin(\Cy_0)$ is a minimizer of $\G(\cdot,0)$ that verifies the support condition
$$
\begin{cases}
(c,\Theta-c)\subset \Spt(u(0,\cdot))\subset (-c,\Theta+c)&\text{ when }\Theta\in\{\pi,t_1\}\\
(c,\pi-c)\cup(\pi+c,2\pi-c)\subset \Spt(u(0,\cdot))&\text{ when }\Theta=2\pi,\\
\end{cases}
$$
then
\begin{equation}\label{eq_epiest_improved}
\G(u,0)\leq \G(u(0,\cdot),0)- \eps\left(\G(u(0,\cdot),0)-\frac{\Theta}{2}\right)_++C\Vert\partial_{t,\theta} u(0,\cdot)\Vert_{L^2(\Sp)}^2-2\Vert \partial_t u(0,\cdot)\Vert_{L^2(\Sp)}^2.
\end{equation}
\end{corollary}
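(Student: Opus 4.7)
My strategy is to perform a case split on whether $\Spt(u(0,\cdot))$ equals all of $\Sp$ or is a proper subset thereof, and apply Lemma~\ref{lem_bihext} in the former case and Lemma~\ref{lem_competitor} in the latter. Note that the support hypothesis already forces $\Spt(u(0,\cdot)) \subsetneq \Sp$ when $\Theta \in \{\pi,t_1\}$ (since $\Spt(u(0,\cdot)) \subset (-c,\Theta+c)$ with $c$ small), while both configurations may arise when $\Theta = 2\pi$.

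In the case $\Spt(u(0,\cdot)) = \Sp$ (which forces $\Theta = 2\pi$), the conclusion of Lemma~\ref{lem_bihext} applied directly to $u$ matches the desired inequality \eqref{eq_epiest_improved} verbatim, with explicit constants $\eps = \tfrac{1}{10}$ and $C = 6$ including the $-2\Vert\partial_t u(0,\cdot)\Vert^2$ term built in.

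In the complementary case $\Spt(u(0,\cdot)) \subsetneq \Sp$, I set $u_0 := u(0,\cdot)$ and $v_0 := \partial_t u(0,\cdot)$. I first check the hypothesis $\{v_0 \neq 0\} \subset \Spt(u_0)$ required by Lemma~\ref{lem_competitor}: since $u$ is $\mathcal{C}^{1,\alpha}$ (by Lemma~\ref{lem_C1log}) and all first partials of $u$ vanish on $\partial \Spt(u)$, any point $\theta$ with $u_0(\theta) = \partial_\theta u_0(\theta) = 0$ either lies in $\Sp\setminus \Spt(u)$ (so $\partial_t u(0,\theta) = 0$) or is a limit of such points, giving $v_0(\theta)=0$ by continuity. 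Lemma~\ref{lem_competitor} then produces $U \in H^2_\lin(\Cy_0)$ with $U(0,\cdot) = u_0$, $\partial_t U(0,\cdot) = v_0$, and
\[
\G(U,0) \leq \G(u_0,0) - \eps_0\left(\G(u_0,0) - \tfrac{\Theta}{2}\right)_+ + C_0 \Vert \partial_\theta v_0\Vert^2.
\]
Minimality of $u$ as a $\G(\cdot,0)$-minimizer (Definition~\ref{def_minimizerG}) yields $\G(u,0) \leq \G(U,0)$. To insert the $-2\Vert v_0\Vert^2$ term, I use the Poincaré inequality: $v_0$ vanishes on the non-empty open set $\Sp\setminus\Spt(u_0)$, whose measure is bounded below by $2\pi-\Theta-2c>0$ when $\Theta \in \{\pi,t_1\}$ (and uniformly positive in the non-degenerate part of the $\Theta=2\pi$ subcase), so $\Vert v_0\Vert^2 \leq C_P \Vert\partial_\theta v_0\Vert^2$ for some uniform $C_P$. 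Setting $C := C_0 + 2C_P$ and $\eps := \min(\eps_0,\tfrac{1}{10})$ then absorbs $-2\Vert v_0\Vert^2$ into $C\Vert\partial_{t,\theta} u(0,\cdot)\Vert^2$ and combines both cases into the stated estimate.

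The main obstacle I expect is the degenerate regime $\Theta = 2\pi$ with $\Spt(u_0) \subsetneq \Sp$ but $|\Sp\setminus \Spt(u_0)|$ arbitrarily small, where the Poincaré constant $C_P$ would blow up. I anticipate handling this through a continuity/interpolation argument connecting to the biharmonic-extension case of Lemma~\ref{lem_bihext} as the gap closes (since both estimates then yield the same limit), or by a direct refinement of the construction of Lemma~\ref{lem_competitor} producing a uniform $-2\Vert v_0\Vert^2$ term, analogous to the cross-term cancellation $-8v\partial_t v$ on $\partial\Cy_0$ that generates the corresponding term in the proof of Lemma~\ref{lem_bihext}.
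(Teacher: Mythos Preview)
Your approach matches the paper's exactly: case-split on whether $\Spt(u(0,\cdot))$ equals $\Sp$, apply Lemma~\ref{lem_bihext} when it does (forcing $\Theta=2\pi$) and Lemma~\ref{lem_competitor} followed by minimality and a Poincar\'e step when it does not.

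The obstacle you anticipate is a phantom. Since $v_0=\partial_t u(0,\cdot)\in H^1(\Sp)$ is continuous (one-dimensional Sobolev embedding) and vanishes at any point of the nonempty set $\Sp\setminus\Spt(u_0)$, the Wirtinger inequality holds with the \emph{universal} constant $C_P=4$: viewing $v_0$ as an element of $H^1_0$ on an interval of length $2\pi$ (with zero endpoints, by periodicity), the first Dirichlet eigenvalue of $-\partial_\theta^2$ is $1/4$, so $\Vert v_0\Vert_{L^2(\Sp)}^2\le 4\Vert\partial_\theta v_0\Vert_{L^2(\Sp)}^2$ \emph{independently of the measure of} $\Sp\setminus\Spt(u_0)$. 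This is precisely what the paper does, adding $0\le 8\Vert\partial_{t,\theta}u(0,\cdot)\Vert^2-2\Vert\partial_t u(0,\cdot)\Vert^2$ to the conclusion of Lemma~\ref{lem_competitor} and replacing $C$ by $C+8$. No interpolation with Lemma~\ref{lem_bihext} and no refinement of the competitor construction is needed; your proof is already complete once you take $C_P=4$.
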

\begin{proof}
Denote $c,\eps,C$ the constants from Lemma \ref{lem_competitor}. If $u$ verifies the support condition and $\Spt(u)\neq \Sp$, then by Lemma \ref{lem_competitor} applied to $u$ we get
\begin{align*}
\G(u,0)&\leq \G(u(0,\cdot),0)- \eps\left(\G(u(0,\cdot),0)-\frac{\Theta}{2}\right)_++C\Vert\partial_{t,\theta} u(0,\cdot)\Vert_{L^2(\Sp)}^2\\
&\leq \G(u(0,\cdot),0)- \eps\left(\G(u(0,\cdot),0)-\frac{\Theta}{2}\right)_++\left(C+8\right)\Vert\partial_{t,\theta} u(0,\cdot)\Vert_{L^2(\Sp)}^2-2\Vert \partial_t u(0,\cdot)\Vert_{L^2(\Sp)}^2
\end{align*}
where we used the Poincaré inequality $\Vert f\Vert_{L^2(\Sp)}^2\leq 4 \Vert \partial_\theta f\Vert_{L^2(\Sp)}^2$, which is valid since $\Spt(u(0,\cdot))\neq\Sp$.

Suppose now that $\Spt(u)=\Sp$ (so in particular we are in the case $\Theta=2\pi$), then the result is directly implied by Lemma \ref{lem_bihext} (for any $\eps\leq 0.1$, $C\geq 6$).
\end{proof}

We now have all the tools to prove the epiperimetric inequality. We will make use of the following interpolation inequality: for any smooth bounded set $D\subset\R^2$, there exists $c(D)>0$ such that for any $u\in W^{2,4}(D) (\subset\mathcal{C}^1(D))$:
\begin{equation}\label{eq_interp_C1H1}
\Vert u\Vert_{\mathcal{C}^{1}(D)}\leq c(D)\Vert u\Vert_{H^1(D)}^{\frac{1}{2}}\Vert u\Vert_{W^{2,4}(D)}^{\frac{1}{2}}.
\end{equation}
This is a consequence of the following classical Gagliardo-Nirenberg inequality applied to $f=\nabla u$:
$$\Vert f\Vert_{L^\infty(D)}\leq C(D)\Vert f\Vert_{L^2(D)}^\frac{1}{2}\Vert f\Vert_{W^{1,4}(D)}^\frac{1}{2}.$$

\begin{proof}[Proof of Theorem \ref{th_epiperimetry_exp}]
Let $c_0>0$ be some quantity that we fix (arbitrarily small) later such that
$$\Vert u-b\Vert_{H^1(\Cy_0\setminus\Cy_1)}\leq c_0\Vert b\Vert_{H^1(\Sp)}.$$
By the Cacciopoli inequality of Lemma \ref{lem_Cacciopoli} and the BMO estimate of Lemma \ref{lem_BMO}, we have 
$$\Vert u\Vert_{W^{2,4}(\Cy_{\frac{1}{4}}\setminus\Cy_{\frac{3}{4}})}\lesssim \Vert u\Vert_{H^1(\Cy_0\setminus\Cy_1)}\lesssim\Vert b\Vert_{H^1(\Sp)}.$$
By the interpolation inequality \eqref{eq_interp_C1H1} we then have
\begin{equation}\label{eq_C1est}
\Vert u-b\Vert_{\mathcal{C}^{1}(\Cy_{\frac{1}{4}}\setminus\Cy_{\frac{3}{4}})}\lesssim \Vert u-b\Vert_{H^{1}(\Cy_{\frac{1}{4}}\setminus\Cy_{\frac{3}{4}})}^\frac{1}{2}\Vert u-b\Vert_{W^{2,4}(\Cy_{\frac{1}{4}}\setminus\Cy_{\frac{3}{4}})}^\frac{1}{2}\lesssim c_0^\frac{1}{2}\Vert b\Vert_{H^{1}(\Sp)}.
\end{equation}
We claim that for a sufficiently small $c_0$, $u(t,\cdot)$ verifies the support hypothesis of Corollary \ref{cor_epiest} for almost every $t$, up to a rotation in $\theta$. We denote by $c$ the constant from Corollary \ref{cor_epiest}.

First note that $\{\partial_t u\neq 0\}\subset\Spt(u)$, so $\{\partial_t u(t,\cdot)\neq 0\}\subset\Spt(u(t,\cdot))$ for almost every $t$. Then we differentiate whether $\Theta\in\{\pi,t_1\}$ or $\Theta=2\pi$.
\begin{itemize}
\item $\Theta\in\{\pi,t_1\}$. Suppose without loss of generality that $b=b^\mathrm{I}$ or $b=b^\mathrm{II}$. Then 
$$\inf_{\theta\in [c,\Theta-c]}\left( |b(\theta)|+|\partial_\theta b(\theta)|\right)\gtrsim 1.$$
So by the $\mathcal{C}^{1}$ estimate of $u-b$ in \eqref{eq_C1est}, we have for a small enough $c_0$:
$$\forall t\in\left[\frac{1}{4},\frac{3}{4}\right],\ \inf_{\theta\in [c,\Theta-c]}\left( |u(t,\theta)|+|\partial_\theta u(t,\theta)|\right)\gtrsim 1.$$
As a consequence, $(c,\Theta-c)\subset\Spt(u(t,\cdot))$ for any $t\in \left[\frac{1}{4},\frac{3}{4}\right]$.

On the other hand, for any disk $D=\D_{p,c}$ with $p\in \left[\frac{1}{4},\frac{3}{4}\right]\times \left(\Sp\setminus \left[-c,\Theta+c\right]\right)$, we have
$$\Vert u\Vert_{H^1(D)}= \Vert u-b\Vert_{H^1(\Cy_0\setminus\Cy_1)}\lesssim c_0.$$
For a small enough $c_0$ we may apply Lemma \ref{lem_nondegeneracy}, so $u(p)=0$. This implies that for every $t\in \left[\frac{1}{4},\frac{3}{4}\right]$, we have $\Spt(u(t,\cdot))\subset (-c,\Theta+c)$.
\item $\Theta=2\pi$. Up to a rotation (and change of sign) there exists $(\alpha,\beta)\in\R_+^2$, such that 
$$b(\theta)=\alpha -\beta \cos(2\theta).$$
In particular
\begin{align*}
|b(\theta)|^2+\frac{1}{4}|b'(\theta)|^2&=\alpha^2+\beta^2-2\alpha\beta\cos(2\theta)\\
&\geq \frac{1-\cos(2\theta)}{2}(\alpha+\beta)^2\\
&\gtrsim (1-\cos(2\theta))\Vert b\Vert_{H^1(\Sp)}^2,
\end{align*}
so for any $\theta\in (c,\pi-c)\cup(\pi+c,2\pi-c)$, we have $|b(\theta)|+|b'(\theta)|\gtrsim \Vert b\Vert_{H^1(\Sp)}$. By the $\mathcal{C}^1$ estimate \eqref{eq_C1est} we obtain for a small enough $c_0$:
$$\forall t\in \left[\frac{1}{4},\frac{3}{4}\right],\ \inf_{\theta\in [c,\pi-c]\cup[\pi+c,2\pi-c]}\left(|u(t,\theta)|+|u'(t,\theta)|\right)\gtrsim \Vert b\Vert_{H^1(\Sp)},$$
so the support hypothesis of Corollary \ref{cor_epiest} is verified.
\end{itemize}
Let $\phi\in\mathcal{C}^\infty_c\left(\left]\frac{1}{4},\frac{3}{4}\right[,[0,1]\right)$, such that
$$|\phi'|\lesssim 1,\ \phi|_{\left[\frac{1}{3},\frac{2}{3}\right]}\equiv 1.$$
Then
\begin{align*}
\W(u,0)&-\W(u,1)=\int_{0}^{1}-\W'(u,t)dt\geq \int_{0}^{1}-\phi(t)^2\W'(u,t)dt\\
=&\,\int_{0}^{1}\left(-\phi(t)^2\G'(u,t)-2\phi(t)^2\partial_t\langle \partial_{t}u,\partial_{t}u-\partial_{t,t}u\rangle\right)dt\\
=&\,\int_{0}^{1}\phi(t)^2\left(\Vert \partial_{t,t}u\Vert^2+2\Vert \partial_{t,\theta}u\Vert^2+\Vert\partial_{\theta,\theta}u\Vert^2-4\Vert \partial_{\theta}u\Vert^2+|\{u(t,\cdot)\neq 0\}|-2\G(u,t)\right)dt\\
&-4\int_{0}^{1}\phi(t)\left(\phi(t)+\phi'(t)\right)\langle \partial_{t}u,\partial_{t,t}u\rangle dt.
\end{align*}
Since $\G(u(t,\cdot),t)=\frac{1}{2}\left(\Vert\partial_{\theta,\theta}u\Vert^2-4\Vert \partial_{\theta}u\Vert^2+|\{u(t,\cdot)\neq 0\}|\right)$, we may rewrite this as
\begin{equation}\label{eq_aux_1}
\begin{split}
\W(u,0)-\W(u,1)\geq&\int_{0}^{1}\phi(t)^2\left(\Vert \partial_{t,t}u\Vert^2+2\Vert \partial_{t,\theta}u\Vert^2+2\G(u(t,\cdot),t)-2\G(u,t)\right)dt\\
&-4\int_{0}^{1}\phi(t)\left(\phi(t)+\phi'(t)\right)\langle \partial_{t}u,\partial_{t,t}u\rangle dt.
\end{split}
\end{equation}
Now we apply Corollary \ref{cor_epiest} to $u(t+\cdot,\cdot)$ (which verifies the hypothesis for almost every $t\in\left[\frac{1}{4},\frac{3}{4}\right]$, from the previous discussion). The estimate of Corollary \ref{cor_epiest} may be rewritten as
\begin{align*}
\G(u(t,\cdot),t)-\eps\left(\G(u(t,\cdot)-\frac{\Theta}{2}\right)_+-\G(u,t)\geq -C\Vert\partial_{t,\theta} u(0,\cdot)\Vert_{L^2(\Sp)}^2+2\Vert \partial_t u(0,\cdot)\Vert_{L^2(\Sp)}^2,
\end{align*}
which implies (supposing without loss of generality that $\eps$ is less that $\frac{1}{2}$, so that $-\frac{1}{1-\eps}\geq -2$):
\begin{align*}
\G(u(t,\cdot),t)-\G(u,t)&\geq \frac{\eps}{1-\eps}\left(\G(u,t)-\frac{\Theta}{2}\right) -2C\Vert\partial_{t,\theta} u(t,\cdot)\Vert_{L^2(\Sp)}^2+\frac{2}{1-\eps}\Vert \partial_t u(t,\cdot)\Vert_{L^2(\Sp)}^2\\
&\geq\eps\left(\W(u,t)-\frac{\Theta}{2}\right)+2\Vert\partial_t u(t,\cdot)\Vert^2-4\eps\left|\langle\partial_t u,\partial_{t,t}u\rangle\right| -2C\Vert\partial_{t,\theta} u(t,\cdot)\Vert_{L^2(\Sp)}^2.
\end{align*}
We inject this estimate in \eqref{eq_aux_1}:
\begin{equation}\label{eq_aux_2}
\begin{split}
\W(u,0)-\W(u,1)\geq&\,2\eps\left(\W(u,1)-\frac{\Theta}{2}\right)\int_{0}^{1}\phi(t)^2dt+\int_{0}^{1}4\phi(t)^2\Vert\partial_t u\Vert^2dt\\
&+\int_{0}^{1}\phi(t)^2\left(\Vert \partial_{t,t}u\Vert^2+2\Vert \partial_{t,\theta}u\Vert^2-8\eps\left|\langle\partial_{t,t}u,\partial_t u\rangle\right| -4C\Vert\partial_{t,\theta}u\Vert^2\right)dt\\
&-4\int_{0}^{1}\phi(t)\left(\phi(t)+\phi'(t)\right)\langle \partial_{t}u,\partial_{t,t}u\rangle dt
\end{split}
\end{equation}
We have $-8\eps\left|\langle\partial_{t,t}u,\partial_t u\rangle\right|\geq -\Vert\partial_{t,t}u\Vert^2-\Vert \partial_t u\Vert^2,$
for a sufficiently small $\eps>0$ (which we can assume without loss of generality). Similarly, we bound
$$-4\phi(t)\left(\phi(t)+\phi'(t)\right)\langle \partial_{t}u,\partial_{t,t}u\rangle \geq -\phi(t)^2\Vert\partial_t u\Vert^2-8(\phi(t)^2+\phi'(t)^2)\Vert\partial_{t,t}u\Vert^2.$$
We obtain (for some universal constant $C'>0$):
\begin{equation}\label{eq_aux_3}
\begin{split}
\W(u,0)-\W(u,1)\geq&\,2\eps\left(\W(u,1)-\frac{\Theta}{2}\right)\int_{0}^{1}\phi(t)^2dt+\int_{0}^{1}2\phi(t)^2\Vert\partial_t u\Vert^2dt\\
&-C'\int_{0}^{1}\left(\phi(t)^2+\phi'(t)^2\right)\left(\Vert\partial_{t,\theta}u\Vert^2+\Vert\partial_{t,t}u\Vert^2\right)dt.
\end{split}
\end{equation}
We remind that $\phi|_{\left[\frac{1}{3},\frac{2}{3}\right]}\equiv 1$ and $|\phi'|\lesssim 1$, so
\begin{equation}\label{eq_aux_4}
\begin{split}
\W(u,0)-\W(u,1)\geq&\,\frac{2\eps}{3}\left(\W(u,1)-\frac{\Theta}{2}\right)+2\Vert \partial_t u\Vert_{L^2(\Cy_{\frac{1}{3}}\setminus\Cy_{\frac{2}{3}})}^2\\
&-C''\int_{0}^{1}\left(\Vert\partial_{t,\theta}u\Vert^2+\Vert\partial_{t,t}u\Vert^2\right)dt,
\end{split}
\end{equation}
for some constant $C''>0$. Finally, $f\mapsto \sqrt{\Vert f\Vert_{L^2(\Cy_{\frac{1}{3}}\setminus\Cy_{\frac{2}{3}})}^2+\Vert \nabla f\Vert_{L^2(\Cy_0\setminus\Cy_1)}^2}$ is a norm for $H^1(\Cy_0\setminus\Cy_1)$, so
$$\Vert f\Vert_{L^2(\Cy_{\frac{1}{3}}\setminus\Cy_{\frac{2}{3}})}^2+\Vert \nabla f\Vert_{L^2(\Cy_0\setminus\Cy_1)}^2\gtrsim \Vert f\Vert_{L^2(\Cy_0\setminus\Cy_1)}^2+\Vert \nabla f\Vert_{L^2(\Cy_0\setminus\Cy_1)}^2,$$
and there is some universal constant $d\in (0,1)$, $D>0$ such that 
\begin{equation}\label{eq_aux_5}
\W(u,0)-\W(u,1)\geq d\left(\left(\W(u,1)-\frac{\Theta}{2}\right) +\Vert \partial_t u\Vert_{H^1(\Cy_{0}\setminus\Cy_{1})}^2\right)-4D\int_{0}^{1}\left(\Vert\partial_{t,\theta}u\Vert^2+\Vert\partial_{t,t}u\Vert^2\right)dt.
\end{equation}
Using the expression of $\W'(u,t)$ (see Theorem \ref{Th_Monotonicity_exp}), we have
\begin{equation}\label{eq_proofepi_monotonicity}
4\int_{0}^{1}\left(\Vert \partial_{t,t}u\Vert^2+\Vert \partial_{t,\theta}u\Vert^2\right)dt=\W(u,0)-\W(u,1),
\end{equation}
so we may replace the last term of \eqref{eq_aux_5} with $D(\W(u,0)-\W(u,1))$. We obtain
$$\W(u,0)-\W(u,1)\geq \frac{d}{D+1}\left(\left(\W(u,1)-\frac{\Theta}{2}\right) +\Vert \partial_t u\Vert_{H^1(\Cy_{0}\setminus\Cy_{1})}^2\right),$$
which can be rearranged into
$$\W(u,1)\leq\W(u,0)-\frac{d}{D+1-d}\left(\left(\W(u,0)-\frac{\Theta}{2}\right) +\Vert \partial_t u\Vert_{H^1(\Cy_{0}\setminus\Cy_{1})}^2\right),$$
which is the result.
\end{proof}
We also need the following lemma to control the variation of the blow-up sequence $(u_r)$ by the variation of $\W$.
\begin{lemma}\label{lem_control_variation_u}
There exist $c,C>0$ such that, for any $u\in H^2_\lin(\Cy_0)$ a local minimizer of $\G(\cdot\ ;0)$, assume that there exists $b\in\mathscr{B}_{\mathrm{hom}}^{\Theta}$ if $\Theta\in\{\pi,t_1\}$ (resp. $b\in\mathrm{Span}(1,\cos(2\theta),\sin(2\theta))$ if $\Theta=2\pi$) such that
$$\Vert u-b\Vert_{H^1(\Cy_0\setminus\Cy_1)}\leq c\Vert b\Vert_{H^1(\Sp)},\ \W(u,+\infty)\geq \frac{\Theta}{2},$$
then
$$\Vert u-u(1+\cdot,\cdot)\Vert_{H^1(\Cy_0\setminus\Cy_1)}\leq C\sqrt{\W(u,0)-\W(u,+\infty)}.$$
\end{lemma}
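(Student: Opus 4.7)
The plan is to decompose the $H^1(\Cy_0\setminus\Cy_1)$-norm of $u - u(1+\cdot,\cdot)$ into its three natural components (the $L^2$ norms of the difference and of its two partial derivatives $\partial_t$ and $\partial_\theta$), and to control each using either the monotonicity formula (Theorem \ref{Th_Monotonicity_exp}) or the epiperimetric inequality (Theorem \ref{th_epiperimetry_exp}). I will fix $c:=c_0$, the constant from Theorem \ref{th_epiperimetry_exp}; the hypothesis $\W(u,+\infty)\geq \Theta/2$ together with monotonicity ensures $\W(u,\tau)\geq \Theta/2$ for all $\tau\geq 0$, so the epiperimetric inequality applies at $\tau=0$ and yields
$$\|\partial_t u\|_{H^1(\Cy_0\setminus\Cy_1)}^2\leq \eta^{-1}\bigl(\W(u,0)-\W(u,+\infty)\bigr),$$
while integrating $\W'$ from $0$ to $+\infty$ in Theorem \ref{Th_Monotonicity_exp} gives $4\int_0^{+\infty}\bigl(\|\partial_{t,t}u\|^2+\|\partial_{t,\theta}u\|^2\bigr)\,dt = \W(u,0)-\W(u,+\infty)$.

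For the two derivative components of $u(1+\cdot,\cdot)-u$, I will use the identity $\partial_t u(t+1,\theta)-\partial_t u(t,\theta)=\int_0^1\partial_{t,t}u(t+s,\theta)\,ds$ (and analogously for $\partial_\theta$), then Cauchy--Schwarz and Fubini to bound the resulting $L^2(\Cy_0\setminus\Cy_1)$-norms by $\|\partial_{t,t}u\|_{L^2(\Cy_0\setminus\Cy_2)}^2$ and $\|\partial_{t,\theta}u\|_{L^2(\Cy_0\setminus\Cy_2)}^2$, both immediately controlled by monotonicity. The same argument applied to the $L^2$ component of $u(1+\cdot,\cdot)-u$ reduces the problem to bounding $\|\partial_t u\|_{L^2(\Cy_0\setminus\Cy_2)}^2$. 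The epiperimetric inequality takes care of $\Cy_0\setminus\Cy_1$; for the remaining piece over $\Cy_1\setminus\Cy_2$, I will invoke a Poincaré-type argument: for $(t_1,\theta)\in\Cy_1\setminus\Cy_2$ and $t_0\in[0,1]$, the identity $\partial_t u(t_1,\theta)=\partial_t u(t_0,\theta)+\int_{t_0}^{t_1}\partial_{t,t}u(s,\theta)\,ds$, squared and integrated successively in $t_0$, $\theta$, and $t_1$, yields
$$\|\partial_t u\|_{L^2(\Cy_1\setminus\Cy_2)}^2\lesssim \|\partial_t u\|_{L^2(\Cy_0\setminus\Cy_1)}^2+\|\partial_{t,t}u\|_{L^2(\Cy_0\setminus\Cy_2)}^2,$$
with both terms on the right already bounded. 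Summing the three components and taking square roots gives the conclusion with $C$ of order $\eta^{-1/2}$.

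The main obstacle, and the only reason the epiperimetric inequality must be invoked at all, is that the monotonicity formula of Theorem \ref{Th_Monotonicity_exp} controls only the \emph{second} derivatives $\partial_{t,t}u$ and $\partial_{t,\theta}u$ in $L^2$; a bound on $\partial_t u$ itself is not implicit in monotonicity. This is precisely where the care taken in Lemma \ref{lemma_removal_derivative} (handling a non-zero $\partial_t u$ boundary datum and producing the $-2\|\partial_t u(0,\cdot)\|^2$ contribution in Corollary \ref{cor_epiest}) pays off, since that extra term is what manifests in the $\eta\|\partial_t u\|_{H^1(\Cy_0\setminus\Cy_1)}^2$ summand of Theorem \ref{th_epiperimetry_exp}.
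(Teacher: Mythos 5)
Your proposal is correct and follows essentially the same route as the paper: the fundamental theorem of calculus plus Cauchy--Schwarz reduces the three components of $\Vert u-u(1+\cdot,\cdot)\Vert_{H^1(\Cy_0\setminus\Cy_1)}$ to the second derivatives $\partial_{t,t}u,\partial_{t,\theta}u$ (controlled by integrating $\W'$ in Theorem \ref{Th_Monotonicity_exp}) and to $\partial_t u$ (controlled by Theorem \ref{th_epiperimetry_exp}, using $\W(u,1)\geq\W(u,+\infty)\geq\Theta/2$). The only cosmetic difference is that the paper anchors $\partial_t u$ at the slice $t=\tau$ so that only $\Vert\partial_t u\Vert_{L^2(\Cy_0\setminus\Cy_1)}$ is needed, whereas you need $\partial_t u$ on $\Cy_0\setminus\Cy_2$ and correctly absorb the extra strip via your Poincar\'e-type step.
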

\begin{proof}
For any $\tau\in (0,1)$, we write
\begin{align*}
\int_{\Sp}\left|\nabla u(\tau+1,\theta)-\nabla u(\tau,\theta)\right|^2d\theta&=\int_{\Sp}\left|\int_{\tau}^{\tau+1}\partial_t \nabla u(s,\theta)ds\right|^2d\theta\\
&\leq \int_{\Sp}\int_{\tau}^{\tau+1}|\partial_t\nabla u(s,\theta)|^2dsd\theta\\
&\leq \frac{\W(u,0)-\W(u,+\infty)}{4}\text{ by Theorem }\ref{Th_Monotonicity_exp}.
\end{align*}
Likewise,
\begin{align*}
\int_{\Sp}\left| u(\tau+1,\theta)- u(\tau,\theta)\right|^2d\theta&=\int_{\Sp}\left|\int_{\tau}^{\tau+1}\partial_t  u(s,\theta)ds\right|^2d\theta\leq \int_{\Sp}\int_{\tau}^{\tau+1}|\partial_t u(s,\theta)|^2dsd\theta\\
&= \int_{\Sp}\int_{\tau}^{\tau+1}\left|\partial_t u(\tau,\theta)+\int_{\tau}^{s}\partial_{t,t}u(s',\theta)ds'\right|^2dsd\theta\\
&\leq 2\int_{\Sp}|\partial_t u(\tau,\theta)|^2d\theta+2\int_{\Sp}\int_{\tau}^{\tau+1}|\partial_{t,t}u(s,\theta)|^2dsd\theta\\
&\leq 2\int_{\Sp}|\partial_t u(\tau,\theta)|^2d\theta+\frac{\W(u,0)-\W(u,+\infty)}{2}
\end{align*}
so
\begin{align*}
\int_{\Cy_0\setminus\Cy_1}\int_{\Sp}\left| u(\tau+1,\theta)- u(\tau,\theta)\right|^2d\theta d\tau&\leq 2\int_{\Cy_0\setminus\Cy_1}|\partial_t u(\tau,\theta)|^2d\theta d\tau+\frac{\W(u,0)-\W(u,+\infty)}{2}.
\end{align*}
When $\frac{\Vert u-b\Vert_{H^1(\Cy_0\setminus\Cy_1)}}{\Vert b\Vert_{H^1(\Sp)}}$ is small enough, then by the epiperimetry inequality of Theorem \ref{th_epiperimetry_exp} we have $\int_{\Cy_0\setminus\Cy_1}|\partial_t u|^2\lesssim \W(u,0)-\W(u,1)(\leq \W(u,0)-\W(u,+\infty))$, which concludes the proof.
\end{proof}

We now prove the main result of this section.

\begin{proof}[Proof of Theorem \ref{th_epiperimetry}]
We let $v(t,\theta)=e^{2t}u\left(e^{-t+i\theta}\right)$, so that $v$ is a minimizer of $\G(\cdot,0)$. We let also $b\in\mathscr{B}_{\mathrm{hom}}^\Theta$ be such that
$$\ov{u}(re^{i\theta})=r^2b(\theta).$$
We remind that $\W(v,t)=W(u,e^{-t})$ (by Theorem \ref{Th_Monotonicity_disk}). Let $c_1>0$ to be fixed small enough later. The bound on  $\Vert u-\ov{u}\Vert_{H^1(\D_{1}\setminus\D_{e^{-1}})}$ implies
$$\Vert v-b\Vert_{H^1(\Cy_0\setminus\Cy_1)}\lesssim c_1\Vert b\Vert_{H^1(\Sp)}.$$
Then for a small enough $c_1$, Theorem \ref{th_epiperimetry_exp} applies and we have
$$W(u,e^{-1})\leq \eta \frac{\Theta}{2}+(1-\eta)W(u,1).$$
Similarly, for a small enough $c_1$, we may apply Lemma \ref{lem_control_variation_u}, which implies
$$\Vert u-u_{e^{-1}}\Vert_{H^1(\D_1\setminus\D_{e^{-1}})}\lesssim \Vert v-v(1+\cdot,\cdot)\Vert_{H^1(\Cy_0\setminus\Cy_1)}\leq C_1\sqrt{W(u,1)-\frac{\Theta}{2}}.$$
Since $W(u,0)\geq \frac{\Theta}{2}$, this concludes the proof.
\end{proof}

\section{Description of flat, angular and isolated boundary points.}\label{sec_reg}

In this section, we prove the main Theorems \ref{mr_blowup}, \ref{mr_typeIII_IV}, \ref{mr_typeII},  \ref{mr_typeI} (in this order), and conclude with the proof of Theorem \ref{mr_total}. 

The proof of Theorem \ref{mr_blowup} and \ref{mr_typeIII_IV} is based on the epiperimetric inequality and variation control given by Theorem \ref{th_epiperimetry}: we will see in subsection \ref{subsec_uniquenessblowup} this implies an explicit polynomial rate of convergence of $u_r$ to a (unique) limit, given in Proposition \ref{prop_convpolyblow_up}.

We then prove in subsection \ref{subsec_C1alphaest}  $\eps$-regularity results near flat boundary points (Proposition \ref{prop_C1a}) and angular boundary points (Proposition \ref{prop_II_C1a}); assuming $u$ is sufficiently close to a flat or angular blow-up, we prove that the support of $u$ is a $\mathcal{C}^{1,\alpha}$ perturbation of the support of the associated homogeneous solution. This will imply Theorem \ref{mr_typeII}.

In subsection \ref{subsec_higherreg}, we make a conformal change of variable to transform an overdetermined Stokes equation on a $\mathcal{C}^{1,\alpha}$ free boundary into an overdetermined elliptic equation on a disk: from this we obtain the higher regularity of the boundary and Theorem \ref{mr_typeI}.

Finally, we prove the main Theorem \ref{mr_total} in subsection \ref{subsec_proof_mr_total} by gathering all the previous results.

\subsection{Uniqueness and speed of convergence of blow-ups}\label{subsec_uniquenessblowup}

\begin{proposition}\label{prop_convpolyblow_up}
Let $\Theta\in\{\pi,t_1,2\pi\}$. There exist constants $c_2,C_2>0$, $\gamma\in (0,1)$ such that the following holds: let $\ov{u}\in\M_{\mathrm{hom}}^{\Theta}$ if $\Theta\in\{\pi,t_1\}$ (resp. $\ov{u}\in\mathrm{Span}(x^2+y^2,x^2-y^2,xy)$ if $\Theta=2\pi$), $u\in\M(\D_1)$ such that
$$W(u,0)\geq\frac{\Theta}{2},\qquad \Vert u-\ov{u}\Vert_{H^1(\D_1)}\leq c_2\Vert \ov{u}\Vert_{H^1(\D_1)}.$$
Then there exists $\widehat{u}\in \M_{\mathrm{hom}}^{\Theta}$ such that for any $r\in (0,1]$:
$$\Vert u_r-\widehat{u}\Vert_{H^1(\D_1)}\leq C_2\min\left(r,\frac{\Vert u-\ov{u}\Vert_{H^1(\D_1)}}{\Vert \ov{u}\Vert_{H^1(\D_1)}}\right)^\gamma\Vert \ov{u}\Vert_{H^1(\D_1)},$$
and 
$$\Vert\ov{u}-\widehat{u}\Vert_{H^1(\D_1)}\leq C_2 \Vert u-\ov{u}\Vert_{H^1(\D_1)}^\gamma\Vert \ov{u}\Vert_{H^1(\D_1)}^{1-\gamma}.$$
\end{proposition}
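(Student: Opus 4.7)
The plan is to iterate the epiperimetric inequality of Theorem \ref{th_epiperimetry} at dyadic scales $r_k = e^{-k}r_0$ starting from an interior scale $r_0 \in (1/2, 1)$, and then to combine the geometric decay with a quantitative initial estimate on $W(u, r_0) - \Theta/2$. First, I would set up a bootstrap: if $\Vert u - \ov u\Vert_{H^1(\D_1)}$ is small enough, each rescaling $u_{r_k}$ remains close in $H^1(\D_1 \setminus \D_{e^{-1}})$ to some $\ov u^{(k)}$ of type $\Theta$ (a family compact up to rotation for $\Theta \in \{\pi, t_1\}$, the three-dimensional span of biharmonic polynomials for $\Theta = 2\pi$), so Theorem \ref{th_epiperimetry} applies at each step and yields $W(u, r_k) - \Theta/2 \leq (1-\eta)^k(W(u, r_0) - \Theta/2)$. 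The accompanying variation bound gives $\Vert u_{r_k} - u_{r_{k+1}}\Vert_{H^1(\D_1 \setminus \D_{e^{-1}})} \leq C(1-\eta)^{k/2}\sqrt{W(u, r_0) - \Theta/2}$, so $(u_{r_k})$ is Cauchy in $H^1(\D_1 \setminus \D_{e^{-1}})$; the limit $\widehat u$ is a $2$-homogeneous minimizer (by Lemma \ref{lem_sequence} together with constancy of $W(\widehat u, \cdot) = \Theta/2$), and by Lemma \ref{lem_separation_blowup} belongs to the same class as $\ov u$. To upgrade convergence from the annulus to $\D_1$ I would exploit the $2$-homogeneity of $\widehat u$: the $H^1$ estimate on a dyadic sub-annulus $\D_{e^{-j}} \setminus \D_{e^{-j-1}}$ of $u_{r_k} - \widehat u$ rescales to the analogous estimate on $\D_1 \setminus \D_{e^{-1}}$ of $u_{r_{k+j}} - \widehat u$ with a factor of $e^{-4j}$; summing the resulting geometric series yields $\Vert u_r - \widehat u\Vert_{H^1(\D_1)} \leq Cr^{\gamma_1}\sqrt{W(u, r_0) - \Theta/2}$ for every $r \in (0, 1]$, with $\gamma_1 = -\log(1-\eta)/2$.

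The main obstacle is the quantitative initial estimate
\[W(u, r_0) - \Theta/2 \leq C \Vert u - \ov u\Vert_{H^1(\D_1)}^{2\alpha} \Vert \ov u\Vert_{H^1(\D_1)}^{2(1-\alpha)}\]
for some $\alpha > 0$: this is what converts the $r$-rate into a rate in the initial distance. The difficulty is that $W(u, r)$ contains the non-$H^1$-continuous term $|\{u \neq 0\} \cap \D_r|$. My approach is the averaging scheme of Lemma \ref{lem_W_cacciopoli}: for $\varphi \in \mathcal{C}^\infty_c((r_0, 1))$ with $\int \varphi = 1$, the monotonicity gives $W(u, r_0) \leq \int_{r_0}^1 \varphi(r) W(u, r) dr$, and after integration by parts in the $N'$ term the right-hand side decomposes into an $H^2_\loc$-continuous part (controlled by $\Vert u - \ov u\Vert_{H^2(\D_{3/4})}^2 \lesssim \Vert u - \ov u\Vert_{H^1(\D_1)}^2$ via Lemma \ref{lem_Cacciopoli} applied to $u$ and the explicit smoothness of $\ov u$) and the area integral $\int \varphi(r) r^{-2}|\{u \neq 0\} \cap \D_r| dr$. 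The error of this last integral against the corresponding one for $\ov u$ (which contributes exactly $\Theta/2$) is bounded by the symmetric-difference measure $|(\{u \neq 0\} \triangle \{\ov u \neq 0\}) \cap \D_{3/4}|$, itself controlled by a positive power of $\Vert u - \ov u\Vert_{H^1(\D_1)}$ via $\mathcal{C}^1$-interpolation ($\Vert u - \ov u\Vert_{\mathcal{C}^1(\D_{3/4})} \lesssim \Vert u - \ov u\Vert_{H^1(\D_1)}^{1/2}\Vert \ov u\Vert_{H^1(\D_1)}^{1/2}$, using Lemmas \ref{lem_Cacciopoli}, \ref{lem_BMO}, \ref{lem_C1log} together with \eqref{eq_interp_C1H1}) combined with the quadratic vanishing of $\ov u$ at $\partial \Spt(\ov u)$.

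Combining the decay in $r$ with the initial estimate yields $\Vert u_r - \widehat u\Vert_{H^1(\D_1)} \leq Cr^{\gamma_1}\Vert u - \ov u\Vert_{H^1(\D_1)}^{\alpha}\Vert \ov u\Vert_{H^1(\D_1)}^{1-\alpha}$. Setting $\gamma = \min(\gamma_1, \alpha)$ recovers the rate $\min(r, \Vert u - \ov u\Vert_{H^1}/\Vert \ov u\Vert_{H^1})^\gamma \Vert \ov u\Vert_{H^1}$ in the small-$r$ regime; in the complementary regime the triangle inequality $\Vert u_r - \widehat u\Vert_{H^1} \leq \Vert u_r - \ov u\Vert_{H^1} + \Vert \ov u - \widehat u\Vert_{H^1}$ combined with the proven closeness of $\widehat u$ to $\ov u$ delivers the bound directly. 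The final inequality $\Vert \ov u - \widehat u\Vert_{H^1(\D_1)} \leq C \Vert u - \ov u\Vert_{H^1(\D_1)}^\gamma \Vert \ov u\Vert_{H^1(\D_1)}^{1-\gamma}$ then follows by specializing the convergence at $r = 1$ and applying the triangle inequality.
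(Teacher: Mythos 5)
Your iteration skeleton coincides with the paper's: iterate Theorem \ref{th_epiperimetry} along dyadic scales, use the variation bound to make $(u_{r_k})$ Cauchy on the annulus, upgrade to $\D_1$ by summing dyadic annuli with the $e^{-4j}$ scaling factor, and identify the limit in $\M_{\mathrm{hom}}^{\Theta}$ via Lemma \ref{lem_separation_blowup}. Where you genuinely diverge is how the dependence on $\Vert u-\ov{u}\Vert_{H^1(\D_1)}$ enters: you want a quantitative continuity estimate $W(u,r_0)-\frac{\Theta}{2}\lesssim \Vert u-\ov{u}\Vert_{H^1}^{2\alpha}\Vert\ov{u}\Vert_{H^1}^{2(1-\alpha)}$, which simultaneously closes your bootstrap (the only a priori bound, Lemma \ref{lem_W_cacciopoli}, gives $W(u,\frac12)\lesssim\Vert\ov{u}\Vert^2$ with a large constant, so without smallness of $W-\frac{\Theta}{2}$ the per-step drift $C_1\sqrt{W-\frac{\Theta}{2}}$ can exceed the admissible threshold $c_1\Vert\ov{u}\Vert$) and converts the $r$-rate into a rate in the initial distance. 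The paper never proves any such estimate: it first runs a \emph{fixed finite} number $K$ of epiperimetric steps during which closeness to $\ov{u}$ is guaranteed purely by the crude scaling bound $\Vert u_{\frac12 e^{-k}}-\ov{u}\Vert_{H^1(\D_1\setminus\D_{e^{-1}})}\leq 8e^{3k}\Vert u-\ov{u}\Vert_{H^1(\D_1)}$ together with $c_2\leq\frac{\ov{r}^3c_1}{2}$, thereby driving $W-\frac{\Theta}{2}$ below $\alpha\Vert\ov{u}\Vert^2$ before the drift-accumulation argument takes over; and at the very end it recovers the rate in $\Vert u-\ov{u}\Vert$ by triangulating through an intermediate scale $s=\left(\Vert u-\ov{u}\Vert_{H^1}/\Vert\ov{u}\Vert_{H^1}\right)^{\frac{1}{3+\nu}}$, again using only the $s^{-3}$ scaling bound. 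In particular no continuity of $W$ (and in particular of the area term) with respect to the $H^1$ distance is ever required.

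The gap in your route is the central ingredient of your initial estimate: the claim $\Vert u-\ov{u}\Vert_{H^2(\D_{3/4})}^2\lesssim\Vert u-\ov{u}\Vert_{H^1(\D_1)}^2$ does not follow from Lemma \ref{lem_Cacciopoli} (which applies to a minimizer, not to a difference of two minimizers), and it is false: take $\ov{u}=u^{\mathrm{I}}$ and a solution whose free boundary is approximately the translate $\{y=\eps\}$, i.e. $u\approx\frac12(y-\eps)_+^2$ near the origin; then $\Vert u-\ov{u}\Vert_{H^1(\D_1)}\sim\eps$, while on the strip $\{0<y<\eps\}$ the two Hessians differ by a quantity of order one, so $\Vert\nabla^2(u-\ov{u})\Vert_{L^2(\D_{3/4})}^2\sim\eps$, not $\eps^2$. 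Thus the energy part of $W$ is only H\"older, not Lipschitz, continuous in the $H^1$ distance, and your proof of the key estimate does not go through as stated. It is repairable with a smaller exponent (write the energy difference as a bilinear form in $u-\ov{u}$ and $u+\ov{u}$ and interpolate $\Vert\nabla^2(u-\ov{u})\Vert_{L^2(\D_{3/4})}\lesssim\Vert u-\ov{u}\Vert_{H^1}^{1/3}\Vert u-\ov{u}\Vert_{W^{2,4}(\D_{3/4})}^{2/3}$, the $W^{2,4}$ bound coming from Lemmas \ref{lem_Cacciopoli} and \ref{lem_BMO}), and since any positive $\alpha$ suffices your scheme survives; note also that for the symmetric-difference bound the inclusion direction $\{u\neq0\}\cap\{\ov{u}=0\}$ (relevant when $\Theta\in\{\pi,t_1\}$) is not handled by the quadratic vanishing of $\ov{u}$ and needs the nondegeneracy Lemma \ref{lem_nondegeneracy} at scale $\rho\sim\Vert u-\ov{u}\Vert_{H^1}^{1/3}$. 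As written, however, the initial estimate — on which both your bootstrap and your final rate depend — is not established.
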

Note that the hypothesis $W(u,0)\geq\frac{\Theta}{2}$ is implied by the existence of a blow-up of opening $\Theta$ (i.e. type I if $\Theta=\pi$, II if $\Theta=t_1$, III, IV if $\Theta=2\pi$).

In the proof, we will use the following scaling property: for any $r\in (0,1)$, for any $u\in H^1(rD)$ for some domain $D\subset\R^2$,
\begin{equation}\label{eq_scaling_property}
r^{-2}\Vert u\Vert_{H^1(rD)}\leq \Vert u_r\Vert_{H^1(D)}\leq r^{-3}\Vert u\Vert_{H^1(rD)}.
\end{equation}
Indeed, this is a consequence of the change of variable
$\Vert u_r\Vert_{H^1(D)}=\left(\int_{rD}r^{-4}u^2+r^{-6}|\nabla u|^2\right)^{\frac{1}{2}}$.
\begin{proof}
Let $c_2>0$ be some constant that will be fixed arbitrarily small later. Let $c_1,C_1,\eta$ be the constants from Theorem \ref{th_epiperimetry}. By Lemma \ref{lem_W_cacciopoli}, we have
$$W\left(u,\frac{1}{2}\right)\leq C\Vert\ov{u}\Vert_{H^1(\D_1)}^2,$$
for some universal constant $C>0$. We define the auxiliary constants 
$$\alpha=\left(\frac{c_1\left(1-\sqrt{1-\eta}\right)}{2C_1}\right)^2,\quad K=\left\lceil\frac{\log(\alpha/C)}{\log(1-\eta)}\right\rceil_+,\quad \ov{r}=\frac{1}{2}e^{-K},$$
i.e. $K\in\N$ is the smallest integer such that $(1-\eta)^{K} C\leq \alpha$. We suppose $c_2$ verifies
\begin{equation}\label{eq_smallness_c2}
c_2\leq\frac{\ov{r}^3c_1}{2}.
\end{equation}
For any $k\in\{0,1,2,\hdots,K-1\}$, we have (by the scaling formula \eqref{eq_scaling_property}):
\begin{align*}
\Vert u_{\frac{1}{2}e^{-k}}-\ov{u}\Vert_{H^1(\D_1\setminus\D_{e^{-1}})}&=\Vert \left(u-\ov{u}\right)_{\frac{1}{2}e^{-k}}\Vert_{H^1(\D_1\setminus\D_{e^{-1}})}&\text{ since }\ov{u}\text{ is homogeneous}\\
&\leq 8e^{3k}\Vert u-\ov{u}\Vert_{H^1(\D_1)}&\text{ by the scaling \eqref{eq_scaling_property}}\\
&\leq \ov{r}^{-3}c_2\Vert \ov{u}\Vert_{H^1(\D_1)}&\text{ by definition of }\ov{r}\\
&\leq \frac{c_1}{2}\Vert \ov{u}\Vert_{H^1(\D_1)}&\text{ by assumption \eqref{eq_smallness_c2}}.
\end{align*}
As a consequence, Theorem \ref{th_epiperimetry} applies to $u_{\frac{1}{2}e^{-k}}$, and we get 
$$\forall k=0,1,2,\hdots,K-1,\ W\left(u,\frac{1}{2}e^{-k-1}\right)-\frac{\Theta}{2}\leq (1-\eta)\left(W\left(u,\frac{1}{2}e^{-k}\right)-\frac{\Theta}{2}\right).$$
By induction on $k=0,1,\hdots,K-1$, we obtain
\begin{align*}
W\left(u,\ov{r}\right)-\frac{\Theta}{2}&=W\left(u,\frac{1}{2}e^{-K}\right)-\frac{\Theta}{2}\leq (1-\eta)^K\left(W\left(u,\frac{1}{2}\right)-\frac{\Theta}{2}\right)\\
&\leq C(1-\eta)^K\Vert\ov{u}\Vert_{H^1(\D_1)}^2\text{ by Lemma \ref{lem_W_cacciopoli}}\\
&\leq \alpha\Vert\ov{u}\Vert_{H^1(\D_1)}^2\text{ by definition of }\alpha.
\end{align*}
We claim now that for any $k\in\N$, we have
\begin{equation}\label{eq_inductive}
W\left(u,e^{-k}\ov{r}\right)\leq \frac{\Theta}{2}+(1-\eta)^k\alpha\Vert\ov{u}\Vert_{H^1(\D_1)}^2,\quad \frac{\Vert u_{e^{-k}\ov{r}}-\ov{u}\Vert_{H^1(\D_1\setminus\D_{e^{-1}})}}{\Vert\ov{u}\Vert_{H^1(\D_1)}}\leq\frac{c_1}{2}+C_1\sqrt{\alpha}\sum_{j=0}^{k-1}(1-\eta)^\frac{j}{2}
\end{equation}
We prove this by induction on $k$: this is verified at $k=0$, since we verified $W(u,\ov{r})\leq \frac{\Theta}{2}+\alpha\Vert \ov{u}\Vert_{H^1(\D_1)}^2$ and
$$\frac{\Vert u_{\ov{r}}-\ov{u}\Vert_{H^1(\D_1\setminus\D_{e^{-1}})}}{\Vert\ov{u}\Vert_{H^1(\D_1)}}\leq \ov{r}^{-3}c_2\leq\frac{1}{2}c_1.$$
Suppose now that \eqref{eq_inductive} is verified for some $k\in\N$. By our definition of $\alpha$, we have
$$\frac{c_1}{2}+C_1\sqrt{\alpha}\sum_{j=0}^{k-1}(1-\eta)^{\frac{j}{2}}\leq \frac{c_1}{2}+\frac{C_1\sqrt{\alpha}}{1-\sqrt{1-\eta}}=c_1,$$
so Theorem \ref{th_epiperimetry} applies to $u_{e^{-k}\ov{r}}$. This gives the two conclusions of \eqref{eq_inductive} for $k+1$:
\begin{align*}
W\left(u,e^{-k-1}\ov{r}\right)-\frac{\Theta}{2}&\leq (1- \eta)\left(W\left(u,e^{-k}\ov{r}\right)-\frac{\Theta}{2}\right)\\
&\leq  (1-\eta)^{k+1}\alpha\Vert\ov{u}\Vert_{H^1(\D_1)}^2
\end{align*}
and
\begin{align*}
\frac{\Vert u_{e^{-k-1}\ov{r}}-\ov{u}\Vert_{H^1(\D_1\setminus\D_{e^{-1}})}}{\Vert \ov{u}\Vert_{H^1(\D_1)}}&\leq\frac{ \Vert u_{e^{-k}\ov{r}}-\ov{u}\Vert_{H^1(\D_1\setminus\D_{e^{-1}})}+ \Vert u_{e^{-k-1}\ov{r}}-u_{e^{-k}\ov{r}}\Vert_{H^1(\D_1\setminus\D_{e^{-1}})}}{\Vert \ov{u}\Vert_{H^1(\D_1)}}\\
&\leq \frac{c_1}{2}+C_1\sqrt{\alpha}\sum_{j=0}^{k-1}(1-\eta)^\frac{j}{2}+ C_1\sqrt{W\left(u,e^{-k}\ov{r}\right)-\frac{\Theta}{2}}&\text{ by }\eqref{eq_inductive}\\
&\leq \frac{c_1}{2}+C_1\sqrt{\alpha}\sum_{j=0}^{k}(1-\eta)^\frac{j}{2}&\text{ by }\eqref{eq_inductive}.
\end{align*}
As a consequence, we have for every $k\in\N$:
$$W\left(u,e^{-k}\ov{r}\right)\leq \frac{\Theta}{2}+(1-\eta)^k\alpha\Vert \ov{u}\Vert_{H^1(\D_1)}^2,\ \Vert u_{e^{-k}\ov{r}}-\ov{u}\Vert_{H^1(\D_1\setminus\D_{e^{-1}})}\leq c_1\Vert \ov{u}\Vert_{H^1(\D_1)},$$
so using again Theorem \ref{th_epiperimetry}:
\begin{align*}
\Vert u_{e^{-k}\ov{r}}-u_{e^{-(k-1)}\ov{r}}\Vert_{H^1(\D_1\setminus\D_{e^{-1}})}&\leq C_1\sqrt{\alpha}(1-\eta)^{\frac{k}{2}}\Vert \ov{u}\Vert_{H^1(\D_1)}
\end{align*}
and
\begin{align*}
\Vert u_{e^{-k}\ov{r}}-u_{e^{-(k-1)}\ov{r}}\Vert_{H^1(\D_1)}&=\sqrt{\sum_{p\in\N}\Vert u_{e^{-k}\ov{r}}-u_{e^{-(k-1)}\ov{r}}\Vert_{H^1(\D_{e^{-p}}\setminus\D_{e^{-p-1}})}^2}\\
&\leq\sqrt{\sum_{p\in\N}e^{-4p}\Vert u_{e^{-(k+p)}\ov{r}}-u_{e^{-(k+p-1)}\ov{r}}\Vert_{H^1(\D_{1}\setminus\D_{e^{-1}})}^2}&\text{ by \eqref{eq_scaling_property}}\\
&\leq \sqrt{\sum_{p\in\N}e^{-4p}C_1^2\alpha (1-\eta)^{k+p}}\Vert \ov{u}\Vert_{H^1(\D_1)}\\
&\lesssim (1-\eta)^{\frac{k}{2}}\Vert \ov{u}\Vert_{H^1(\D_1)},
\end{align*}
The quantity $\Vert u_{e^{-k}\ov{r}}-u_{e^{-(k-1)}\ov{r}}\Vert_{H^1(\D_1)}$
is summable in $k$: as a consequence there exists $\widehat{u}\in\M_{\mathrm{hom}}$ a limit of $u_{e^{-k}\ov{r}}$ such that for any $k\geq 0$:
$$\Vert u_{e^{-k}\ov{r}}-\widehat{u}\Vert_{H^1(\D_1)}\leq \sum_{p\geq k}\Vert u_{e^{p}\ov{r}}-u_{e^{-p-1}\ov{r}}\Vert_{H^1(\D_1)}\lesssim \sum_{p\geq k}(1-\eta)^{\frac{p}{2}}\Vert \ov{u}\Vert_{H^1(\D_1)}\lesssim (1-\eta)^\frac{k}{2}\Vert \ov{u}\Vert_{H^1(\D_1)}.$$
Denoting $\nu=-\log\sqrt{1-\eta}$, we obtain, for any $r\in [0,1]$:
\begin{equation}\label{eq_est_u_r_hat_u}
\Vert u_r-\widehat{u}\Vert_{H^1(\D_1)}\lesssim r^{\nu} \Vert \ov{u}\Vert_{H^1(\D_1)}.
\end{equation}
Consider now any $r\in (0,1]$, and let $s\in (0,1)$ to be fixed. Then
\begin{align*}
\Vert u_r-\widehat{u}\Vert_{H^1(\D_1)}&\leq \Vert u_r-\ov{u}\Vert_{H^1(\D_1)}+\Vert \ov{u}-u_s\Vert_{H^1(\D_1)}+\Vert u_s-\widehat{u}\Vert_{H^1(\D_1)}\\
&\lesssim \left(\frac{1}{r^3}+\frac{1}{s^3}\right)\Vert u-\ov{u}\Vert_{H^1(\D_1)}+s^\nu\Vert \ov{u}\Vert_{H^1(\D_1)}.
\end{align*}
We optimize this quantity in $s$ by fixing $s=\left(\frac{\Vert u-\ov{u}\Vert_{H^1(\D_1)}}{\Vert \ov{u}\Vert_{H^1(\D_1)}}\right)^\frac{1}{3+\nu}$ (which is smaller than $1$ since $c_1\leq 1$), so
\begin{align*}
\Vert u_r-\widehat{u}\Vert_{H^1(\D_1)}&\lesssim \frac{\Vert u-\ov{u}\Vert_{H^1(\D_1)}}{r^3}+\Vert u-\ov{u}\Vert_{H^1(\D_1)}^{\frac{\nu}{3+\nu}}\Vert \ov{u}\Vert_{H^1(\D_1)}^\frac{3}{3+\nu}.
\end{align*}
We gather this estimate and \eqref{eq_est_u_r_hat_u}:
\begin{align*}
\Vert u_r-\widehat{u}\Vert_{H^1(\D_1)}&\lesssim\min\left(r^\nu\Vert \ov{u}\Vert_{H^1(\D_1)},\frac{\Vert u-\ov{u}\Vert_{H^1(\D_1)}}{r^3}+\Vert u-\ov{u}\Vert_{H^1(\D_1)}^{\frac{\nu}{3+\nu}}\Vert \ov{u}\Vert_{H^1(\D_1)}^\frac{3}{3+\nu}\right)\\
&\lesssim \min\left(r^\nu\Vert \ov{u}\Vert_{H^1(\D_1)},\Vert \ov{u}\Vert_{H^1(\D_1)}^{\frac{3}{3+\nu}}\Vert u-\ov{u}\Vert_{H^1(\D_1)}^{\frac{\nu}{3+\nu}}\right)\\
&\lesssim \min\left(r,\frac{\Vert u-\ov{u}\Vert_{H^1(\D_1)}}{\Vert \ov{u}\Vert_{H^1(\D_1)}}\right)^\gamma \Vert \ov{u}\Vert_{H^1(\D_1)},
\end{align*}
with $\gamma=\frac{\nu}{3+\nu}$. Evaluating this at $r=1$ we get
\begin{align*}
\Vert \ov{u}-\widehat{u}\Vert_{H^1(\D_1)}&\leq \Vert \ov{u}-u_r\Vert_{H^1(\D_1)}+\Vert u_r-\widehat{u}\Vert_{H^1(\D_1)}\\
&\lesssim \Vert u-\ov{u}\Vert_{H^1(\D_1)}+\Vert u-\ov{u}\Vert_{H^1(\D_1)}^\gamma\Vert \ov{u}\Vert_{H^1(\D_1)}^{1-\gamma}\\
&\lesssim \left(c_2+c_2^\gamma\right)\Vert \ov{u}\Vert_{H^1(\D_1)},
\end{align*}
so for a sufficiently small constant $c_2$ we obtain that $\widehat{u}\in\M_{\mathrm{hom}}^\Theta$ (by Lemma \ref{lem_separation_blowup}: the $H^1(\D_1)$ distance between $\M_{\mathrm{hom}}^\pi,\M_{\mathrm{hom}}^{t_1},\M_{\mathrm{hom}}^{2\pi}$ is positive). This concludes the proof.
\end{proof}

As a corollary, we obtain Theorem \ref{mr_typeIII_IV} and \ref{mr_blowup}.

\begin{proof}[Proof of Theorem \ref{mr_typeIII_IV}]
This is a direct consequence of the previous Proposition \ref{prop_convpolyblow_up} for $\Theta=2\pi$.
\end{proof}

\begin{proof}[Proof of Theorem \ref{mr_blowup}]
Let $u\in\M(\D_1)$ such that $0\in\Spt(u)$. By Proposition \ref{prop_convpolyblow_up}, it is sufficient to prove the following claims:
\begin{itemize}
\item[(i) ]In the case $\liminf_{r\to 0}\frac{|\Spt(u)\cap\D_{p,r}|}{\pi r^2}<1$, then there exists a sequence $r_n\to 0$ and $\ov{u}\in\B^{\pi}_{\mathrm{hom}}\cup \B^{t_1}_{\mathrm{hom}}$ such that $u_{r_n}$ converges to $\ov{u}$. Indeed this implies $W(u,0)\geq \frac{\Theta}{2}$ (with $\Theta=2|\Spt(\ov{u})\cap\D_1|$) and for some sufficiently large $n$ we have
$$\frac{\Vert u_{r_n}-\ov{u}\Vert_{H^1(\D_1)}}{\Vert \ov{u}\Vert_{H^1(\D_1)}}\leq c_2,$$
where $c_2$ is the constant from Proposition \ref{prop_convpolyblow_up}, and this proposition implies the result.
\item[(ii) ]In the case $W(u,0)\geq \pi$, then there exists $\ov{u}\in\mathrm{Span}(x^2+y^2,x^2-y^2,xy)$ and $r\in (0,1)$ such that $$\Vert u_r-\ov{u}\Vert_{H^1(\D_1)}\leq, c_2\Vert \ov{u}\Vert_{H^1(\D_1)}.$$
Indeed, by Proposition \ref{prop_convpolyblow_up}, this implies the result.
\end{itemize}
The first claim is an immediate consequence of Proposition \ref{prop_existence_blowup}.\\
Assume now we are in the second case. If some sequence $u_{r_n}$ is bounded in $H^1(\D_1)$ as $r_n\to 0$, then by Lemma \ref{lem_conv_blowup}, $u_{r_n}$ converges to some $2$-homogeneous limit $v$. Since $W(v,1)\geq \pi$, necessarily $v$ is of type III or IV, and we obtain the result.

Assume that $\Vert u_{r}\Vert_{H^1(\D_1)}\underset{r\to 0}{\longrightarrow}+\infty$ instead. By Proposition \ref{prop_renormalized_blowup}, there exists some extracted sequence $r_n\to 0$ and some nonzero $2$-homogeneous biharmonic function $v$ such that
$$\frac{u_{r_n}}{\Vert u_{r_n}\Vert_{H^1(\D_1)}}\underset{n\to +\infty}{\longrightarrow} v\text{ in }H^2_\lin(\R^2).$$
In particular, for some large enough $n$ we have
$$\frac{\left\Vert u_{r_n}-\Vert u_{r_n}\Vert_{H^1(\D_1)}v\right\Vert_{H^1(\D_1)}}{\Vert u_{r_n}\Vert_{H^1(\D_1)}}=\left\Vert \frac{u_{r_n}}{\Vert u_{r_n}\Vert_{H^1(\D_1)}}-v\right\Vert_{H^1(\D_1)}\leq c_2,$$
which concludes the proof.
\end{proof}

\subsection{$\mathcal{C}^{1,\alpha}$ regularity of the boundary}\label{subsec_C1alphaest}

We are now ready to prove a weaker form (with only $\mathcal{C}^{1,\gamma}$ bound) of Theorem \ref{mr_typeI}. We will use the following interpolation inequalities: for any smooth bounded open set $D\subset\R$ or $\R^2$, $0<\alpha<\beta<1$, there exists $C=C(D,\alpha,\beta)\geq 0$ such that for any $f\in\mathcal{C}^{1,\beta}(D,\R)$:
\begin{align}
\Vert f\Vert_{\mathcal{C}^{1}(D)}&\leq C \Vert f\Vert_{L^\infty(D)}^{\frac{\beta}{1+\beta}}\Vert f\Vert_{\mathcal{C}^{1,\beta}(D)}^{\frac{1}{1+\beta}}\label{eq_HolderInterp_C1Linfty},\\
\Vert f\Vert_{\mathcal{C}^{1,\alpha}(D)}&\leq C \Vert f\Vert_{\mathcal{C}^1(D)}^{\frac{\beta-\alpha}{\beta}}\Vert f\Vert_{\mathcal{C}^{1,\beta}(D)}^{\frac{\alpha}{\beta}}\label{eq_HolderInterp_C1aC1},\\
\Vert f\Vert_{\mathcal{C}^{1,\alpha}(D)}&\leq C \Vert f\Vert_{L^\infty(D)}^{\frac{\beta-\alpha}{\beta+1}}\Vert f\Vert_{\mathcal{C}^{1,\beta}(D)}^{\frac{\alpha+1}{\beta+1}}\label{eq_HolderInterp_C1aLinfty}.
\end{align}

\begin{proposition}\label{prop_C1a}
There exist $c_3>0$, $\alpha,\kappa\in (0,1)$, with the following property: for any $u\in\M(\D_1)$ such that
\[\Vert u-u^{\mathrm{I}}\Vert_{H^1(\D_1)}\leq c_3,\]
there exists a function $h\in\mathcal{C}^{1,\alpha}\left(\left[-\frac{1}{2},\frac{1}{2}\right],\R\right)$ such that $\Vert h\Vert_{L^{\infty}\left(\left[-\frac{1}{2},\frac{1}{2}\right]\right)}\leq \sqrt[6]{\Vert u-u^{\mathrm{I}}\Vert_{H^1(\D_1)}}$ and
$$\Vert h\Vert_{\mathcal{C}^{1,\alpha}\left(\left[-\frac{1}{2},\frac{1}{2}\right]\right)}\lesssim \Vert u-u^\mathrm{I}\Vert_{H^1(\D_1)}^\kappa$$
and
$$\left\{(x,y)\in\D_{\frac{3}{4}}:|x|\leq \frac{1}{2},\ u(x,y)\neq 0\right\}=\left\{(x,y)\in\D_{\frac{3}{4}}:|x|\leq \frac{1}{2},\ y>h(x)\right\}.$$
Moreover, $\Delta u\in\mathcal{C}_\loc^{1,\alpha}\left(\D_{\frac{1}{2}}\cap\ov{\Spt(u)}\right)$ and $\Delta u=1$ in $\D_{\frac{1}{2}}\cap\partial\Spt(u)$.
\end{proposition}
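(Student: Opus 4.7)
The plan is to apply Proposition \ref{prop_convpolyblow_up} (with $\Theta=\pi$) at \emph{every} boundary point in a neighbourhood of the origin and to turn the resulting quantitative rate of blow-up convergence into Hölder control of the tangent direction along $\partial\Spt(u)$. Writing $\eps:=\|u-u^{\mathrm{I}}\|_{H^1(\D_1)}$, the Cacciopoli estimate (Lemma \ref{lem_Cacciopoli}) combined with the BMO/log-Lipschitz bounds (Lemmas \ref{lem_BMO}, \ref{lem_C1log}) and a Gagliardo--Nirenberg interpolation gives $\|u-u^{\mathrm{I}}\|_{\mathcal{C}^{1}(\D_{3/4})}\lesssim \eps^{1/2}$. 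Coupled with the non-degeneracy Lemma \ref{lem_nondegeneracy} applied on small disks contained in $\{y<-\delta\}$, this forces
$$\partial\Spt(u)\cap\{|x|\leq 1/2\}\subset\{|y|\leq C\eps^{1/4}\},$$
which already yields the $L^\infty$-bound on $h$. At each such boundary point $p$ the density of $\Spt(u)$ at $p$ is strictly below $1$, so Proposition \ref{prop_existence_blowup} together with the monotonicity gives $W(u(p+\cdot),0)\geq \pi/2$; the $\mathcal{C}^1$-closeness to $u^{\mathrm{I}}$ together with the gap Lemma \ref{lem_separation_blowup} rules out type II accumulations, and Proposition \ref{prop_convpolyblow_up} produces a unique blow-up $\widehat{u}_p=\tfrac{s_p}{2}(\nu_p\cdot z)_+^2$, $s_p\in\{\pm1\}$, $\nu_p\in\Sp$, with the quantitative rate
$$\|u_{p,r}-\widehat{u}_p\|_{H^1(\D_1)}\leq C\min(r,\eps)^\gamma,\qquad r\in(0,1],\ p\in\partial\Spt(u)\cap\D_{1/2}.$$

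The heart of the argument is to convert this rate into Hölder continuity of $p\mapsto\nu_p$. For $p,q\in\partial\Spt(u)\cap\D_{1/2}$ at distance $\rho=|p-q|$ and any scale $R\in[\rho,1/4]$, the translation identity $u_{p,R}(z)=u_{q,R}(z+(q-p)/R)$, combined with the rate above applied at both $p$ and $q$, yields
$$\|\widehat{u}_p-\widehat{u}_q(\cdot-(q-p)/R)\|_{H^1(\D_{1/2})}\lesssim R^\gamma\eps^{1-\gamma}.$$
Since $\nabla\widehat{u}_q$ is globally Lipschitz, $\|\widehat{u}_q(\cdot-(q-p)/R)-\widehat{u}_q\|_{H^1(\D_{1/2})}\lesssim \rho/R$; optimising $R\simeq (\rho/\eps^{1-\gamma})^{1/(1+\gamma)}$ gives $\|\widehat{u}_p-\widehat{u}_q\|_{H^1(\D_{1/2})}\lesssim \rho^{\alpha}\eps^{\kappa_0}$ with $\alpha:=\gamma/(1+\gamma)$ and $\kappa_0:=(1-\gamma)/(1+\gamma)$. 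Because $\widehat{u}_p,\widehat{u}_q$ are explicit homogeneous half-plane solutions, this bound directly controls $|s_p-s_q|+|\nu_p-\nu_q|$, and by continuity $s_p$ is locally constant (equal to $+1$ after identifying it from the starting comparison with $u^{\mathrm{I}}$).

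With $\nu_p$ close to $e_y$ and Hölder in $p$, the standard implicit-function/graph argument expresses $\partial\Spt(u)\cap\{|x|<1/2\}$ as $\{y=h(x)\}$ with $h'(x)$ read off from $\nu_{(x,h(x))}$, hence $h\in\mathcal{C}^{1,\alpha}$; interpolating the resulting Hölder estimate on $h'$ against the $L^\infty$-bound on $h$ via \eqref{eq_HolderInterp_C1Linfty}--\eqref{eq_HolderInterp_C1aLinfty} produces $\|h\|_{\mathcal{C}^{1,\alpha}}\lesssim \eps^{\kappa}$ for an appropriate $\kappa>0$. The boundary condition $\Delta u=\pm 1$ follows from the inner-variation identity of Lemma \ref{lem_shapeder} applied after blowing up at each boundary point (the limit being a half-plane parabola forces $(\Delta \widehat{u}_p)^2=1$), with the sign locally constant by connectedness; and $\Delta u\in\mathcal{C}^{1,\alpha}_\loc$ up to $\partial\Spt(u)$ is then standard Schauder regularity for biharmonic functions on $\mathcal{C}^{1,\alpha}$ domains with zero Dirichlet data. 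The main obstacle is the shift-comparison step of the second paragraph: converting an $H^1$-distance estimate between a minimizer and its blow-up into a pointwise Hölder bound on the blow-up direction requires a careful balancing of scales and uses essentially the full quantitative content of Proposition \ref{prop_convpolyblow_up}, not merely uniqueness of the blow-up.
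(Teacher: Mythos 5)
The core of your argument has a genuine gap at the very first step where you pass from the strip localization to blow-ups: you assert that ``at each such boundary point $p$ the density of $\Spt(u)$ at $p$ is strictly below $1$'', and everything downstream (the invocation of Proposition \ref{prop_existence_blowup}, the bound $W(u(p+\cdot),0)\geq \pi/2$, and hence the applicability of Proposition \ref{prop_convpolyblow_up} at \emph{every} boundary point) rests on this. But nothing you have established implies it. The $\mathcal{C}^1$-closeness to $u^{\mathrm{I}}$ and the non-degeneracy argument only tell you that $u\equiv 0$ below the strip $\{|y|\lesssim \eps^{1/4}\}$ and that $\partial\Spt(u)$ lies inside that strip; for a boundary point $p$ sitting strictly above the bulk of the zero set, the disks $\D_{p,r}$ with $r$ small never meet the region where $u$ is known to vanish, so the zero set may a priori have density $0$ at $p$ (i.e.\ $p$ could be a junction or explosion point in the terminology of Theorem \ref{mr_total}, with $\Spt(u)$ of full density). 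Proposition \ref{prop_existence_blowup} requires $\limsup_{r\to 0}|\{u=0\}\cap\D_{p,r}|/|\D_{p,r}|>0$, and Proposition \ref{prop_convpolyblow_up} with $\Theta=\pi$ requires $W(u(p+\cdot),0)\geq\pi/2$; neither hypothesis is available at such points. Excluding them is precisely the main difficulty, and the paper handles it with a sliding-cone contact-point argument: one first restricts attention to the set $G$ of boundary points carrying a full cone of opening $>2\pi-t_1$ inside $\{u=0\}$ (for these the zero set has definite density, type II is excluded by the cone opening, and the blow-up rate applies), proves via the contact-point dichotomy that $G$ is a Lipschitz graph, upgrades it to $\mathcal{C}^{1,\beta}$ by the same two-point/matched-scale comparison you describe, and only \emph{then} shows that every boundary point in the strip belongs to $G$ (your claim would be the conclusion of that last step, not an input). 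Your ``implicit-function/graph argument'' suffers from the same circularity: Hölder continuity of the blow-up normal along $\partial\Spt(u)$ presupposes blow-ups at all boundary points and does not by itself rule out extra boundary components stacked inside the strip.

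The remainder of your proposal is sound and close to the paper's: the $L^\infty$ localization of the boundary, the matched-scale translation comparison converting the rate of Proposition \ref{prop_convpolyblow_up} into Hölder control of the blow-up direction (the paper optimizes with $r=d^{1/(2\gamma+1)}$, you with $R\simeq(\rho/\eps^{1-\gamma})^{1/(1+\gamma)}$; both are fine), the interpolation to get $\|h\|_{\mathcal{C}^{1,\alpha}}\lesssim\eps^{\kappa}$, and the identification $\Delta u=1$ on the free boundary followed by boundary Schauder theory. But without an argument replacing the cone/contact-point step (claims (c) and (f) of the paper's proof), the proof does not go through.
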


\begin{proof}
In this proof, when $u_{p,r}$ has a unique limit as $r\to 0$, we write its limit $u_{p,0}$. We let $\eps>0$ be such that $\Vert u-u^\mathrm{I}\Vert_{H^1(\D_1)}\leq \eps$, and $\eps$ will be supposed arbitrarily small. We define the cone
$$\mathcal{C}=\{(x,y)\in\R^2:y<-|x|/2\}.$$
The angular opening of the cone $\mathcal{C}$ is approximately $0.7\pi$, which is strictly larger than $2\pi-t_1\approx 0.57\pi$. We let
$$G=\left\{p\in\partial\Spt(u)\cap\left[-\frac{1}{2},\frac{1}{2}\right]^2:\ (p+\mathcal{C})\cap\D_{\frac{3}{4}}\subset\{u=0\}\right\}.$$
Note that the square $\left[-\frac{1}{2},\frac{1}{2}\right]^2$ is fully contained in the open disk $\D_{\frac{3}{4}}$.

We now prove a sequence of claims on $G$ and $\partial\Spt(u)$, to prove that $G$ is exactly $\partial\Spt(u)\cap\left[-\frac{1}{2},\frac{1}{2}\right]^2$ and that it is a $\mathcal{C}^{1,\alpha}$ graph (over the $x$ coordinate) for some $\alpha\in (0,1)$ to be defined.

\begin{itemize}
\item[(\textbf{Claim a})] For a sufficiently small $\eps$, which we will suppose in the rest of the proof, we have $$\D_{\frac{3}{4}}\cap\partial\Spt(u)\subset\{(x,y)\in\R^2:|y|\leq \sqrt[6]{\eps}\}.$$
Indeed, let $r=\sqrt[6]{\eps}$, $p=(x,y)\in \D_{\frac{3}{4}}$ such that $y\leq -r$, then $u^\mathrm{I}_{p,r}=0$ in $\D_1$ so
$$\Vert u_{p,r}\Vert_{H^1(\D_1)}=\Vert (u-u^\mathrm{I})_{p,r}\Vert_{H^1(\D_1)}\leq\frac{1}{r^3}\Vert u-u^\mathrm{I}\Vert_{H^1(\D_1)}\leq \frac{\eps}{r^3}=\sqrt{\eps}$$
so for a small enough $\eps$ by the non-degeneracy of Lemma \ref{lem_nondegeneracy} we have $u=0$ in $\D_{p,r/2}$: $p$ cannot be in the boundary of the support.

Assume now instead that $y\geq r$. Lemma \ref{lem_Cacciopoli} gives a bound $\Vert u\Vert_{H^2(\D_{\frac{3}{4}})}\lesssim 1$, so
\begin{align*}
\Vert u-u^\mathrm{I}\Vert_{\mathcal{C}^0(\D_{\frac{3}{4}})}&\lesssim \Vert u-u^\mathrm{I}\Vert_{W^{1,4}(\D_{\frac{3}{4}})}\lesssim \Vert u-u^\mathrm{I}\Vert_{H^{2}(\D_{\frac{3}{4}})}^\frac{1}{2} \Vert u-u^\mathrm{I}\Vert_{H^{1}(\D_{\frac{3}{4}})}^\frac{1}{2}\\ 
&\lesssim \Vert u-u^\mathrm{I}\Vert_{H^{1}(\D_{\frac{3}{4}})}^\frac{1}{2}\lesssim \sqrt{\eps}
\end{align*}
by Gagliardo-Nirenberg interpolation inequality. Thus,
$u(x,y)\geq \frac{1}{2}y^2 -C\sqrt{\eps}$
for some universal constant $C$. Since $y\geq \sqrt[6]{\eps}$, then $u(x,y)>0$ for a small enough $\eps$. This concludes claim (a).
\item[(\textbf{Claim b})] For any $p\in \D_{\frac{3}{4}}\cap\partial\Spt(u)$, we claim $\Vert u_{p,\frac{1}{8}}-u^\mathrm{I}\Vert_{H^1(\D_1)}\lesssim \sqrt[12]{\eps}$. Indeed, using Lemma \ref{lem_Cacciopoli}, \ref{lem_C1log}, we have a uniform bound
$$\Vert u\Vert_{\mathcal{C}^{1,\frac{1}{2}}(\D_{7/8})}\lesssim 1.$$
Let $p=(x,y)\in G$, by the claim (a) we have $|y|\leq \sqrt[6]{\eps}$, so
\begin{align*}
\Vert u_{p,\frac{1}{8}}-u^\mathrm{I}\Vert_{H^1(\D_1)}&\leq \Vert u_{(x,y),\frac{1}{8}}-u_{(x,0),\frac{1}{8}}\Vert_{H^1(\D_1)}+\Vert u_{(x,0),\frac{1}{8}}-u^\mathrm{I}\Vert_{H^1(\D_1)}\\
&\lesssim \sqrt{|y|}+\Vert \left(u-u^\mathrm{I}\right)_{(x,0),\frac{1}{8}}\Vert_{H^1(\D_1)}\text{ by the }\mathcal{C}^{1,\frac{1}{2}}(\D_{\frac{7}{8}})\text{ bound}\\
&\leq \sqrt[12]{\eps}+\eps\lesssim \sqrt[12]{\eps}
\end{align*}
We obtain claim (b).

\item[(\textbf{Claim c})] $G$ is a graph in the variable $x\in \left[-\frac{1}{2},\frac{1}{2}\right]$: by definition two distinct points of $G$ cannot share the same $x$ coordinate, so it is sufficient to prove that for any $x_0\in \left[-\frac{1}{2},\frac{1}{2}\right]$ there exists $y_0\in\R$ such that $(x_0,y_0)\in G$.

To prove this, consider the largest possible value of $y_0$ such that $((x_0,y_0)+\mathcal{C})\cap\D_{\frac{3}{4}}\subset\{u=0\}$. Since $y_0$ is maximal, there must exist some contact point $$q=(x,y)\in \partial\left[((x_0,y_0)+\mathcal{C})\cap\D_{\frac{3}{4}}\right]\cap \partial\Spt(u).$$
By the claim (a), necessarily $|y|\leq \sqrt[6]{\eps}$.

Suppose $q$ is \textbf{not} $(x_0,y_0)$. Then $\Spt(u)$ has Lebesgue density at most $\frac{1}{2}$ at $q$, so there exists a unique blow-up $u_{q,0}$ that is necessarily of the form
$$u_{q,0}=s_1 u^\mathrm{I} \circ \mathrm{rot}_{s_2\theta_0} $$
where $\theta_0=\arctan(1/2)$, $s_1,s_2\in\{-1,+1\}$. However by the claim (b) we have 
$$\Vert u_{q,\frac{1}{8}}-u^\mathrm{I}\Vert_{H^1(\D_1)}\lesssim \sqrt[12]{\eps},$$

so Proposition \ref{prop_convpolyblow_up} applied to $u_{q,\frac{1}{8}}$ gives
$$\Vert s_1 u^\mathrm{I} \circ \mathrm{rot}_{s_2\theta_0}-u^\mathrm{I}\Vert_{H^1(\D_1)}\lesssim \eps^\frac{\gamma}{12}.$$
Here $\gamma$ is the constant from Proposition \ref{prop_convpolyblow_up}.
This is a contradiction when $\eps$ is small enough: as a consequence $G$ is a graph, and it is automatically the graph of a $1$-Lipschitz function (since the slope of $\mathcal{C}$ is smaller than $1$), denoted 
$$g:\left[-\frac{1}{2},\frac{1}{2}\right]\to \left[-\sqrt[6]{\eps},\sqrt[6]{\eps}\right].$$

\item[(\textbf{Claim d})] For every $p\in G$, $u$ admits at $p$ a unique blow-up of type I, denoted $u_{p,0}$, such that for every $r\in \left(0,\frac{1}{8}\right)$ we have
$$\Vert u_{p,r}-u_{p,0}\Vert_{H^1(\D_1)}\lesssim \min\left(\eps^\frac{1}{12},r\right)^\gamma.$$
Indeed, the existence of a blow-up of type I is a consequence of Proposition \ref{prop_existence_blowup}, since the density of the support of $u$ at points of $G$ is strictly less than $\frac{t_1}{2\pi}$. The uniqueness and rate of convergence follows from applying Proposition \ref{prop_convpolyblow_up} to $u_{p,\frac{1}{8}}$, which verifies $\Vert u_{p,\frac{1}{8}}-u^\mathrm{I}\Vert_{H^1(\D_1)}\lesssim \sqrt[12]{\eps}$ by the claim (b).

\item[(\textbf{Claim e})] $\Vert g\Vert_{\mathcal{C}^{1,\beta}(\left[-\frac{1}{2},\frac{1}{2}\right])}\lesssim 1$, where $\beta=\frac{\gamma}{2\gamma+1}\in (0,1)$.

To prove this, consider $p,q$ two points in $G$, $d:=|p-q|$, and $r=d^\frac{1}{2\gamma+1}$. We suppose $d$ is small enough so that $r\leq \frac{1}{16}$ and $\frac{d}{r}\leq \frac{1}{4}$. We apply claim (d) above to $u$ at $p$ and $q$:
\begin{align*}
\Vert u_{p,2r}-u_{p,0}\Vert_{H^1(\D_1)}&\lesssim r^\gamma=d^\frac{\gamma}{2\gamma+1}\\
\Vert u_{q,r}-u_{q,0}\Vert_{H^1(\D_1)}&\lesssim r^\gamma=d^\frac{\gamma}{2\gamma+1}.
\end{align*}
By the Cacciopoli-type and higher regularity estimates from Lemma \ref{lem_Cacciopoli}, \ref{lem_C1log}, we have
$$\Vert u_{p,2r}\Vert_{\mathcal{C}^{1,\frac{1}{2}}(\D_{\frac{3}{4}})}\lesssim 1 $$
Then,
$$\Vert u_{p,r}-u_{q,r}\Vert_{H^1(\D_1)}=\left\Vert u_{p,r}-u_{p,r}\left(\cdot+\frac{q-p}{r}\right)\right\Vert_{H^1(\D_1)}\lesssim \Vert u_{p,r}\Vert_{\mathcal{C}^{1,\frac{1}{2}}(\D_{\frac{3}{2}})}\sqrt{\frac{|q-p|}{r}}\lesssim d^\frac{\gamma}{2\gamma+1}.$$

This implies $\Vert u_{q,0}-u_{p,0}\Vert_{H^1(\D_1)}\lesssim d^\frac{\gamma}{2\gamma+1}$, so  $g$ (the function defining the graph $G$) is differentiable at every point $x\in\left[-\frac{1}{2},\frac{1}{2}\right]$, with $|g'(x)-g'(y)|\lesssim (|x-y|+|g(x)-g(y)|)^{\beta}\lesssim |x-y|^\beta$.

\item[(\textbf{Claim f})] $\left\{(x,y)\in\D_{\frac{3}{4}}:|x|\leq \frac{1}{2}\right\}\cap\partial\Spt(u)\subset G$. Indeed, suppose that there exists some point 
$$q\in\left\{(x,y)\in\D_{\frac{3}{4}}:|x|\leq \frac{1}{2}\right\}\cap\partial\Spt(u)\setminus G.$$
Let $p$ be a projection of $q$ on $G$ ($p$ may not be unique). Since the $y$-coordinate of $p$ and $q$ are bounded by $\sqrt[6]{\eps}$, we have necessarily $|p-q|\lesssim \sqrt[6]{\eps}$. Let $r=4|p-q|$, then by application of claim (d) to $u$ at the point $p$ with radius $r$, we have
$$\Vert u_{p,r}-u^\mathrm{I}\Vert_{H^1(\D_1)} \lesssim \eps^\frac{\gamma}{12}.$$

Let $\left(\tilde{u},\tilde{q}\right)=\left(u_{p,r},\frac{q-p}{r}\right)$. Let $\tilde{G}$ be the graph associated to $\tilde{u}$ (defined in the same way that $G$ is defined for $u$), note that $0\in \tilde{G}$ (since $p\in G$).

All the above claims apply to $\tilde{u}$:  by claims (a,e) applied to $\tilde{u}$, we obtain that $\tilde{G}$ is the graph of a function $\tilde{g}\in\mathcal{C}^{1,\alpha}\left(\left[-\frac{1}{2},\frac{1}{2}\right]\right)$ such that $\Vert\tilde{g}\Vert_{L^\infty\left(\left[-\frac{1}{2},\frac{1}{2}\right]\right)}\lesssim \eps^\frac{\gamma}{72}$. By definition of $r$, we have $|\tilde{q}|=\frac{1}{4}$. By claim (a), we have $|\tilde{q}_y|\lesssim \eps^\frac{\gamma}{36}$. At the same time, the origin is a projection of $\tilde{q}$ on $\frac{G-p}{r}$: this is a contradiction for a small enough $\eps$, which proves the claim (f).
\end{itemize}
Thus, we have proved that $$\left\{(x,y)\in\D_{\frac{3}{4}}:|x|\leq \frac{1}{2}\right\}\cap\Spt(u)=\left\{(x,y)\in\D_{\frac{3}{4}}:|x|\leq \frac{1}{2},\ y>g(x)\right\}$$
for some function $g:\left[-\frac{1}{2},\frac{1}{2}\right]\to \R$ that verifies
$$\Vert g\Vert_{\mathcal{C}^{1,\beta}\left(\left[-\frac{1}{2},\frac{1}{2}\right]\right)}\lesssim 1,\ \Vert g\Vert_{L^{\infty}\left(\left[-\frac{1}{2},\frac{1}{2}\right]\right)}\leq \sqrt[6]{\eps}.$$
We define $\alpha=\frac{\beta}{2}$, $\kappa=\frac{\alpha}{12\alpha+4}$. By the interpolation properties \eqref{eq_HolderInterp_C1aLinfty}, this implies
$$\Vert h\Vert_{\mathcal{C}^{1,\alpha}\left(\left[-\frac{1}{2},\frac{1}{2}\right]\right)}\lesssim \eps^{\kappa}.$$
Now we prove the $\mathcal{C}^{1,\alpha}_\loc(\D_{\frac{1}{2}})$ regularity of $\Delta u$ on its support. Let $q\in \Spt(u)\cap\D_{\frac{1}{2}}$, and let $p$ be a projection of $q$ on $G$. We suppose $r:=2|p-q|$ is sufficiently small such that $r\leq \frac{1}{8}$. Then we may apply claim (d) to $u$ at the point $p$:
$$\Vert u_{p,r}-u_{p,0}\Vert_{H^1(\D_1)}\lesssim \eps^\frac{\gamma}{12}.$$
Moreover, since $p$ is a projection of $q$ on $G$, then for a small enough $\eps$ we have $\D_{\tilde{q},\frac{1}{2}}\subset\Spt(u_{p,0})$, where $\tilde{q}=\frac{q-p}{r}$. Let now $\chi\in\mathcal{C}^\infty_c(\D_{\frac{1}{2}},\R)$ a radial function with integral $1$ with $|\nabla^k \chi|\lesssim 1$ for $k=0,1$. Since $\Delta u$ is harmonic on its support, then
\begin{equation}\label{eq_mean_value}
\begin{split}
|\Delta u(q)-1|&=\left|\Delta u_{p,r}\left(\tilde{q}\right)-\Delta u_{p,0}\left(\tilde{q}\right)\right|=\left|\int_{\D_{\tilde{q},\frac{1}{2}}}\left(\Delta u_{p,r}-\Delta u_{p,0}\right)\chi(\cdot-\tilde{q})\right|\\
&=\left|\int_{\D_{\tilde{q},\frac{1}{2}}}\nabla(u_{p,r}-u_{p,0})\cdot\nabla\chi(\cdot-\tilde{q})\right|\lesssim \eps^\frac{\gamma}{12}
\end{split}
\end{equation}
Thus $\Delta u-1$ is harmonic on its support and continuous up to the boundary $G$, with a homogeneous Dirichlet condition. Since $G$ has a regularity $\mathcal{C}^{1,\alpha}$, then by classical boundary elliptic regularity we have $\Delta u\in\mathcal{C}^{1,\alpha}_\loc\left(\D_{\frac{1}{2}}\cap\ov{\Spt(u)}\right)$.
\end{proof}

\begin{proposition}\label{prop_II_C1a}
There exist $c_4,C_4>0$, $\alpha\in (0,1)$, with the following property: for any $u\in\M(\D_1)$ such that
\[\Vert u-u^{\mathrm{II}}\Vert_{H^1(\D_1)}\leq c_4,\]
there exist two functions $\theta^-,\theta^+\in\mathcal{C}^{1,\alpha}\left(\left[\frac{1}{4},\frac{3}{4}\right]\right)$ such that
\begin{align*}
\Vert \theta^-\Vert_{\mathcal{C}^{1,\alpha}\left(\left[\frac{1}{4},\frac{3}{4}\right]\right)}+\Vert \theta^+\Vert_{\mathcal{C}^{1,\alpha}\left(\left[\frac{1}{4},\frac{3}{4}\right]\right)}&\lesssim 1,\\
\Vert \theta^-\Vert_{L^{\infty}\left(\left[\frac{1}{4},\frac{3}{4}\right]\right)}+\Vert \theta^+\Vert_{L^{\infty}\left(\left[\frac{1}{4},\frac{3}{4}\right]\right)}&\lesssim \sqrt[6]{\Vert u-u^{\mathrm{II}}\Vert_{H^1(\D_1)}},
\end{align*}
and
$$\left\{re^{i\theta}\in\Spt(u)\cap\D_{\frac{3}{4}}\setminus \D_{\frac{1}{4}}\right\}=\left\{re^{i\theta}\in\D_{\frac{3}{4}}\setminus \D_{\frac{1}{4}}:\theta^-(r)\leq \theta\leq t_1+\theta^+(r)\right\}.$$
\end{proposition}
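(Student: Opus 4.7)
The plan is to reduce the statement to Proposition \ref{prop_C1a} via a local covering argument on the two rays of $\partial\Spt(u^{\mathrm{II}})$. The key preliminary observation is that every point $p_0 = r_0 e^{i\theta_0}$ with $\theta_0 \in \{0, t_1\}$ and $r_0 \in [1/8, 7/8]$ is itself a flat boundary point of $u^{\mathrm{II}}$: a direct Taylor expansion of \eqref{def_uII}, combined with the Laplacian values $b^{\mathrm{II}\prime\prime}(0)=+1$ and $b^{\mathrm{II}\prime\prime}(t_1)=-1$ (the latter via the identity $\tan(t_1)=t_1$), shows that, uniformly in $p_0$ on the compact arc,
$$\Vert u^{\mathrm{II}}_{p_0,\rho} - s(\theta_0)\,u^{\mathrm{I}}\circ R_{p_0}\Vert_{H^1(\D_1)} = \mathcal{O}(\rho) \quad \text{as } \rho \to 0^+,$$
where $s(\theta_0)\in\{\pm 1\}$ is the sign of $\Delta u^{\mathrm{II}}$ on the associated ray and $R_{p_0}$ is a rotation aligning the support of $u^{\mathrm{I}}$ with that of $u^{\mathrm{II}}$ near $p_0$.

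First, a preliminary localization step is needed: $\partial\Spt(u) \cap \overline{\D_{3/4}\setminus\D_{1/4}}$ must lie in a thin tubular neighborhood of the two rays of $\partial\Spt(u^{\mathrm{II}})$. This follows by combining Lemma \ref{lem_Cacciopoli} and Gagliardo--Nirenberg interpolation to obtain $\Vert u - u^{\mathrm{II}}\Vert_{L^\infty(\D_{7/8})}\lesssim c_4^{1/2}$, together with the strict positivity of $|u^{\mathrm{II}}|$ on compact subsets of $\Spt(u^{\mathrm{II}})$ away from its boundary, and the nondegeneracy Lemma \ref{lem_nondegeneracy} applied to small disks in $\Spt(u^{\mathrm{II}})^c$ away from the rays. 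Hence $\partial\Spt(u)\cap\overline{\D_{3/4}\setminus\D_{1/4}}$ is covered by finitely many small disks $\{\D_{p_{0,j}, \rho_0/4}\}$ centered on the two rays.

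Second, for each $p_0$ in this covering, the Taylor estimate above and the scaling \eqref{eq_scaling_property} give
$$\Vert u_{p_0,\rho_0} - s(\theta_0)\,u^{\mathrm{I}}\circ R_{p_0}\Vert_{H^1(\D_1)} \lesssim \rho_0 + \rho_0^{-3}\Vert u - u^{\mathrm{II}}\Vert_{H^1(\D_1)}.$$
Fixing $\rho_0$ small but universal and requiring $c_4 \leq c_3 \rho_0^3/2$ (with $c_3$ from Proposition \ref{prop_C1a}) makes the right-hand side at most $c_3$. Proposition \ref{prop_C1a} applied to $\pm u_{p_0,\rho_0}\circ R_{p_0}^{-1}$ then produces a $\mathcal{C}^{1,\alpha}$ function $h_{p_0}$ describing $\partial\Spt(u) \cap \D_{p_0,\rho_0/4}$ as a graph in the rotated Cartesian frame, with $\Vert h_{p_0}\Vert_{\mathcal{C}^{1,\alpha}}\lesssim 1$. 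Since the Cartesian-to-polar change of coordinates is a smooth diffeomorphism away from the origin, each $h_{p_0}$ translates into a local polar description $\theta = \theta_{p_0}(r)$ preserving $\mathcal{C}^{1,\alpha}$ regularity with comparable norms; uniqueness on overlaps and patching then yield $\theta^\pm \in \mathcal{C}^{1,\alpha}([1/4,3/4])$ with the stated $\mathcal{C}^{1,\alpha}$ bound.

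The main technical subtlety is obtaining the quantitative $L^\infty$ bound $\Vert\theta^\pm\Vert_{L^\infty}\lesssim \sqrt[6]{\Vert u - u^{\mathrm{II}}\Vert_{H^1(\D_1)}}$. With a fixed universal $\rho_0$, Proposition \ref{prop_C1a} gives only $\Vert h_{p_0}\Vert_{L^\infty}\lesssim (\rho_0 + \rho_0^{-3}\Vert u - u^{\mathrm{II}}\Vert_{H^1})^{1/6}$, which does not vanish as $\Vert u-u^{\mathrm{II}}\Vert_{H^1}\to 0$ with $\rho_0$ fixed. To recover the correct exponent I would optimize $\rho_0$ as a function of $\Vert u-u^{\mathrm{II}}\Vert_{H^1}$, balancing the two terms by the choice $\rho_0 \sim \Vert u - u^{\mathrm{II}}\Vert_{H^1}^{1/4}$; this yields $\rho_0\Vert h_{p_0}\Vert_{L^\infty}\lesssim \Vert u - u^{\mathrm{II}}\Vert_{H^1}^{7/24}\leq \sqrt[6]{\Vert u - u^{\mathrm{II}}\Vert_{H^1}}$ in the original (unscaled) coordinates when $c_4$ is small enough, matching the statement.
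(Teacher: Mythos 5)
Your proposal is correct and follows essentially the same route as the paper: rescalings of $u^{\mathrm{II}}$ at points of its two boundary rays are quantitatively close to $\pm u^{\mathrm{I}}$ after rotation, so the flat $\eps$-regularity of Proposition \ref{prop_C1a} is applied at a fixed universal scale to get the $\mathcal{C}^{1,\alpha}$ graph bound and at an $\eps$-dependent scale to get the $L^\infty$ smallness, after first localizing $\partial\Spt(u)$ to tubular neighbourhoods of the two rays via the $\mathcal{C}^0$/$\mathcal{C}^1$ closeness and the nondegeneracy Lemma \ref{lem_nondegeneracy}. The differences are cosmetic: you use a finite covering of centers instead of all $re_1$, $r\in\left[\frac14,\frac34\right]$, your rate $\mathcal{O}(\rho)$ for $\Vert u^{\mathrm{II}}_{p_0,\rho}-s(\theta_0)\,u^{\mathrm{I}}\circ R_{p_0}\Vert_{H^1(\D_1)}$ is in fact the correct one (the paper's $\rho^3$ is harmlessly optimistic, any positive power suffices), and your balanced scale $\rho_0\sim\Vert u-u^{\mathrm{II}}\Vert_{H^1(\D_1)}^{1/4}$ plays exactly the role of the paper's choice $\rho=\sqrt[6]{\eps}$, both yielding the stated $L^\infty$ bound.
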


\begin{proof}
We denote
$$\eps=\Vert u-u^{\mathrm{II}}\Vert_{H^1(\D_1)},\quad S_0=\left[\frac{1}{4}e_1,\frac{3}{4}e_1\right],\quad S_{t_1}=\mathrm{rot}_{t_1}(S_0),$$
where $\mathrm{rot}_{t_1}$ is the rotation of angle $t_1$. We let $\eps>0$ that will be fixed sufficiently small later. We let also for any $\delta>0$:
$$S_0^\delta=\{p\in\D_{\frac{3}{4}}\setminus\D_{\frac{1}{4}}:\mathrm{dist}(p,S_0)\leq \delta,\ S_{t_1}^\delta=\mathrm{rot}_{t_1}(S_0^\delta).$$
Let $c_2,C_2,\gamma$ be the constants from Proposition \ref{prop_convpolyblow_up}, and $c_3,C_3,\alpha,\kappa$ be the constants from Proposition \ref{prop_C1a}.
Let $r\in\left[\frac{1}{4},\frac{3}{4}\right]$, $\theta\in \left[-\frac{\pi}{2},\frac{\pi}{2}\right]$, then
$$u^\mathrm{II}(re^{i\theta})-u^{\mathrm{I}}(re^{i\theta})=-\frac{r^2}{2t_1}\left(\theta-\frac{\sin(2\theta)}{2}\right).$$
Observe the last factor is of the form $\mathcal{O}_{\theta\to 0}(|\theta|^3)$, so for any $\rho\in\left[0,\frac{1}{4}\right]$ we get
$$\Vert u^{\mathrm{II}}_{re_1,\rho}-u^{\mathrm{I}}\Vert_{H^1(\D_1)}\lesssim \rho^3.$$
Thus,
\begin{equation}\label{eq_comparison_flat_angle}
\Vert u_{re_1,\rho}-u^{\mathrm{I}}\Vert_{H^1(\D_1)}\leq \Vert u_{re_1,\rho}-u^{\mathrm{II}}_{re_1,\rho}\Vert_{H^1(\D_1)}+\Vert u^{\mathrm{II}}_{re_1,\rho}-u^{\mathrm{I}}\Vert_{H^1(\D_1)} \lesssim \frac{\eps}{\rho^3}+\rho^3.
\end{equation}
We may fix a sufficiently small $\ov{\rho}\gtrsim 1$ such that, when $\eps$ is sufficiently small, then
$$\forall r\in\left[\frac{1}{4},\frac{3}{4}\right],\ \Vert u_{re_1,\ov{\rho}}-u^{\mathrm{I}}\Vert_{H^1(\D_1)}\leq c_3.$$
We apply Proposition \ref{prop_C1a} to every $r\in\left[\frac{1}{4},\frac{3}{4}\right]$. As a consequence, we get that
$$S_0^{\ov{\rho}/2}\cap\partial\Spt(u)=\left\{te^{i\theta^-(t)},\ t\in\left[\frac{1}{4},\frac{3}{4}\right]\right\}$$
with $\Vert\theta^-\Vert_{\mathcal{C}^{1,\alpha}\left(\left[\frac{1}{4},\frac{3}{4}\right]\right)}\lesssim 1$. Moreover, using again \eqref{eq_comparison_flat_angle} with $\rho:=\sqrt[6]{\eps}$ and Proposition \ref{prop_C1a}, we have
$$\Vert\theta^-\Vert_{L^\infty\left(\left[\frac{1}{4},\frac{3}{4}\right]\right)}\lesssim \sqrt[6]{\eps}.$$
We define similarly $\theta^+$ from the rotation $u\circ\mathrm{rot}_{t_1}$, and we obtain
$$S_{t_1}^{\ov{\rho}/2}\cap\partial\Spt(u)=\left\{te^{i(t_1+\theta^+(t))},\ t\in\left[\frac{1}{4},\frac{3}{4}\right]\right\}$$
with
$$\Vert\theta^+\Vert_{\mathcal{C}^{1,\alpha}\left(\left[\frac{1}{4},\frac{3}{4}\right]\right)}\lesssim 1,\ \Vert\theta^+\Vert_{L^\infty\left(\left[\frac{1}{4},\frac{3}{4}\right]\right)}\lesssim \sqrt[6]{\eps}.$$
By the interpolation inequality \eqref{eq_interp_C1H1}, and Lemmas \ref{lem_Cacciopoli}, \ref{lem_BMO}:
\begin{align*}
\Vert u-u^{\mathrm{II}}\Vert_{\mathcal{C}^1(\D_{\frac{3}{4}})}&\lesssim \Vert u-u^{\mathrm{II}}\Vert_{H^1(\D_{\frac{3}{4}})}^\frac{1}{2}\Vert u-u^{\mathrm{II}}\Vert_{W^{2,4}(\D_{\frac{3}{4}})}^\frac{1}{2}\\
&\lesssim \Vert u-u^{\mathrm{II}}\Vert_{H^1(\D_{1})}^\frac{1}{2}\left(\Vert u\Vert_{H^1(\D_1)}+\Vert u^{\mathrm{II}}\Vert_{H^{1}(\D_{1})}\right)^\frac{1}{2}\\
&\lesssim \eps^\frac{1}{2}.
\end{align*}
In particular, for a sufficiently small $\eps$, we obtain $$\partial\Spt(u)\cap \left(\D_{\frac{3}{4}}\setminus\D_{\frac{1}{4}}\right)\subset S_0^{\ov{\rho}/2}\cup S_{t_1}^{\ov{\rho}/2},$$
and this proves the result.
\end{proof}
We are now ready to prove Theorem \ref{mr_typeII}.

\begin{proof}[Proof of Theorem \ref{mr_typeII}]
Let $\eps=\mathrm{dist}_{H^1(\D_1)}\left(u,\M_{\mathrm{hom}}^{t_1}\right)$, which we will suppose to be arbitrarily small. 

We let $c_2,C_2>0$, $\gamma\in (0,1)$ be the constants from Proposition \ref{prop_convpolyblow_up}, and $c_4,C_4>0$, $\alpha\in (0,1)$ be the constants from Proposition \ref{prop_II_C1a}. By Proposition \ref{prop_convpolyblow_up} there exists $\widehat{u}\in \mathscr{M}_{\mathrm{hom}}^{t_1}$ such that 
$$\Vert u_r-\widehat{u}\Vert_{H^1(\D_1)}\lesssim \min(r,\eps)^\gamma$$
for any $r\in (0,1)$. Up to a rotation and a change of sign, we may assume without loss of generality that $\widehat{u}=u^{\mathrm{II}}$. By Proposition \ref{prop_II_C1a} applied to $u_r$ for every $r\in (0,1)$, there exists two functions $\theta^-,\theta^+\in\mathcal{C}^{1,\alpha}\left(\left(0,\frac{3}{4}\right],\R\right)$ such that
\begin{equation}\label{eq_esttheta}
\begin{split}
\Vert \theta^-(r\cdot)\Vert_{\mathcal{C}^{1,\alpha}\left(\left[\frac{1}{4},\frac{3}{4}\right]\right)}&+\Vert \theta^+(r\cdot)\Vert_{\mathcal{C}^{1,\alpha}\left(\left[\frac{1}{4},\frac{3}{4}\right]\right)}\lesssim 1,\\
\Vert \theta^-(r\cdot)\Vert_{L^{\infty}\left(\left[\frac{1}{4},\frac{3}{4}\right]\right)}&+\Vert \theta^+(r\cdot)\Vert_{L^{\infty}\left(\left[\frac{1}{4},\frac{3}{4}\right]\right)}\lesssim \min\left(r,\eps\right)^\frac{\gamma}{6},
\end{split}
\end{equation}
for every $r\in (0,1)$ and
$$\left\{re^{i\theta}\in\Spt(u)\cap\D_{\frac{3}{4}}\setminus\{0\}\right\}=\left\{re^{i\theta}\in\D_{\frac{3}{4}}:r>0,\ \theta^-(r)\leq \theta\leq t_1+\theta^+(r)\right\}.$$
Functions $\theta_\pm$ extends to $r=0$ with the value $0$. We now define the diffeomorphism (seeing it as a complex-valued function)
$$\Phi\left(re^{i\theta}\right)=re^{i\left(\theta+\theta^-(r)+\frac{\theta^+(r)-\theta^-(r)}{t_1}\theta\right)},$$
so that
$$\Phi\left(\D_{\frac{3}{4}}\cap\Spt(u^{\mathrm{II}})\right)=\D_{\frac{3}{4}}\cap\Spt(u).$$
We now verify that $\Phi-\mathrm{id}$ is small in a suitable Hölder space:
\begin{itemize}[label=\textbullet]
\item By \eqref{eq_esttheta}, we have $\Vert\Phi-\mathrm{id}\Vert_{L^\infty(\D_1)}\lesssim \eps^\frac{\gamma}{6}$. 
\item For any $x\in\D_{\frac{1}{2}}\setminus\{0\}$, then by \eqref{eq_esttheta} applied to $r:=2|x|$ we have
$$|D\Phi(x)-I_2|\lesssim|x|^{\frac{\gamma }{6}\frac{\alpha}{\alpha+1}}$$
We used here the interpolation inequality \eqref{eq_HolderInterp_C1Linfty}.
\item For any distinct $x,y\in\D_{\frac{1}{2}}$, suppose that $|x|\leq |y|\leq  3|x|$ i.e. there exists $r\in (0,1)$ such that $\frac{1}{4}r\leq |x|\leq |y|\leq \frac{3}{4}r$. We let
$$\beta:=\frac{\frac{\gamma}{6}\alpha}{\alpha+1+\frac{\gamma}{6}}.$$
Then by \eqref{eq_esttheta} applied with the radius $r$ we have
\begin{align*}
|D\Phi(x)-D\Phi(y)|&\lesssim \left(\Vert \theta_+(r\cdot)\Vert_{\mathcal{C}^{1,\beta}(\left[\frac{1}{4},\frac{3}{4}\right])}+\Vert \theta_-(r\cdot)\Vert_{\mathcal{C}^{1,\beta}(\left[\frac{1}{4},\frac{3}{4}\right])}\right)\left|\frac{x-y}{r}\right|^\beta\\
&\lesssim r^{\frac{\gamma}{6}\frac{\alpha-\beta}{\alpha+1}} \left|\frac{x-y}{r}\right|^\beta&\text{ by  \eqref{eq_HolderInterp_C1aLinfty}, \eqref{eq_esttheta}}\\
&=|x-y|^\beta&\text{ by definition of }\beta.
\end{align*}
\end{itemize}
The second and third point imply $\Vert \Phi\Vert_{\mathcal{C}^{1,\beta}(\D_{\frac{1}{2}})}\lesssim 1$. Since $\Vert \Phi-\mathrm{id}\Vert_{L^\infty(\D_{\frac{1}{2}})}\lesssim \eps^\frac{\gamma}{6}$, then we may apply interpolation inequality \eqref{eq_HolderInterp_C1aLinfty}; we let
$$\nu=\frac{\beta}{2},\ \mu=\frac{\beta}{\beta+1}\frac{\gamma}{12},$$
then
$$\Vert\Phi-\mathrm{id}\Vert_{\mathcal{C}^{1,\nu}(\D_{\frac{1}{2}})}\lesssim \eps^\mu.$$
\end{proof}

\subsection{Higher regularity of the boundary}\label{subsec_higherreg}

In the classical Alt-Caffarelli problem, higher order regularity of the boundary is usually obtained by some form of partial hodograph transform (i.e. seeing the state function $u$ as a coordinate, and finding a nonlinear, overdetermined elliptic equation verified in this coordinate system), based on \cite[Th. 2]{KN77}. This was generalized for elliptic systems in \cite{KNS78}, and for higher order systems in \cite{KNS79}: in particular our boundary condition appears verbatim in \cite[Th. 4.2]{KNS79} with $n=m=2$, $F(p,u,\nabla u,\hdots,\nabla^4 u)=\Delta^2u$, $g(x,M)=\mathrm{Tr}(M)-1$. However, in \cite[Th. 4.2]{KNS79} $u$ is assumed to be of class $\mathcal{C}^4$ up to the free boundary, so we cannot apply it directly. A similar issue for the Alt-Caffarelli problem (regarding the results from \cite{KNS78}) was pointed out in the appendix of \cite{KL18}.

For this reason, we give an independent proof of higher regularity based on a conformal transform of the boundary. We find that the Stokes system associated to the biharmonic equation is conformally transported to an overdetermined elliptic equation, where the additional boundary condition (from the optimality condition) allows a bootstrap procedure. A similar method was used for a second order problem in \cite{L11}.

The key argument is that after a conformal change of variable to flatten the boundary, the optimality conditions imply that the pressure (which is harmonic) verifies a homogeneous Neumann condition, so it is automatically smooth up to the boundary. From there, we treat the system as a complex-valued linear elliptic equation.

\begin{lemma}\label{lem_higherreg}
Let $\Om\subset\D_1$ an open set such that $\Gamma:=\D_1\cap\partial\Om$ is a $\mathcal{C}^{1,\alpha}$ curve (for some $\alpha\in (0,1)$) that connects two points of $\partial\D_1$. Let $u\in H^2(\Om)$ such that $\Delta u\in\mathcal{C}^0(\Om\cup\Gamma)$ and
$$
\begin{cases}
\Delta^2u = 0 &(\Om)\\
u=|\nabla u|=0 & (\Gamma)\\
\Delta u = c & (\Gamma)
\end{cases}
$$
for some nonzero constant $c$. Then $\D_1\cap\partial\Om$ is an analytic curve.
\end{lemma}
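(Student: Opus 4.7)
The strategy is to encode $\Gamma$ as a level set of a holomorphic function on $\Omega$ and then exploit classical complex analysis. Since $u$ is biharmonic, the function $h := \Delta u$ is harmonic in $\Omega$; on a simply connected neighborhood $V$ of a fixed $p_0 \in \Gamma$, I pick a harmonic conjugate $q$ of $h$ and set $F := h + iq$, holomorphic in $\Omega \cap V$. The overdetermined boundary condition $\Delta u = c$ on $\Gamma$ reads $\Re F \equiv c$ there, so $F$ sends the arc $\Gamma \cap V$ into the vertical line $\{\Re \zeta = c\} \subset \C$. Standard Schauder estimates for the Dirichlet problem $\Delta h = 0$, $h|_\Gamma = c$, on the $\mathcal{C}^{1,\alpha}$ arc $\Gamma$ give $h \in \mathcal{C}^{1,\alpha}(\overline{\Omega \cap V})$, and hence $q \in \mathcal{C}^{1,\alpha}$ by Cauchy--Riemann (the Neumann condition $\partial_\nu q = 0$ on $\Gamma$ alluded to in the introduction through the pressure interpretation is equivalent to $\partial_\tau h = 0$, and delivers the same information). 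In particular $F' \in \mathcal{C}^{0,\alpha}$ up to $\Gamma$.

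\textbf{Main case ($F \not\equiv c$).} Then $F - c$ is a non-constant holomorphic function and the zeros of $F'$ form a discrete set. At a point $p_0 \in \Gamma$ where $F'(p_0) \neq 0$, the continuity and non-vanishing of $F'$ up to $\Gamma$ yield a small neighborhood $W$ of $p_0$ on which $F$ is bi-Lipschitz (via $F(z) - F(w) = \int_w^z F'$ along short paths in $\overline{\Omega}$). Consequently $F \colon W \cap \overline{\Omega} \to F(W \cap \overline{\Omega})$ is a homeomorphism, biholomorphic on the interior, and maps $W \cap \Gamma$ into $\{\Re \zeta = c\}$. The inverse $F^{-1}$ is holomorphic on the half-neighborhood $F(W \cap \Omega)$ and continuous up to the line segment, so by Schwarz reflection across $\{\Re \zeta = c\}$ it extends holomorphically to an open complex neighborhood of $F(p_0)$. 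Therefore $\Gamma \cap W = F^{-1}(\{\Re \zeta = c\} \cap F(W))$ is the image of a real-analytic segment under a holomorphic map, hence real-analytic near $p_0$.

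\textbf{Remaining cases and main obstacle.} At an isolated critical point $p_0 \in \Gamma$ of $F$, writing $F(z) - c = (z - p_0)^n F_1(z)$ with $n \geq 2$ and $F_1(p_0) \neq 0$, a holomorphic change of variable $w = (z - p_0) F_1(z)^{1/n}$ turns $\{F = c\}$ near $p_0$ into $\{\Re w^n = 0\}$, a union of $n$ transverse real-analytic arcs meeting at $w=0$; since $\Gamma$ is $\mathcal{C}^{1,\alpha}$ with a unique tangent at $p_0$, it must locally coincide with exactly one of them (otherwise it would develop a corner), so it is real-analytic near $p_0$ as well. In the degenerate case $F \equiv c$ on $\Omega \cap V$, equivalently $\Delta u \equiv c$, I write $u = c|z|^2/4 + \Re \phi(z)$ with $\phi$ holomorphic; the condition $\nabla u = 0$ on $\Gamma$ becomes $\phi'(z) = -c\bar z/2$ on $\Gamma$, exhibiting $\bar z = -2 \phi'(z)/c$ as the trace on $\Gamma$ of a holomorphic function (this is where $c \neq 0$ is used), which is precisely the Schwarz-function relation and directly shows $\Gamma$ is defined by a real-analytic equation. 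The most delicate step will be carrying out rigorously the homeomorphism and Schwarz-reflection argument of the main case at the minimal regularity $\mathcal{C}^{1,\alpha}$ of $\Gamma$, in particular justifying that $F$ is a genuine local homeomorphism up to the boundary and that $F^{-1}$ does extend holomorphically across the line $\{\Re\zeta = c\}$.
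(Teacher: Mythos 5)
Your main-case argument has a genuine gap that cannot be repaired within the framework you set up: away from the degenerate case you only ever use that $h=\Delta u$ is harmonic in $\Om$ and equal to $c$ on $\Gamma$; the conditions $u=|\nabla u|=0$ never enter. That information alone is compatible with an arbitrary, non-analytic $\mathcal{C}^{1,\alpha}$ curve: given any such $\Gamma$ bounding $\Om$, solve the Dirichlet problem with data $c$ on $\Gamma$ and $c+1$ on the remaining boundary; the solution $h$ is harmonic, equals $c$ on $\Gamma$, and has non-vanishing gradient there (Hopf lemma), so your reasoning would force $\Gamma$ to be analytic, a contradiction. The precise point of failure is the Schwarz-reflection step: $F^{-1}$ maps the segment of $\{\Re\zeta=c\}$ onto $\Gamma$ itself, which is only known to be $\mathcal{C}^{1,\alpha}$; the reflection principle requires the image of the segment to lie on a line or circle (or an already-known analytic arc), so there is no reason for $F^{-1}$ to extend holomorphically across the line — indeed in the counterexample above it does not. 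The same objection hits the critical-point case, and in the degenerate case $\Delta u\equiv c$ the relation $\bar z=-2\phi'(z)/c$ on $\Gamma$ is only a \emph{one-sided} Schwarz function (holomorphic in $\Om$, continuous up to $\Gamma$), so concluding analyticity there requires Sakai's regularity theorem rather than being a direct consequence.

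What makes the lemma true is the genuinely overdetermined system, i.e. the simultaneous use of $u=|\nabla u|=0$ and $\Delta u=c$. This is how the paper proceeds: setting $v=i\partial_{\bar z}u$ and taking $p$ a harmonic conjugate of $\tfrac14\Delta u$, the pair $(v,p)$ solves a Stokes system with $v=0$ on $\Gamma$ (from $\nabla u=0$), while $\Delta u=c$ on $\Gamma$ yields both $\partial_z v=\tfrac{ic}{4}$ and a homogeneous Neumann condition for $p$ there. Composing with a conformal map $\Phi:\D_1\to\Om$ (which is $\mathcal{C}^{1,\alpha}$ up to the relevant boundary arc by Carathéodory and Kellogg--Warschawski), the transported pressure is analytic up to the flattened boundary, and the extra condition $\partial_z\hat v=\tfrac{ic}{4}\Phi'$ allows a bootstrap: Schauder estimates improve $\hat v$, the overdetermined condition then improves $\Phi'$ on the boundary, and iterating gives smoothness and finally analyticity of $\Phi$, hence of $\Gamma$. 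If you wish to keep a purely complex-analytic presentation, you must bring the velocity $v=i\partial_{\bar z}u$ (not only $F=h+iq$) into the boundary analysis in an analogous way; the function $F$ alone carries too little information.
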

\begin{proof}
We identify $\R^2$ with $\C$ (so vector fields are complex-valued functions) and write the Wirtinger derivative
$$\partial_{z}=\frac{1}{2}\left(\partial_x-i\partial_y\right),\ \partial_{\ov{z}}=\frac{1}{2}\left(\partial_x+i\partial_y\right).$$
Since $\D_1\cap\partial\Om$ is a $\mathcal{C}^{1,\alpha}$ curve and $\Delta u-c$ is a harmonic function that verify a (homogeneous) dirichlet condition on $\D_1\cap\partial\Om$, then $\Delta u\in \mathcal{C}^{1,\alpha}_{\loc}(\Om\cup\Gamma)$.
Let $v:\Om\to\C$ and $p:\Om\to \R$ two functions defined by
$$v=i\partial_{\ov{z}} u,\ \partial_{\ov{z}}p=i\partial_{\ov{z}}\partial_{z}\partial_{\ov{z}}u$$
i.e. $p$ is a harmonic conjugate (uniquely defined up to an additive constant) of the harmonic function $\frac{1}{4}\Delta u$, that belongs to $\mathcal{C}^{1,\alpha}_\loc(\Om\cup \Gamma)$. $(v,p)$ verifies the Stokes equation with Dirichlet (or no-slip) boundary condition

$$
\begin{cases}
\partial_z\partial_{\ov{z}}v=\partial_{\ov{z}}p&(\Om)\\
v= 0 &(\Gamma).
\end{cases}
$$
Moreover, the condition $\Delta u|_{\Gamma}=c$ gives an additional boundary condition on $p$ and $v$: since $p$ is the harmonic conjugate of $\Delta u$, which is constant in $\Gamma$, then $p$ verifies a homogeneous Neumann boundary condition in $\Gamma$, and since $\partial_{z}v=\frac{i}{4}\Delta u$ we have:
$$
\partial_{z}v=\frac{1}{4}ic \text{ in }\Gamma.
$$

Let $\Phi:\D_1\to \Om$ be a bijective conformal map (that is uniquely defined up to some conformal automorphism of the disk): by the Carathéodory Theorem \cite{Car13} (see \cite[Th. 2.6]{P92} for a modern reference), $\Phi$ extends as a homeomorphism $\ov{\D_1}\to \ov{\Om}$, and we let $\Gamma'$ be the reciprocal image of $\Gamma$ through $\Phi$. By the Kellogg-Warschawski heorem \cite{W32} (see also \cite[Th. 3.6]{P92}, or \cite[Prop. 3.4]{P92} for a local version), we have $\Phi\in\mathcal{C}^{1,\alpha}_\loc(\D_1\cup\Gamma')$.

Let 
$$\ovh{p}=p\circ\Phi,\ \ovh{v}=v\circ\Phi$$
Then $\ovh{p}$ is harmonic and verifies a homogeneous Neumann boundary condition on $\Gamma'$ (since this property is transported through the conformal mapping). Thus $\ovh{p}$ is analytic in $\D_1\cup\Gamma'$. 

Then we compute the equation verified by $(\ovh{v},\ovh{p})$ in $\D_1$:
$$\partial_{z}\ovh{v}=\Phi'(\partial_{z} v)\circ\Phi,\ \partial_{\ov{z}}\ovh{p}=\ov{\Phi'}(\partial_{\ov{z}}p)\circ\Phi,$$
so
$$\partial_{z}\partial_{\ov{z}}\ovh{v}=|\Phi'|^2(\partial_{z}\partial_{\ov{z}} v)\circ\Phi=|\Phi'|^2(\partial_{\ov{z}} p)\circ\Phi=\Phi'\partial_{\ov{z}}\ovh{p}.$$
We obtain that $\ovh{v}$ verifies the overdetermined elliptic equation
$$
\begin{cases}
\partial_{\ov{z}}\partial_z \ovh{v}=\Phi'\partial_{\ov{z}}\ovh{p}& (\D_1)\\
\ovh{v}= 0 & (\Gamma')\\
\partial_{z}\ovh{v}=\frac{1}{4}ic\Phi' & (\Gamma').
\end{cases}
$$
Since $\partial_{\ov{z}}\ovh{p}$ is analytic up to the boundary $\Gamma'$, and $\Phi'\in\mathcal{C}^{0,\alpha}_\loc(\D_1\cup\Gamma')$, by standard Schauder theory (using the first two equations only) we have $\ovh{v}\in\mathcal{C}^{2,\alpha}_\loc(\D_1\cup\Gamma')$. Using the third line, we see $\Phi'\in\mathcal{C}^{1,\alpha}_\loc(\Gamma')$, so $\Phi\in\mathcal{C}^{2,\alpha}_\loc(\D_1\cup\Gamma')$.

Iterating this, for any $k\in\N_{>0}$, if $\Phi\in\mathcal{C}^{k,\alpha}_\loc(\D_1\cup\Gamma')$, then $\widehat{v}\in \mathcal{C}^{k+1,\alpha}_\loc(\D_1\cup\Gamma')$,  so $\Phi'|_{\Gamma'}\in \mathcal{C}^{k,\alpha}_\loc(\Gamma')$, which implies $\Phi\in \mathcal{C}^{k+1,\alpha}_\loc(\D_1\cup\Gamma')$.\\
Thus, we obtain $\Phi\in\mathcal{C}^{\infty}(\D_1\cup \Gamma')$. From here on, the analyticity of $\Phi$ up to the boundary $\Gamma'$ may be obtained through tracking constants in this bootstrapping procedure, or simply by application of \cite[Th. 4.2]{KNS79} with $n=m=2$, $F(p,u,\nabla u,\hdots,\nabla^4 u)=\Delta^2u$, $g(x,M)=\mathrm{Tr}(M)-c$ to the function $u$.
\end{proof}

From this we obtain the main Theorem \ref{mr_typeI}.

\begin{proof}[Proof of Theorem \ref{mr_typeI}]
This is a direct combination of Proposition \ref{prop_C1a}, and Lemma \ref{lem_higherreg}.
\end{proof}
\subsection{Proof of the main result}\label{subsec_proof_mr_total}
We may now conclude with the proof of Theorem \ref{mr_total}.

\begin{proof}[Proof of Theorem \ref{mr_total}]
Let $u\in\M(D)$ for some open set $D\subset\R^2$. We define

\begin{itemize}[label=\textbullet]
\item $\mathcal{R}_u$ the points of $D\cap\partial\Spt(u)$, with density $\frac{1}{2}$ in $\Spt(u)$.
\item $\mathcal{A}_u$ the points of $D\cap\partial\Spt(u)$, with density $\frac{t_1}{2\pi}$ in $\Spt(u)$.
\item $\mathcal{N}_u$ the points $p\in D\cap\partial\Spt(u)$ such that for some sufficiently small $r>0$, we have $|\D_{p,r}\setminus\Spt(u)|=0$.
\item $\mathcal{J}_u$ is the set of points $p\in D\cap\partial\Spt(u)\setminus (\mathcal{R}_u\cup\mathcal{A}_u\cup\mathcal{N}_u)$ such that $\liminf_{r\to 0}\Vert u_{p,r}\Vert_{H^1(\D_1)}<\infty$.
\item $\mathcal{E}_u:=D\cap\partial\Spt(u)\setminus (\mathcal{R}_u\cup\mathcal{A}_u\cup\mathcal{N}_u\cup\mathcal{J}_u)$
\end{itemize}
These sets are a partition of $D\cap\partial\Spt(u)$ by construction. Now, we prove the claim of Theorem \ref{mr_total} case-by-case.
\begin{itemize}[label=\textbullet]
\item Let $p\in\mathcal{R}_u$: by Theorem \ref{mr_blowup}, $u_{p,r}$ converges in $H^1(\D_1)$ as $r\to 0$ to some homogeneous solution of type I. By Theorem \ref{mr_typeI} applied to $u_{p,2r}$ for a sufficiently small $r$, we obtain that $\Spt(u)\cap \D_{p,r}$ is, after rotation, the epigraph of some analytic function. In particular, every point of $\partial\Spt(u)\cap\D_{p,r}$ is also regular.
\item Let $p\in\mathcal{A}_u$. By the same reasoning, by Theorem \ref{mr_blowup} and \ref{mr_typeII}, there exists a sufficiently small $r>0$ such that $\Spt(u)\cap\D_{p,r}$ is the image of the angular cone $\Spt(u^{\mathrm{II}})\cap\D_1$ through a $\mathcal{C}^{1,\gamma}$ diffeomorphism (sending $0$ to $p$, $\partial\D_1$ to $\partial\D_{p,r}$). As a consequence, every point of $\partial\Spt(u)\cap\D_{p,r}\setminus\{p\}$ belongs to $\mathcal{R}_u$, $\mathcal{A}_u$ is a discrete set where two connected components of $\mathcal{R}_u$ join with an angle $t_1$ and $\Delta u|_{\mathcal{R}_u}$ takes opposite signs on each.
\item Let $p\in\mathcal{N}_u$. Then $u$ is biharmonic in a neighbourhood of $p$. Since biharmonic functions are analytic, this implies that for a sufficiently small $r>0$, $\partial\Spt(u)\cap\D_{p,r}$ is an intersection of zeros of analytic function. Since $u(p)=|\nabla u(p)|=0$, the blow-up $u_{p,0}$ is well-defined and non-zero by the nondegeneracy Lemma \ref{lem_nondegeneracy}, and it is a blow-up either of type III (in which case $|\Delta u(p)|\geq 1$) or of type IV (in which case $\partial\Spt(u)\cap\D_{p,r}=\{p\}$ for a sufficiently small $r$).
\item Let $p\in\mathcal{J}_u$. By Lemma \ref{lem_conv_blowup} and the fact that $p\notin \mathcal{R}_u\cup\mathcal{A}_u$, $u$ admits a blow-up of type III or IV, denoted $\widehat{u}$. By Theorem \ref{mr_typeIII_IV}, we have $\Vert u_{p,r}-\widehat{u}\Vert_{H^1(\D_1)}\to 0$ as $r$ tends to $0$. If $\widehat{u}$ was of type IV, then $p$ would be an isolated point of $\partial\Spt(u)$ and in particular it would be in the nodal set $\mathcal{N}_u$: thus $\widehat{u}$ is necessarily of type III.
\item Let $p\in \mathcal{E}_u$. Since $p$ belongs to none of the sets above we know that $p$ neither has density $\frac{1}{2}$ or $\frac{t_1}{2\pi}$ in $\Spt(u)$ - meaning by Proposition \ref{prop_existence_blowup} that it has density $1$ in $\Spt(u)$ - nor is it isolated in $\partial\Spt(u)$ - meaning it does not admit a blow-up of type IV -  nor does it admit a blow-up of type III since it would otherwise belong to $\mathcal{N}_u\cup\mathcal{J}_u$. Since $u$ admits no blow-up at $p$, necessarily we have
$$
\Vert u_{p,r}\Vert_{H^1(\D_{p,r})}\underset{r\to 0}{\longrightarrow}+\infty.$$
By Theorem \ref{mr_blowup}, we have necessarily $W(u(p+\cdot),0)<\pi$, since otherwise there would be a blow-up of type III or IV.
Finally, since $\Vert u_{p,r}\Vert_{H^1(\D_1)}\to +\infty$, then for any subsequence $r_n\to 0$ we may apply Proposition \ref{prop_renormalized_blowup} to the sequence $u_{p,r_n}$: there is some subsequence $(n_i)$ such that $\frac{u_{p,r_{n_i}}}{\Vert u_{p,r_{n_i}}\Vert_{H^1(u_{p,{r_{n_i}}})}}$ converges in $H^2_\loc(\R^2)$ norm to some nonzero $2$-homogeneous biharmonic function.
\end{itemize}
\end{proof}

\section{Quasi-minimizers}\label{sec_quasireg}

We explain here how the method we developed for the energy $E(\cdot,D)$ extends to a more general setting, with applications to the buckling eigenvalue minimization \eqref{expr_Lambda1}. We do not attempt to find the weakest optimality condition that would still imply the main regularity results: for the sake of clarity we instead give some mild optimality condition that contains the buckling problem.

\subsection{Regularity of quasi-minimizers}
\begin{definition}\label{def_quasimin}
Let $D\subset\R^2$ be an open set, we say $u$ is a quasiminimiser of $E(\cdot,D)$, and we write
$$u\in\QM(D),$$
if for any $v\in H^2(D)$ such that $\{u\neq v\}\Subset D$, we have
\begin{equation}\label{eq_quasiminimality}
E(u;D)\leq E(v;D)+ \Vert v-u\Vert_{L^2(D)}.
\end{equation}

More generally, for $\mu\geq 0$ we say $u$ is a $\mu$-quasiminimizer of $E(\cdot;D)$, we write $u\in\QM^{\mu}(D)$ if for any $v\in H^2(D)$ such that $\{u\neq v\}\Subset D$ we have
\begin{equation}\label{eq_weaker_quasiminimality}
E(u;D)\leq E(v;D)+ \mu\Vert v-u\Vert_{L^2(D)}.
\end{equation}
\end{definition}
This notion of quasi-minimizer scales as follows.
\begin{lemma}\label{lem_rescale_general_minimizers}
Let $D\subset\R^2$ be an open set, $\mu\geq 0$, $r>0$, and $u\in \QM^\mu(D)$. Then $u_r\in\QM^{r\mu}(D/r)$.
\end{lemma}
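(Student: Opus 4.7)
The plan is a direct change of variables. Fix a competitor $w\in H^2(D/r)$ with $\{u_r\neq w\}\Subset D/r$, and let $v(y)=r^2 w(y/r)$, so that $v\in H^2(D)$ with $\{u\neq v\}\Subset D$, making $v$ an admissible competitor for $u$ in the definition of $\QM^\mu(D)$.

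Next, I would record the scaling of each term. Since $\Delta u_r(x)=(\Delta u)(rx)$ and $\chi_{u_r\neq 0}(x)=\chi_{u\neq 0}(rx)$, a change of variables $y=rx$ gives
$$E(u_r;D/r)=r^{-2}E(u;D),\qquad E(w;D/r)=r^{-2}E(v;D),$$
and similarly
$$\|w-u_r\|_{L^2(D/r)}^2=\int_{D/r}r^{-4}|v(rx)-u(rx)|^2dx=r^{-6}\|v-u\|_{L^2(D)}^2,$$
so $\|w-u_r\|_{L^2(D/r)}=r^{-3}\|v-u\|_{L^2(D)}$.

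Applying the quasiminimality \eqref{eq_weaker_quasiminimality} of $u$ with competitor $v$ and multiplying by $r^{-2}$ yields
$$E(u_r;D/r)\leq E(w;D/r)+r^{-2}\mu\|v-u\|_{L^2(D)}=E(w;D/r)+r\mu\|w-u_r\|_{L^2(D/r)},$$
which is exactly the $r\mu$-quasiminimality inequality for $u_r$ on $D/r$. Since $w$ was arbitrary, this gives $u_r\in\QM^{r\mu}(D/r)$. There is no real obstacle here beyond the bookkeeping of the exponents in the three different scalings (energy, area, $L^2$ norm); the only point worth emphasizing is that the factor of $r$ (rather than $1$ or $r^{-1}$) in front of $\mu$ comes from the mismatch between the $r^{-2}$ scaling of $E$ and the $r^{-3}$ scaling of the $L^2$ norm under $v\mapsto v_r$.
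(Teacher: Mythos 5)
Your proof is correct and is essentially identical to the paper's: both arguments identify a competitor $w$ for $u_r$ with a rescaled competitor $v$ for $u$, apply the $\mu$-quasiminimality of $u$, and convert back via the scalings $E(v_r;D/r)=r^{-2}E(v;D)$ and $\Vert v_r\Vert_{L^2(D/r)}=r^{-3}\Vert v\Vert_{L^2(D)}$, which is where the factor $r\mu$ comes from. Nothing is missing.
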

In other words, it is sufficient to prove the minimality condition \eqref{eq_weaker_quasiminimality} for some (possibly large) constant $\mu>0$ to know that some rescaling of $u$ is a quasiminimizer in the sense of \eqref{eq_quasiminimality}.
\begin{proof}
Consider $v\in H^2(D/r)$ such that $\{v\neq u_r\}\Subset D/r$. This means that $v=w_r$, for some $w\in H^2(D)$ such that $\{w\neq u\}\Subset D$.
By the minimality condition \eqref{eq_weaker_quasiminimality} we have
$$E(u;D)\leq E(w;D)+\mu\Vert v-u\Vert_{L^2(D)}.$$
After a change of variable, we obtain
$$E(u_r;D/r)\leq E(v;D/r)+r\mu\Vert v-u_r\Vert_{L^2(D/r)}.$$
So $u_r\in\QM^{r\mu}(D)$.
\end{proof}

We now explain the different changes to adapt the proofs to quasi-minimizers, following the sections of the rest of the paper.
\subsubsection{Cacciopoli inequality} Extending Lemma \ref{lem_Cacciopoli} to elements of $u\in \QM(\D_1)$ is a direct consequence of the quasiminimality condition \eqref{eq_quasiminimality} with $v=e^{-t\eta^4}u_r$ for any $r\in(0,1)$, $\eta\in\mathcal{C}^{\infty}_c(\D_1,[0,1])$, $t\in\R$. Indeed, this implies
$$E(u_r;\D_1)-E(e^{t\eta^4}u_r;\D_1)\leq r\Vert (e^{t\eta^4}-1)u_r\Vert_{L^2(\D_1)},$$
so for $t\to 0^+$ we get
$$\int_{\D_1}2\nabla^2 u_r:\nabla^2(\eta^4 u_r)\leq r\Vert \eta^4u_r\Vert_{L^2(\D_1)}.$$
We bound 
\begin{align*}
r\Vert \eta^4 u_r\Vert_{L^2(\D_1)}&\leq r\Vert \eta^2 u_r\Vert_{L^2(\D_1)}\quad\text{ since }\eta\leq 1\\
&\lesssim r\Vert\nabla^2(\eta^2 u_r)\Vert_{L^2(\D_1)}\quad\text{by Poincaré inequality}\\
&\lesssim r^2+\Vert \nabla(\eta^2)\otimes\nabla u_r\Vert_{L^2(\D_1)}^2+\Vert \nabla^2(\eta^2) u_r\Vert_{L^2(\D_1)}^2+\Vert \eta^2\nabla^2 u_r\Vert_{L^2(\D_1)}^2.
\end{align*}
The rest of the proof follows with the same computation as in Lemma \ref{lem_Cacciopoli}. We obtain
\begin{lemma}\label{lem_quasi_cacciopoli}
Let $u\in\QM(\D_1)$, $r\in (0,1)$, $\rho\in \left[\frac{1}{2},1\right)$, then
\begin{equation}\label{eq_quasi_cacciopoli}
\Vert u_r\Vert_{H^2(\D_\rho)}\lesssim \frac{r^2+\Vert u_r\Vert_{H^1(\D_1\setminus\D_\rho)}}{(1-\rho)^2}.
\end{equation}
\end{lemma}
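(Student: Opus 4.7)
The plan is to adapt the proof of Lemma~\ref{lem_Cacciopoli} to absorb the extra error term coming from the quasi-minimality defect. By Lemma~\ref{lem_rescale_general_minimizers}, the rescaling $u_r\in\QM^r(\D_{1/r})$, and since $r<1$ we have $\D_1\Subset\D_{1/r}$, so $u_r$ is an $r$-quasi-minimizer on $\D_1$. I would pick $\eta\in\mathcal{C}^\infty_c(\D_1,[0,1])$ equal to $1$ on $\D_\rho$ with $|\nabla^k\eta|\lesssim(1-\rho)^{-k}$ for $k=0,1,2$, and test the quasi-minimality against $v=e^{t\eta^4}u_r$ for small $t\in\R$. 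Since $\{v\neq 0\}=\{u_r\neq 0\}$, the indicator terms in the energy cancel, and $\{v\neq u_r\}\Subset\D_1$, so letting $t\to 0^{\pm}$ in \eqref{eq_weaker_quasiminimality} yields
$$\left|\int_{\D_1}2\nabla^2 u_r:\nabla^2(\eta^4 u_r)\right|\leq r\Vert \eta^4 u_r\Vert_{L^2(\D_1)}.$$

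Next, expanding $\nabla^2(\eta^4 u_r)=\eta^4\nabla^2 u_r+8\eta^3\nabla\eta\otimes\nabla u_r+4(3\eta^2\nabla\eta\otimes\nabla\eta+\eta^3\nabla^2\eta)u_r$ and applying Cauchy-Schwarz exactly as in Lemma~\ref{lem_Cacciopoli} to absorb half of $\int\eta^4|\nabla^2 u_r|^2$ into the left-hand side, one obtains
$$\int_{\D_1}\eta^4|\nabla^2 u_r|^2\lesssim\int_{\D_1\setminus\D_\rho}\Big((|\nabla\eta|^4+|\eta\nabla^2\eta|^2)|u_r|^2+\eta^2|\nabla\eta|^2|\nabla u_r|^2\Big)+r\Vert\eta^4 u_r\Vert_{L^2(\D_1)}.$$
The first term is bounded by $(1-\rho)^{-4}\Vert u_r\Vert_{H^1(\D_1\setminus\D_\rho)}^2$ from the derivative bounds on $\eta$; the genuinely new step is to control the quasi-minimality remainder $r\Vert\eta^4 u_r\Vert_{L^2}$. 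Since $\eta^2 u_r\in H^2_0(\D_1)$, Poincar\'e's inequality applied twice gives
$$\Vert\eta^4 u_r\Vert_{L^2(\D_1)}\leq\Vert\eta^2 u_r\Vert_{L^2(\D_1)}\lesssim\Vert\nabla^2(\eta^2 u_r)\Vert_{L^2(\D_1)}\lesssim\Vert\eta^2\nabla^2 u_r\Vert_{L^2(\D_1)}+(1-\rho)^{-1}\Vert u_r\Vert_{H^1(\D_1\setminus\D_\rho)},$$
where the last inequality is obtained by expanding $\nabla^2(\eta^2 u_r)$ as before.

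The key observation — and the only subtle point — is that $\Vert\eta^2\nabla^2 u_r\Vert_{L^2}^2=\int\eta^4|\nabla^2 u_r|^2$ is exactly the left-hand side we are trying to bound. Hence, by Young's inequality $r\Vert\eta^2\nabla^2 u_r\Vert_{L^2}\leq\varepsilon\int\eta^4|\nabla^2 u_r|^2+\tfrac{r^2}{4\varepsilon}$, a small $\varepsilon$ absorption yields
$$\int_{\D_1}\eta^4|\nabla^2 u_r|^2\lesssim (1-\rho)^{-4}\Vert u_r\Vert_{H^1(\D_1\setminus\D_\rho)}^2+r^2+r(1-\rho)^{-1}\Vert u_r\Vert_{H^1(\D_1\setminus\D_\rho)},$$
and the right-hand side is $\lesssim (1-\rho)^{-4}\big(r^2+\Vert u_r\Vert_{H^1(\D_1\setminus\D_\rho)}\big)^2$ by AM-GM. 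Restricting to $\D_\rho$ (where $\eta\equiv 1$), taking square roots, and finally combining with the standard trace bound $\Vert v\Vert_{H^2(\D_\rho)}^2\lesssim\Vert\nabla^2 v\Vert_{L^2(\D_\rho)}^2+(1-\rho)^{-1}\Vert v\Vert_{H^1(\D_1\setminus\D_\rho)}^2$ used at the end of Lemma~\ref{lem_Cacciopoli} gives the claimed estimate \eqref{eq_quasi_cacciopoli}.
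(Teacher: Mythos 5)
Your argument is the paper's argument: the authors prove this lemma exactly by testing the quasi-minimality against $e^{\pm t\eta^4}u_r$, cancelling the indicator terms, linearising in $t$, bounding the defect $r\Vert\eta^4 u_r\Vert_{L^2}$ through the Poincar\'e inequality for $\eta^2u_r\in H^2_0(\D_1)$, and absorbing the resulting $\Vert\eta^2\nabla^2u_r\Vert_{L^2}$ term by Young's inequality before finishing as in Lemma \ref{lem_Cacciopoli}. Two small remarks on your write-up: the term $\nabla^2(\eta^2)u_r$ costs $(1-\rho)^{-2}$, not $(1-\rho)^{-1}$ (harmless, since the final bound carries $(1-\rho)^{-2}$ anyway), and — more importantly — your concluding ``AM--GM'' step is not literally true: with $A:=\Vert u_r\Vert_{H^1(\D_1\setminus\D_\rho)}$, the inequality $r^2\lesssim(1-\rho)^{-4}(r^2+A)^2$ fails in the regime $A\ll r\ll 1$. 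What your chain of estimates (and the bootstrap hidden in the absorption) genuinely yields is
$$\Vert u_r\Vert_{H^2(\D_\rho)}^2\lesssim\frac{\Vert u_r\Vert_{H^1(\D_1\setminus\D_\rho)}^2+r^2}{(1-\rho)^4},\qquad\text{i.e.}\qquad \Vert u_r\Vert_{H^2(\D_\rho)}\lesssim\frac{r+\Vert u_r\Vert_{H^1(\D_1\setminus\D_\rho)}}{(1-\rho)^2},$$
with $r$ rather than $r^2$ in the numerator: the quasi-minimality defect enters linearly in the perturbation, so after Young it produces a bare $Cr^2$ at the level of squared norms, and no iteration of the argument improves this to $r^4$.

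You should not be penalised for this, because the paper's own proof has exactly the same feature — its displayed Young step likewise leaves $r^2$ among squared quantities, and the stated form \eqref{eq_quasi_cacciopoli} with $r^2$ is simply asserted at the end. Moreover the weaker form is all that is used later: the nondegeneracy Lemma \ref{lem_nondeg_quasi} invokes $\Vert u_r\Vert_{H^2}^2\lesssim r^2+\Vert u_r\Vert_{H^1}^2$, and Lemma \ref{lem_quasi_weaker_minimality} only needs bounds of this type. So your proposal faithfully reproduces the paper's proof, including its final imprecision; if you want a fully rigorous statement, replace $r^2$ by $r$ in \eqref{eq_quasi_cacciopoli} (or state the squared version above), which is what your computation actually establishes.
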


\subsubsection{BMO estimate}
The BMO estimate is proved similarly to Lemma \ref{lem_BMO}: we compare $u$ with its biharmonic replacement on any disk, and deduce a $BMO$ bound on $\Delta u$.

\begin{lemma}\label{lem_quasi_BMO}
Let $u\in\QM(\D_1)$. Then we have
\begin{equation}\label{eq_quasi_C112}
[\nabla^2 u]_{\mathrm{BMO}(\D_{\frac{1}{2}})}\lesssim 1+\Vert u\Vert_{H^2(\D_1)}.
\end{equation}
\end{lemma}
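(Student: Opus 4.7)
The plan is to follow the strategy of Lemma \ref{lem_BMO} essentially verbatim, with the only modification coming at the very start where the minimality of $u$ is used: we must now absorb the extra term $\Vert v-u\Vert_{L^2(\D_{p,r})}$ produced by the quasiminimality condition \eqref{eq_quasiminimality}.

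Concretely, fix a disk $\D_{p,r}\subset\D_1$ and let $v$ be the biharmonic extension of $u$ in $\D_{p,r}$, so that $u-v\in H^2_0(\D_{p,r})$. The quasiminimality condition gives
\[
\int_{\D_{p,r}}\bigl(|\Delta u|^2+\chi_{u\neq 0}\bigr)\leq\int_{\D_{p,r}}\bigl(|\Delta v|^2+\chi_{v\neq 0}\bigr)+\Vert v-u\Vert_{L^2(\D_{p,r})},
\]
and since $u-v\in H^2_0(\D_{p,r})$ implies $\int_{\D_{p,r}}(|\Delta u|^2-|\Delta v|^2)=\int_{\D_{p,r}}|\Delta(u-v)|^2$, I obtain after dividing by $|\D_{p,r}|$:
\[
\fint_{\D_{p,r}}|\Delta(u-v)|^2\leq 1+\frac{\Vert v-u\Vert_{L^2(\D_{p,r})}}{|\D_{p,r}|}.
\]

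Now I would exploit the Poincaré-type inequality on $H^2_0(\D_{p,r})$, namely $\Vert w\Vert_{L^2(\D_{p,r})}\lesssim r^2\Vert\Delta w\Vert_{L^2(\D_{p,r})}$ for every $w\in H^2_0(\D_{p,r})$, applied to $w=u-v$. This yields
\[
\frac{\Vert v-u\Vert_{L^2(\D_{p,r})}}{|\D_{p,r}|}\lesssim r\left(\fint_{\D_{p,r}}|\Delta(u-v)|^2\right)^{1/2},
\]
and setting $A=\fint_{\D_{p,r}}|\Delta(u-v)|^2$ the previous bound becomes $A\lesssim 1+r\sqrt{A}$, which, since $r\leq 1$, absorbs into $A\lesssim 1$. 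Hence I recover exactly the same control $\fint_{\D_{p,r}}|\Delta(u-v)|^2\lesssim 1$ that drove the proof of Lemma \ref{lem_BMO}.

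From this point the argument is identical: using the Campanato decay \eqref{eq_decay} for the harmonic function $\Delta v$ together with the estimate just obtained, I get for every $\tau\in(0,1)$ a bound of the form $g(p,\tau r)\leq C+\tfrac12 g(p,r)$, where $g(p,r)=\fint_{\D_{p,r}}|\Delta u-\langle\Delta u\rangle_{p,r}|^2$ (after fixing $\tau$ small enough). A dyadic iteration then yields $[\Delta u]_{\mathrm{BMO}(\D_{3/4})}\lesssim 1+\Vert\Delta u\Vert_{L^2(\D_1)}\lesssim 1+\Vert u\Vert_{H^2(\D_1)}$, and finally the Calder\'on--Zygmund-type result \cite[Cor. 2]{FS72} passes this bound from $\Delta u$ to the full Hessian, giving the claimed estimate \eqref{eq_quasi_C112}. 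The only genuinely new step is the Poincar\'e absorption above; the remainder of the proof of Lemma \ref{lem_BMO} transfers unchanged because the universal constant produced, though larger than $1$, only enters additively and does not affect the geometric convergence of the iteration.
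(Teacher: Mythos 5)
Your proof is correct and follows essentially the same route as the paper: compare $u$ with its biharmonic extension, use quasiminimality plus the Poincaré inequality on $H^2_0(\D_{p,r})$ to absorb the extra $L^2$ term and recover $\fint_{\D_{p,r}}|\Delta(u-v)|^2\lesssim 1$, then run the Campanato iteration and \cite[Cor. 2]{FS72} exactly as in Lemma \ref{lem_BMO}. The only cosmetic difference is that the paper absorbs the error via Young's inequality before dividing by $|\D_{p,r}|$ (obtaining $\lesssim 1+r^2$), whereas you divide first and absorb $r\sqrt{A}$; the two are equivalent.
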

\begin{proof}
For any $\D_{p,r}\Subset D$, we let $u_{\D_{p,r}}$ be the biharmonic extension of $u$ in $\D_{p,r}$. Then the quasiminimality of $u$ gives
$$\int_{\D_{p,r}}|\Delta (u-u_{\D_{p,r}})|^2\leq |\D_{p,r}|+\Vert u-u_{\D_{p,r}}\Vert_{L^2(\D_{p,r})}.$$
We bound $\Vert u-u_{\D_{p,r}}\Vert_{L^2(\D_{p,r})}\lesssim r^2\Vert\Delta( u-u_{\D_{p,r}})\Vert_{L^2(\D_{p,r})}$ by Poincaré inequality, so by Young inequality:
$$\int_{\D_{p,r}}|\Delta (u-u_{\D_{p,r}})|^2\lesssim |\D_{p,r}|+r^4.$$
We divide by the measure of $\D_{p,r}$:
$$\fint_{\D_{p,r}}|\Delta (u-u_{\D_{p,r}})|^2\lesssim 1+r^2.$$
The rest of the proof is the same as the one of Lemma \ref{lem_BMO}.
\end{proof}
\subsubsection{Nondegeneracy lemma}
The extension of Lemma \ref{lem_nondegeneracy} to the quasi-minimal context has no major changes.
\begin{lemma}\label{lem_nondeg_quasi}
There exists $\eps>0$ such that the following holds: let $u\in\QM(\D_1)$, such that $\Spt(u)\cap\D_{\frac{1}{2}}\neq\emptyset$, then
\begin{equation}\label{eq_quasi_nondeg}
\Vert u\Vert_{H^1(\D_1)}\geq \eps.
\end{equation}
\end{lemma}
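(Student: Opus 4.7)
The plan is to adapt the three-step argument of Lemma \ref{lem_nondegeneracy} (Cacciopoli control of $H^2$ by $H^1$, area control of the support via comparison with $(1-\eta)v$, and a Sobolev product estimate for $H^1$), carefully tracking how the quasi-minimality constant enters each bound, and then to compensate for the fact that $\mu$ does not vanish at unit scale by first zooming in on a well-chosen scale $r_0$.

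For any $v\in\QM^{\mu}(\D_1)$ with $\mu\leq 1$, write $M:=\mu+\Vert v\Vert_{H^1(\D_1)}$. The three ingredients are: (a) the quasi-Cacciopoli inequality in its general form (the computation of Lemma \ref{lem_quasi_cacciopoli} with $\mu$ in place of $r$) gives $\Vert v\Vert_{H^2(\D_{9/10})}\lesssim M$; (b) comparing $E(v;\D_1)$ with $E((1-\eta)v;\D_1)$ for a cutoff $\eta\in\mathcal{C}^\infty_c(\D_{9/10},[0,1])$ equal to $1$ on $\D_{1/2}$, the quasi-minimality contributes an extra term $\mu\Vert\eta v\Vert_{L^2}$, and the cross term $\int\Delta v\,\Delta(\eta v)$ appearing upon expansion is handled by testing the quasi-minimality against competitors $v\pm t\eta v$ (which preserve the support $\{v\neq 0\}$ for small $t$), yielding the Euler--Lagrange bound $\left|\int\Delta v\,\Delta(\eta v)\right|\leq \tfrac{\mu}{2}\Vert\eta v\Vert_{L^2}$; combining,
\[
|\Spt(v)\cap\D_{1/2}|\lesssim \Vert v\Vert_{H^2(\D_{9/10})}^2+\mu\Vert v\Vert_{L^2(\D_1)}\lesssim M^2;
\]
(c) the Sobolev embedding $W^{2,2}_0(\D_1)\hookrightarrow L^4(\D_1)\cap W^{1,4}(\D_1)$ applied to $\eta v$ (as in the proof of Lemma \ref{lem_nondegeneracy}) gives $\Vert v\Vert_{H^1(\D_{1/2})}^2\lesssim |\Spt(v)\cap\D_{9/10}|^{1/2}\Vert v\Vert_{H^2(\D_{9/10})}^2\lesssim M^3$, whence by the elementary scaling identity $\Vert v_{1/2}\Vert_{H^1(\D_1)}\lesssim \Vert v\Vert_{H^1(\D_{1/2})}$ one obtains $\Vert v_{1/2}\Vert_{H^1(\D_1)}\lesssim M^{3/2}$.

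Setting $v^{(k+1)}=(v^{(k)})_{1/2}$, by Lemma \ref{lem_rescale_general_minimizers} one has $v^{(k)}\in\QM^{\mu_0/2^k}(\D_1)$; writing $\mu_k=\mu_0/2^k$ and $M_k=\mu_k+\Vert v^{(k)}\Vert_{H^1(\D_1)}$, the previous step yields $M_{k+1}\leq \mu_k/2+CM_k^{3/2}$ for some universal $C>0$. A direct induction shows that if $M_0\leq 2\mu_0$ and $\mu_0$ is smaller than a universal constant (so that $C(2\mu_k)^{3/2}\leq \mu_k/2$ for all $k$), then $M_k\leq 2\mu_k\to 0$. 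To apply this to $u\in\QM(\D_1)$ with $\Vert u\Vert_{H^1(\D_1)}\leq\eps$ at any $p\in\D_{1/2}$, we take $r_0=\eps^{1/4}$ and $v^{(0)}=u_{p,r_0}\in\QM^{r_0}(\D_1)$ (by Lemma \ref{lem_rescale_general_minimizers}): the scaling identity $\Vert u_{p,r_0}\Vert_{H^1(\D_1)}\lesssim r_0^{-3}\Vert u\Vert_{H^1(\D_{p,r_0})}\leq r_0^{-3}\eps=r_0$ matches $\mu_0=r_0$, so $M_0\lesssim r_0=\eps^{1/4}$, which meets the smallness threshold once $\eps$ is small. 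The iteration then gives $\Vert u_{p,r_0 2^{-k}}\Vert_{H^1(\D_1)}\to 0$, and Lebesgue differentiation combined with the continuity of $u$ and $\nabla u$ (a consequence of Lemmas \ref{lem_quasi_cacciopoli} and \ref{lem_quasi_BMO} via the John--Nirenberg inequality and Sobolev embedding) forces $u(p)=|\nabla u(p)|=0$, hence $p\notin\Spt(u)$.

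The main obstacle, compared to the minimizer case of Lemma \ref{lem_nondegeneracy}, is that the quasi-minimality defect is of order $1$ at unit scale, precluding a direct iteration from $\Vert u\Vert_{H^1(\D_1)}\leq\eps$: at the first step the Cacciopoli and comparison estimates would only produce bounds of order $1$. The key is that Lemma \ref{lem_rescale_general_minimizers} makes $\mu$ shrink linearly under rescaling, so by choosing $r_0=\eps^{1/4}$ the two parameters $\mu_0=r_0$ and $\Vert u_{p,r_0}\Vert_{H^1(\D_1)}\lesssim r_0^{-3}\eps=r_0$ are simultaneously small and comparable; thereafter the halving $\mu_{k+1}=\mu_k/2$ and the cubic feedback $\Vert v^{(k+1)}\Vert_{H^1(\D_1)}\lesssim M_k^{3/2}$ cooperate to contract $M_k$ to zero.
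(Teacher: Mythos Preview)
Your argument is correct and follows essentially the same approach as the paper's (Cacciopoli + area bound + Sobolev yielding a nonlinear recursion, preceded by an initial rescaling so that the quasi-defect and the $H^1$-norm are simultaneously small); the paper writes the recursion as $g(r/4)\leq C_0(r^3+g(r)^{3/2})$ with $g(r)=\Vert u_r\Vert_{H^1(\D_1)}^2$, which is your $M_{k+1}\leq \mu_k/2+CM_k^{3/2}$ in disguise, and the paper simply bounds the cross term $\int\Delta v\,\Delta(\eta v)$ by Cauchy--Schwarz rather than via your Euler--Lagrange argument. One small radius mismatch to fix: step (c) invokes $|\Spt(v)\cap\D_{9/10}|\lesssim M^2$, but step (b) only delivers this on $\D_{1/2}$; either shift the cutoffs (take $\eta\equiv 1$ on $\D_{9/10}$ in (b), using Cacciopoli at radius $19/20$) or chain two scales as the paper does, which is why it iterates by a factor $1/4$ rather than $1/2$.
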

\begin{proof}
It is sufficient to prove that when $\Vert u\Vert_{H^1(\D_1)}$ is sufficiently small, then $u(0)=|\nabla u(0)|=0$ (since we may apply this to $u_{p,\frac{1}{2}}$ for $p\in\D_{\frac{1}{2}}$).
Let $r\in (0,1)$, we have the first (non-linear) estimate:
\begin{align*}
\Vert u_r\Vert_{H^1(\D_{\frac{1}{2}})}^2&\lesssim |\Spt(u_r)\cap\D_{\frac{3}{4}}|^\frac{1}{2}\Vert u_r\Vert_{W^{1,4}(\D_{\frac{3}{4}})}^2&\text{ by Hölder inequality}\\
&\lesssim |\Spt(u_r)\cap\D_{\frac{3}{4}}|^\frac{1}{2}\Vert u_r\Vert_{H^2(\D_{\frac{3}{4}})}^2&\text{ by Sobolev embedding }H^2\to W^{1,4}\\
&\lesssim |\Spt(u_r)\cap\D_{1}|^\frac{1}{2}\left(r^2+\Vert u_r\Vert_{H^1(\D_1)}^2\right)&\text{ by Lemma \ref{lem_quasi_cacciopoli}}.
\end{align*}
Then we have an estimate of the area of the support, obtained by the minimality condition applied to $v=(1-\eta)u_r$, where $\eta\in\mathcal{C}^\infty_c(\D_{\frac{3}{4}})$ verifies $\eta=1$ in $\D_{\frac{1}{2}}$:
\begin{align*}
|\Spt(u_r)\cap\D_{\frac{1}{2}}|&\leq \int_{\D_{\frac{1}{2}}}(\chi_{u_r\neq 0}-\chi_{v\neq 0})\\
&\leq \int_{\D_{\frac{3}{4}}}\left(|\Delta(u_r-\eta u_r)|^2-|\Delta u_r|^2\right)+ r \Vert\eta u_r\Vert_{L^2(\D_1)}\text{ by \eqref{eq_quasiminimality}}\\
&\lesssim \int_{\D_{\frac{3}{4}}}\left(|\Delta(\eta u_r)|^2-2\Delta(\eta u_r)\Delta u_r\right)+ r \Vert\eta u_r\Vert_{L^2(\D_1)}\\
&\lesssim \Vert u_r\Vert_{H^1(\D_{1})}^2+r^2\text{ by Lemma \ref{lem_quasi_cacciopoli}}.
\end{align*}
Denoting $g(r)=\Vert u_r\Vert_{H^1(\D_1)}^2$, we obtain that for some constant $C_0>0$:
$$\forall r>0:\D_{p,r}\subset D,\text{ we have }g\left(\frac{r}{4}\right)\leq C_0\left(r^3+g(r)^\frac{3}{2}\right).$$
Let $a_0=(4C_0)^{-2}$, $r_0:=\min\left(1,C_0^{-\frac{1}{2}},\frac{3}{4}a_0\right)$, then for any $r\in (0,r_0)$, assume $g(r)\leq a_0$, then we have
$$g\left(\frac{r}{4}\right)\leq r+\frac{1}{4}g(r)(\leq a_0)$$
and by induction $g\left(4^{-k}r\right)\leq k4^{1-k}r+4^{-k}g(r)$ for any $k\in\N^*$. This tends to $0$ as $k\to +\infty$.\\
Assume $\Vert u\Vert_{H^1(\D_1)}\leq r_0^3a_0^\frac{1}{2}$, then $\Vert u_{r_0}\Vert_{H^1(\D_1)}\leq a_0^\frac{1}{2}$, so $u(0)=|\nabla u(0)|=0$. This concludes the proof.
\end{proof}

From the previous results we can deduce a weaker (but easier to work with) optimality condition.

\begin{lemma}\label{lem_quasi_weaker_minimality}
There exists $Q>0$ such that for any $u\in\QM(\D_1)$ such that $0\in\partial\Spt(u)$, for any $r\in \left(0,\frac{1}{2}\right)$, $v\in u+H^2_0(\D_r)$, we have
$$E(u;\D_r)\leq E(v;\D_r)+Q\Vert u\Vert_{H^1(\D_1)}r^{\frac{5}{2}}.$$
\end{lemma}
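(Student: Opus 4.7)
The plan is to combine the quasiminimality \eqref{eq_quasiminimality} with an a priori bound on $E(u;\D_r)$ at small scales, in order to convert the $L^2$-error term into the claimed $r^{5/2}$-error. A reduction argument at the start eliminates the case $E(v;\D_r)\geq E(u;\D_r)$, which is trivial. Moreover, by the nondegeneracy Lemma \ref{lem_nondeg_quasi}, the assumption $0\in\partial\Spt(u)$ forces $\Vert u\Vert_{H^1(\D_1)}\gtrsim 1$, so it will be enough to prove the inequality with $(1+\Vert u\Vert_{H^1(\D_1)})$ in place of $\Vert u\Vert_{H^1(\D_1)}$ on the right-hand side.

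The first key step is to bound $\Vert v-u\Vert_{L^2(\D_r)}$ when $E(v;\D_r)\leq E(u;\D_r)$. Since $w:=v-u\in H^2_0(\D_r)$, two applications of Poincaré's inequality (combined with the integration by parts $\int_{\D_r}|\nabla w|^2 = -\int_{\D_r}w\Delta w$) yield
\[
\Vert v-u\Vert_{L^2(\D_r)}\lesssim r^2\Vert \Delta(v-u)\Vert_{L^2(\D_r)}\lesssim r^2\sqrt{E(u;\D_r)},
\]
using $\Vert \Delta v\Vert_{L^2(\D_r)}^2\leq E(v;\D_r)\leq E(u;\D_r)$. So everything reduces to an a priori estimate on $E(u;\D_r)$.

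The second key step is to bound $E(u;\D_r)$. Since $u\in\QM(\D_1)$, the BMO bound of Lemma \ref{lem_quasi_BMO} together with the Cacciopoli estimate of Lemma \ref{lem_quasi_cacciopoli} (applied at scale $1$ with $\rho=3/4$) gives $[\nabla^2 u]_{\mathrm{BMO}(\D_{1/4})}\lesssim 1+\Vert u\Vert_{H^1(\D_1)}$. The same Orlicz-space argument used in Lemma \ref{lem_C1log} then yields the log-Lipschitz estimate
\[
|u(x)|+|x|\,|\nabla u(x)|\lesssim (1+\Vert u\Vert_{H^1(\D_1)})\,|x|^2\log(1/|x|),
\]
valid near $0$ since $u(0)=|\nabla u(0)|=0$. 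Integrating this pointwise bound on the dyadic annulus $\D_r\setminus\D_{r/2}$ gives $\Vert u_{2r}\Vert_{H^1(\D_1\setminus\D_{1/2})}\lesssim \log(1/r)(1+\Vert u\Vert_{H^1(\D_1)})$; Lemma \ref{lem_quasi_cacciopoli} applied to $u_{2r}$ at $\rho=1/2$ then gives $\Vert u_{2r}\Vert_{H^2(\D_{1/2})}\lesssim \log(1/r)(1+\Vert u\Vert_{H^1(\D_1)})$, which after rescaling reads $\int_{\D_r}|\Delta u|^2\lesssim r^2\log^2(1/r)(1+\Vert u\Vert_{H^1(\D_1)})^2$. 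Adding the trivial bound $|\Spt(u)\cap\D_r|\leq \pi r^2$, one gets $\sqrt{E(u;\D_r)}\lesssim r\log(1/r)(1+\Vert u\Vert_{H^1(\D_1)})$.

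Combining these steps yields $\Vert v-u\Vert_{L^2(\D_r)}\lesssim r^3\log(1/r)(1+\Vert u\Vert_{H^1(\D_1)})$; since $r^{1/2}\log(1/r)$ is uniformly bounded on $(0,1/2)$, this is $\lesssim r^{5/2}(1+\Vert u\Vert_{H^1(\D_1)})\lesssim r^{5/2}\Vert u\Vert_{H^1(\D_1)}$ by nondegeneracy, and plugging back into \eqref{eq_quasiminimality} closes the proof. The main obstacle is purely technical, namely extending the log-Lipschitz estimate of Lemma \ref{lem_C1log} to the quasiminimal setting; however its proof (a combination of BMO bound, Cacciopoli estimate, and Orlicz embedding from \cite{C96}) transfers verbatim once Lemmas \ref{lem_quasi_cacciopoli} and \ref{lem_quasi_BMO} are in hand.
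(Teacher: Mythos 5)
Your proof is correct and follows essentially the same route as the paper: reduce to the case $E(v;\D_r)\leq E(u;\D_r)$, use Poincaré on $v-u\in H^2_0(\D_r)$ to get $\Vert v-u\Vert_{L^2(\D_r)}\lesssim r^2E(u;\D_r)^{1/2}$, bound $E(u;\D_r)$ via the quasi-Cacciopoli and quasi-BMO estimates together with $u(0)=|\nabla u(0)|=0$, and absorb constants with the nondegeneracy Lemma \ref{lem_nondeg_quasi}. The only divergence is in the interior estimate: the paper uses the $\mathcal{C}^{0,\frac12}$ bound on $\nabla u$ coming from the BMO estimate (giving $E(u;\D_r)^{1/2}\lesssim r^{1/2}\Vert u\Vert_{H^1(\D_1)}$ and hence exactly $r^{5/2}$), whereas you invoke the log-Lipschitz estimate of Lemma \ref{lem_C1log} transferred to quasiminimizers (a routine transfer, as you note), which yields the slightly stronger bound $r^{3}\log(1/r)\Vert u\Vert_{H^1(\D_1)}$ before being weakened to $r^{5/2}$; both variants rest on the same Lemmas \ref{lem_quasi_cacciopoli} and \ref{lem_quasi_BMO}.
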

\begin{proof}
We assume without loss of generality that $E(v;\D_r)\leq E(u;\D_r)$ (otherwise there is nothing to prove). By the non-degeneracy lemma we have 
\begin{equation}\label{eq_lem_quasi_weaker_minimality_1}
\Vert u\Vert_{H^1(\D_1)}\gtrsim 1.
\end{equation}
By combining Lemma \ref{lem_quasi_BMO}, \ref{lem_quasi_cacciopoli}, we have for any $p\in\D_{\frac{3}{2}r}$:
$$\frac{|u(p)|}{|p|}+|\nabla u(p)|\lesssim r^\frac{1}{2}[\nabla u]_{\mathcal{C}^{0,\frac{1}{2}}(\D_{\frac{3}{4}})}\lesssim r^\frac{1}{2}\Vert u\Vert_{H^1(\D_1)}.$$
So 
\begin{equation}\label{eq_lem_quasi_weaker_minimality_2}
\Vert u_r\Vert_{H^2(\D_1)}\lesssim \Vert u_r\Vert_{H^1(\D_{\frac{3}{2}})}\lesssim r^{-\frac{1}{2}}.
\end{equation}
Then, by the quasi-minimality condition on $u$ we have 
$$E(u;\D_r)\leq E(v;\D_r)+\Vert u-v\Vert_{L^2(\D_r)}$$
and we bound the remainder term:
\begin{align*}
\Vert u-v\Vert_{L^2(\D_r)}&\lesssim r^2\Vert \Delta(u-v)\Vert_{L^2(\D_r)}\text{ by Poincaré inequality}\\
&\lesssim r^2\left(\Vert \Delta u\Vert_{L^2(\D_r)}+\Vert \Delta v\Vert_{L^2(\D_r)}\right)\\
&\lesssim r^2E(u;\D_r)^{\frac{1}{2}}\text{ since we assume }E(v;\D_r)\leq E(u;\D_r)\\
&\lesssim r^2\left(r^2+\Vert \Delta u\Vert_{L^2(\D_r)}^2\right)^\frac{1}{2}\\
&=  r^3\left(1+\Vert \Delta u_r\Vert_{L^2(\D_1)}^2\right)^\frac{1}{2}\\
&\lesssim r^{\frac{5}{2}}\left(1+\Vert u\Vert_{H^1(\D_1)}^2\right)^\frac{1}{2}\text{ by }\eqref{eq_lem_quasi_weaker_minimality_2}\\
&\lesssim r^{\frac{5}{2}}\Vert u\Vert_{H^1(\D_1)}\text{ by }\eqref{eq_lem_quasi_weaker_minimality_1}.
\end{align*}
And so
\begin{align*}
E(u;\D_r)-E(v;\D_r)&\leq\Vert u-v\Vert_{L^2(\D_r)}\lesssim r^{\frac{5}{2}}\Vert u\Vert_{H^1(\D_1)}.
\end{align*}
This concludes the proof.
\end{proof}

\subsubsection{Sequence of minimizers}

For the next lemma, we write (for $\lambda,\mu\geq 0$) $u\in\QM_\lambda^{\mu}(D)$ if for every $\D_{p,r}\Subset D$, $v\in u+H^2_0(\D_{p,r})$, we have
$$E_\lambda(u;\D_{p,r})\leq E_\lambda(v;\D_{p,r})+\mu\Vert u-v\Vert_{L^2(\D_{p,r})}.$$
\begin{lemma}\label{lem_sequence_quasimin}
Let $\lambda^{(n)}\in \R_+$ some sequence that converges in $\mathcal{C}^0(\D_1)$ to $\lambda\geq 0$, $\mu^{(n)}$ a sequence of $[0,1]$ that converge to $\mu\geq 0$. For each $n$, let $u^{(n)}\in\QM_{\lambda^{(n)}}^{\mu^{(n)}}(\D_1)$ such that
$$\limsup_{n\to +\infty}\Vert u^{(n)}\Vert_{H^1(\D_1)}<+\infty.$$
Then there is a subsequence $u^{(n_k)}$ that converges in $H^2_\loc(\D_1)$ to $u\in\QM_\lambda^{\mu}(\D_1)$, with a strong $L^1(\D_1)$ convergence of $\lambda^{(n_k)}\chi_{u^{(n_k)}\neq 0}$ to $\lambda\chi_{u\neq 0}$.
\end{lemma}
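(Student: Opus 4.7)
The plan is to adapt the proof of Lemma \ref{lem_sequence} to the quasi-minimal setting, the only new ingredients being the error term $\mu^{(n)}\Vert u^{(n)}-v\Vert_{L^2}$ and the spatial dependence of $\lambda^{(n)}$.

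First I would apply the Cacciopoli inequality for quasi-minimizers (Lemma \ref{lem_quasi_cacciopoli}) to obtain uniform $H^2(\D_\rho)$ bounds for every $\rho<1$ from the uniform $H^1(\D_1)$ bound. Up to extraction, I get some $u\in H^2_\loc(\D_1)$ such that $u^{(n)}\to u$ weakly in $H^2(\D_\rho)$, strongly in $H^1(\D_\rho)$, and a.e. on $\D_1$. Lower semicontinuity and Fatou (using the uniform $\mathcal{C}^0$-convergence $\lambda^{(n)}\to\lambda$) give $E_\lambda(u;\D_\rho)\leq\liminf_n E_{\lambda^{(n)}}(u^{(n)};\D_\rho)$.

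Next I verify $u\in\QM_\lambda^\mu(\D_1)$ by the standard gluing argument: given $v\in u+H^2_0(\D_s)$ with $s<1$, pick $r\in(s,1)$ and $\zeta\in\mathcal{C}^\infty_c(\D_r)$ with $\zeta\equiv 1$ on $\D_s$, then define the competitor $v^{(n)}=(1-\zeta)u^{(n)}+\zeta v$. Apply the quasi-minimality of $u^{(n)}$ with $v^{(n)}$ and decompose $\int|\Delta v^{(n)}|^2-\int|\Delta u^{(n)}|^2$ exactly as in Lemma \ref{lem_sequence}, so that the cross terms involving $\nabla\zeta,\Delta\zeta$ vanish in the limit by the strong $H^1$-convergence of $v^{(n)}\to v$ on the annulus $\{\zeta\not\in\{0,1\}\}$. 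The new remainder $\mu^{(n)}\Vert v^{(n)}-u^{(n)}\Vert_{L^2(\D_1)}=\mu^{(n)}\Vert \zeta(v-u^{(n)})\Vert_{L^2(\D_1)}$ passes to $\mu\Vert v-u\Vert_{L^2(\D_s)}$ by strong $L^2$-convergence and boundedness of $\mu^{(n)}$. For the area term, I use that $\lambda^{(n)}\to\lambda$ uniformly, the identity $\chi_{v^{(n)}\neq 0}-\chi_{u^{(n)}\neq 0}=\chi_{\D_s}(\chi_{v\neq 0}-\chi_{u^{(n)}\neq 0})+\mathcal{O}(\chi_{\D_r\sm\D_s})$, and Fatou on $\D_s$; taking $r\searrow s$ makes the remainder proportional to $\pi\lambda(r^2-s^2)\to 0$ since $\lambda$ is continuous and hence bounded. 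Rearranging yields $E_\lambda(u;\D_s)\leq E_\lambda(v;\D_s)+\mu\Vert v-u\Vert_{L^2(\D_s)}$.

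Finally, strong convergence is obtained exactly as in Lemma \ref{lem_sequence}, by testing the previous inequality with $v=u$ (so $v^{(n)}=(1-\zeta)u^{(n)}+\zeta u$). The new error $\mu^{(n)}\Vert \zeta(u-u^{(n)})\Vert_{L^2}$ tends to zero, so the argument produces
\[
\int_{\D_s}|\Delta u|^2\geq\limsup_{n\to\infty}\int_{\D_s}|\Delta u^{(n)}|^2,\qquad \int_{\D_s}\lambda\chi_{u\neq 0}\geq\limsup_{n\to\infty}\int_{\D_s}\lambda^{(n)}\chi_{u^{(n)}\neq 0},
\]
combined with the opposite inequality from lower semicontinuity/Fatou. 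This upgrades weak $H^2_\loc$ to strong $H^2_\loc$, and gives $L^1(\D_s)$-convergence of $\lambda^{(n)}\chi_{u^{(n)}\neq 0}$ to $\lambda\chi_{u\neq 0}$ by dominated convergence (using uniform $\mathcal{C}^0$-convergence of $\lambda^{(n)}$).

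The main delicate point is the strong $L^1$-convergence of the area densities: since the $\lambda^{(n)}$ now vary with $x$, one must use both the uniform convergence of $\lambda^{(n)}$ on $\ov{\D_s}$ and the characteristic-function convergence, and I would handle this by splitting $|\lambda^{(n)}\chi_{u^{(n)}\neq 0}-\lambda\chi_{u\neq 0}|\leq|\lambda^{(n)}-\lambda|\chi_{u^{(n)}\neq 0}+\lambda|\chi_{u^{(n)}\neq 0}-\chi_{u\neq 0}|$, the first going to zero by uniform convergence and the second by the previous two-sided inequality with $\lambda$ continuous and bounded.
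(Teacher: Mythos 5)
Your proposal is correct and follows essentially the same route as the paper: the paper's own proof of this lemma is just a short reduction to the argument of Lemma \ref{lem_sequence}, testing the quasi-minimality of $u^{(n)}$ against the glued competitor $(1-\zeta)u^{(n)}+\zeta v$, observing that the extra term $\mu^{(n)}\Vert \zeta(v-u^{(n)})\Vert_{L^2}$ passes to the limit by strong $L^2_{\loc}$ convergence, and taking $v=u$ to upgrade to strong $H^2_\loc$ convergence. Your elaboration of the area term (uniform convergence of $\lambda^{(n)}$ plus Fatou, then $r\searrow s$) and of the $L^1$ convergence of $\lambda^{(n)}\chi_{u^{(n)}\neq 0}$ matches the paper's treatment in Lemma \ref{lem_sequence}, so there is nothing substantive to add.
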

\begin{proof}
By Lemma \ref{lem_quasi_cacciopoli}, $(u^{(n)})_{n\in\N}$ is bounded in $H^2_\loc(\D_1)$. Consider $u$ a $H^2_\loc(\D_1)$-weak limit of $(u^{(n)})_{n\in\N}$ (after extraction, which we do not write here for clarity), $v\in H^2(\D_1)$ such that $\{u\neq v\}\Subset\D_1$, $\eta\in\mathcal{C}^\infty_c(\D_1)$ such that $\eta=1$ on $\Spt(u-v)$, then we apply the quasi-optimality condition to $(1-\eta) u^{(n)}+\eta v$:
\begin{equation}\label{eq_quasiminimality_sequence}
E_{\lambda^{(n)}}(u^{(n)};\D_1)\leq E_{\lambda^{(n)}}((1-\eta) u^{(n)}+\eta v;\D_1)+\mu^{(n)}\Vert \eta (v-u^{(n)})\Vert_{L^2(\D_1)}.
\end{equation}
Following the proof of Lemma \ref{lem_sequence}, with $v=u$ we obtain 
$$\Vert u\Vert_{H^2(\D_r)}=\lim_{n\to +\infty}\Vert u^{(n)}\Vert_{H^2(\D_r)}$$
for any $r\in (0,1)$, thus the convergence $u^{(n)}\to u$ is strong in $H^2_\loc(\D_1)$. Then Lemma \ref{lem_sequence_quasimin} for a general $v$ gives as $n\to +\infty$:
$$E_\lambda(u;\D_1)\leq E_\lambda(v;\D_1)+\mu\Vert v-u\Vert_{L^2(\D_1)},$$
so $u\in\QM_{\lambda}^{\mu}(\D_1)$.
\end{proof}

\subsubsection{Monotonicity}

In this section, we consider a minimizer $u\in\QM(\D_1)$ with $0\in\partial\Spt(u)$. We prove that up to adding a sufficiently large polynomial term to $W(u,r)$, we still obtain an increasing function.

\begin{theorem}\label{Th_Monotonicity_quasi}
There exists $\lambda_0>0$ such that the following holds. For any $u\in\QM(\D_1)$ such that $0\in\partial\Spt(u)$, we let
$$v(t,\theta)=e^{2t}u(e^{-t+i\theta})$$
Then the function
$$r\in \left(0,\frac{1}{2}\right)\mapsto W(u,r)+\lambda_0\Vert u\Vert_{H^1(\D_1)}\sqrt{r}$$
is non-decreasing. More precisely, for any $\tau\geq 0$, we have $W(u,e^{-\tau})=\W(v,\tau)$ and
$$\frac{d}{d\tau}\left(\W(v,\tau)+\lambda_0\Vert u\Vert_{H^1(\D_1)}e^{-\frac{\tau}{2}}\right)\leq -4\Vert\partial_t\nabla_{t,\theta}v\Vert_{L^2(\partial\Cy_\tau)}^2.$$
\end{theorem}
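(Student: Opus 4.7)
The plan is to adapt the inner-variation argument of Theorem \ref{Th_Monotonicity_exp} to the quasi-minimizer setting, where the Euler--Lagrange equality is replaced by a one-sided inequality with defect $\Vert u_\eps - u\Vert_{L^2(\D_1)}$; the key task is to show that this defect is small enough, at scale $r=e^{-\tau}$, to be absorbed by a monotone correction of size $\Vert u\Vert_{H^1(\D_1)}e^{-\tau/2}$. I would use exactly the same family of variations as in the original proof: for non-negative $\phi\in\mathcal{C}^\infty_c((\tau,\tau+h),\R)$ with $\tau>0$, set $f(t)=\int_0^t e^{2s}\phi(s)\,ds$ and $v_\eps(t,\theta)=v(\psi_\eps(t),\theta)$, where $\psi_\eps$ is the inverse of $t\mapsto t+\eps f(t)$. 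In disk coordinates this reads $u_\eps(re^{i\theta})=e^{-2\eps f(-\log r)}u(re^{\eps f(-\log r)}e^{i\theta})$, which equals $u$ on $\D_1\setminus\overline{\D_{e^{-\tau}}}$; the quasi-minimality condition \eqref{eq_quasiminimality} applied in $\D_1$ then yields
\[E(u;\D_1)\leq E(u_\eps;\D_1)+\Vert u_\eps-u\Vert_{L^2(\D_1)}.\]

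Expanding both sides to first order in $\eps$, the energy side reproduces verbatim the computation in the proof of Theorem \ref{Th_Monotonicity_exp}: $E(u_\eps)-E(u)=\eps A'+o(\eps)$ with
\[A'=\int_0^\infty\phi'(t)\W(v,t)\,dt-4\int_0^\infty\phi(t)\bigl\{\Vert\partial_{t,t}v(t,\cdot)\Vert_{L^2(\Sp)}^2+\Vert\partial_{t,\theta}v(t,\cdot)\Vert_{L^2(\Sp)}^2\bigr\}\,dt,\]
and the Taylor expansion $u_\eps-u=-\eps f(-\log r)\,r^2\partial_t v+O(\eps^2)$ (using the identity $r\partial_r u-2u=-r^2\partial_t v$) gives $\Vert u_\eps-u\Vert_{L^2(\D_1)}=|\eps|B+o(\eps)$ with
\[B^2=\int_0^\infty\int_{\Sp}f(t)^2\,e^{-6t}(\partial_t v(t,\theta))^2\,dt\,d\theta.\]
Testing the quasi-minimality with both signs of $\eps$ yields the two-sided bound $|A'|\leq B$. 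Rewriting $\int\phi'\W=-\int\phi\W'$ as at the end of the original proof, the one-sided version $-A'\leq B$ becomes
\[\int_0^\infty\phi(t)\bigl\{\W'(v,t)+4\Vert\partial_t\nabla_{t,\theta}v(t,\cdot)\Vert_{L^2(\Sp)}^2\bigr\}\,dt\leq B.\]

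The crux is then to estimate $B$ pointwise. For $\phi$ a non-negative mollifier of $\delta_\tau$ of unit mass, $f\equiv 0$ on $[0,\tau]$ and $|f|\lesssim e^{2\tau}$ everywhere, hence the change of variables $t=-\log r$ gives
\[B^2\lesssim e^{4\tau}\int_{\D_{e^{-\tau}}}(r\partial_r u-2u)^2\,dz.\]
The quasi-minimizer analogue of the log-Lipschitz bound of Lemma \ref{lem_C1log} (which follows by the same Orlicz embedding argument from the BMO estimate \eqref{eq_quasi_C112} and the Cacciopoli bound \eqref{eq_quasi_cacciopoli}), combined with $u(0)=|\nabla u(0)|=0$ (forced by $0\in\partial\Spt(u)$ and the continuity of $\nabla u$) and the lower bound $\Vert u\Vert_{H^1(\D_1)}\gtrsim 1$ from Lemma \ref{lem_nondeg_quasi}, yields the pointwise estimate $|r\partial_r u-2u|\lesssim\Vert u\Vert_{H^1(\D_1)}\,r^2\log(1/r)$. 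Integrating gives $B^2\lesssim\Vert u\Vert_{H^1(\D_1)}^2(1+\tau)^2 e^{-2\tau}$, hence $B\lesssim\Vert u\Vert_{H^1(\D_1)}\,e^{-\tau/2}$ uniformly as the mollifier concentrates.

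Letting the mollifier converge to $\delta_\tau$, Lebesgue differentiation gives for almost every $\tau>0$
\[\W'(v,\tau)+4\Vert\partial_t\nabla_{t,\theta}v(\tau,\cdot)\Vert_{L^2(\Sp)}^2\leq C\Vert u\Vert_{H^1(\D_1)}\,e^{-\tau/2}\]
for some universal $C>0$. Setting $\lambda_0=2C$ rearranges this into
\[\frac{d}{d\tau}\bigl(\W(v,\tau)+\lambda_0\Vert u\Vert_{H^1(\D_1)}e^{-\tau/2}\bigr)\leq -4\Vert\partial_t\nabla_{t,\theta}v\Vert_{L^2(\partial\Cy_\tau)}^2,\]
whose non-positive right-hand side means $\tau\mapsto\W(v,\tau)+\lambda_0\Vert u\Vert_{H^1(\D_1)}e^{-\tau/2}$ is non-increasing, equivalent via $r=e^{-\tau}$ to the claimed monotonicity of $r\mapsto W(u,r)+\lambda_0\Vert u\Vert_{H^1(\D_1)}\sqrt r$. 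The main technical obstacle is that the log-Lipschitz bound on $\nabla u$ for quasi-minimizers is not written explicitly in Section \ref{sec_quasireg}; I would first record its proof by a direct transposition of Lemma \ref{lem_C1log}, substituting Lemmas \ref{lem_quasi_cacciopoli} and \ref{lem_quasi_BMO} for their minimizer counterparts.
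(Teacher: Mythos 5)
Your proposal is correct and follows essentially the same route as the paper: the same inner variations $v(\psi_\eps(t),\theta)$ (radial reparametrization in exponential coordinates), with quasi-minimality yielding a one-sided inequality whose $L^2$ defect is absorbed into a term of size $\Vert u\Vert_{H^1(\D_1)}e^{-\tau/2}$, using exactly the pointwise bound on $\partial_t v$ (equivalently $r\partial_r u-2u$) that the paper also extracts from the quasi-minimizer Cacciopoli/BMO estimates together with the nondegeneracy bound $\Vert u\Vert_{H^1(\D_1)}\gtrsim 1$. The only differences are cosmetic — you estimate the defect in disk coordinates via the log-Lipschitz bound and use mollified test functions instead of the paper's $f\to\chi_{t\geq\tau}$ limit after rewriting quasi-minimality on the cylinder — and your remark that the log-Lipschitz lemma must be transposed to quasi-minimizers matches what the paper does implicitly when invoking Lemma \ref{lem_quasi_BMO}.
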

Since $W(u,r)+\lambda_0\Vert u\Vert_{H^1(\D_1)}\sqrt{r}$ is nondecreasing, this means that $W(u,r)$ admits a limit - that we denote $W(u,0)$  - as $r\to 0$.

\begin{proof}
By Lemma \ref{lem_quasi_BMO}, we have $v\in H^2_\lin(\Cy_0)$. By a change of variable (see Lemma \ref{lem_change_variable_diskexp}), the quasi-minimality of $u$ implies the following: for any $\tau\geq 0$, any $w\in H^2_\lin(\Cy_0)$ such that $\{v\neq w\}\subset\Cy_\tau$, we have
$$\G(v,\tau)\leq\G(w,\tau)+e^{2\tau}\sqrt{\int_{\Cy_\tau}e^{-6t}(v-w)(t,\theta)^2dtd\theta}.$$
The minimality condition simplifies to
\begin{equation}\label{eq_quasiminimality_exp}
\G(v,\tau)\leq\G(w,\tau)+ e^{-\tau}\Vert v-w\Vert_{L^2(\Cy_\tau,e^{-2t}dtd\theta)}.
\end{equation}
As in the proof of Theorem \ref{Th_Monotonicity_exp}, let $\tau>0$, consider the vector field $\phi(t,\theta)=(f(t),0)$ for some function $f\in\mathcal{C}^\infty(\R_+)$ with $f=0$ on $[0,\tau]$, $\Vert f\Vert_{W^{1,\infty}(\R_+)}<\infty$,
$\psi_\eps$ the inverse of $t\mapsto t+\eps f(t)$ and $w(t,\theta)=v(\psi_\eps(t),\theta)$. Then the optimality condition \eqref{eq_quasiminimality_exp} applied for $\eps\to 0$ gives the condition 
\begin{align*}
\int_{0}^{+\infty}e^{-2t}(2f-f')&\left\{\Vert\partial_{t,t}v\Vert^2+2\Vert\partial_{t,\theta}v\Vert^2+\Vert\partial_{\theta,\theta}v\Vert^2-4\Vert\partial_{\theta}v\Vert^2+\int_{\partial\Cy_t}\chi_{v\neq 0}\right\}dt\\
&+2\int_{0}^{+\infty}e^{-2t}f''\langle\partial_t v,\partial_{t,t}v\rangle dt\\
&\leq -4\int_{0}^{+\infty}e^{-2t}f'\left\{\Vert \partial_{t,t}v\Vert^2+\Vert \partial_{t,\theta}v\Vert^2\right\}dt+ e^{-\tau}\Vert f\partial_t v\Vert_{L^2(\Cy_\tau,e^{-2t}dtd\theta)}.
\end{align*}
By Lemma \ref{lem_quasi_BMO}, we bound $|\partial_ t v|\lesssim e^{\frac{t}{2}} \Vert u\Vert_{H^1(\D_1)}$. So taking formally $f\to \chi_{t\geq\tau}$ (or using the weak derivative characterization as in the proof of Theorem \ref{Th_Monotonicity_exp}) we obtain
$$\W'(v,\tau)+4\left(\Vert\partial_{t,t}v(\tau)\Vert^2+\Vert\partial_{t,\theta}v(\tau)\Vert^2\right)\lesssim \Vert u\Vert_{H^1(\D_1)}e^{-\frac{\tau}{2}}.$$
So for some sufficiently large $\lambda_0\gtrsim 1$, we have
$$\frac{d}{d\tau}\left(\W(v,\tau)+\lambda_0\Vert u\Vert_{H^1(\D_1)} e^{-\frac{\tau}{2}}\right)\leq -4\left(\Vert\partial_{t,t}v(\tau)\Vert^2+\Vert\partial_{t,\theta}v(\tau)\Vert^2\right).$$
The rest follows exactly the proof of Theorem \ref{Th_Monotonicity_disk} with the additional corrective term.
\end{proof}
We obtain as a consequence the following blow-up results, that is a combination of Lemma \ref{lem_conv_blowup} and Propositions \ref{prop_existence_blowup}, \ref{prop_renormalized_blowup}.
\begin{lemma}\label{lem_existence_blowup_quasi}
Let $u\in\QM(\D_1)$ such that $0\in\partial\Spt(u)$, then
\begin{itemize}[label=\textbullet]
\item Any converging subsequence of $u_{r}$ as $r\to 0$ converges to a $2$-homogeneous minimizer of $E(\cdot;\D_1)$.
\item If $\liminf_{r\to 0}\frac{|\Spt(u)\cap \D_{r}|}{|\D_{r}|}<1$, then there exists some sequence $r_n\to 0$ such that $u_{r_n}$ converges to an element of $\M_{\mathrm{hom}}^{\pi}\sqcup \M_{\mathrm{hom}}^{t_1}$.
\item If $\limsup_{r\to 0}\Vert u_{r}\Vert_{H^1(\D_1)}=+\infty$ and $0\in\partial\Spt(u)$, then there exists a sequence $r_n\to 0$ such that $\frac{u_{r_n}}{\Vert u_{r_n}\Vert_{H^1(\D_1)}}$ converges in $H^2_\loc(\R^2)$ to a non-zero $2$-homogeneous biharmonic function.
\end{itemize}
\end{lemma}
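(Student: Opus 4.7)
The strategy is to parallel the three proofs in Section \ref{sec_mono} (Lemma \ref{lem_conv_blowup} and Propositions \ref{prop_existence_blowup}, \ref{prop_renormalized_blowup}), replacing each technical input (Cacciopoli, compactness, monotonicity, control of $N$ by $W$) by its quasi-minimal counterpart developed just above. The central observation driving the entire argument is the following rescaling fact: by Lemma \ref{lem_rescale_general_minimizers}, $u_r \in \QM^r(\D_{1/r})$, so as $r \to 0$ the quasi-minimality parameter tends to $0$, and Lemma \ref{lem_sequence_quasimin} (applied with $\lambda^{(n)}\equiv 1$, $\mu^{(n)}=r_n\to 0$) will transfer any $H^1$-bounded blow-up sequence to a \emph{genuine} minimizer in $\M(\D_1)$.

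For the first point, I would start from an $H^2_\loc$-bounded subsequence $u_{r_n}$, apply Lemma \ref{lem_sequence_quasimin} to extract a further subsequence converging in $H^2_\loc(\D_1)$ to some $v\in\M(\D_1)$ with $\chi_{u_{r_n}\neq 0}\to \chi_{v\neq 0}$ in $L^1_\loc$. To show $v$ is $2$-homogeneous, I would use the monotonicity of Theorem \ref{Th_Monotonicity_quasi}: the function $r\mapsto W(u,r)+\lambda_0\Vert u\Vert_{H^1(\D_1)}\sqrt{r}$ is non-decreasing, hence $W(u,0):=\lim_{r\to 0}W(u,r)$ exists. Averaging $W(v,\cdot)$ with a smooth weight $\phi\in\mathcal{C}^\infty_c(]1/2,1[)$, exactly as in the proof of Lemma \ref{lem_conv_blowup}, converts the boundary-integral term of $W$ into a continuous functional of $v$; passing $n\to\infty$ in $W_\phi(u_{r_n},s)=W_\phi(u,sr_n)$ and using that the correction $\lambda_0\Vert u\Vert_{H^1}\sqrt{sr_n}$ vanishes shows $W_\phi(v,\cdot)$ is constant, hence $W(v,\cdot)$ is constant, hence by Theorem \ref{Th_Monotonicity_disk} (applied to $v\in\M(\D_1)$) $v$ is $2$-homogeneous.

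For the second point, I would adapt Lemmas \ref{lem_boundedgrowth}, \ref{lem_nondegsequences}, \ref{lem_Ncontrol} to the quasi-minimal setting. The corrected monotonicity of Theorem \ref{Th_Monotonicity_quasi} immediately yields $W(u,r)\leq W(u,1/2)+\lambda_0\Vert u\Vert_{H^1(\D_1)}/\sqrt{2}$, so the bound $rN'(u,r)-\kappa N(u,r)\leq W(u,r)$ still integrates to a Gronwall-type upper bound on $N(u,r)$. The adaptation of Lemma \ref{lem_Ncontrol} is the crux: given a sequence $u^{(n)}\in\QM_1^{\mu^{(n)}}(\D_1)$ with $\mu^{(n)}\to 0$, density $|\{u^{(n)}\neq 0\}\cap\D_1|\leq(1-\delta)|\D_1|$, and $N(u^{(n)},1)\to\infty$ with $W(u^{(n)},1)=o(N(u^{(n)},1))$, we renormalize $v^{(n)}=u^{(n)}/\Vert u^{(n)}\Vert_{H^1(\D_1)}$, observe $v^{(n)}\in\QM_{\Vert u^{(n)}\Vert_{H^1}^{-2}}^{\mu^{(n)}\Vert u^{(n)}\Vert_{H^1}^{-1}}(\D_1)$, and apply Lemma \ref{lem_sequence_quasimin} with both parameters going to zero to obtain a nonzero biharmonic limit, which contradicts the preserved density estimate by analyticity. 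Then Proposition \ref{prop_existence_blowup}'s argument runs verbatim: on a sequence of radii $r_n$ where a density hypothesis holds on an interval $(\tau r_n,r_n)$, the quasi-version of Lemma \ref{lem_Ncontrol} plus Cacciopoli (Lemma \ref{lem_quasi_cacciopoli}) give uniform $H^1$ bounds on $u_{r_n}$, and the first point of the present lemma produces a $2$-homogeneous limit, which must lie in $\M_{\mathrm{hom}}^{\pi}\sqcup\M_{\mathrm{hom}}^{t_1}$ by its density.

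For the third point, I would follow Proposition \ref{prop_renormalized_blowup}: pick $s_n\in[r_n/2,r_n]$ with $\Vert u_{s_n}\Vert_{H^1(\D_1)}^2\sim \int_{r_n/2}^{r_n}N(u,s)ds/s\to\infty$, apply the quasi-version of Lemma \ref{lem_nondegsequences} to $u_{s_n}/\Vert u_{s_n}\Vert_{H^1(\D_1)}$ (this converges to a nonzero biharmonic limit $v$ since the rescaled quasi-parameters both vanish), and extend the convergence to all of $\R^2$ via the corrected monotonicity (which still bounds $N(u,s_n r)$ by $r^\kappa N(u,s_n)+(\text{negligible corrective terms})$). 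The identification that $v$ is a $2$-homogeneous biharmonic polynomial proceeds from Lemma \ref{lem_computationWbih} applied to the limit, using that $W_0(v,r)=\lim W(u_{s_n},r)/\Vert u_{s_n}\Vert_{H^1(\D_1)}^2\leq 0$ because $W(u,\cdot)$ is bounded above thanks to the corrected monotonicity. The main obstacle I anticipate is precisely this bookkeeping: keeping track of the quasi-parameter $\mu$, the area weight $\lambda$, and the corrective term $\lambda_0\Vert u\Vert_{H^1}\sqrt{r}$ simultaneously through two nested renormalizations (one fixing scale, one dividing by $\Vert u_{s_n}\Vert_{H^1}$). All three quantities must be verified to vanish in the appropriate limits, which they do, but the verification is what occupies the majority of the proof.
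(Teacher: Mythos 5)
Your plan is correct and follows essentially the same route as the paper: the paper's proof likewise combines Lemma \ref{lem_sequence_quasimin} with Theorem \ref{Th_Monotonicity_quasi} for the first point, uses the quasi-minimal growth estimate (the analogue of Lemma \ref{lem_boundedgrowth}) plus renormalization and the nondegeneracy Lemma \ref{lem_nondeg_quasi} to get boundedness and a nonzero $2$-homogeneous limit for the second, and renormalizes $u_{r_n}$ into $\QM^{\lambda_n}_{\lambda_n^2}(\D_1)$ and concludes as in Proposition \ref{prop_renormalized_blowup} for the third. The only difference is presentational: the paper compresses the adaptation of Lemmas \ref{lem_Ncontrol} and \ref{lem_nondegsequences} into a direct contradiction argument, whereas you spell them out, and the final upgrade from ``degree at most two'' to pure $2$-homogeneity (via the log-Lipschitz bound inherited from Lemma \ref{lem_quasi_BMO}) is implicit in your ``conclude as in Proposition \ref{prop_renormalized_blowup}''.
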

\begin{proof}
\begin{itemize}[label=\textbullet]
\item This is a direct consequence of Lemma  \ref{lem_sequence_quasimin} and Theorem \ref{Th_Monotonicity_quasi}.
\item With the same computation as for Lemma \ref{lem_boundedgrowth}, we have for any $u\in\QM(\D_1)$, $0<s<r<1$:
\begin{equation}\label{eq_quasi_growth_estimate}
N(u,s)\geq \left(\frac{s}{r}\right)^\kappa N(u,r)-\frac{1-\left(\frac{s}{r}\right)^{\kappa}}{\kappa}\left(W(u,1)+\lambda_0\Vert u\Vert_{H^1(\D_1)}\right)
\end{equation}
which implies by contradiction that for any sequence $r_n\to 0$ such that $\limsup_{n}|\Spt(u_{r_n})\cap\D_1|<\pi$, we have
$$\limsup_{n\to +\infty}\Vert u_{r_n}\Vert_{H^1(\D_1)}<\infty.$$
Indeed, otherwise $u_{r_n}/\Vert u_{r_n}\Vert_{H^1(\D_1)}$ would converge to a non-zero biharmonic function $v$, such that $\{v=0\}$ has a positive measure, which is a contradiction.

Since $u_{r_n}$ is bounded in $H^1(\D_1)$ and belongs to $\QM^{r_n}(\D_1)$ (by Lemma \ref{lem_rescale_general_minimizers}), then up to extracting some subsequence we may suppose (by Lemma \ref{lem_sequence_quasimin}) that $u_{r_n}$ converges to  some $v\in\QM^0(\D_1)=\M(\D_1)$, which is non-zero by the nondegeneracy Lemma \ref{lem_nondeg_quasi}.

Since $W(u,r)$ converges to a constant as $r\to 0$, then $W(v,r)$ is constant with respect to $r$, so $v$ is a $2$-homogeneous element of $\M(\D_1)$. Since the density of its support is less than $1$, necessarily
$$v\in\M_{\mathrm{hom}}^{\pi}\sqcup\M_{\mathrm{hom}}^{t_1}.$$
\item This follows the same proof as Lemma \ref{prop_renormalized_blowup}. Let $\lambda_n=\Vert u_{r_n}\Vert_{H^1(\D_1)}^{-1}$, the renormalized function $\lambda_n u_{r_n}$ belongs to $\M_{\lambda_n^2 }^{\lambda_n}(\D_1)$: by Lemma \ref{lem_sequence_quasimin} we may suppose (after extraction) that $\Vert u_{r_n}\Vert_{H^1(\D_1)}^{-1}u_{r_n}$ converges to $v\in\QM_0^0(\D_1)$, meaning some biharmonic function $v$. Function $v$ is non-zero by the growth estimate \eqref{eq_quasi_growth_estimate} and we conclude as in Proposition \ref{prop_renormalized_blowup}.
\end{itemize}
\end{proof}
\subsubsection{Epiperimetry}
The central cornerstone of the epiperimetry formula is the estimate of Corollary \ref{cor_epiest}: we do not need to re-prove this result, indeed we can just use the competitor of \ref{cor_epiest} in the minimality characterization of Lemma \ref{lem_quasi_weaker_minimality}.

\begin{theorem}\label{th_epi_quasi}
Let $\Theta\in\{\pi,t_1,2\pi\}$. There exist constants $c_1,C_1,\lambda_1>0$, $\eta\in (0,1)$ such that the following holds: let $u\in\QM(\D_1)$ such that $0\in\partial\Spt(u)$, $
\ov{u}\in \M_{\mathrm{hom}}^{\Theta}$ if $\Theta\in\{\pi,t_1\}$ (resp. $\ov{u}\in\mathrm{Span}(x^2+y^2,x^2-y^2,xy)$ if $\Theta=2\pi$), $r\in \left(0,\frac{1}{2}\right)$, assume
$$\left\Vert u_r-\ov{u}\right\Vert_{H^1(\D_1\setminus\D_{e^{-1}})}\leq c_1\Vert \ov{u}\Vert_{H^1(\D_1)},\quad W(u,0)\geq \frac{\Theta}{2}.$$
Then, denoting 
\begin{equation}\label{eq_def_WM}
W^{quasi}(u,r):=W(u,r)+\lambda_1\Vert u\Vert_{H^1(\D_1)}\sqrt{r},
\end{equation}
we have
$$
W^{quasi}(u,e^{-1}r)-\frac{\Theta}{2}\leq (1-\eta)\left( W^{quasi}(u,r)-\frac{\Theta}{2}\right)$$
and
$$
\Vert u_r-u_{e^{-1}r}\Vert_{H^1(\D_1\setminus\D_{e^{-1}})}\leq C_1\sqrt{W^{quasi}(u,r)-\frac{\Theta}{2}}.
$$
\end{theorem}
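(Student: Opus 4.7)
The proof adapts that of Theorem \ref{th_epiperimetry} to the quasi-minimal setting. Two ingredients change: the monotonicity of $\W$ is replaced by the corrected quasi-monotonicity of Theorem \ref{Th_Monotonicity_quasi}, and Corollary \ref{cor_epiest} acquires an additive error term. The new summand $\lambda_1 M\sqrt{r}$ in $W^{quasi}$ (with $M := \|u\|_{H^1(\D_1)}$ and $\lambda_1$ chosen sufficiently large, in particular $\lambda_1 > \max(\lambda_0, 1)$) is introduced precisely to absorb both of these errors.

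\emph{Step 1 (quasi-minimality in exponential coordinates).} Setting $v(t,\theta) := e^{2t}u(e^{-t+i\theta})$ and $\tau_0 := -\log r$, one has $\W(v,\tau_0) = W(u,r)$. Combining Lemma \ref{lem_quasi_weaker_minimality} (applicable at every scale $s = e^{-\tau} \leq r \leq \tfrac12$) with the change of variable formula of Lemma \ref{lem_change_variable_diskexp}, the quasi-minimality of $u$ translates into: for every $\tau \geq \tau_0$ and every $w \in H^2_\lin(\Cy_\tau)$ with $w - v \in H^2_0(\Cy_\tau)$,
\[
\G(v,\tau) \leq \G(w,\tau) + Q M e^{-\tau/2}, \qquad Q \text{ universal}.
\]
Moreover the quasi-Cacciopoli and quasi-BMO bounds of Lemmas \ref{lem_quasi_cacciopoli}--\ref{lem_quasi_BMO} differ from their minimizer analogues only by $\mathcal O(r^2)$ additive corrections, so the $\mathcal C^1$-closeness argument of the proof of Theorem \ref{th_epiperimetry_exp} carries over and guarantees that $v(\tau,\cdot)$ satisfies the support condition of Corollary \ref{cor_epiest} for every $\tau \in [\tau_0 + \tfrac14, \tau_0 + \tfrac34]$.

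\emph{Step 2 (quasi-epiperimetric estimate and integration).} For such $\tau$, running the proof of Corollary \ref{cor_epiest} verbatim with the competitor $U$ constructed in Lemma \ref{lem_competitor} (or with the biharmonic replacement of Lemma \ref{lem_bihext} in the $\Theta = 2\pi$ full-support case), and substituting the quasi-minimality of Step 1 for exact minimality, one obtains
\[
\G(v,\tau) \leq \G(v(\tau,\cdot),0) - \eta\bigl(\G(v(\tau,\cdot),0) - \tfrac{\Theta}{2}\bigr)_+ + C\|\partial_{t,\theta} v(\tau,\cdot)\|^2 - 2\|\partial_t v(\tau,\cdot)\|^2 + Q M e^{-\tau/2}.
\]
Multiplying by $\phi(\tau-\tau_0)^2$ for a standard cutoff $\phi \in \mathcal C^\infty_c((1/4, 3/4))$ with $\phi \equiv 1$ on $[1/3, 2/3]$, integrating over $\tau \in [\tau_0, \tau_0 + 1]$, and using the differential bound $-\W'(v,\tau) \geq 4(\|\partial_{tt}v(\tau)\|^2 + \|\partial_{t,\theta}v(\tau)\|^2) - \tfrac{\lambda_0}{2} M e^{-\tau/2}$ supplied by Theorem \ref{Th_Monotonicity_quasi}, the computation of the proof of Theorem \ref{th_epiperimetry_exp} goes through and produces universal constants $\tilde\eta \in (0,1)$ and $C', C'' > 0$ such that
\[
W(u, e^{-1}r) - \tfrac{\Theta}{2} \leq (1 - \tilde\eta)\bigl(W(u,r) - \tfrac{\Theta}{2}\bigr) + C' M \sqrt{r},
\]
and, extracted from the pointwise controls of $\|\partial_t v\|^2$ and $\|\partial_t \nabla_{t,\theta} v\|^2$ along the lines of Lemma \ref{lem_control_variation_u},
\[
\|u_r - u_{e^{-1}r}\|_{H^1(\D_1 \setminus \D_{e^{-1}})}^2 \leq C''\bigl(W(u,r) - \tfrac{\Theta}{2} + M\sqrt{r}\bigr).
\]

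\emph{Step 3 (absorption into $W^{quasi}$ and main obstacle).} Choosing any $\eta \in \bigl(0, \min(\tilde\eta,\, 1 - e^{-1/2})\bigr)$ and $\lambda_1 \geq \max\bigl(1,\, \lambda_0,\, C'/(1 - \eta - e^{-1/2})\bigr)$, the identity $\sqrt{e^{-1}r} = e^{-1/2}\sqrt{r}$ transforms the first inequality of Step 2 into the desired contractive estimate $W^{quasi}(u, e^{-1}r) - \tfrac{\Theta}{2} \leq (1-\eta)\bigl(W^{quasi}(u,r) - \tfrac{\Theta}{2}\bigr)$, while the inclusion $W(u,r) - \Theta/2 + M\sqrt{r} \leq W^{quasi}(u,r) - \Theta/2$ (valid since $\lambda_1 \geq 1$) turns the second inequality into the advertised variation bound with $C_1 = \sqrt{C''}$. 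The only real subtlety is this algebraic compatibility: the decay rate $\eta$ is forced to lie strictly below $1 - 1/\sqrt{e} \approx 0.39$, independently of the sharper rates produced by Lemmas \ref{lem_competitor} and \ref{lem_bihext}, and $\lambda_1$ must be large enough so that the single correction $\lambda_1 M \sqrt{r}$ absorbs simultaneously the quasi-minimality error of Step 2, the correction in the monotonicity formula, and the rescaling factor $e^{-1/2}$ relating $\sqrt{e^{-1}r}$ to $\sqrt{r}$.
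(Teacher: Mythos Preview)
Your proof is correct and follows essentially the same approach as the paper: pass to exponential coordinates, apply Lemma \ref{lem_quasi_weaker_minimality} to pick up the additive error $QMe^{-\tau/2}$ in Corollary \ref{cor_epiest}, integrate against the cutoff $\phi^2$ as in Theorem \ref{th_epiperimetry_exp} using the quasi-monotonicity of Theorem \ref{Th_Monotonicity_quasi}, and finally absorb the resulting $C'M\sqrt{r}$ term into $W^{quasi}$ by choosing $\lambda_1$ large. The one minor imprecision is in the explicit lower bound for $\lambda_1$ in Step 3: since $W(u,r)-\Theta/2$ may be slightly negative (only bounded below by $-\lambda_0 M\sqrt{r}$ via quasi-monotonicity), the term $(\tilde\eta-\eta)(W(u,r)-\Theta/2)$ contributes an additional $-(\tilde\eta-\eta)\lambda_0 M\sqrt{r}$ that must also be absorbed, so one actually needs $\lambda_1(1-\eta-e^{-1/2}) \geq C' + (\tilde\eta-\eta)\lambda_0$; this does not affect the argument since all constants are universal.
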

\begin{proof}
We let $v(t,\theta)=e^{2t}u(e^{-t+i\theta})$ and proceed as in the proof of Theorem \ref{Th_Monotonicity_exp}. We fix $\tau=-\log(r)$. When the ratio
$$\frac{\left\Vert u_r-\ov{u}\right\Vert_{H^1(\D_1\setminus\D_{e^{-1}})}}{\Vert \ov{u}\Vert_{H^1(\D_1)}}$$
is sufficiently small, we obtain (by the same argument as in the proof of Theorem \ref{th_epiperimetry_exp}) that $(v(t,\cdot),\partial_t v(t,\cdot))$ verifies the support hypothesis of \ref{cor_epiest} for almost every $t\in\left[\tau+\frac{1}{4},\tau+\frac{3}{4}\right]$. 

We let $\phi\in\mathcal{C}^\infty_c(\R,[0,1])$ such that 
$$\{\phi\neq 0\}\subset\left[\tau+\frac{1}{4},\tau+\frac{3}{4}\right],\quad |\phi'|\lesssim 1,\quad \phi=1\text{ in }\left[\tau+\frac{1}{3},\tau+\frac{2}{3}\right].$$
Then following the computations of the proof of Theorem \ref{th_epiperimetry_exp}, we have 
\begin{align*}
\W(v,\tau)-\W(v,\tau+1)\geq&\int_{\tau}^{\tau+1}\phi(t)^2\left(\Vert \partial_{t,t}v\Vert^2+2\Vert \partial_{t,\theta}v\Vert^2+2\G(v(t,\cdot),t)-2\G(v,t)\right)dt\\
&-4\int_{\tau}^{\tau+1}\phi(t)\left(\phi(t)+\phi'(t)\right)\langle \partial_{t}v,\partial_{t,t}v\rangle dt.
\end{align*}
By the minimality condition of Lemma \ref{lem_quasi_weaker_minimality}, applied to the competitor built in Corollary \ref{cor_epiest} with the initial conditions $(v(t,\cdot),\partial_t v(t,\cdot))$, we have for every $t\in\left[\tau+\frac{1}{4},\tau+\frac{3}{4}\right]$:
\begin{align*}
\G(v(t,\cdot),t)-\G(v,t)
\geq&\,\eps\left(\W(v,t)-\frac{\Theta}{2}\right)+2\Vert\partial_t v(t,\cdot)\Vert^2-4\eps\left|\langle\partial_t v,\partial_{t,t}v\rangle\right| -2C\Vert\partial_{t,\theta} v(t,\cdot)\Vert_{L^2(\Sp)}^2\\
&-Qe^{-\frac{\tau}{2}}\Vert u\Vert_{H^1(\D_1)}
\end{align*}
for some constant $C>0$. Here $\G$ is as defined in \eqref{defGronde}. Injecting this in the previous estimate, and rearranging the terms as in the proof of Theorem \ref{th_epiperimetry_exp}, we obtain for some constants $d,D,C'>0$:
\begin{align*}
\W(v,\tau)-\W(v,\tau+1)\geq&\, d\left(\left(\W(v,\tau+1)-\frac{\Theta}{2}\right) +\Vert \partial_t v\Vert_{H^1(\Cy_{0}\setminus\Cy_{1})}^2\right)-4D\int_{0}^{1}\left(\Vert\partial_{t,\theta}v\Vert^2+\Vert\partial_{t,t}v\Vert^2\right)dt\\
&-C'e^{-\frac{\tau}{2}}\Vert u\Vert_{H^1(\D_1)}.
\end{align*}
By the monotonicity formula of Theorem \ref{Th_Monotonicity_quasi} (we denote $\lambda_0$ the constant from this result), we have
$$4\int_{0}^{1}\left(\Vert\partial_{t,\theta}v\Vert^2+\Vert\partial_{t,t}v\Vert^2\right)dt\geq\W(v,\tau)-\W(v,\tau+1)-\left(1-e^{-\frac{1}{2}}\right)\lambda_0\Vert u\Vert_{H^1(\D_1)}e^{-\frac{\tau}{2}}.$$
And so we obtain that for some constant $C''$:
$$\W(v,\tau+1)\leq \W(v,\tau)-\frac{d}{D+1-d}\left(\W(u,\tau)-\frac{\Theta}{2}+\Vert\partial_tv\Vert_{H^1(\Cy_\tau\setminus\Cy_{\tau+1})}^2\right)+ C''\Vert u\Vert_{H^1(\D_1)}e^{-\frac{\tau}{2}}.$$
Going back to the disk coordinate, and handling the estimate of $\Vert u_{e^{-1}r}-u_{r}\Vert_{H^1(\D_1\setminus\D_{e^{-1}})}$ as in the proof of Lemma \ref{lem_control_variation_u}, we obtain the result.
\end{proof}
\subsubsection{Uniqueness and speed of convergence of blow-ups}
The adaptation of Proposition \ref{prop_convpolyblow_up} to the quasiminimal setting is as follows.

\begin{proposition}\label{prop_convpolyblow_up_quasi}
Let $\Theta\in\{\pi,t_1,2\pi\}$, $M\geq 1$. There exist constants $c_2,C_2>0$, $\gamma\in (0,1)$ such that the following holds: let $u\in\QM(\D_1)$ such that $0\in\partial\Spt(u)$, $
\ov{u}\in \M_{\mathrm{hom}}^{\Theta}$ if $\Theta\in\{\pi,t_1\}$ (resp. $\ov{u}\in\mathrm{Span}(x^2+y^2,x^2-y^2,xy)$ if $\Theta=2\pi$), assume
$$W(u,0)\geq \frac{\Theta}{2},\quad \Vert u-\ov{u}\Vert_{H^1(\D_1)}\leq c_2\Vert \ov{u}\Vert_{H^1(\D_1)}.$$
Then there exists $\widehat{u}\in \M_{\mathrm{hom}}^{\Theta}$ such that for any $r\in (0,1]$:
$$\Vert u_r-\widehat{u}\Vert_{H^1(\D_1)}\leq C_2\min\left(r,\frac{\Vert u-\ov{u}\Vert_{H^1(\D_1)}}{\Vert \ov{u}\Vert_{H^1(\D_1)}}\right)^\gamma\Vert \ov{u}\Vert_{H^1(\D_1)},$$
and 
$$\Vert\ov{u}-\widehat{u}\Vert_{H^1(\D_1)}\leq C_2 \Vert u-\ov{u}\Vert_{H^1(\D_1)}^\gamma\Vert \ov{u}\Vert_{H^1(\D_1)}^{1-\gamma}.$$
\end{proposition}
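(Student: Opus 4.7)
The strategy is to mirror the proof of Proposition \ref{prop_convpolyblow_up} with $W$ replaced by the quasi-corrected energy $W^{quasi}$ from \eqref{eq_def_WM}, using the monotonicity formula of Theorem \ref{Th_Monotonicity_quasi} and the epiperimetric inequality of Theorem \ref{th_epi_quasi} in place of their minimizer counterparts. A preliminary observation is that the closeness hypothesis $\Vert u - \ov{u}\Vert_{H^1(\D_1)} \leq c_2 \Vert\ov{u}\Vert_{H^1(\D_1)}$ forces $\Vert u\Vert_{H^1(\D_1)} \leq (1+c_2)\Vert\ov{u}\Vert_{H^1(\D_1)}$, so the correction $\lambda_1 \Vert u\Vert_{H^1(\D_1)} \sqrt{r}$ appearing in $W^{quasi}$ is uniformly dominated by $\lambda_1(1+c_2)\Vert\ov{u}\Vert_{H^1(\D_1)}\sqrt{r}$. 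This estimate will be used throughout.

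First I would fix the initial scale. The proof of Lemma \ref{lem_W_cacciopoli} goes through verbatim once one replaces the monotonicity of $W$ by that of $W^{quasi}$, yielding $|W^{quasi}(u, 1/2)| \lesssim \Vert\ov{u}\Vert_{H^1(\D_1)}^2$ (with an implicit constant that may depend on $\lambda_1$). Then, exactly as in Proposition \ref{prop_convpolyblow_up}, I would set
\[\alpha = \left(\frac{c_1(1-\sqrt{1-\eta})}{2C_1}\right)^2, \qquad K = \left\lceil \frac{\log(\alpha/C)}{\log(1-\eta)} \right\rceil_+, \qquad \ov{r} = \frac{1}{2}e^{-K},\]
and choose $c_2$ small enough (depending on $\ov{r}$) so that for $k=0,\dots,K-1$ one has $\Vert u_{\frac{1}{2}e^{-k}} - \ov{u}\Vert_{H^1(\D_1 \setminus \D_{e^{-1}})} \leq (c_1/2)\Vert\ov{u}\Vert_{H^1(\D_1)}$ via the scaling identity \eqref{eq_scaling_property}. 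Iterating Theorem \ref{th_epi_quasi} across these $K$ scales produces the anchor bound $W^{quasi}(u, \ov{r}) - \Theta/2 \leq \alpha \Vert\ov{u}\Vert_{H^1(\D_1)}^2$.

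The main induction then claims, for every $k \geq 0$:
\[W^{quasi}(u, e^{-k}\ov{r}) - \Theta/2 \leq (1-\eta)^k \alpha \Vert\ov{u}\Vert_{H^1(\D_1)}^2, \qquad \frac{\Vert u_{e^{-k}\ov{r}} - \ov{u}\Vert_{H^1(\D_1 \setminus \D_{e^{-1}})}}{\Vert\ov{u}\Vert_{H^1(\D_1)}} \leq \frac{c_1}{2} + C_1\sqrt{\alpha}\sum_{j=0}^{k-1}(1-\eta)^{j/2}.\]
The geometric sum is bounded by $c_1/2$ by our choice of $\alpha$, so the proximity hypothesis of Theorem \ref{th_epi_quasi} self-propagates, and the variation control $\Vert u_r - u_{e^{-1}r}\Vert_{H^1(\D_1 \setminus \D_{e^{-1}})} \leq C_1\sqrt{W^{quasi}(u,r) - \Theta/2}$ passes the induction from $k$ to $k+1$. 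Using the scaling \eqref{eq_scaling_property} on dyadic annuli to pass from the annular estimate to a full $H^1(\D_1)$ bound (as in the proof of Proposition \ref{prop_convpolyblow_up}), the summability of $(1-\eta)^{k/2}$ yields a Cauchy sequence with limit $\widehat{u}$ satisfying $\Vert u_{e^{-k}\ov{r}} - \widehat{u}\Vert_{H^1(\D_1)} \lesssim (1-\eta)^{k/2}\Vert\ov{u}\Vert_{H^1(\D_1)}$. By Lemma \ref{lem_existence_blowup_quasi}, any such limit of a blow-up sequence is a $2$-homogeneous element of $\M(\D_1)$, and by Lemma \ref{lem_separation_blowup} its proximity to $\ov{u}$ forces $\widehat{u} \in \M_{\mathrm{hom}}^\Theta$.

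Finally, the polynomial rate in $r$ follows the same two-parameter interpolation as in Proposition \ref{prop_convpolyblow_up}: for arbitrary $r \in (0,1]$, split $\Vert u_r - \widehat{u}\Vert_{H^1(\D_1)}$ through $\ov{u}$ and an auxiliary scale $s = (\Vert u - \ov{u}\Vert_{H^1(\D_1)}/\Vert\ov{u}\Vert_{H^1(\D_1)})^{1/(3+\nu)}$, with $\nu = -\tfrac{1}{2}\log(1-\eta)$, and optimize. The main obstacle compared to the strict-minimizer case is keeping track of the correction term $\lambda_1\Vert u\Vert_{H^1(\D_1)}\sqrt{r}$ in $W^{quasi}$; however, because $\sqrt{r}$ decays strictly faster in $k$ than $(1-\eta)^{k/2}$ (since $e^{-1/2} < \sqrt{1-\eta}$ for small $\eta$, which we may assume without loss of generality), this correction is always dominated by the geometric term $(1-\eta)^k \alpha \Vert\ov{u}\Vert_{H^1(\D_1)}^2$ and therefore does not degrade the argument.
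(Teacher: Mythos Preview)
Your proposal is correct and follows exactly the approach indicated in the paper, which simply says to repeat the proof of Proposition \ref{prop_convpolyblow_up} with $W$ replaced by $W^{quasi}$ and the constants $c_1,C_1$ from Theorem \ref{th_epiperimetry} replaced by those from Theorem \ref{th_epi_quasi}. Your final paragraph worrying about the correction term $\lambda_1\Vert u\Vert_{H^1(\D_1)}\sqrt{r}$ is unnecessary: since Theorem \ref{th_epi_quasi} is already formulated directly for $W^{quasi}$ (both the contraction inequality and the variation bound), the correction is absorbed once and for all and never needs to be tracked separately in the induction.
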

\begin{proof}
The proof is the same as the proof of Proposition \ref{prop_convpolyblow_up}, by replacing $W$ with $W^{quasi}$, and the constant $c_1,C_1$ from Theorem \ref{th_epiperimetry} by $c_1,C_1$ from Theorem \ref{th_epi_quasi}.
\end{proof}

The $\eps$-regularity results near flat and angular boundary point also follow.

\begin{proposition}\label{prop_eps_reg_quasi}
There exist $c_3>0$, $\alpha,\kappa\in (0,1)$, with the following property: for any $u\in\M(\D_1)$, $\ov{u}\in\mathscr{M}_{\mathrm{hom}}^{\pi}\cup\mathscr{M}_{\mathrm{hom}}^{t_1}$ such that
\[\Vert u-\ov{u}\Vert_{H^1(\D_1)}\leq c_3,\]
then 
\begin{itemize}
\item If $\ov{u}=u^{\mathrm{I}}$, then there exists a function $h\in\mathcal{C}^{1,\alpha}\left(\left[-\frac{1}{2},\frac{1}{2}\right],\R\right)$ such that $\Vert h\Vert_{L^{\infty}\left(\left[-\frac{1}{2},\frac{1}{2}\right]\right)}\leq \sqrt[6]{\Vert u-u^{\mathrm{I}}\Vert_{H^1(\D_1)}}$ and
$$\Vert h\Vert_{\mathcal{C}^{1,\alpha}\left(\left[-\frac{1}{2},\frac{1}{2}\right]\right)}\lesssim \Vert u-u^\mathrm{I}\Vert_{H^1(\D_1)}^\kappa$$
and
$$\left\{(x,y)\in\D_{\frac{3}{4}}:|x|\leq \frac{1}{2},\ u(x,y)\neq 0\right\}=\left\{(x,y)\in\D_{\frac{3}{4}}:|x|\leq \frac{1}{2},\ y>h(x)\right\}.$$
Moreover, $\Delta u\in\mathcal{C}_\loc^{0,\alpha}\left(\D_{\frac{1}{2}}\cap\ov{\Spt(u)}\right)$ and $\Delta u=1$ in $\D_{\frac{1}{2}}\cap\partial\Spt(u)$.
\item If $u=u^{\mathrm{II}}$ and $W(u,0)\geq \frac{t_1}{2}$, then there exists a diffeomorphism $\Phi\in\mathcal{C}^{1,\gamma}\left(\D_{\frac{1}{2}},\D_{\frac{1}{2}}\right)$ such that
$$\Phi(0)=0,\ D\Phi(0)=I_2,\ \Vert \Phi-\mathrm{id}\Vert_{\mathcal{C}^{1,\nu}\left(\D_{\frac{1}{2}}\right)}\leq C\Vert u-u^{\mathrm{II}}\Vert_{H^1(\D_1)}^\mu$$
and
$$\Spt(u)\cap\D_{1/2}=\Phi\left(\Spt(u^{\mathrm{II}})\cap\D_{1/2}\right).$$
\end{itemize}
\end{proposition}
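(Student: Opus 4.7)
The plan is to rerun the proofs of Propositions \ref{prop_C1a} and \ref{prop_II_C1a} essentially verbatim, with each classical tool replaced by the quasi-minimal analogue already established earlier in this section. For the flat case, I would carry out the six-step cone argument of Proposition \ref{prop_C1a} with the following substitutions: the Cacciopoli bound of Lemma \ref{lem_Cacciopoli} becomes Lemma \ref{lem_quasi_cacciopoli} (the extra $r^2$ term being harmless at small scale); the log-Lipschitz estimate of Lemma \ref{lem_C1log} is replaced by the combination of Lemmas \ref{lem_quasi_cacciopoli} and \ref{lem_quasi_BMO}, giving uniform $\mathcal{C}^{1,1/2}$ bounds on $u$; the non-degeneracy Lemma \ref{lem_nondegeneracy} is replaced by Lemma \ref{lem_nondeg_quasi}, applied to the rescalings $u_{p,r}\in\QM^r(\D_1)$ via Lemma \ref{lem_rescale_general_minimizers}; and the existence, uniqueness and polynomial convergence of blow-ups of type I (claims (b), (c), (d) in the proof of Proposition \ref{prop_C1a}) are provided by Lemma \ref{lem_existence_blowup_quasi} and Proposition \ref{prop_convpolyblow_up_quasi}. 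This produces the $\mathcal{C}^{1,\alpha}$ graph $h$ with the required $L^\infty$ and Hölder estimates, identically to the minimal case.

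For the angular case, I would follow the proof of Proposition \ref{prop_II_C1a}: for every $r\in [1/4,3/4]$, the function $u^{\mathrm{II}}_{re_1,\bar\rho}$ is $\mathcal{O}(\bar\rho^3)$-close in $H^1(\D_1)$ to $u^\mathrm{I}$, so for $\bar\rho$ small and $\|u-u^\mathrm{II}\|_{H^1(\D_1)}$ small enough, $u_{re_1,\bar\rho}$ falls under the hypothesis of the flat case established above (noting that $u_{re_1,\bar\rho}\in\QM^{\bar\rho}(\D_1)$ by Lemma \ref{lem_rescale_general_minimizers}). Applying the flat $\eps$-regularity on a neighbourhood of each arm, and then on the rotated copy $u\circ\mathrm{rot}_{t_1}$, yields the two $\mathcal{C}^{1,\alpha}$ profiles $\theta^\pm$ near each ray. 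The construction of the diffeomorphism $\Phi$ from these profiles, together with the interpolation estimates \eqref{eq_HolderInterp_C1Linfty}--\eqref{eq_HolderInterp_C1aLinfty} used to conclude the $\mathcal{C}^{1,\nu}$ bound with the stated rate, is then identical to the argument proving Theorem \ref{mr_typeII}.

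The one place where the argument genuinely differs, and which I expect to be the main obstacle, is the last sentence of the first item, namely the identification and regularity of $\Delta u$ on $\partial\Spt(u)$. In the minimal setting, $u$ is biharmonic on $\Spt(u)$ and the mean-value identity \eqref{eq_mean_value} concludes; in the quasi-minimal setting, testing the quasi-minimality \eqref{eq_quasiminimality} against perturbations compactly supported in $\Spt(u)$ yields only $|\Delta^2 u|\leq 1$ in the distributional sense on $\Spt(u)$, not $\Delta^2 u=0$. I would therefore replace the mean-value argument by standard boundary Schauder/Calderón--Zygmund estimates: the function $\Delta u-1$ solves $\Delta (\Delta u-1) \in L^\infty$ on $\Spt(u)\cap\D_{1/2}$ with Dirichlet boundary data $0$ on the $\mathcal{C}^{1,\alpha}$ free boundary, the boundary value being obtained pointwise from the blow-up $u^\mathrm{I}$ given by Proposition \ref{prop_convpolyblow_up_quasi} (the pointwise limit of $\Delta u$ at each boundary point is $\Delta u^\mathrm{I}\equiv 1$ on $\{y=0\}$). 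This yields only the weaker $\mathcal{C}^{0,\alpha}$ regularity for $\Delta u$ stated in the proposition, which is consistent with the loss of one derivative inherent to the quasi-minimal setting.
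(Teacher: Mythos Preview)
Your first two paragraphs are exactly the paper's strategy: rerun Propositions \ref{prop_C1a} and \ref{prop_II_C1a} with the quasi-minimal Lemmas \ref{lem_quasi_cacciopoli}, \ref{lem_quasi_BMO}, \ref{lem_nondeg_quasi}, \ref{lem_existence_blowup_quasi} and Proposition \ref{prop_convpolyblow_up_quasi} in place of their minimal counterparts. The paper says as much and only details the part that differs.

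For the last step (the $\mathcal{C}^{0,\alpha}$ regularity of $\Delta u$ up to the free boundary), your diagnosis is correct --- quasi-minimality gives only $\Delta^2 u\in L^2$ on $\Spt(u)$ with $\|\Delta^2 u\|_{L^2}\le\tfrac12$, not $\Delta^2 u=0$ --- but your proposed mechanism has a circularity. You want to apply boundary Schauder to $\Delta u-1$ with Dirichlet data $0$, claiming the boundary value comes ``pointwise from the blow-up''. But the blow-up convergence in Proposition \ref{prop_convpolyblow_up_quasi} is only in $H^1(\D_1)$ (hence $H^2_\loc$ after Cacciopoli), which does not give pointwise values of $\Delta u$; and you cannot invoke the mean-value identity \eqref{eq_mean_value} since $\Delta u$ is no longer harmonic. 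So the boundary condition you feed into Schauder is precisely what you are trying to prove.

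The paper closes this differently and more directly: on the interior disk $\D_{\tilde q,\frac12}\subset\Spt(u_{p,r})$, the bound $\|\Delta^2(u_{p,r}-u_{p,0})\|_{L^2}\le\tfrac12$ together with the $H^1$ smallness $\|u_{p,r}-u_{p,0}\|_{H^1}\lesssim r^\gamma$ gives an $H^4$ bound on $u_{p,r}-u_{p,0}$, and Gagliardo--Nirenberg interpolation of $W^{2,\infty}$ between $H^1$ and $H^4$ yields
\[
|\Delta u(q)-1|=|\Delta(u_{p,r}-u_{p,0})(\tilde q)|\lesssim \|u_{p,r}-u_{p,0}\|_{H^1}^{1/3}\|u_{p,r}-u_{p,0}\|_{H^4}^{2/3}\lesssim r^{\gamma/36},
\]
where $r$ is comparable to $\mathrm{dist}(q,\partial\Spt(u))$. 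This single estimate simultaneously identifies the boundary value $\Delta u=1$ and gives the H\"older rate at the boundary, so no separate Schauder step is needed.
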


In the first case, the only difference with Proposition \ref{prop_C1a} is that $\Delta u$ is only proved to be $\mathcal{C}^{0,\alpha}$ (instead of $\mathcal{C}^{1,\alpha}$) up to the boundary. This is because we do not know  that $\Delta u$ is harmonic in $\Spt(u)$ anymore.

\begin{proof}
We only detail the part that differs (in the proof of Proposition \ref{prop_C1a}) in the quasi-minimal setting: the regularity of $\Delta u$ up to the boundary. Indeed, starting over from equation \eqref{eq_mean_value}, we have
$$
|\Delta u(q)-1|=\left|\Delta (u_{p,r}-u_{p,0})\left(\tilde{q}\right)\right|.
$$
Since $u_{p,r}\in \QM(\D_1)$ and $\D_{\tilde{q},\frac{1}{2}}\subset\Spt(u_{p,r})$, in particular we have for every $\varphi\in\mathcal{C}^{\infty}_c(\D_{\tilde{q},\frac{1}{2}},\R)$, $t\in\R$,
$$\int_{\D_{\tilde{q},\frac{1}{2}}}|\Delta u_{p,r}|^2\leq \int_{\D_{\tilde{q},\frac{1}{2}}}|\Delta (u_{p,r}+t\varphi)|^2+ |t|\Vert \varphi\Vert_{L^2(\D_{\tilde{q},\frac{1}{2}})}.$$
This implies that for every such $\varphi$, we have,
$$\left|\int_{\D_{\tilde{q},\frac{1}{2}}}\Delta u_{p,r}\Delta \varphi\right|\leq\frac{1}{2}\Vert\varphi\Vert_{L^2(\D_{\tilde{q},\frac{1}{2}})}.$$
Since $\Delta^2u_{p,0}=0$ in $\D_{\tilde{q},\frac{1}{2}}$, we obtain
$$\Vert \Delta^2 (u_{p,r}-u_{p,0})\Vert_{L^2(\D_{\tilde{q},\frac{1}{2}})}\leq \frac{1}{2}.$$
We remind that $\Vert u_{p,r}-u_{p,0}\Vert_{H^1(\D_1)}\lesssim \eps^\frac{\gamma}{12}$, so $u_{p,r}-u_{p,0}$ is bounded in $H^4(\D_{\tilde{q},\frac{1}{2}})$. By Gagliardo-Nirenberg inequality (interpolating $W^{2,\infty}(\D_{\tilde{q},\frac{1}{2}})$ between $H^1(\D_{\tilde{q},\frac{1}{2}})$ and $H^4(\D_{\tilde{q},\frac{1}{2}})$) we obtain
\begin{align*}
|\Delta u(q)-1|&=\left|\Delta (u_{p,r}-u_{p,0})\left(\tilde{q}\right)\right|\\
&\lesssim \Vert u_{p,r}-u_{p,0}\Vert_{H^1(\D_{\tilde{q},\frac{1}{2}})}^{\frac{1}{3}}
\Vert u_{p,r}-u_{p,0}\Vert_{H^4(\D_{\tilde{q},\frac{1}{2}})}^{\frac{2}{3}}\\
&\lesssim r^{\frac{\gamma}{36}}.
\end{align*}
This concludes the proof of the Hölder regularity of $\Delta u$ up to the regular boundary.
\end{proof}

\subsubsection{Boundary decomposition for quasiminimizers}

\begin{proof}[Proof of Theorem \ref{mr_total_quasi}]
The proof is identical to the proof of Theorem \ref{mr_total}: we define $\mathcal{R}_u,\mathcal{A}_u,\mathcal{N}_u,\mathcal{J}_u,\mathcal{E}_u$ the same way. Proposition \ref{prop_eps_reg_quasi} implies the claim on $\mathcal{R}_u,\mathcal{A}_u$. The claim on $\mathcal{J}_u$ is a consequence of Proposition \ref{prop_convpolyblow_up_quasi}.
Finally, the description of $\mathcal{E}_u$ is a consequence of Lemma \ref{lem_existence_blowup_quasi} and Proposition \ref{prop_convpolyblow_up_quasi}.
\end{proof}

\subsection{Application to buckling eigenvalue}\label{subsec_buckling}

We remind that the definition of $\Lambda_1(\Om)$, for an open set $\Om$ with finite area, is given in \eqref{expr_Lambda1}. We write $u_\Om$ an (arbitrary) choice of non-zero eigenfunction on $\Om$ associated to $\Lambda_1(\Om)$. Weinberger and Willms's argument may be decomposed in the following steps (we remind a detailed proof may be found in \cite[Prop 4.4]{K00} and \cite{AB03}). 
\begin{itemize}
\item[a) ] If $\Om\subset\R^2$ has finite measure, $\mathcal{C}^2$ boundary, and is a critical point of the functional $|\Om|\Lambda_1(\Om)$ with respect to shape derivatives, then $|\Delta u_\Om|$ must be constant in $\partial\Om$.
\item[b) ]If $\Om\subset\R^2$ is a bounded simply connected set and $|\Delta u_\Om|$ is constant on $\partial\Om$, then $\Delta u_\Om+\Lambda_1(\Om)u_\Om$ is constant in $\Om$. 
\item[c) ]If $\Om\subset\R^2$ is open with finite area such that $\Delta u_{\Om}+\Lambda_1(\Om)u_\Om$ is constant in $\Om$, then $\Lambda_1(\Om)\geq\Lambda_1(\Om^*)$, where $\Om^*$ is a disk with the same area as $\Om$.
\end{itemize}
These three steps imply that if $\Om$ is a minimizer of $|\Om|\Lambda_1(\Om)$ that is bounded, simply connected, with $\mathcal{C}^2$ boundary, then $\Om$ must be the disk.

As a direct consequence of Theorem \ref{mr_total_quasi}, we may lower the initial hypothesis to obtain this in two ways. We first prove that the first eigenfunction of a minimizers of the buckling eigenvalue problem \eqref{expr_Lambda1} under area constraint belong to the class of quasi-minimizers given in Definition \ref{def_quasimin}.

We remind that since $\Lambda_1(t\Om)=t^{-2}\Lambda_1(\Om)$, then up to a dilation,  minimizing $\Lambda_1(\Om)$ under an area constraint on $\Om$ is equivalent to minimizing either $|\Om|\Lambda_1(\Om)$ or $\Lambda_1(\Om)+|\Om|$ among every set.

\begin{lemma}\label{lem_buckling_quasimin}
Let $u\in H^2(\R^2)$ be a minimizer of $E(\cdot;\R^2)$ under the constraint $\int_{\R^2}|\nabla u|^2=1$. 
Then there exists $\ov{r}>0$ such that for any $p\in\R^2$, we have 
$u_{\ov{r}}\in\QM(\D_{p,1})$.
\end{lemma}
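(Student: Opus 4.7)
The plan is to derive the quasi-minimality of $u$ from its constrained minimality by testing against rescaled competitors that satisfy the gradient constraint. The multiplicative rescaling produces a Lagrange-multiplier-type correction proportional to a difference of Dirichlet energies, which I will then convert into an $L^2$ correction via integration by parts and scale down to unit balls.

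First, given any competitor $v\in H^2(\R^2)$ with $\{u\neq v\}\Subset \D_{p,R}$, I would set $\delta:=\int_{\D_{p,R}}(|\nabla v|^2-|\nabla u|^2)$ and $\tilde v:=v/\sqrt{1+\delta}$, which satisfies $\int|\nabla\tilde v|^2=1$. The denominator is positive because a compactly supported perturbation of the non-constant function $u$ cannot be globally constant on the connected set $\R^2$, so $\int|\nabla v|^2>0$, i.e.\ $1+\delta>0$. Minimality $E(u;\R^2)\le E(\tilde v;\R^2)$, combined with the fact that $v=u$ outside $\D_{p,R}$ and $\chi_{\tilde v\neq 0}=\chi_{v\neq 0}$ (since $\sqrt{1+\delta}>0$), gives after straightforward rearrangement
$$E(u;\D_{p,R})-E(v;\D_{p,R})\le \delta\left(\int_{\D_{p,R}}\chi_{v\neq 0}-E(u;\D_{p,R})-\int_{\R^2\setminus\D_{p,R}}|\Delta u|^2\right),$$
which I would bound crudely by $|\delta|(E(u;\R^2)+\pi R^2)$.

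The next step is to estimate $|\delta|$ by $\|v-u\|_{L^2(\D_{p,R})}$. Since $v-u\in H^2_0(\D_{p,R})$, integration by parts yields
$$\delta=\int_{\D_{p,R}}(\nabla v+\nabla u)\cdot\nabla(v-u)=-\int_{\D_{p,R}}(\Delta v+\Delta u)(v-u),$$
so $|\delta|\le(\|\Delta u\|_{L^2(\D_{p,R})}+\|\Delta v\|_{L^2(\D_{p,R})})\|v-u\|_{L^2(\D_{p,R})}$. Assuming without loss of generality that $E(v;\D_{p,R})\le E(u;\D_{p,R})$ (else the desired inequality is trivial), one gets $\|\Delta v\|_{L^2(\D_{p,R})}\le\sqrt{E(u;\R^2)}$, and hence the quasi-minimality
$$E(u;\D_{p,R})-E(v;\D_{p,R})\le\mu_0(R)\,\|v-u\|_{L^2(\D_{p,R})},\qquad \mu_0(R):=2\sqrt{E(u;\R^2)}\,(E(u;\R^2)+\pi R^2),$$
in which the constant $\mu_0(R)$ is independent of $p$.

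Finally, I would apply the rescaling Lemma \ref{lem_rescale_general_minimizers}: since $u\in\QM^{\mu_0(\ov r)}(\D_{p,\ov r})$ for every $p$, the function $u_{\ov r}$ lies in $\QM^{\ov r\,\mu_0(\ov r)}(\D_{q,1})$ for every $q\in\R^2$. Choosing $\ov r$ small enough that $\ov r\,\mu_0(\ov r)\le 1$ (which is possible since $\mu_0(\ov r)\sim \sqrt{E(u;\R^2)}\,E(u;\R^2)$ as $\ov r\to 0$) yields the conclusion $u_{\ov r}\in\QM(\D_{q,1})$. The only substantive obstacle is the exclusion of the degenerate case $1+\delta\le 0$ outlined above; the rest is bookkeeping, once one notes that $E(u;\R^2)<\infty$ follows from the existence of the minimizer.
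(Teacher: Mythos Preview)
Your approach is essentially the same as the paper's: both derive quasi-minimality from the constrained minimality by comparing with a renormalized competitor, bound the resulting Lagrange-multiplier correction via the integration-by-parts identity $\int(|\nabla v|^2-|\nabla u|^2)=-\int(\Delta v+\Delta u)(v-u)$, use the WLOG assumption $E(v)\le E(u)$ to control $\|\Delta v\|_{L^2}$, and then rescale. The paper works with the Rayleigh-quotient form and also inserts a Poincar\'e inequality $\|u-v\|_{L^2}\lesssim r^2\|\Delta(u-v)\|_{L^2}$ to gain an extra $r^2$ factor, which simultaneously takes care of the positivity of $\int|\nabla v|^2$ for small $r$; your version skips Poincar\'e and handles positivity separately.

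That separate handling is the one genuine slip. Your claim that ``a compactly supported perturbation of the non-constant function $u$ cannot be globally constant'' is false as stated: if $\Spt(u)\Subset\D_{p,R}$ then $v\equiv 0$ is a valid competitor with $1+\delta=\int|\nabla v|^2=0$. The conclusion you need still holds for small $R$, but for a different reason: since $|\nabla u|^2\in L^1(\R^2)$, uniform integrability gives $\sup_{p}\int_{\D_{p,R}}|\nabla u|^2\to 0$ as $R\to 0$, hence $1+\delta=\int_{\R^2}|\nabla v|^2\ge\int_{\R^2\setminus\D_{p,R}}|\nabla u|^2\ge\tfrac12$ for $R$ small enough. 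Alternatively, follow the paper and use Poincar\'e together with the WLOG bound $\|\Delta v\|_{L^2}\le\sqrt{E(u;\R^2)}$ to get $|\delta|\lesssim R^2 E(u;\R^2)<1$. Either patch closes the gap, and the rest of your argument goes through unchanged.
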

Similar computation may be found in \cite{S16} (as well as the existence of such a minimizer, obtained via a concentration-compactness procedure).
\begin{proof}
The function $u$ is a minimizer of the functional
$$v\in H^2(\R^2)\setminus\{0\}\mapsto\frac{\int_{\R^2}|\Delta v|^2}{\int_{\R^2}|\nabla v|^2}+|\Spt(v)|.$$
We write $\Lambda:=\int_{\R^2}|\Delta u|^2$. Let $v\in H^2(\R^2)$ such that $\{u\neq v\}\subset\D_{p,r}$ for some disk $\D_{p,r}$, suppose without loss of generality that
$$E(v;\D_{p,r})\leq E(u;\D_{p,r}),$$
otherwise the quasiminimality condition \eqref{eq_quasiminimality} is directly verified. Then
\begin{align*}
\left|1-\int_{\R^2}|\nabla v|^2\right|&=\left|\int_{\D_{p,r}}|\nabla u|^2-|\nabla v|^2\right|=\left|\int_{\D_{p,r}}(u-v)\Delta(u+v)\right|\\
&\leq \Vert u-v\Vert_{L^2(\D_{p,r})}\Vert \Delta (u+v)\Vert_{L^2(\D_{p,r})}\\
&\lesssim r^2\Vert \Delta(u-v)\Vert_{L^2(\D_{p,r})}\Vert \Delta (u+v)\Vert_{L^2(\D_{p,r})}\text{ by Poincaré inequality}\\
&\lesssim r^2\left(\Vert \Delta u\Vert_{L^2(\D_{p,r})}^2+\Vert \Delta v\Vert_{L^2(\D_{p,r})}^2\right)\\
&\lesssim r^2\left(\Vert \Delta u\Vert_{L^2(\R^2)}^2+|\Spt(u)|\right)\text{ since }E(v;\D_{p,r})\leq E(u;\D_{p,r}).
\end{align*}
So for a small enough $r>0$, we have $\int_{\R^2}|\nabla v|^2\neq 0$. Then by the minimality of $u$ we obtain
$$\Lambda+|\Spt(u)|\leq \frac{\Lambda+\int_{\D_{p,r}}\left(|\Delta v|^2-|\Delta u|^2\right)}{1+\int_{\D_{p,r}}\left(|\nabla v|^2-|\nabla u|^2\right)}+|\Spt(v)|.$$
This simplifies to
$$
E(u;\D_{p,r})-E(v;\D_{p,r})\leq\left(\Lambda+|\Spt(u)|-|\Spt(v)|\right)\int_{\D_{p,r}}\left(|\nabla u|^2-|\nabla v|^2\right).
$$
We bound the last term:
\begin{align*}
\int_{\D_{p,r}}\left(|\nabla u|^2-|\nabla v|^2\right)&=\int_{\D_{p,r}}\Delta(v+u)(v-u)\\
&\lesssim \Vert\Delta (v+u)\Vert_{L^2(\D_{p,r})}\Vert v-u\Vert_{L^2(\D_{p,r})}\\
&\lesssim \left(\Vert\Delta u\Vert_{L^2(\R^2)}+|\Spt(u)|^\frac{1}{2}\right)\Vert v-u\Vert_{L^2(\D_{p,r})}\text{ since }E(v;\D_{p,r})\leq E(u;\D_{p,r}).
\end{align*}
And $\Lambda+|\Spt(u)|-|\Spt(v)|\leq \Lambda+\pi r^2$. This proves the result, for some sufficiently small $\ov{r}$.
\end{proof}

\begin{corollary}\label{cor_buckling_1}
Let $\Om\subset\R^2$ be a minimizer of $\Om\mapsto\Lambda_1(\Om)$ among open sets of area $1$. Let $u_\Om$ be a buckling eigenfunction associated to $\Lambda_1(\Om)$. Then $\Om$ is bounded and $u_\Om$ satisfies the conclusion of Theorem \ref{mr_total_quasi}; in particular for any $p\in\partial\Om$ such that $\liminf_{r\to 0}\frac{|\Om\cap\D_{p,r}|}{|\D_{p,r}|}<1$, then there exists $r>0$ such that $\D_{p,r}\cap\Om$ is either a $\mathcal{C}^{1,\alpha}$ curve, or two $\mathcal{C}^{1,\alpha}$ curves meeting with an angle $t_1$.
\end{corollary}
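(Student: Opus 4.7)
The strategy will be to reduce the corollary to Theorem \ref{mr_total_quasi} by placing a suitable rescaling of $u_\Omega$ inside the quasi-minimality framework of Lemma \ref{lem_buckling_quasimin}, and then to use nondegeneracy to establish boundedness.

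First I would identify $\Spt(u_\Omega) = \Omega$. By classical elliptic regularity the eigenfunction is $\mathcal{C}^{1,\alpha}_{\mathrm{loc}}(\Omega)$, so $\Spt(u_\Omega)$ is a well-defined open subset of $\Omega$. If $|\Omega\setminus\Spt(u_\Omega)|>0$, then $u_\Omega \in H^2_0(\Spt(u_\Omega))$ would realize the Rayleigh quotient $\Lambda_1(\Omega)$ on a set of strictly smaller area, and rescaling $\Spt(u_\Omega)$ to unit area would produce an eigenvalue equal to $|\Spt(u_\Omega)|\Lambda_1(\Omega) < \Lambda_1(\Omega)$, contradicting minimality. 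Since the difference is open and has zero measure, it is empty.

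Next, I would use that the functional
$$
F(v) := \frac{\int_{\R^2}|\Delta v|^2}{\int_{\R^2}|\nabla v|^2} + |\Spt(v)|
$$
is invariant under $v\mapsto tv$, so $\min_{H^2(\R^2)\setminus\{0\}}F = \min_{\Omega'}[\Lambda_1(\Omega')+|\Omega'|]$. Using $\Lambda_1(s\Omega')=s^{-2}\Lambda_1(\Omega')$ and the minimality of $\Omega$ for $\Omega'\mapsto|\Omega'|\Lambda_1(\Omega')$, this infimum is attained at an appropriately dilated copy of $\Omega$: concretely, $\tilde u(x):=u_\Omega(\lambda x)$ with $\lambda$ balancing the two terms of $F$ is a minimizer of $F$. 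After normalizing the $H^1$-norm, $\tilde u$ becomes a minimizer of $E(\cdot;\R^2)$ under $\int|\nabla\tilde u|^2=1$, and Lemma \ref{lem_buckling_quasimin} then yields $\bar r>0$ with $\tilde u_{\bar r}\in\QM(\D_{p,1})$ for every $p\in\R^2$. For boundedness, the nondegeneracy Lemma \ref{lem_nondeg_quasi} applied to $\tilde u_{\bar r}$ at each $p\in\partial\Spt(\tilde u_{\bar r})$ gives $\|\tilde u_{\bar r}\|_{H^1(\D_{p,1})}\geq\varepsilon$; since $\tilde u\in H^1(\R^2)$ has finite norm, any maximal collection of disjoint half-unit disks centered on $\partial\Spt(\tilde u_{\bar r})$ is finite, so $\partial\Spt(\tilde u_{\bar r})$ lies in a bounded region, and combined with $|\Spt(\tilde u_{\bar r})|<\infty$ this shows that $\Spt(\tilde u_{\bar r})$, hence $\Omega$, is bounded. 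The decomposition of $\partial\Omega$ is then obtained by applying Theorem \ref{mr_total_quasi} to $\tilde u_{\bar r}$ locally around each boundary point and transferring back through the dilation; the condition $\liminf_{r\to 0}|\Omega\cap\D_{p,r}|/|\D_{p,r}|<1$ excludes the strata $\mathcal{N}_{\tilde u_{\bar r}}$, $\mathcal{J}_{\tilde u_{\bar r}}$, $\mathcal{E}_{\tilde u_{\bar r}}$ (all of which have Lebesgue density one in $\Spt(\tilde u_{\bar r})$ at such points), leaving exactly the regular and angular alternatives stated.

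The main obstacle will be the bookkeeping in the dilation step: one must verify that $\tilde u$ minimizes $E(\cdot;\R^2)$ against \emph{all} competitors in $H^2(\R^2)$, not merely within the one-parameter family of dilations of $u_\Omega$. This relies crucially on the minimality of $\Omega$ \emph{among all} open sets of area one, combined with the scale-invariant reformulation $F$ above and with the identification $\Spt(u_\Omega)=\Omega$ from the first step.
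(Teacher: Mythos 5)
Your proposal follows essentially the same route as the paper's proof: normalize and dilate so that $u_\Om$ minimizes $E(\cdot;\R^2)$ under the constraint $\int_{\R^2}|\nabla u_\Om|^2=1$, invoke Lemma \ref{lem_buckling_quasimin} to obtain quasi-minimality of a rescaling on every unit disk, deduce boundedness by packing disjoint disks through the nondegeneracy Lemma \ref{lem_nondeg_quasi}, and conclude by applying Theorem \ref{mr_total_quasi} together with the density hypothesis to land in the regular or angular strata. Your additional step identifying $\Spt(u_\Om)=\Om$ (and the explicit exclusion of $\mathcal{N}_u,\mathcal{J}_u,\mathcal{E}_u$ at low-density points) is a harmless elaboration of what the paper dispatches with a parenthetical remark, though be aware that its justification silently uses $u_\Om\in H^2_0(\Spt(u_\Om))$, a spectral-synthesis/capacity point that deserves a citation rather than being treated as immediate.
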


\begin{proof}
Up to a dilation, we assume that $\Om$ is a minimizer (among open sets of finite area) of $\Om\mapsto \Lambda_1(\Om)+|\Om|$. We assume $u_\Om$ is normalized by $\int_{\Om}|\nabla u_\Om|^2=1$. This means that $u_\Om$ is a minimizer of $E(\cdot;\R^2)$ under the constraint $\int_{\R^2}|\nabla u_\Om|^2=1$. By Lemma \ref{lem_buckling_quasimin}, there exists $\ov{r}>0$ such that for every $p\in\R^2$ we have
$$(u_\Om)_{p,\ov{r}}\in\QM(\D_{1}).$$
By the non-degeneracy Lemma \ref{lem_nondeg_quasi}, there exists $c>0$ such that for any $p\in\R^2$, if $u(p)\neq 0$ then
$$
\int_{\D_{p,\ov{r}}}\left(|\nabla u_\Om|^2+u_\Om^2\right)\geq c.$$
Consider a sequence of disjoint disks $(\D_{p_i,\ov{r}})_{i=1,\hdots,N}$ (for some $N\in\N^*$) such that $u_\Om(p_i)\neq 0$ for every $i$, then applying the above inequality to each disk we obtain
$$\int_{\R^2}\left(|\nabla u_\Om|^2+|u_\Om|^2\right)\geq Nc.$$
Thus the number $N$ of such disjoint disks is bounded from above, meaning that $\Om$ must be a bounded set.

We now apply Theorem \ref{mr_total_quasi} to $(u_\Om)_{\ov{r}}$, which gives the rest of the conclusions. Note that $\partial\Om$ is, a priori, only a subset of $\partial\Spt(u_\Om)$.

\end{proof}

\begin{corollary}\label{cor_buckling_2}
Let $\Om\subset\R^2$ be a minimizer of $\Om\mapsto\Lambda_1(\Om)$ among open sets of area $1$. Let $u_\Om$ be a buckling eigenfunction associated to $\Lambda_1(\Om)$ and assume either that
\begin{equation}\label{eq_assumption_density_1}
\Om\text{ is simply connected and for all }p\in\partial\Om,\ \liminf_{r\to 0}\frac{|\D_{p,r}\cap\Om|}{|\D_{p,r}|}<\frac{t_1}{2\pi}(\approx 0.715),
\end{equation}
or
\begin{equation}\label{eq_assumption_density_2}
\Delta u_\Om+\Lambda_1(\Om)u_\Om\geq 0\text{ in }\Om\text{ and for all }p\in\partial\Om,\text{ we have } \liminf_{r\to 0}\frac{|\D_{p,r}\cap\Om|}{|\D_{p,r}|}<1.
\end{equation}
Then $\Om$ is a disk.
\end{corollary}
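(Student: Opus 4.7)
The plan is to reduce both cases to step (c) of Weinberger and Willms's argument: any open set $\Om$ of finite area on which $\Delta u_\Om + \Lambda_1(\Om)u_\Om$ is constant satisfies $\Lambda_1(\Om)\ge \Lambda_1(\Om^*)$ with equality iff $\Om=\Om^*$ is a disk of the same area. First I apply Corollary \ref{cor_buckling_1}: combined with the minimality of $\Om$ (any proper measurable $\Spt(u_\Om)\subsetneq\Om$ would carry the same eigenvalue on strictly smaller area, contradicting minimality), it gives $\Om=\Spt(u_\Om)$ up to a null set, together with the partition
$$\partial\Om \subseteq \partial\Spt(u_\Om) = \mathcal R_u \sqcup \mathcal A_u \sqcup \mathcal N_u \sqcup \mathcal J_u \sqcup \mathcal E_u$$
from Theorem \ref{mr_total_quasi}. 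A direct inspection of the blow-ups in each class shows that the Lebesgue density of $\Om$ at $p\in\partial\Om$ equals $\tfrac{1}{2}$ on $\mathcal R_u$, $\tfrac{t_1}{2\pi}$ on $\mathcal A_u$, and $1$ on $\mathcal N_u$, $\mathcal J_u$ and $\mathcal E_u$ (in each of the latter three cases the blow-up or renormalized blow-up is biharmonic on the full disk, with vanishing set of measure zero).

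Under assumption \eqref{eq_assumption_density_1}, the strict bound $<\tfrac{t_1}{2\pi}$ excludes every class except $\mathcal R_u$, so $\partial\Om$ is locally a $\mathcal C^{1,\alpha}$ graph and $\Delta u_\Om$ extends continuously to $\partial\Om$ with value in $\{-1,+1\}$. Because $\Om$ is bounded and simply connected with $\mathcal C^{1,\alpha}$ boundary, $\partial\Om$ is a single Jordan curve, so this value is constant by continuity and $|\Delta u_\Om|\equiv 1$ on $\partial\Om$. Step (b) of Weinberger--Willms then gives that $\Delta u_\Om+\Lambda_1(\Om)u_\Om$ is constant in $\Om$, and step (c) concludes that $\Om$ is a disk.

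Under assumption \eqref{eq_assumption_density_2}, the strict bound $<1$ excludes $\mathcal N_u$, $\mathcal J_u$ and $\mathcal E_u$, leaving $\partial\Om\subseteq \mathcal R_u\cup \mathcal A_u$. At a regular point $p\in\mathcal R_u$, the continuity of $\Delta u_\Om$ up to $\mathcal R_u$ and $u_\Om(p)=0$ yield $(\Delta u_\Om+\Lambda_1(\Om)u_\Om)(q)\to \Delta u_\Om(p)\in\{-1,+1\}$ as $q\to p$ from inside $\Om$, and the sign hypothesis $\ge 0$ then forces $\Delta u_\Om\equiv +1$ on all of $\mathcal R_u$. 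But at every $p\in\mathcal A_u$ the two components of $\mathcal R_u$ meeting at $p$ carry opposite signs of $\Delta u_\Om|_{\mathcal R_u}$, so $\mathcal A_u=\emptyset$ and $\partial\Om\subseteq \mathcal R_u$ entirely. Since the buckling equation $\Delta^2u_\Om+\Lambda_1(\Om)\Delta u_\Om=0$ makes $\Delta u_\Om+\Lambda_1(\Om)u_\Om$ harmonic in $\Om$ with continuous boundary trace $1$, the maximum principle on each connected component of the bounded set $\Om$ gives $\Delta u_\Om+\Lambda_1(\Om)u_\Om\equiv 1$ in $\Om$, and step (c) concludes.

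The main technical difficulty in both cases is the rigorous justification of the continuous extension of $\Delta u_\Om$ up to $\partial\Om$, which is provided by the $\mathcal C^{0,\alpha}$ regularity of $\Delta u$ up to $\mathcal R_u$ in Proposition \ref{prop_eps_reg_quasi}. A subtler point, specific to case \eqref{eq_assumption_density_2}, is handling the possible disconnectedness of $\Om$ when applying the maximum principle; this is resolved component by component, each component being a bounded open set whose entire boundary lies in $\mathcal R_u$ and is therefore $\mathcal C^{1,\alpha}$.
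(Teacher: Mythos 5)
Your proof is correct and follows essentially the same route as the paper: apply Corollary \ref{cor_buckling_1} and Theorem \ref{mr_total_quasi}, use the density hypotheses to restrict $\partial\Om$ to $\mathcal{R}_u$ (resp.\ $\mathcal{R}_u\cup\mathcal{A}_u$), deduce that $\Delta u_\Om$ is constant on the boundary, and conclude via harmonicity of $\Delta u_\Om+\Lambda_1(\Om)u_\Om$ and step (c) of Weinberger--Willms. Your only deviation is cosmetic: in case \eqref{eq_assumption_density_2} you exclude angular points by first fixing the sign $\Delta u_\Om\equiv+1$ on $\mathcal{R}_u$ and then invoking the opposite-sign description of $\mathcal{A}_u$, whereas the paper argues the sign change of $\Delta u_\Om+\Lambda_1(\Om)u_\Om$ directly from the type II blow-up at an angular point; both use the same ingredient.
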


\begin{proof}
We first apply corollary \ref{cor_buckling_1}: $\Om$ is bounded, and $u_\Om$ verifies the conclusion of Theorem \ref{mr_total_quasi}.

If the first hypothesis \eqref{eq_assumption_density_1} is verified, then we know that every point of $\partial\Om$ belongs to $\mathcal{R}_{u_\Om}$. Since
$\Om$ is bounded and simply connected, then $\mathcal{R}_{u_\Om}$ is connected. $\Delta u_\Om$ is continuous up to the boundary with values in $\{-1,+1\}$, so by the connectedness of the boundary it is constant. By maximum principle, the (harmonic) function $\Delta u_\Om+\Lambda_1(\Om)u_\Om$ is constant in $\Om$: the step c) of Weinberger and Willms's argument applies and $\Om$ must be a disk.

If the second hypothesis \eqref{eq_assumption_density_2} is verified, then every point of $\partial\Om$ belongs to $\mathcal{R}_u\sqcup \mathcal{A}_u$. Assume there exists some $p\in\mathcal{A}_u$, then $\Delta u_\Om+\Lambda_1(\Om)u_\Om$ must change sign in any neighbourhood of $p$. Indeed, the blow-up sequence $(u_{\Om})_{p,r}$ converges in $H^2(\D_1)$ to a homogeneous solution of type II, denoted $\widehat{u}$, and $\Delta \widehat{u}=\lim_{r\to 0}\left(\Delta u_{p,r}+r^2 \Lambda u_{p,r}\right)$ changes sign. This contradicts our hypothesis, so $\mathcal{A}_u$ must be empty and every point of $\partial\Om$ is regular. 

Thus $\partial\Om\subset\mathcal{R}_{u_\Om}$, $\Delta u_\Om|_{\Om}$ extends continuously to $\partial\Om$ with value $\pm 1$. Since $u_\Om=0$ on $\partial\Om$ and $\Delta u_\Om+\Lambda_1(\Om)u_\Om\geq 0$, then $\Delta u_\Om+\Lambda_1(\Om)u_\Om=1$ in $\partial\Om$ and harmonic, so it is constant in $\Om$. We conclude with step c) of Weinberger and Willms's argument described above: $\Om$ is necessarily a disk.

\end{proof}

There are several obstacles to obtain a general proof of the optimality of the disk from this. First is proving the hypothesis \eqref{eq_assumption_density_1} would hold for a general minimizer ; either the simple connectedness should be attained as a \textit{consequence} of the optimality, from some unknown geometric argument, or it could be considered as a \textit{constraint} (in an attempt to prove the optimality of the disk at least among simply connected sets). If it is considered as a constraint, then our main Theorem \ref{mr_total_quasi} does not apply since we frequently use non-simply connected competitors (for instance as early as in the proof of Lemma \ref{lem_BMO} and Lemma \ref{lem_nondegeneracy}).

The density assumption is not obvious either ; although relying on heuristics in some parts, the optimal set for the drag of Richardson in \cite{R95} shows that angular points may occur in optimal shapes. Even assuming that we work among simply connected shapes, some geometric argument specific to the buckling problem would be needed to prove that angular points do not occur for minimizers. The same can be said for the assumption $\Delta u_\Om+\Lambda_1(\Om)u_\Om\geq 0$ in \eqref{eq_assumption_density_2}.

Finally, even assuming that $\partial\Om$ is connected, one would need to understand the behaviour of $\Delta u_\Om|_{\partial\Om}$ near junction point (here $\Delta u_\Om|_{\partial\Om}$ is expected to \textbf{not} change sign) and explosion points (which are expected to not exist).\bigbreak

\noindent{\bf Acknowledgements: }
This work was supported by the project ANR STOIQUES (ANR-24-CE40-2216) financed by the French Agence Nationale de la Recherche (ANR).

\bibliographystyle{plain}
\bibliography{biblio}
\end{document}